\definecolor{darkpastelpurple}{rgb}{0.59, 0.44, 0.84}
\titleformat{\subsection}[hang]{\large\bfseries}{\thesubsection\hsp\textcolor{gray!75}{|}\hsp}{0pt}{\large\bfseries}
\titleformat{\subsubsection}[hang]{\bfseries}{\thesubsubsection\hsp\textcolor{gray!75}{|}\hsp}{0pt}{\bfseries}
\titleformat{\part}[display]{\Huge\bfseries}{\partname~\thepart:}{20pt}{}{}
\newcommand{\hsp}{\hspace{20pt}}
\titleformat{\section}[hang]{\Large\bfseries}{\thesection\hsp\textcolor{gray!75}{|}\hsp}{0pt}{\Large\bfseries}
\let\pprevious@stem\@empty
\def\fooo#1#2\global\let\previous@stem\current@stem#3\zz{%
   \def#1{#2%
\global\let\pprevious@stem\previous@stem
\global\let\previous@stem\current@stem
#3}}
\def\foo#1{\expandafter\fooo\expandafter#1#1\zz}
\let\generate@label\generate@alphalabel
\def\calc@alpha@suffix{%
    \@tempswafalse
    \compare@stems\previous@stem\current@stem
    \ifsame@stems
        \ifx\previous@year\current@year
            \@tempswatrue
        \fi
    \else
    \compare@stems\pprevious@stem\current@stem
    \ifsame@stems
        \ifx\previous@year\current@year
            \@tempswatrue
        \fi
    \fi
    \fi
    \if@tempswa
        \global\advance\alpha@suffix\@ne
        \edef\alpha@label@suffix{\@suffix@format\alpha@suffix}%
        \ifnum\alpha@suffix=\tw@
            \immediate\write\@auxout{%
                \string\ModifyBibLabel{\prev@citekey}%
            }%
        \fi
    \else
        \let\alpha@label@suffix\@empty
        \global\alpha@suffix\@ne
        \@xp\ifx \csname b@\current@citekey @suffix\endcsname \relax
        \else
            \edef\alpha@label@suffix{\@suffix@format\alpha@suffix}%
        \fi
    \fi
}
\definecolor{reference}{rgb}{0.20,0.36,0.74}
\definecolor{citation}{rgb}{0,.40,.80}
\crefname{section}{\S \!\!}{\S\S \!\!}
\crefname{equation}{}{}
\crefname{enumi}{}{}
\theoremstyle{plain}
\newtheorem{theorem}{Theorem}[section]
\newtheorem{mainthm}{Theorem}
\newtheorem{proposition}[theorem]{Proposition}
\newtheorem{lemma}[theorem]{Lemma}
\newtheorem{corollary}[theorem]{Corollary}
\theoremstyle{definition}
\newtheorem{definition}[theorem]{Definition}
\newtheorem{construction}[theorem]{Construction}
\newtheorem{notation}[theorem]{Notation}
\newtheorem{observation}[theorem]{Observation}
\newtheorem{remark}[theorem]{Remark}
\newtheorem{example}[theorem]{Example}
\newtheorem{warning}[theorem]{Warning}
\def\cA{\mathcal A}\def\cB{\mathcal B}\def\cC{\mathcal C}\def\cD{\mathcal D}
\def\cE{\mathcal E}\def\cF{\mathcal F}
\def\cI{\mathcal I}
\def\cM{\mathcal M}\def\cO{\mathcal O}
\def\cS{\mathcal S}
\def\cU{\mathcal U}\def\cV{\mathcal V}\def\cX{\mathcal X}
\def\cY{\mathcal Y}\def\cZ{\mathcal Z}
\newcommand{\Spectra}{{\cS}{\sf p}}
\newcommand{\Sp}{{\cS}{\sf p}}
\newcommand{\CycSp}{\mathsf{CycSp}}
\newcommand{\Spaces}{{\cS}}
\newcommand{\ra}{\rightarrow}
\newcommand{\xra}{\xrightarrow}
\newcommand{\xla}{\xleftarrow}
\newcommand{\xhookra}{\xhookrightarrow}
\newcommand{\longla}{\longleftarrow}
\newcommand{\longra}{\longrightarrow}
\newcommand{\xlongra}[1]{\stackrel{#1}{\longra}}
\newcommand{\xlongla}[1]{\stackrel{#1}{\longla}}
\newcommand{\longhookra}{\lhook\joinrel\longrightarrow}
\providecommand{\leftsquigarrow}{%
  \mathrel{\mathpalette\reflect@squig\relax}%
}
\newcommand{\reflect@squig}[2]{%
  \reflectbox{$\m@th#1\rightsquigarrow$}%
}
\newcommand{\hookra}{\hookrightarrow}
\newcommand{\hookla}{\hookleftarrow}
\newcommand{\Shv}{{\sf Shv}}
\newcommand{\HomforStrat}{\Yleft}
\newcommand{\HomforCAlg}{\Yright}
\newcommand{\adj}{\dashv}
\DeclareMathOperator{\uno}{\mathbbm{1}}
\newcommand{\ev}{{\sf ev}}
\newcommand{\pr}{{\sf pr}}
\newcommand{\Fun}{{\sf Fun}}
\newcommand{\Hom}{{\sf Hom}}
\newcommand{\Fin}{{\sf Fin}}
\newcommand{\Bord}{{\sf Bord}}
\newcommand{\BBord}{\mathfrak{B}{\sf ord}}
\newcommand{\Strat}{{\sf Strat}}
\newcommand{\Stratlax}{\Strat^\lax}
\newcommand{\Cat}{{\sf Cat}}
\newcommand{\PrLSt}{{\sf Pr}^L_{{\sf st}}}
\newcommand{\es}{\varnothing}
\newcommand{\Spec}{{\sf Spec}}
\newcommand{\QC}{{\sf QC}}
\newcommand{\Mod}{{\sf Mod}}
\newcommand{\Pic}{{\sf Pic}}
\newcommand{\fgt}{{\sf fgt}}
\newcommand{\dzbl}{{\sf dzbl}}
\renewcommand{\lim}{{\sf lim}}
\newcommand{\lax}{{\sf lax}}
\newcommand{\Sect}{{\sf Sect}}
\newcommand{\const}{{\sf const}}
\newcommand{\pt}{{\sf pt}}
\newcommand{\id}{{\sf id}}
\newcommand{\Ar}{{\sf Ar}}
\newcommand{\Map}{{\sf Hom}}
\newcommand{\pos}{{P}}
\newcommand{\bit}[1]{\textbf{\textit{#1}}}
\newcommand{\can}{{\sf can}}
\newcommand{\Bifib}{{\sf Bifib}}
\newcommand{\llax}{{\sf llax}}
\newcommand{\rlax}{{\sf rlax}}
\newcommand{\lstr}{{\sf lstrict}}
\newcommand{\rstr}{{\sf rstrict}}
\newcommand{\cocart}{{\sf cocart}}
\newcommand{\op}{{\sf op}}
\newcommand{\Cocart}{{\sf Cocart}}
\newcommand{\Cart}{{\sf Cart}}
\newcommand{\locCocart}{{\sf locCocart}}
\newcommand{\Mon}{{\sf Mon}}
\newcommand{\CAlg}{{\sf CAlg}}
\newcommand{\Alg}{{\sf Alg}}
\newcommand{\Com}{{\sf Com}}
\newcommand{\adjunct}[4]{ \begin{tikzcd}[column sep=2cm, ampersand replacement=\&] {#2} \arrow[yshift=0.9ex]{r}{{#1}} \arrow[leftarrow, yshift=-0.9ex]{r}[yshift=-0.2ex]{\bot}[swap]{{#4}} \& {#3} \end{tikzcd} }
\newcommand{\adjunctbig}[4]{ \begin{tikzcd}[column sep=4cm, ampersand replacement=\&] {#2} \arrow[yshift=0.9ex]{r}{{#1}} \arrow[leftarrow, yshift=-0.9ex]{r}[yshift=-0.2ex]{\bot}[swap]{{#4}} \& {#3} \end{tikzcd} }
\newcommand{\adjunctbigger}[4]{ \begin{tikzcd}[column sep=5cm, ampersand replacement=\&] {#2} \arrow[yshift=0.9ex]{r}{{#1}} \arrow[leftarrow, yshift=-0.9ex]{r}[yshift=-0.2ex]{\bot}[swap]{{#4}} \& {#3} \end{tikzcd} }
\newcommand{\categ}[1]{\textbf{\textup{#1}}}
\newcommand{\sd}{\mathrm{sd}}
\newcommand{\Tw}{\mathrm{Tw}}
\newcommand{\sSet}{\categ{sSet}}
\newcommand{\ob}{\mathrm{ob}}
\newcommand{\res}{\mathrm{res}}
\newcommand{\Yo}{\mathrm{Yo}}
\renewcommand{\max}{\mathrm{max}}
\renewcommand{\min}{\mathrm{min}}
\newcommand{\leftnat}[1]{\vphantom{#1}_{\natural}\mskip-1mu{#1}}
\newcommand{\colim}{\mathrm{colim}}
\newcommand{\EE}{\mathbb{E}}
\newcommand{\NN}{\mathbb{N}}
\newcommand{\SSS}{\mathbb{S}}
\newcommand{\QQ}{\mathbb{Q}}
\newcommand{\ZZ}{\mathbb{Z}}
\newcommand{\SMC}{\mathsf{SMC}}
\newcommand{\Comm}{{\sf Comm}}
\newcommand{\Env}{{\sf Env}}
\newcommand{\End}{\mathsf{End}}
\newcommand{\LEq}{\mathsf{LEq}}
\newcommand{\THH}{\mathrm{THH}}
\newcommand{\triv}{\mathrm{triv}}
\newcommand{\Span}{\mathsf{Span}}
\title{Dualizable objects in stratified categories and the 1-dimensional bordism hypothesis for recollements}
\author{Grigory Kondyrev, Aaron Mazel-Gee, and Jay Shah}
\date{\today}
\begin{document}

\maketitle

\begin{abstract}
Given a monoidal $\infty$-category $\cC$ equipped with a monoidal recollement, we give a simple criterion for an object in $\cC$ to be dualizable in terms of the dualizability of each of its factors and a projection formula relating them. Predicated on this, we then characterize dualizability in any monoidally stratified $\infty$-category in terms of stratumwise dualizability and a projection formula for the links.

Using our criterion, we prove a $1$-dimensional bordism hypothesis for symmetric monoidal recollements. Namely, we provide an algebraic enhancement of the 1-dimensional framed bordism $\infty$-category that corepresents dualizable objects in symmetric monoidal recollements.

We also give a number of examples and applications of our criterion drawn from algebra and homotopy theory, including equivariant and cyclotomic spectra and a multiplicative form of the Thom isomorphism.
\end{abstract}

% Given a monoidal $\infty$-category equipped with a monoidal stratification (in the sense of [ref]), we give a simple criterion for its dualizable objects in terms of a projection formula. we unpack the examples of [some genuine G-spectra, maybe $K(1)$-local homotopy theory, ...]. we also use our result to prove a 1-dimensional bordism hypothesis for symmetric monoidal recollements.
%we describe a stratified bordism category that corepresents dualizable objects in symmetric monoidal recollements. the main ingredient is a surprisingly simple characterization of dualizable objects in symmetric monoidal recollements in terms of a projection formula, which appears to be new.

\setcounter{tocdepth}{2}
\tableofcontents
 
\setcounter{section}{-1}

\section{Introduction}

\subsection{Overview}
\label{subsection.overview}

% \footnote{Here and below, we implicitly take sheaves in any pointed category with finite limits.}
Broadly speaking, a \textit{stratification} of a geometric object is a decomposition into smaller pieces, called the \textit{strata} of the stratification; the set of strata form a poset $\pos$ according to closure relations. In favorable situations, one can reconstruct the original object from the strata along with appropriate gluing data: its \textit{links}, indexed by morphisms $p_0 < p_1$ in $\pos$, and more generally its \textit{higher links}, indexed by strings of morphisms $p_0 < \cdots < p_n$ in $\pos$. This pattern persists upon passing to categories of sheaves, and leads to the general notion of a \bit{stratified category} along with its corresponding notions of strata, links, and higher links \cite{AMGR}; the notion of a \bit{recollement} is recovered in the case of the poset $P = [1] \coloneqq \{ 0 < 1 \}$ \cite{BBD}.\footnote{Throughout this paper, we take the ``implicit $\infty$'' convention: ``category'' means ``$\infty$-category'', and all notions (e.g., co/limits) are implicitly $\infty$-categorical. In particular, our results apply equally well to ordinary categories.} Here are two simple examples (see \Cref{example.many.stratns} for more).
\begin{itemize}
\item A stratification of the scheme $\Spec(\ZZ)$ leads 
%As a simple example, a stratification of a topological space over the poset $[1]$ is simply a closed-open decomposition, and this determines a recollement of its category of sheaves (as we recall in \Cref{subsubsection.intro.dzbl.in.rec}). Likewise, stratifications of schemes determine stratifications of their categories of quasicoherent sheaves. For example, in the simple case of $\Spec(\ZZ)$, this leads
to the \textit{arithmetic fracture square}, a pullback square
\[ \begin{tikzcd}
M
\arrow{r}
\arrow{d}
&
\QQ \otimes_\ZZ M
\arrow{d}
\\
{\displaystyle \prod_{p \text{ prime}} M^\wedge_p}
\arrow{r}
&
\QQ \otimes_\ZZ \left(
{\displaystyle \prod_{p \text{ prime}} M^\wedge_p}
\right)
\end{tikzcd} \]
expressing any quasicoherent sheaf $M \in \QC(\Spec(\ZZ)) \simeq \Mod_\ZZ$ in terms of its rationalization, $p$-completions, and gluing data thereamong.
\item A stratification of a topological space $X$ over $[1]$ is simply a closed-open decomposition. This determines a recollement of its category $\Shv(X)$ of sheaves (as we recall in \Cref{subsubsection.intro.dzbl.in.rec}), which affords an analogous pullback square expressing any sheaf $\cF \in \Shv(X)$ in terms of its restrictions and gluing data.
\end{itemize}
Of course, for stratifications over larger posets, the corresponding reconstruction formulas for objects are more elaborate.

% More broadly, stratified categories are ubiquitous throughout algebraic geometry and homotopy theory (cf. \Cref{example.many.stratns} below).

% More broadly, stratified categories appear in {\color{orange} [give list and refs: modules over rings; qcoh over schemes (and prestacks?); equivariant homotopy theory; Goodwillie calculus; chromatic homotopy theory; D-modules? [ref Kashiwara's theorem?]; topoi?]} {\color{blue} [ref \Cref{example.many.stratns} below]}

% {\color{orange} Many/all of the above-listed stratifications are symmetric monoidal.}

The notion of a stratified category can be upgraded to account for symmetric monoidal structures, and in fact stratifications of geometric objects determine \textit{symmetric monoidal} stratifications of their categories of sheaves. However, the notion of a symmetric monoidal stratification is somewhat subtle, because in general the gluing functors between strata are only \textit{laxly} symmetric monoidal.\footnote{Our conventions (e.g., regarding the term `laxly symmetric monoidal') are laid out in \Cref{subsection.intro.notation}.} As a result, it is generally nontrivial to study questions of a global nature in a symmetric monoidally stratified category that relate to its symmetric monoidal structure.

Our first main result answers a very basic such global question: namely, we give a characterization of the dualizable objects in a symmetric monoidally stratified category. This is nontrivial even for symmetric monoidal recollements (i.e., stratified over the poset $[1]$), where it reduces to stratumwise dualizability and a projection formula for the link. Perhaps surprisingly, it turns out that dualizability over an arbitrary poset is likewise detected merely by stratumwise dualizability and a projection formula for the links: one need not check conditions for higher links. We state these results in \Cref{subsection.intro.dzbl.in.strat}, after reviewing the relevant definitions.

Using this, we prove our second main result: a $[1]$-stratified generalization of the 1-dimensional bordism hypothesis. Namely, we construct a \bit{$[1]$-stratified 1-dimensional bordism category} and prove that it corepresents dualizable objects among symmetric monoidal recollements. We state this result in \Cref{subsection.intro.bord}.

We describe a number of examples and applications of our results in \Cref{section.applications.and.examples}, related to: modules over complete local rings; genuine $G$-spectra; cyclotomic spectra; derived Mackey functors; Picard groups; and Thom spectra and orientations.

\subsection{Dualizable objects in stratified categories}
\label{subsection.intro.dzbl.in.strat}

Here we explain our characterization of the dualizable objects in a symmetric monoidally stratified category. For expository purposes, we begin with the special case of a symmetric monoidal recollement.\footnote{In fact, in the main body of the paper we establish a slightly more general result, namely a characterization of the dualizable objects in a (not necessarily symmetric) monoidal recollement. We omit this generalization here; the only difference is that one must keep track of the handedness of duals.}

\subsubsection{Dualizable objects in recollements}
\label{subsubsection.intro.dzbl.in.rec}

We define a \bit{$[1]$-stratified symmetric monoidal category} to be a laxly symmetric monoidal functor
\[
\cU
\xlongra{\varphi}
\cZ
~,
\]
and we define its \bit{underlying symmetric monoidal category} to be the pullback
\[ \begin{tikzcd}
\cX
\coloneqq
&[-1.2cm]
\lim^\lax(\varphi)
\arrow{r}
\arrow{d}
&
\Ar(\cZ)
\arrow{d}{\ev_1}
\\
&
\cU
\arrow{r}[swap]{\varphi}
&
\cZ
\end{tikzcd}
~.
\]
We depict a typical object of $\cX$ by $[u \mapsto \varphi(u) \xleftarrow{\alpha} z]$, where $u \in \cU$ and $z \in \cZ$; in these terms, its symmetric monoidal structure is given by the formula
\[
[u \mapsto \varphi(u) \xleftarrow{\alpha} z]
\otimes
[u' \mapsto \varphi(u') \xleftarrow{\alpha'} z']
\coloneqq
[ u \otimes u'
\mapsto
\varphi(u \otimes u')
\xla{\can}
\varphi(u) \otimes \varphi(u')
\xla{\alpha \otimes \alpha'}
z \otimes z'
]
\]
(using the laxness of $\varphi$). In this situation, we always have a solid diagram
\begin{equation}
\label{intro.smrec}
\begin{tikzcd}[column sep=2cm]
\cU
\arrow[leftarrow]{r}[yshift=1.5ex]{\bot}[swap, yshift=-1.5ex]{\bot}[description]{j^*}
\arrow[hook, bend left=50, dashed]{r}{j_!}
\arrow[hook, bend right=50]{r}[swap]{j_*}
&
\cX
\arrow[dashed, hookleftarrow]{r}[yshift=1.5ex]{\bot}[swap, yshift=-1.5ex]{\bot}[description]{i_*}
\arrow[bend left=50]{r}{i^*}
\arrow[dashed, bend right=50]{r}[swap]{i^!}
&
\cZ
\end{tikzcd}
~,
\end{equation}
in which $j^*$ and $i^*$ are symmetric monoidal, $j_*$ is laxly symmetric monoidal, and $\varphi \simeq i^* j_*$; the dashed adjoints exist under mild hypotheses,\footnote{Specifically: $j_!$ exists when $\cZ$ has an initial object; $i_*$ exists when $\cU$ and $\cZ$ have terminal objects and $\varphi(\pt_\cU) \simeq \pt_\cZ$ (or more generally if $\cU$ has a terminal object and the functor $\cZ \xra{\varphi(\pt_\cU) \times (-)} \cZ$ exists); and $i^!$ exists when ($i_*$ exists and moreover) $\cU$ is pointed and $\cZ$ admits fibers.} making diagram \Cref{intro.smrec} into a \bit{symmetric monoidal recollement}.\footnote{Recollements were originally introduced by Beilinson--Bernstein--Deligne \cite{BBD}*{\S 1.4.3}. In \cite{HA}*{\S A.8.1}, Lurie gives the following more general definition of a recollement: $\cU$, $\cZ$, and $\cX$ all admit finite limits and are related by the pair of adjunctions $j^* \adj j_*$ and $i^* \adj i_*$, subject to the conditions that $j^*$, $i^*$ are left-exact, $j_*$, $i_*$ are fully faithful, $j^* i_* \simeq \const_{\pt_\cU}$, and $j^*$, $i^*$ are jointly conservative. This is convenient, as such recollements are equivalent to left-exact functors $\cU \xra{\varphi} \cZ$ between categories with finite limits.} Our notation is motivated by the following fundamental example: a closed-open decomposition
\[ \begin{tikzcd}
U
\arrow[hook]{r}{j}[swap]{{\sf open}}
&
X
\arrow[hookleftarrow]{r}{i}[swap]{{\sf closed}}
&
Z
\end{tikzcd} \]
among topological spaces determines a symmetric monoidal recollement \Cref{intro.smrec} in which $\cU = \Shv(U)$, $\cX = \Shv(X)$, and $\cZ = \Shv(Z)$. Similarly, a closed-open decomposition of a scheme determines a recollement of its category of quasicoherent sheaves.\footnote{Note that this has the opposite handedness of the situation with topological spaces.}

Let us fix a $[1]$-stratified symmetric monoidal category $\cX \coloneqq \lim^\lax(\cU \xra{\varphi} \cZ)$ and an object $x \coloneqq [u \mapsto \varphi(u) \xla{\alpha} z ] \in \cX$. Observe that for any object $w \in \cU$, we have a canonical morphism
\[
z \otimes \varphi(w)
\xlongra{\alpha}
\varphi(u) \otimes \varphi(w)
\xra{\can}
\varphi(u \otimes w)
\]
(where the map $\can$ comes from laxness of $\varphi$). We say that $x$ \bit{satisfies the projection formula at $w$} if this morphism is an equivalence.

\begin{mainthm}[\Cref{theorem.algebraic_descr}]
\label{theorem.intro.alg}
The following are equivalent.
\begin{enumerate}
    \item The object $x \in \cX$ is dualizable.
    \item The objects $u \in \cU$ and $z \in \cZ$ are dualizable, and $x$ satisfies the projection formula at every object $w \in \cU$.
    \item The objects $u \in \cU$ and $z \in \cZ$ are dualizable, and $x$ satisfies the projection formula at the objects $\uno_\cU \in \cU$ and $u^\vee \in \cU$.
\end{enumerate}
Moreover, if these conditions hold, then we have $x^\vee \simeq [u^\vee \mapsto \varphi(u^\vee) \xla{\beta} z^\vee]$ where $\beta$ is adjunct to the composite
\[
\uno_\cZ
\xra{\can}
\varphi(\uno_\cU)
\xra{\varphi(\eta)}
\varphi(u \otimes u^\vee)
\xlongla{\sim}
z \otimes \varphi(u^\vee)
\]
(using the laxness of $\varphi$, the duality data for $u$ and $u^\vee$, and the projection formula for $x$ at $u^\vee$).
\end{mainthm}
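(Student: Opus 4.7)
My plan is to establish the chain of implications $(2) \Rightarrow (3) \Rightarrow (1) \Rightarrow (2)$, with the explicit formula for $x^\vee$ emerging from the $(3) \Rightarrow (1)$ step (and pinned down by uniqueness of duals). The implication $(2) \Rightarrow (3)$ is a tautology, so the real content lies in the other two directions.

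For $(1) \Rightarrow (2)$: The projection functors $j^* \colon \cX \to \cU$ and $i^* \colon \cX \to \cZ$ are both symmetric monoidal, and symmetric monoidal functors preserve dualizable objects together with their duals. Hence $u = j^* x$ and $z = i^* x$ are dualizable, with $j^*(x^\vee) \simeq u^\vee$ and $i^*(x^\vee) \simeq z^\vee$. For the projection formula at an arbitrary $w \in \cU$, observe that its defining map is the $\cZ$-component of the canonical comparison morphism $x \otimes j_* w \to j_*(j^* x \otimes w)$ in $\cX$. It therefore suffices to show that the latter is an equivalence, which is a standard Yoneda-style consequence of dualizability: testing against an arbitrary $y \in \cX$,
\[
\Map_\cX(y, x \otimes j_* w) \simeq \Map_\cX(y \otimes x^\vee, j_* w) \simeq \Map_\cU(j^* y \otimes (j^* x)^\vee, w) \simeq \Map_\cX(y, j_*(j^* x \otimes w))
\]
and one checks that the natural comparison map realizes this equivalence.

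For $(3) \Rightarrow (1)$: I construct $x^\vee$ together with evaluation and coevaluation morphisms directly. The projection formula at $u^\vee$ asserts that $\pi \colon z \otimes \varphi(u^\vee) \xra{\alpha \otimes 1} \varphi(u) \otimes \varphi(u^\vee) \xra{\can} \varphi(u \otimes u^\vee)$ is an equivalence, so I may define $\beta \colon z^\vee \to \varphi(u^\vee)$ as the adjunct, under the duality $z \adj z^\vee$, of $\pi^{-1} \circ \varphi(\eta_u) \circ \can_{\uno}$; this yields the object $x^\vee \coloneqq [u^\vee \mapsto \varphi(u^\vee) \xla{\beta} z^\vee] \in \cX$. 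Next, I build a coevaluation $\uno_\cX \to x \otimes x^\vee$ with $u$-component $\eta_u$ and $z$-component $\eta_z$; the compatibility square in $\cZ$ commutes essentially by unwinding the adjunction that defines $\beta$. I also build an evaluation $x^\vee \otimes x \to \uno_\cX$ with components $\epsilon_u$ and $\epsilon_z$; this time the compatibility in $\cZ$ uses the projection formula at $\uno_\cU$ (identifying $z \otimes \varphi(\uno_\cU)$ with $\varphi(u)$ compatibly with $\alpha$) to commute the $\alpha$-twist from the lax structure past $\epsilon_z$. Finally, the triangle identities for $(x, x^\vee)$ split componentwise into the triangle identities for $(u, u^\vee)$ in $\cU$ and for $(z, z^\vee)$ in $\cZ$, both of which hold by hypothesis.

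The main obstacle will be the commutativity of the two compatibility squares over $\cZ$, and especially of the evaluation square, where the lax structure map $\can$, the $\alpha$-twist, the defining composite for $\beta$, and the duality counit $\epsilon_z$ all interact at once. The cleanest route is to expand $\beta$ via its defining adjunction, apply both projection formulas (at $u^\vee$ to invert $\pi$, and at $\uno_\cU$ to commute $\alpha$ past unit constraints), and invoke the triangle identity for $z \adj z^\vee$, so that each side of the square reduces to the image under $\varphi$ of a composite built from the duality data for $u$. Working throughout in the pullback model $\cX \simeq \cU \times_\cZ \Ar(\cZ)$ via $\ev_1$ makes these diagram chases explicit and tractable.
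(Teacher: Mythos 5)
Your proposal mirrors the paper's proof closely in all but the last step: same chain of implications, same Yoneda argument for $(1) \Rightarrow (2)$ (testing $x \otimes j_*(w) \to j_*(j^*x \otimes w)$ against an arbitrary object and then applying $i^*$), same definition of $\beta$ as the adjunct of $\can$ followed by $\varphi(\eta_u)$ followed by the inverse of the projection-formula equivalence at $u^\vee$, and the same strategy of building $\eta_x$ and $\epsilon_x$ by choosing compatibility homotopies in $\cZ$.

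The gap is your final claim that ``the triangle identities for $(x,x^\vee)$ split componentwise into the triangle identities for $(u,u^\vee)$ in $\cU$ and for $(z,z^\vee)$ in $\cZ$.'' This is not true in the $\infty$-categorical setting the paper works in. A morphism $x \to x$ in $\cX \simeq \cU \times_\cZ \Ar(\cZ)$ consists of a morphism in $\cU$, a morphism in $\cZ$, \emph{and} a homotopy in $\cZ$ filling the naturality square with $\alpha$ on its horizontal edges. Once you have constructed $\eta_x$ and $\epsilon_x$ by hand-choosing their compatibility homotopies, it is not automatic that the composite $(\id \otimes \epsilon_x) \circ (\eta_x \otimes \id)$ is homotopic to $\id_x$: you know its $\cU$- and $\cZ$-components are identities, but you must additionally verify that the \emph{pasting} of the two compatibility homotopies you built into $\eta_x$ and $\epsilon_x$ produces the tautological self-homotopy of $\alpha$. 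This coherence verification is precisely where the bulk of the paper's effort in $(3)\Rightarrow(1)$ goes --- a diagram chase in $\cZ$ expanding $\beta^\dagger$, applying the triangle identity for $(z,z^\vee)$, and exploiting that $\gamma$ (the projection formula at $\uno_\cU$) is an equivalence to compare the resulting homotopy with the identity on $\alpha$. In a strict $1$-category your componentwise claim would hold since the square filler is a condition rather than data, but here it is data and must be checked; without that check, the constructed $\epsilon_x$, $\eta_x$ are not certified as duality data, so $(3) \Rightarrow (1)$ is incomplete.
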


\noindent We give a somewhat more conceptual (but slightly weaker) reformulation of \Cref{theorem.intro.alg} as \Cref{remark.strictness.on.subcat.gend.by.u.and.udual}.

\subsubsection{Dualizable objects in stratified categories}
\label{subsubsection.intro.dzbl.objs.in.strat}

As our notation suggests, given a $[1]$-stratified symmetric monoidal category $\cU \xra{\varphi} \cZ$, its underlying symmetric monoidal category $\cX$ is its \bit{lax limit}: the terminal lax cone
\[ \begin{tikzcd}[column sep=0.5cm]
&
\cX
\arrow{ld}
\arrow{rd}
\\
\cU
\arrow{rr}[swap]{\varphi}[yshift=0.3cm]{\rotatebox{30}{$\Leftarrow$}}
&
&
\cZ
\end{tikzcd} \]
in the 2-category $\SMC^\lax$ of symmetric monoidal categories and laxly symmetric monoidal functors (a full sub-2-category of the 2-category of operads). We generalize this as follows: for any category $\cB$, we define a \bit{$\cB$-stratified symmetric monoidal category} to be a (unital) oplax functor 
\[
\cB
\xra[{\sf oplax}]{\cX_\bullet}
\SMC^\lax
~,\footnote{Note that an oplax functor out of $[1]$ is automatically strict, because $[1]$ has no nontrivial composites.}
\]
and we define its \bit{underlying symmetric monoidal category} to be its lax limit
\[
\cX
\coloneqq
\lim^\lax \left(
\cB
\xra[{\sf oplax}]{\cX_\bullet}
\SMC^\lax
\right)
~.\footnote{The standard terms ``lax'' and ``oplax'' that we use here are systematically replaced by the terms ``right-lax'' and ``left-lax'' in \cite{AMGR} (which follows the conventions of \cite{GR}). Since issues of handedness do not play a major role for us here, we employ the shorter terms.}^,\footnote{This definition strictly generalizes that introduced in \cite[\S 1.3]{AMGR} in multiple ways: we do not require $\cB$ to be a poset, we do not require presentability or stability, and we require the existence of fewer adjoints. (However, we are implicitly enforcing \textit{convergence}.)}
\]
We illustrate these notions shortly. However, we first note the clash in handedness: we are taking the \textit{lax} limit of an \textit{oplax} functor. This leads to substantial complexity, especially when accounting for symmetric monoidal structures. Nevertheless, it turns out that these notions are important ones: natural examples of stratified symmetric monoidal categories abound.

\begin{example}
\label{example.many.stratns}

\begin{enumerate}

\item[]

\item The category of $\pos$-constructible sheaves on a $\pos$-stratified topological space (or more generally a $\pos$-stratified topos, e.g.,  the \'etale topos of a $\pos$-stratified scheme \cite{BGH}) is symmetric monoidally $\pos$-stratified \cite[\S 1.8]{AMGR}.

\item\label{example.qcoh} The category of quasicoherent sheaves on a $\pos$-stratified scheme is symmetric monoidally $\pos$-stratified \cite[Examples 1.3.6 and 1.5.1]{AMGR}.

\end{enumerate}

\noindent (In the case that $\pos = [1]$, the above two examples specialize to the fact that a closed-open decomposition determines a symmetric monoidal recollement.)

\begin{enumerate}

\setcounter{enumi}{2}

\item For any compact Lie group $G$, the category $\Spectra^G$ of genuine $G$-spectra is symmetric monoidally stratified over the poset $\pos_G$ of closed subgroups of $G$ and subconjugacies \cite[\S 1.7]{AMGR}.

\item Any rigidly compactly generated presentably symmetric monoidal stable category (e.g., the category of spectra or genuine $G$-spectra) is symmetric monoidally stratified over its Balmer spectrum \cite[\S 1.6]{AMGR}.

\begin{enumerate}[label=(\alph*)]

\item In the case of quasicoherent sheaves over a scheme, we obtain an \textit{adelic stratification} over its Zariski poset -- a special case of \Cref{example.qcoh}.

\item In the case of $\Spectra$, we obtain a stratification organizing the fundamental objects of chromatic homotopy theory \cite[Example 4.6.14]{AMGR}.

\end{enumerate}

\item For any category $\cI$ admitting finite colimits and a terminal object and any presentable stable category $\cY$, the category $\Fun(\cI,\cY)$ is $(\NN^\leq)^\op$-stratified via Goodwillie--Taylor approximations, and this is a symmetric monoidal stratification if $\cY$ is presentably symmetric monoidal \cite[Example 1.3.11]{AMGR}.

\item Given any morphism $\cA \ra \cB$ of presentably symmetric monoidal stable categories, a $\pos$-stratification of $\cA$ determines a $\pos$-stratification of $\cB$ \cite[Remark 1.5.6]{AMGR}.\footnote{More generally, for any $\EE_n$-algebra $\cA$ in $\PrLSt$ for $1 \leq n \leq \infty$, an $\EE_n$-monoidal stratification of $\cA$ determines a stratification of any $\cA$-module in $\PrLSt$.}

% \item {\color{blue} \cite{BGH}, stratified etale topos}

\end{enumerate}
\end{example}

We now illustrate the above notions in the case that $\cB = [2]$. First of all, a $[2]$-stratified symmetric monoidal category consists of a diagram
\[ \begin{tikzcd}[column sep=0.5cm]
&
\cX_1
\arrow{rd}[sloped]{G}
\\
\cX_0
\arrow{rr}[yshift=0.3cm, xshift=-0.1cm]{\theta \Uparrow}[swap]{H}
\arrow{ru}[sloped]{F}
&
&
\cX_2
\end{tikzcd} \]
in $\SMC^\lax$. Its underlying symmetric monoidal category $\cX$ has objects given by tuples
\begin{equation}
\label{typical.object.of.brax.two.stratd.cat}
\left[
x_0
~,~
\begin{tikzcd}
x_1
\arrow{d}[swap]{\gamma_{01}}
\\
F(x_0)
\end{tikzcd}
~,~
\begin{tikzcd}
x_2
\arrow{r}{\gamma_{12}}
\arrow{d}[swap]{\gamma_{02}}
&
G(x_1)
\arrow{d}{G(\gamma_{01})}
\\
H(x_0)
\arrow{r}[swap]{\theta_{x_0}}
&
GF(x_0)
\end{tikzcd}
\right]
~,
\end{equation}
where $x_i \in \cX_i$ and the square commutes.\footnote{The commutative square in $\cX_2$ may be thought of as `higher gluing data', and may be denoted by $\gamma_{012}$. In general, an object of $\lim^\lax_\cB(\cX_\bullet)$ consists of a compatible family of functors $\sd(\cB)_{\max = b} \ra \cX_b$, where $\sd(\cB)$ denotes the \textit{subdivision} of $\cB$ (whose objects are conservative functors $[n] \ra \cB$ and whose morphisms are (necessarily injective) factorizations).} The symmetric monoidal structure of $\cX$ can be described informally as follows: the tensor product $\Cref{typical.object.of.brax.two.stratd.cat}
\otimes
\Cref{typical.object.of.brax.two.stratd.cat}'$ is the object
\[
% \hspace{-0.5cm}
%\Cref{typical.object.of.brax.two.stratd.cat}
%\otimes
%\Cref{typical.object.of.brax.two.stratd.cat}'
%\coloneqq
\left[
x_0 \otimes x_0',
\begin{tikzcd}
x_1 \otimes x_1'
\arrow{d}[swap]{\gamma_{01} \otimes \gamma_{01}'}
\\
F(x_0) \otimes F(x_0')
\arrow{d}[swap]{\can_F}
\\
F(x_0 \otimes x_0')
\end{tikzcd} ,
\begin{tikzcd}
x_2 \otimes x_2'
\arrow{r}{\gamma_{12} \otimes \gamma_{12}'}
\arrow{d}[swap]{\gamma_{02} \otimes \gamma_{02}'}
&
G(x_1) \otimes G(x_1')
\arrow{r}{\can_G}
\arrow{d}{G(\gamma_{01}) \otimes G(\gamma_{01}')}
&
G(x_1 \otimes x_1')
\arrow{d}{G( \gamma_{01} \otimes \gamma_{01}')}
\\
H(x_0) \otimes H(x_0')
\arrow{r}[swap]{\theta_{x_0} \otimes \theta_{x_0'}}
\arrow{d}[swap]{\can_H}
&
GF(x_0) \otimes GF(x_0')
\arrow{r}{\can_G}
\arrow{rd}[sloped, swap]{\can_{GF}}
&
G(F(x_0) \otimes F(x_0'))
\arrow{d}{G(\can_F)}
\\
H(x_0 \otimes x_0')
\arrow{rr}[swap]{\theta_{x_0 \otimes x_0'}}
&
&
GF(x_0 \otimes x_0')
\end{tikzcd}
\right]
\]
(using the laxness of all four of the data $F$, $G$, $H$, and $\theta$).

We have the following characterization of dualizable objects in a $\cB$-stratified symmetric monoidal category
\[
\cX
\simeq
\lim^\lax \left(
\cB
\xra[{\sf oplax}]{\cX_\bullet}
\SMC^\lax
\right)
~,
\]
which we find strikingly simple in comparison with the above complexity.

\begin{mainthm}[\Cref{thm:GeneralDualizability}]
\label{theorem.intro.alg2}
An object $x \in \cX$ is dualizable if and only if for every morphism $[1] \xra{f} \cB$ the restriction
\[
f^*(x)
\in
\lim^\lax \left( \cX_{f(0)} \longra \cX_{f(1)} \right)
\]
is dualizable.
\end{mainthm}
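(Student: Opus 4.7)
The forward direction is formal: for any morphism $f : [1] \to \cB$, restriction along $f$ is a symmetric monoidal functor $f^* : \cX \to \cX_f := \lim^{\lax}(\cX_{f(0)} \xra{F_f} \cX_{f(1)})$, because an oplax functor out of $[1]$ is automatically strict and $\lim^{\lax}$ is functorial in the diagram variable. Since symmetric monoidal functors preserve dualizable objects, $f^*(x)$ is dualizable whenever $x$ is.

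For the converse, I would construct a candidate dual $y \in \cX$ by assembly. First, applying the hypothesis to any edge $f : [1] \to \cB$ whose image contains $b$ and invoking \Cref{theorem.intro.alg}, each component $x_b \in \cX_b$ is dualizable; set $y_b := x_b^\vee$. Second, for each edge $f : [1] \to \cB$ with structure functor $F_f : \cX_{f(0)} \to \cX_{f(1)}$, \Cref{theorem.intro.alg} identifies $f^*(x)^\vee \in \cX_f$ as $[y_{f(0)} \mapsto F_f(y_{f(0)}) \xla{\beta_f} y_{f(1)}]$ for an explicit morphism $\beta_f$ assembled from the projection formula and the duality data of $x_{f(0)}$; these $\beta_f$ serve as the 1-simplex gluing data of $y$.

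The main obstacle is to extend this edge-wise data to a genuine object of $\cX$, since (cf.\ the footnote after equation (\ref{typical.object.of.brax.two.stratd.cat})) this requires compatible higher data on every simplex $\sigma : [n] \to \cB$. I plan to handle this in two stages. First, treat $\cB = [n]$ by induction on $n$, with base case $n = 1$ given by \Cref{theorem.intro.alg}: decompose $\lim^{\lax}$ over $[n]$ as an iterated lax pullback across the edges $i \to i+1$, apply \Cref{theorem.intro.alg} at each stage to chain the projection formulas, and construct the higher gluing inductively. Second, reduce general $\cB$ to the simplex case by presenting $\cX$ as a limit, indexed by simplices $\sigma : [n] \to \cB$, of the lax limits $\cX_\sigma$; duality data descends through this limit because duals, once they exist, are uniquely determined, so the higher data canonically glues across faces. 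The striking content of the theorem is that \emph{no} higher projection formula is required: all higher coherence is forced by the edge-wise conditions together with uniqueness of duals.

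The technically hardest step is stage (i), in which chained projection formulas must be shown to produce precisely the higher gluing on each $[n]$-simplex; I would expect this to be formalized using the explicit formula for $\beta_f$ in \Cref{theorem.intro.alg} together with naturality of the duality operation. Once $y$ is built, the unit and counit of the duality in $\cX$ assemble from their edge-wise counterparts provided by \Cref{theorem.intro.alg}, and the triangle identities reduce accordingly to the edge-wise triangle identities, completing the proof.
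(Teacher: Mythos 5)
Your high-level strategy is broadly aligned with the paper's: both arguments first prove the case $\cB = [n]$ by induction and then reduce an arbitrary $\cB$ to the simplex case by writing $\lim^\lax_\cB \cC$ as a limit in $\SMC$ over $\Delta_{/\cB}$, with the reduction justified by the fact that dualizability is a property that can be checked componentwise in a limit of symmetric monoidal categories. The forward direction is also correct for the reason you give.

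However, the core of the inductive step is where your sketch diverges from something that would actually work, and where the paper invests essentially all of its technical effort. You propose to ``decompose $\lim^{\lax}$ over $[n]$ as an iterated lax pullback across the edges $i \to i+1$.'' Taken at face value this is false: for example, $\lim^\lax_{[2]} \cC$ is not the fiber product $\lim^\lax_{\{0,1\}}(\cC) \times_{\cC_1} \lim^\lax_{\{1,2\}}(\cC)$, since an object of the former additionally carries an arrow $x_2 \to H(x_0)$ for the $(02)$-monodromy $H$ together with a commuting square, whereas the latter records only the two consecutive edges. More generally, a sieve–cosieve decomposition of $[n]$ gives a presentation of $\lim^\lax_{[n]}\cC$ as a lax limit over $[1]$ only after constructing the intermediate monodromy functor as a \emph{lax right Kan extension}, and this is where the paper's choice of decomposition is pivotal: it uses $\{0\}$ versus $\{1,\dots,n\}$ (the map $\zeta$ of Notation~\ref{notation:map_zeta}), precisely because then the relevant indexing categories $J_x = \{[0 < x]\}$ are singletons and the lax right Kan extension exists unconditionally (cf.\ the discussion at the end of \S\ref{sec:2.2} and \Cref{con:DualOfLaxRKE}). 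Peeling off the terminal vertex instead — which seems closer to ``across edges $i \to i+1$'' — would require forming limits over $\sd([n-1])$ in the top stratum, which do not exist in general. Your proposal omits this machinery entirely (\S\S\ref{sec:2.2}--\ref{sec:2.3} and in particular \Cref{thm:ExistenceRKE}, \Cref{cor:InductiveReductionMethod}), yet it is the part of the argument that makes the induction go through.

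There is a second, less fatal, inefficiency: you aim to \emph{construct} a candidate dual $y$ by assembling componentwise duals $x_b^\vee$ and edge-wise gluing data $\beta_f$, then verify triangle identities by hand. This makes the ``higher gluing data'' coherence problem much harder than it needs to be, and your plan for resolving it (``naturality of the duality operation'') is not concrete enough to see that it closes. The paper sidesteps this entirely: once $\lim^\lax_{[n]}\cC \simeq \lim^\lax_{[1]}(\cC_0 \to \lim^\lax_{[n-1]}\cC|_{[n-1]})$ is available as an equivalence of $\cO$-monoidal categories, it invokes \Cref{theorem.algebraic_descr} and only needs to verify the single projection formula
$z \otimes \varphi(w) \to \varphi(u \otimes w)$. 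This morphism is checked to be an equivalence by joint conservativity of restriction to the fibers $\cC_k$, where it becomes exactly the projection formula over the edge $\{0 < k\}$ furnished by the hypothesis — no chaining, no explicit construction of $y$, and no separate verification of triangle identities. In short, the missing and load-bearing ingredient is the unconditional lax right Kan extension along $\zeta$, and the observation that the resulting single projection formula can be checked fiberwise.
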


\noindent Of course, the power of \Cref{theorem.intro.alg2} lies in its combination with \Cref{theorem.intro.alg}. Namely, we find that $x \in \cX$ is dualizable if and only if the following two conditions are satisfied.
\begin{enumerate}

\item For every object $b \in \cB$, the restriction $b^*(x) \in \cX_b$ is dualizable.

\item For every nondegenerate morphism $[1] \xra{f} \cB$, the object 
\[
f^*(x)
\in
\lim^\lax \left( \cX_{f(0)} \longra \cX_{f(1)} \right)
\]
satisfies the projection formula at the objects $\uno_{\cX_{f(0)}} \in \cX_{f(0)}$ and $(f(0)^*(x))^\vee \in \cX_{f(0)}$.

\end{enumerate}

\subsection{The \texorpdfstring{$[1]$}{[1]}-stratified \texorpdfstring{$1$}{1}-dimensional bordism hypothesis}
\label{subsection.intro.bord}

Let us recall the ordinary 1-dimensional bordism hypothesis (following \cite{Baez-Dolan,TQFT}). Consider the ($1$-dimensional framed) bordism category $\Bord$. Its objects are compact signed $0$-manifolds, its morphisms are compact framed (equivalently, oriented) $1$-dimensional bordisms, and its symmetric monoidal structure is given by disjoint union. The positively-signed point $+ \in \Bord$ is a dualizable object. The \bit{bordism hypothesis} asserts that $+$ is the free symmetric monoidal category containing a dualizable object. That is, for any symmetric monoidal category $\cC \in \SMC$, evaluation determines an equivalence
\[
%\ev_+:
\hom_\SMC(\Bord,\cC)
\xra[\sim]{\ev_+}
\cC^\dzbl
\]
from the space of symmetric monoidal functors $\Bord \ra \cC$ to the space of dualizable objects in $\cC$. Note that this necessarily refers to \textit{strictly} symmetric monoidal functors, as (op)laxly symmetric monoidal functors will not generally preserve dualizable objects.

Our goal is as follows: given a $[1]$-stratified symmetric monoidal category $\cX \simeq \lim^\lax ( \cU \xra{\varphi} \cZ)$ (e.g., a symmetric monoidal recollement), we would like to corepresent the data of a dualizable object in $\cX$ within the context of $[1]$-stratified symmetric monoidal categories. More precisely, let $\Strat \coloneqq \Strat_{[1]}$ be the category whose objects are $[1]$-stratified symmetric monoidal categories and in which a morphism $\varphi \ra \varphi'$ is a commutative square
\[ \begin{tikzcd}
\cU
\arrow{r}{\varphi}
\arrow{d}[swap]{f}
&
\cZ
\arrow{d}{g}
\\
\cU'
\arrow{r}[swap]{\varphi'}
&
\cZ'
\end{tikzcd} \]
in $\SMC^\lax$ in which both $f$ and $g$ lie in $\SMC$ (i.e., they are strictly symmetric monoidal).\footnote{The optimality of this definition for our purposes is explained in \Cref{rmk.why.defined.Strat.as.such}. (In light of the fact that (op)laxly symmetric monoidal functors do not generally preserve dualizable objects, an obvious first desideratum is that the formation of lax limits defines a functor $\Strat \xra{\lim^\lax} \SMC$.)} Then, we seek an object $\BBord \coloneqq \BBord_{[1]} \in \Strat$ -- a \bit{$[1]$-stratified bordism category} -- along with a natural equivalence
\[
\hom_{\Strat}(\BBord , \varphi)
\xlongra{\sim}
\cX^\dzbl
~.
\]
Said differently, $\BBord$ should be the free $[1]$-stratified symmetric monoidal category containing a dualizable object.

We define
\[
\BBord
\coloneqq
\left(
\Bord
\xrightarrow{(\id_\Bord , \const_{\pt})}
\Bord \times \Fin
\right) \in \Strat
~.
\]
It is not hard to check that its underlying symmetric monoidal category is
\[
\lim^\lax(\BBord)
\simeq
\Ar(\Bord) \times \Fin
~,
\]
so an object is a pair of a bordism and a finite set. Thereafter, it is not hard to check that the object
$
\tau
\coloneqq
( \id_+ , \es )
\in
\lim^\lax(\BBord)
$
is dualizable, and hence is classified by a morphism
$
\Bord
\xra{\tau}
\lim^\lax(\BBord)
$ in $\SMC$.\footnote{In fact, it is not hard to check that $\tau$ factors as an equivalence $\Bord \xra[\sim]{\tau} \lim^\lax(\BBord)^\dzbl$ (as should be expected in light of \Cref{theorem.intro.bord}).} Using \Cref{theorem.intro.alg}, we prove the following result.

\begin{mainthm}[\Cref{thm.stratified.bordism}]
\label{theorem.intro.bord}
The stratified bordism category $\BBord \in \Strat$ is the free $[1]$-stratified symmetric monoidal category containing a dualizable object. That is, for any $[1]$-stratified symmetric monoidal category $\cX \simeq \lim^\lax ( \cU \xra{\varphi} \cZ)$, the composite
\[
\Hom_{\Strat}(\BBord,\varphi)
\xra{\lim^\lax}
\Hom_\SMC(\lim^\lax(\BBord),\cX)
\xlongra{\tau^*}
\Hom_\SMC(\Bord,\cX)
\simeq
\cX^\dzbl
\]
is an equivalence.
\end{mainthm}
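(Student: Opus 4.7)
The plan is to identify both sides of the claimed equivalence with the same space of triples obtained from \Cref{theorem.intro.alg}, and to verify that the composite in the theorem statement realizes this identification.

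First I would unpack $\Hom_{\Strat}(\BBord, \varphi)$. A morphism $\BBord \to \varphi$ consists of a commuting square in $\SMC^\lax$ with strictly symmetric monoidal vertical arrows $f \colon \Bord \to \cU$ and $g \colon \Bord \times \Fin \to \cZ$, together with a 2-cell $\varphi f \simeq g \circ (\id_\Bord, \const_\pt)$ in laxly symmetric monoidal functors $\Bord \to \cZ$. By the ordinary 1-dimensional bordism hypothesis, $f$ is classified by the dualizable object $u \coloneqq f(+) \in \cU^\dzbl$. Strictness of $g$ together with the componentwise tensor product on $\Bord \times \Fin$ imply that $g$ is determined by its two restrictions: one along $\Bord \xra{(-,\es)} \Bord \times \Fin$, classifying a dualizable object $z \coloneqq g(+,\es) \in \cZ^\dzbl$, and one along $\Fin \xra{(\uno, -)} \Bord \times \Fin$, classifying a commutative algebra $A \coloneqq g(\uno,\pt) \in \CAlg(\cZ)$ (using that $\Fin$ with disjoint union is the free symmetric monoidal category on a commutative algebra object).

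Next I would analyze the 2-cell, evaluated against the dualizable generator of $\Bord$. Evaluation at $\uno_\Bord$ produces an equivalence $A \simeq \varphi(\uno_\cU)$ of commutative algebras in $\cZ$ (the CAlg structure on $\varphi(\uno_\cU)$ coming from the laxness of $\varphi$), so $A$ is fully determined by $\varphi$. Evaluation at $+$ produces an equivalence $\varphi(u) \simeq z \otimes \varphi(\uno_\cU)$; setting $\alpha \colon z \to \varphi(u)$ to be the composite $z \xra{\id_z \otimes \eta_A} z \otimes A \simeq \varphi(u)$ (the arrow arising by applying $g$ to $(\id_+, !) \colon (+,\es) \to (+, \pt)$, which represents $\tau$ in $\lim^\lax(\BBord)$), this equivalence becomes precisely the projection formula at $\uno_\cU$ for the triple $(u, z, \alpha)$. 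Compatibility of the 2-cell with the lax symmetric monoidal structure at $(+, +^\vee)$ then collapses, using the multiplication $\mu \colon A \otimes A \to A$, the commutativity of $A$, and the duality data for $+ \dashv +^\vee$ in $\Bord$, to exactly the projection formula at $u^\vee$. The remaining higher coherences of the 2-cell correspond to projection formulas at other tensor products of $u$ and $u^\vee$; the implication $(3) \Rightarrow (2)$ of \Cref{theorem.intro.alg} shows these follow automatically from the projection formulas at $\uno_\cU$ and $u^\vee$ together with dualizability of $u$ and $z$.

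Consequently $\Hom_{\Strat}(\BBord, \varphi)$ is equivalent to the space of triples $(u \in \cU^\dzbl, \, z \in \cZ^\dzbl, \, \alpha \colon z \to \varphi(u))$ satisfying the projection formula at $\uno_\cU$ and $u^\vee$, which by \Cref{theorem.intro.alg} coincides with $\cX^\dzbl$. To see that the composite in the theorem realizes this identification, I would track $\tau = (\id_+, \es) \in \lim^\lax(\BBord)$ through the induced functor $\lim^\lax(\BBord) \to \cX$: it maps to the object $[u \mapsto \varphi(u) \simeq z \otimes A \xla{\id_z \otimes \eta_A} z]$, which under the identification $A \simeq \varphi(\uno_\cU)$ is exactly the dualizable object of $\cX$ assigned to $(u, z, \alpha)$ by \Cref{theorem.intro.alg}. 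The principal obstacle is the second paragraph: verifying that the coherence of the 2-cell at $(+, +^\vee)$ unpacks to exactly the projection formula at $u^\vee$ (no weaker and no stronger). This is a bookkeeping exercise in propagating the laxities of $\varphi$ and of $(\id_\Bord, \const_\pt)$ simultaneously through the duality $+ \dashv +^\vee$ in $\Bord$ and the fold $\pt \sqcup \pt \to \pt$ in $\Fin$.
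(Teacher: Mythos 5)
Your approach is genuinely different from the paper's, and it contains a real gap. You propose to compute $\Hom_{\Strat}(\BBord,\varphi)$ directly as a pullback
\[
\cU^{\dzbl} \times_{\Hom_{\SMC^{\lax}}(\Bord,\cZ)} \bigl(\cZ^{\dzbl} \times \CAlg(\cZ)\bigr),
\]
and to analyze the path (your ``2-cell'') in $\Hom_{\SMC^{\lax}}(\Bord,\cZ)$ by evaluating it at $\uno_\Bord$, at $+$, and at $(+,+^\vee)$. The step you call ``the remaining higher coherences of the 2-cell \dots follow automatically from (3)$\Rightarrow$(2) of \Cref{theorem.intro.alg}'' is not a consequence of that implication, and is not bookkeeping: the path is a laxly symmetric monoidal natural equivalence between two laxly symmetric monoidal functors $\Bord \to \cZ$, which is a point in a mapping space in $\SMC^{\lax}$, i.e., genuine homotopy-coherent data. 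To make your argument precise you would need a universal property for laxly symmetric monoidal natural transformations out of $\Bord$ pinning them down by their component at $+$ together with finitely many conditions, and nothing in the 1-dimensional bordism hypothesis (which only governs \emph{strictly} symmetric monoidal functors) gives this. The implication (3)$\Rightarrow$(2) of \Cref{theorem.intro.alg} is a condition-to-condition statement and does not produce or contract the space of such natural equivalences.

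The paper's proof is arranged precisely to avoid ever confronting this pullback over $\Hom_{\SMC^{\lax}}(\Bord,\cZ)$. It maps $\Hom_{\Strat}(\BBord,\varphi)$ into $\Hom_{\SMC}(\lim^{\lax}(\BBord),\cX)$ via $\lim^{\lax}$ and then characterizes the image by \emph{conditions}: the factorization/Beck--Chevalley condition (\Cref{notn.for.hom.dagger}) and the condition that the $\Fin$-factor selects $j_*(\uno_\cU)$ (\Cref{notn.for.hom.star}). The latter is a condition rather than data precisely because $j_*$ is fully faithful and $\uno_\cU$ is initial in $\CAlg(\cU)$ — an observation you partially glimpse when you note that $A$ is forced to be $\varphi(\uno_\cU)$, but do not leverage to contract the relevant space of choices. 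The bijection $\HomforStrat = \HomforCAlg$ is then a condition-checking argument (using \Cref{lemma.genzd.projection.formula.for.pos.and.neg.tensor.powers}, the projection formula at arbitrary tensor powers of $u$ and $u^\vee$), and the final leg uses the uniqueness of factorizations through symmetric monoidal localizations (\Cref{factorization.in.cat.iff.in.calg}). Your identification of $\tau$'s image and the use of \Cref{theorem.intro.alg} to reinterpret dualizability via the projection formula are both correct and do appear in the paper; it is the reduction of the 2-cell data to a morphism $\alpha$ with a finite list of conditions that needs a structurally different argument, namely the passage through the concrete category $\lim^{\lax}(\BBord) \simeq \Ar(\Bord)\times\Fin$ and its localizations.
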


\begin{remark}
Note that $\BBord \in \Strat$ itself does not underlie a symmetric monoidal recollement, as $\Bord$ does not admit finite limits. This is one of the reasons why we work in the more general context of $\Strat$.
\end{remark}

\begin{remark}
We would be interested to see a generalization of \Cref{theorem.intro.bord} to larger posets, as well as ($\cO$-)monoidal and higher-categorical versions.
\end{remark}

%We would also be interested to see a manifestly differentio-topological description of $\BBord$.

\begin{remark}
It is not hard to verify that $\Strat$ is presentable, and thereafter to formally deduce the existence of a left adjoint
\[ \begin{tikzcd}[column sep=1.5cm]
\SMC
\arrow[dashed, yshift=0.9ex]{r}{L}
\arrow[leftarrow, yshift=-0.9ex]{r}[yshift=-0.2ex]{\bot}[swap]{\lim^\lax}
&
\Strat
\end{tikzcd}~. \]
One can then interpret \Cref{theorem.intro.bord} as a concrete description of the object $L(\Bord) \in \Strat$.

In fact, the left adjoint $L$ admits an explicit formula: it carries a symmetric monoidal category $\cC \in \SMC$ to the $[1]$-stratified symmetric monoidal category given by the composite
\[
\cC
\xlongra{\eta}
\Env(\cC)
\xra{\{1\}}
\colim^\SMC \left(
\begin{tikzcd}
\Env(\cC)
\arrow{r}{\epsilon}
\arrow{d}[swap]{\{0\}}
&
\cC
\\
{[1] \odot \Env(\cC)}
\end{tikzcd}
\right)
~,
\]
where $\SMC^\lax \xra{\Env} \SMC$ denotes the \textit{symmetric monoidal envelope} functor (the left adjoint to the forgetful functor \cite[\S 2.2.4]{HA}) and $\Cat \times \SMC \xra{\odot} \SMC$ denotes the tensoring. The fact that our description of $L(\Bord)$ in \Cref{theorem.intro.bord} is substantially simpler than this general formula is due to our usage of \Cref{theorem.intro.alg}.
\end{remark}

%We do not address the monoidal version of \Cref{theorem.intro.bord}. We would be very interested to see an answer to this. It seems that the resulting universal object would be somewhat less geometric. Firstly, it would involve the free monoidal $\infty$-category on an object equipped with a right dual, which is \textit{not} the tangle $\infty$-category (because in the tangle $\infty$-category all objects have all duals, which is even stronger than asking for merely an object with both duals). Secondly, it would involve coproduct in the $\infty$-category ${\sf Alg}(\Cat)$ of monoidal $\infty$-categories which is, contrary to coproduct in $\CAlg(\Cat)$, quite complicated. 

\subsection{Outline}
\label{subsection.intro.outline}

%After explaining how the theory of recollements is subsumed by that of $[1]$-indexed lax limits, w

The main body of this paper is organized as follows.

\begin{itemize}

\item[\Cref{sec:1}:] We prove \Cref{theorem.intro.alg} after some brief recollections on recollements and dualizability. We also discuss a number of closely related ideas (e.g., the Beauville--Laszlo theorem).

\item[\Cref{sec:2}:] We prove \Cref{theorem.intro.alg2} in \Cref{sec:2.4}, after some preliminary work. Namely, in \Cref{sec:lax.limit} we discuss the notion of a $\cO$-monoidal stratified category, and in \S\S\ref{sec:2.2}-\ref{sec:2.3} we study some aspects of the theory of lax right Kan extensions.

\item[\Cref{section:strat}:] We prove \Cref{theorem.intro.bord} using \Cref{theorem.intro.alg}. We note here that \Cref{section:strat} can be read independently of \Cref{sec:2} (which is somewhat technical).

\item[\Cref{section.applications.and.examples}:] We give some examples and applications of Theorems \ref{theorem.intro.alg} and \ref{theorem.intro.alg2}.

\end{itemize}

%We then introduce the concept of a $\cO$-monoidal stratified category in \Cref{sec:lax.limit} and prove \Cref{theorem.intro.alg2} in \Cref{sec:2.4}; the inductive method behind its proof rests on a fragment of the theory of lax right Kan extensions, which we carefully explain in \Cref{sec:2.2}--\ref{sec:2.3}. We next apply \Cref{theorem.intro.alg} to prove our $[1]$-stratified version of the $1$-dimensional bordism hypothesis in \Cref{section:strat}; this may be read independently of \Cref{sec:2} apart from a few parts in \Cref{sec:lax.limit} that we recall at the outset. Finally, in \Cref{section.applications.and.examples} we give our applications and examples of \Cref{theorem.intro.alg} and \Cref{theorem.intro.alg2}.

% Finally, in \Cref{section.applications.and.examples} we give a number of applications and examples of \Cref{theorem.intro.alg} and \Cref{theorem.intro.alg2}, including to orientation theory, $G$-spectra, and cyclotomic spectra.

% {\color{blue} describe sections in turn. in particular, note that \S 2 and \S 3 are logically unrelated, although both rest on \S 1. actually, \S 3 uses a few basic notions from \S 2, but we've written \S 3 to be readable independently (since \S 2 is quite a bit more technical).}

\subsection{Notation and conventions}
\label{subsection.intro.notation}

We generally use the notation and terminology introduced by Lurie in his books \cite{HA,HTT}, with which we assume a passing familiarity.

%However, as indicated in \Cref{subsubsection.intro.dzbl.objs.in.strat} we write $\SMC^\lax$ for the 2-category of symmetric monoidal categories and laxly symmetric monoidal functors 

In general, we use the standard 2-categorical terms `lax' and `oplax': for instance, locally cocartesian (resp.\! cartesian) fibrations over a base category $\cB$ encode oplax (resp.\! lax) functors from $\cB$ to $\Cat$. These are respectively also called `right-lax' and 'left-lax' (e.g., in \cite{AMGR}), and we sometimes use this disambiguating terminology as well (e.g., in \Cref{sec:lax.limit} when discussing right-lax morphisms among left-lax diagrams). In fact, we adopt a fibrational perspective and \textit{define} left-lax functors from $\cB$ to $\SMC^\lax$ to be commutative monoids in $\locCocart^{\lax}_{\cB}$ (although we show that the two possible definitions are equivalent when $\cB = [1]$ in \Cref{prop:StratID}). Also, we use the term `laxly (symmetric) monoidal' instead of `lax (symmetric) monoidal' to avoid conflation with other notions of laxness.\footnote{Beware that a laxly symmetric monoidal functor $\cC^\otimes \ra \cD^\otimes$ is a morphism between cocartesian fibrations over $\Fin_*$, and is therefore an instance of an \textit{oplax} morphism.} We refer the reader to \cite{AMGR}*{Appendix~A} for a more thorough discussion of these notions.

%In $2$-category theory, there are two possible directions that have to be distinguished when working with lax concepts. For instance, locally cocartesian and locally cartesian fibrations over a base category $\cB$ encode differently handed lax functors from $\cB$ to $\Cat$; in \cite{AMGR}, these are called `left-lax' and `right-lax', respectively, and are commonly called `oplax' and `lax' elsewhere in the literature. We systematically abbreviate `right-lax' as `lax' throughout this paper (e.g., `lax limit' means `right-lax limit'), only adopting the disambiguating terminology of left vs. right in \Cref{sec:lax.limit} since we need to discuss right-lax morphisms of left-lax diagrams there. See also \cite{AMGR}*{Appendix~A}.

We use the word `strict' in place of the word `strong' (e.g., we may say that a functor is `strictly monoidal'). Of course, this always refers to the homotopy-coherent notion.

We write $(-)^\dagger$ to denote the passage from a morphism to its adjunct (i.e., the corresponding morphism via an (implicit) adjunction).

\subsection{Acknowledgments}
\label{subsection.intro.ack}

It is our pleasure to thank Clark Barwick, Tim Campion, Denis-Charles Cisinski, David Gepner, Peter Haine, Achim Krause, Thomas Nikolaus, and Tomer Schlank for helpful discussions related to this paper. All three authors gratefully acknowledge the superb working conditions provided by the Mathematical Sciences Research Institute (which is supported by NSF award 1440140), where they were in residence during the Spring 2020 semester. The first author was supported by the HSE University Basic Research Program. The third author was funded by the Deutsche Forschungsgemeinschaft (DFG, German Research Foundation) under Germany's
Excellence Strategy EXC 2044–390685587, Mathematics Münster: Dynamics–Geometry–Structure.

\section{Dualizable objects in recollements} \label{sec:1}

In this section we prove \Cref{theorem.intro.alg}, which provides an algebraic description of dualizable objects in a symmetric monoidal recollement. In fact, we prove a more general result characterizing left and right-dualizable objects in the lax limit of a laxly monoidal functor (\Cref{theorem.algebraic_descr}). This is accomplished in \Cref{subsection.dualizable.objects.in.recs}. Thereafter, we provide an extended discussion of it and related ideas in \Cref{subsection.complements.to.algdescr}.

\subsection{Dualizable objects in recollements}
\label{subsection.dualizable.objects.in.recs}

We begin by laying out our conventions for duality data.

\begin{definition} \label{def:RightDualizable} Let $\cC$ be a monoidal category, with unit object $\uno \coloneqq \uno_\cC$. An object $x \in \cC$ is \bit{right-dualizable} if there exist a \bit{right dual} object $x^\vee \in \cC$ along with structure morphisms
\[
x^\vee \otimes x
\xlongra{\epsilon}
\uno
\qquad
\text{and}
\qquad
\uno
\xlongra{\eta}
x \otimes x^\vee
\]
satisfying the triangle identities.\footnote{The duality data furnish natural equivalences
\[ \Map_\cC(a,b \otimes x^{\vee}) \simeq \Map_\cC(a \otimes x, b) \qquad \text{and} \qquad \Map_\cC(a, x \otimes b) \simeq \Map_\cC(x^{\vee} \otimes a, b) \]
for any $a,b \in \cC$.}
We refer to $x$ as the \bit{left dual} of $x^\vee$.
%An object $x \in \cC$ is \bit{left-dualizable} if it is right-dualizable when $\cC$ is equipped with the reversed monoidal structure.
\end{definition}

\begin{warning}
Given an endomorphism in a 2-category, a \textit{right} dual is a \textit{left} adjoint. This unfortunate state of affairs is ultimately a consequence of the historical mistake of composing functions (and thereafter, functors) from right to left.
\end{warning}

We now recall how the notion of a monoidal recollement is subsumed by that of a laxly monoidal functor.

\begin{definition} \label{def:MonoidalRecollement}
Let $\cO^\otimes$ be a reduced operad (i.e., an operad with contractible underlying category), and let $\cX$ be an $\cO$-monoidal category that admits finite limits. Let $\cU, \cZ \subseteq \cX$ be two full subcategories that together constitute a \textit{recollement} of $\cX$ (in the sense of \cite[\S A.8.1]{HA}): in other words, their inclusions $j_*$ and $i_*$ admit left adjoints
\[ \begin{tikzcd}[column sep=2cm]
\cU
\arrow[dashed, leftarrow]{r}[swap, yshift=-1.2ex]{\bot}{j^*}
\arrow[hook, bend right=50]{r}[swap]{j_*}
&
\cX
\arrow[hookleftarrow]{r}[yshift=1.2ex]{\bot}[swap]{i_*}
\arrow[dashed, bend left=50]{r}{i^*}
&
\cZ
\end{tikzcd} \]
that are left exact and jointly conservative, and moreover the composite $j^*i_*$ is constant at the terminal object of $\cU$. Then, we say the recollement is an \bit{$\cO$-monoidal recollement} if the localization functors $j_\ast j^\ast$ and $i_\ast i^\ast$ are compatible with the $\cO$-monoidal structure in the sense of \cite[Definition 2.2.1.6]{HA}. In this case, $\cU$ and $\cZ$ canonically inherit $\cO$-monoidal structures, in such a way that $j^\ast$ and $i^\ast$ are strictly $\cO$-monoidal while $j_\ast$ and $i_\ast$ are laxly $\cO$-monoidal \cite[Proposition 2.2.1.9]{HA}.
\end{definition}

In this paper, we will only be concerned with the case where $\cO$ is either the commutative or the associative operad.

\begin{observation} \label{obs:RecollementsAreLaxLimits}
Given a recollement as in \Cref{def:MonoidalRecollement}, we can reconstruct $\cX$ from the datum of the composite functor $\varphi \coloneqq i^* j_*$: namely, we have a canonical pullback square
\[ \begin{tikzcd}[column sep=2cm]
\cX
\arrow{r}{i^* \xra{\eta} i^* j_* j^*}
\arrow{d}[swap]{j^*}
&
\Ar(\cZ)
\arrow{d}{\ev_1}
\\
\cU
\arrow{r}[swap]{\varphi}
&
\cZ
\end{tikzcd} \]
\cite[Corollary 1.10]{QS}.
%$(\cU, \cZ)$ on $\cX$, we may reconstruct $\cX$ from the datum of the gluing functor $\varphi = i^\ast j_\ast: \cU \to \cZ$, in the sense that we have a canonical equivalence of categories \cite[Corollary 1.10]{QS}
%$$\cX \xrightarrow{\simeq} \lim^\lax ( \cU \xrightarrow{\varphi} \cZ ) = \Ar(\cZ) \times_{\ev_1,\cZ, \varphi} \cU.$$
Moreover, if the recollement is $\cO$-monoidal so that $\varphi$ is a laxly $\cO$-monoidal functor, then by the same reasoning as in \cite[Proposition 1.26]{QS} (which dealt with the case where $\cO$ is the commutative operad), this canonically upgrades to a pullback square of operads
\[ \begin{tikzcd}
\cX^\otimes \ar{r} \ar{d} & \Ar(\cZ)^\otimes \ar{d}{(\ev_1)^\otimes} \\
\cU^\otimes \ar{r}[swap]{\varphi^\otimes} & \cZ^\otimes
\end{tikzcd} \]
(where $\Ar(\cZ)$ is endowed with the pointwise $\cO$-monoidal structure). The datum of an $\cO$-monoidal recollement is thus equivalent to that of a left-exact laxly $\cO$-monoidal functor between $\cO$-monoidal categories that admit finite limits.

More generally, if $\cU \xra{\varphi} \cZ$ is any laxly $\cO$-monoidal functor between $\cO$-monoidal categories, then $\Ar(\cZ) \times_{\cZ} \cU$ comes endowed with its \emph{canonical} $\cO$-monoidal structure via the above pullback \cite[Definition 1.21]{QS}. %, using that $(\ev_1)^\otimes$ is a cocartesian fibration of operads over $\cO^\otimes$.
% Note also that although $\cU \times_{\cZ} \Ar(\cZ)$ no longer decomposes as a recollement, the projection $j^\ast$ to $\cU$ still admits a fully faithful right adjoint given by $j_\ast(u) = [u, \id_{\varphi(u)}]$.
\end{observation}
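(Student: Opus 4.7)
The plan is to establish the equivalence first at the level of underlying categories and then bootstrap to the $\cO$-operadic level via the fibrational perspective, following the template of \cite{QS}. For the underlying equivalence, I would first verify the fundamental recollement pullback square: for every $x \in \cX$, the square
\[ \begin{tikzcd}
x \arrow{r} \arrow{d} & j_* j^* x \arrow{d} \\
i_* i^* x \arrow{r} & i_* i^* j_* j^* x
\end{tikzcd} \]
(with maps built from the units of $j^* \adj j_*$ and $i^* \adj i_*$) is cartesian. One checks this by applying the jointly conservative pair $(j^*, i^*)$: after $j^*$, the units associated to $j_\ast$ become equivalences (by fully faithfulness), and the bottom row collapses to $\pt \to \pt$ using $j^* i_* \simeq \const_{\pt}$; after $i^*$, the fully faithfulness of $i_*$ makes both vertical maps equivalences.

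Granted this pullback square, the functor $\cX \to \Ar(\cZ) \times_\cZ \cU$ sending $x \mapsto (j^* x, \, i^* x \xra{i^* \eta} i^* j_* j^* x = \varphi(j^* x))$ has an inverse sending $(u, z \xra{\alpha} \varphi(u))$ to the fiber product $j_* u \times_{i_* \varphi(u)} i_* z$ computed in $\cX$; the cartesianness of the recollement square shows these constructions are mutually inverse. This yields the first claim.

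For the $\cO$-monoidal upgrade, I would adopt the perspective that an $\cO$-monoidal category is a cocartesian fibration over $\cO^\otimes$ with fibers of the appropriate form, and that a laxly $\cO$-monoidal functor is a morphism of such fibrations preserving only inert-cocartesian lifts. The $\cO$-monoidal compatibility of the localizations $j_\ast j^\ast$ and $i_\ast i^\ast$ (\cite[Definition 2.2.1.6]{HA}) guarantees that $j^*$ and $i^*$ are strictly $\cO$-monoidal, so $j_*$ and $i_*$ (and thus $\varphi = i^* j_*$) are laxly $\cO$-monoidal by \cite[Proposition 2.2.1.9]{HA}. The recollement square can then be formed in the category of operads over $\cO^\otimes$, and taking fibers over each $\langle n \rangle \in \cO^\otimes$ (after endowing $\Ar(\cZ)$ with the pointwise $\cO$-monoidal structure) reduces the statement to the underlying equivalence established above. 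The conclusion is then obtained by observing that a fiberwise equivalence between cocartesian fibrations over $\cO^\otimes$ is an equivalence.

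The main obstacle is the second step: care is required to ensure that the pullback of operads (i.e., of fibrations over $\cO^\otimes$) really does compute, fiberwise, the underlying categorical pullback, and in particular that $(\ev_1)^\otimes : \Ar(\cZ)^\otimes \to \cZ^\otimes$ is an operad map of the right kind so that the pullback preserves the desired structure. Since the same argument as in \cite[Proposition 1.26]{QS} (treating the commutative case) applies verbatim to a general reduced operad $\cO^\otimes$, I would invoke \textit{op.~cit.} rather than reprove it. The final sentence on the canonical $\cO$-monoidal structure on $\Ar(\cZ) \times_\cZ \cU$ for an arbitrary laxly $\cO$-monoidal $\varphi$ is then immediate from the same pullback-of-operads formalism, as recorded in \cite[Definition 1.21]{QS}.
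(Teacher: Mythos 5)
Your proposal is correct and tracks the paper's approach: the paper itself treats this as a citation to \cite[Corollary~1.10]{QS} for the underlying categorical pullback and \cite[Proposition~1.26]{QS} for the operadic upgrade, and you fill in the standard proof of the former while deferring to \emph{op.~cit.}\ for the latter, as the paper does. Your joint-conservativity check of the fracture square (using $j^\ast j_\ast \simeq \id$, $i^\ast i_\ast \simeq \id$, and $j^\ast i_\ast \simeq \const_\pt$) and the explicit inverse $(u, z \xra{\alpha} \varphi(u)) \mapsto j_\ast u \times_{i_\ast \varphi(u)} i_\ast z$ (with the left leg the unit $\eta_{j_\ast u}$ of $i^\ast \adj i_\ast$, which you should make explicit) are the usual route to \cite[Corollary~1.10]{QS}. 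One small caution on your last step: for the monoidal upgrade, ``fiberwise equivalence between cocartesian fibrations over $\cO^\otimes$ is an equivalence'' needs a morphism of cocartesian fibrations preserving \emph{all} cocartesian lifts, not merely the inert ones; the comparison map a priori is only a map of operads, and one must separately argue (as in \cite[Proposition~1.26]{QS}, via joint conservativity of the two strictly $\cO$-monoidal legs to $\cU^\otimes$ and $\cZ^\otimes$) that it is strictly $\cO$-monoidal before this logic applies. Since you do defer this step to \cite{QS}, the gap does not affect your conclusion.
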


% \begin{example}
% Suppose $\cO = \Ass$ is the associative operad and let $\varphi: \cU \to \cZ$ be a laxly monoidal functor. Then we may unwind the definition of the monoidal structure on $\Ar(\cZ) \times_{\cZ} \cU$ at the level of objects as follows: suppose given objects $x_i = [u_i \mapsto \varphi(u_i) \xla{\alpha_i} z_i]$, $i = 0,1$. Then the tensor product $x_0 \otimes x_1$ is given by
% $$ [u_0 \otimes u_1 \mapsto \varphi(u_0 \otimes u_1) \xla{\can} \varphi(u_0) \otimes \varphi(u_1) \xla{\alpha_0 \otimes \alpha_1} z_0 \otimes z_1]. $$
% \end{example}

By \Cref{obs:RecollementsAreLaxLimits}, in order to understand left- or right-dualizable objects in a monoidal recollement, it suffices to understand those in the lax limit of a laxly monoidal functor. We now state the main theorem of this section, which specializes to \Cref{theorem.intro.alg}.

\begin{theorem}
\label{theorem.algebraic_descr}
Let $\cU \xra{\varphi} \cZ$ be a laxly monoidal functor of monoidal categories and let
\[
\cX \coloneqq
\lim^\lax ( \cU \xlongra{\varphi} \cZ ) \coloneqq \cU \times_\cZ \Ar(\cZ)
\]
be its lax limit. For any object $x=[u \mapsto \varphi(u) \xla{\alpha} z]$ of $\cX$, the following conditions are equivalent.
\begin{enumerate}
    \item The object $x \in \cX$ is right-dualizable.
    \item\label{contn.alg.descr.all.objects} The objects $u \in \cU$ and $z \in \cZ$ are right-dualizable, and for any $w \in \cU$ the composite morphism
    \[
    z \otimes \varphi(w) \xlongra{\alpha} \varphi(u) \otimes \varphi(w) \xrightarrow{\can} \varphi(u \otimes w)
    \]
    in $\cZ$ is an equivalence.
    \item\label{condtn.alg.descr.two.objects.only} The objects $u \in \cU$ and $z \in \cZ$ are right-dualizable, and the composite morphisms
    \[
    \gamma: z \otimes \varphi(\uno_{\cU}) \xlongra{\alpha} \varphi(u) \otimes \varphi(\uno_{\cU}) \xrightarrow{\can} \varphi(u)
    \]
    \[
    g: z \otimes \varphi(u^{\vee}) \xlongra{\alpha} \varphi(u) \otimes \varphi(u^{\vee}) \xrightarrow{\can} \varphi(u \otimes u^{\vee})
    \]
    in $\cZ$ are equivalences.
\end{enumerate}
Moreover, if $x$ is right-dualizable, then its right dual $x^{\vee}$ is given by $[u^{\vee} \mapsto \varphi(u^{\vee}) \xla{\beta} z^{\vee}]$, where $\beta$ is adjunct to the composite
$$\uno_{\cZ} \xrightarrow{\can} \varphi(\uno_{\cU}) \xrightarrow{\varphi(\eta)} \varphi(u \otimes u^{\vee}) \underset{\sim}{\xlongla{g}} z \otimes \varphi(u^{\vee})~.$$
\end{theorem}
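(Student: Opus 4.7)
The plan is to establish the cycle (2) $\Rightarrow$ (3) $\Rightarrow$ (1) $\Rightarrow$ (2), with the \emph{moreover} formula for $x^\vee$ emerging from the step (3) $\Rightarrow$ (1). The implication (2) $\Rightarrow$ (3) is immediate by specializing the projection formula to $w = \uno_\cU$ and $w = u^\vee$.

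For (3) $\Rightarrow$ (1) and the moreover clause, I would take $x^\vee := [u^\vee \mapsto \varphi(u^\vee) \xla{\beta} z^\vee]$ with $\beta$ as prescribed, and construct the unit $\eta_\cX : \uno_\cX \to x \otimes x^\vee$ and counit $\epsilon_\cX : x^\vee \otimes x \to \uno_\cX$ in $\cX$ componentwise, using the standard duality data $(\eta_u, \epsilon_u)$ and $(\eta_z, \epsilon_z)$. Each of these is a morphism in $\cX$ only after supplying a coherence square in $\cZ$ witnessing compatibility with the structure maps. For $\eta_\cX$, the required identity $\can \circ (\alpha \otimes \beta) \circ \eta_z \simeq \varphi(\eta_u) \circ \can$ holds by construction: writing $(\alpha \otimes \beta) = (\alpha \otimes \id) \circ (\id \otimes \beta)$ and using $(\id_z \otimes \beta) \circ \eta_z = g^{-1} \circ \varphi(\eta_u) \circ \can$ (the defining adjoint property of $\beta$, valid because $g$ is assumed an equivalence), both sides collapse to $\varphi(\eta_u) \circ \can$. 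For $\epsilon_\cX$, the required identity $\can \circ \epsilon_z \simeq \varphi(\epsilon_u) \circ \can \circ (\beta \otimes \alpha)$ is the main technical verification: one substitutes the adjoint presentation of $\beta$, applies the lax-monoidality coherences of $\varphi$, and uses both equivalences $\gamma$ and $g$ together with the triangle identity for $(u, u^\vee)$ to reduce the right-hand side to the left. Once $\eta_\cX$ and $\epsilon_\cX$ are in place, the triangle identities for $(x, x^\vee)$ in $\cX$ follow from those for $(u, u^\vee)$ and $(z, z^\vee)$ applied componentwise.

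For (1) $\Rightarrow$ (2), the projections $j^* : \cX \to \cU$ and $i^* : \cX \to \cZ$ out of the lax limit are strictly monoidal and so preserve right duals; hence $u$ and $z$ are right-dualizable with $u^\vee \simeq j^*(x^\vee)$ and $z^\vee \simeq i^*(x^\vee)$. For the projection formula at $w$, right-dualizability of $x$ also yields the second adjunction $x^\vee \otimes - \dashv x \otimes -$ on $\cX$ (with unit $\eta \otimes \id$ and counit $\epsilon \otimes \id$); similarly $u^\vee \otimes - \dashv u \otimes -$ on $\cU$. Combining these with $j^* \dashv j_*$ and using the strict monoidality identification $j^*(x^\vee \otimes y) \simeq u^\vee \otimes j^*(y)$, I find that both functors $x \otimes j_*(-)$ and $j_*(u \otimes -) : \cU \to \cX$ are right adjoint to the same functor $y \mapsto u^\vee \otimes j^*(y)$. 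The canonical comparison $x \otimes j_*(w) \to j_*(u \otimes w)$, which on the $u$-component is the identity and on the $z$-component is $\sigma_w := \can \circ (\alpha \otimes \id_{\varphi(w)})$, is the mate of the strict-monoidality iso and is thus an equivalence; in particular $\sigma_w$ is an equivalence. The main obstacle I anticipate is the counit coherence square in step (3) $\Rightarrow$ (1): it is the sole place where both $\gamma$ and $g$ are needed simultaneously, and where the interaction between the adjoint definition of $\beta$ and the triangle identity for $(u, u^\vee)$ must be tracked carefully. Once this diagram chase is carried out, the remainder of the argument is formal.
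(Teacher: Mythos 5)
Your proof follows the paper's overall strategy: you establish the cycle $(2) \Rightarrow (3) \Rightarrow (1) \Rightarrow (2)$, construct the candidate right dual $x^\vee$ with the prescribed $\beta$ in the step $(3) \Rightarrow (1)$, and prove $(1) \Rightarrow (2)$ via an adjunction/mate argument that is a repackaging of the paper's representable-functor computation (the paper's chain of $\Map$-equivalences). The construction of $\eta_\cX$ and $\epsilon_\cX$, and the identification of the counit coherence square as the central computation, also match the paper.

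There is, however, a genuine gap in your final step of $(3) \Rightarrow (1)$. You assert that \emph{``the triangle identities for $(x,x^\vee)$ in $\cX$ follow from those for $(u,u^\vee)$ and $(z,z^\vee)$ applied componentwise.''} This is not true in the $\infty$-categorical setting in which the theorem is stated. A morphism in $\cX = \cU \times_\cZ \Ar(\cZ)$ carries not only a $\cU$-component and a $\cZ$-component but also a homotopy in $\cZ$ filling a square; $\Map_\cX(x,x)$ is a homotopy pullback, and $\pi_0$ of a homotopy pullback is not determined by the $\pi_0$'s of the factors. Hence two endomorphisms of $x$ can have homotopic components in $\cU$ and in $\cZ$ without being homotopic in $\cX$. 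After your coherence checks, the zigzag $(\id \otimes \epsilon_\cX)\circ(\eta_\cX \otimes \id)\colon x \to x$ is known to restrict to the identity on each component and therefore, by joint conservativity of $(j^*,i^*)$, to be an \emph{equivalence} --- but not to be homotopic to $\id_x$. The paper accordingly performs an additional, non-trivial verification that the homotopy obtained by pasting the coherence data for $\eta_x$ and $\epsilon_x$ is itself the identity homotopy on the $\alpha$-component of $x$: this is the diagram chase occupying the final third of their proof of $(3) \Rightarrow (1)$, and it is precisely the content that ``follows componentwise'' elides.

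Your plan can be repaired without reproducing the paper's diagram chase: having established by conservativity that both zigzag composites for $(x,x^\vee)$ are equivalences in $\cX$, one invokes the general fact that a candidate unit and counit whose two zigzag composites are both invertible do exhibit genuine duality data. (Concretely: the two comparison maps between $\Map_\cX(x\otimes a, b)$ and $\Map_\cX(a, x^\vee\otimes b)$ compose in both directions to precomposition/postcomposition by the invertible zigzags, hence are themselves equivalences, giving the required adjunction between $x\otimes-$ and $x^\vee\otimes-$.) That argument is valid and would in fact shortcut the paper's final diagram. But as written, your proposal identifies the wrong final obstruction: in this setting the triangle identities are \emph{not} detected on components, and the step is missing.
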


\begin{proof} First, suppose that $x$ is right-dualizable. Since the functors $\cX \xrightarrow{j^*} \cU$ and $\cX \xrightarrow{i^*} \cZ$ are strictly monoidal, the objects $u = j^*(x)$ and $z = i^*(x)$ are right-dualizable. Now, for any $w \in \cU$, the composite map
\begin{equation}
\label{composite.map.in.X.for.projection.formula}
x \otimes j_{\ast}(w)
\longra
j_\ast j^\ast (x) \otimes j_\ast (w)
\xra{\can}
j_{\ast} ((j^{\ast} x) \otimes w)
\end{equation}
is an equivalence, because for any $t \in \cX$ we have the sequence of equivalences
\begin{align*}
\Map_{\cX}(t,x \otimes j_{\ast}(w)) &\simeq \Map_{\cX}(x^{\vee} \otimes t, j_{\ast} w) \\
 &\simeq \Map_{\cU}(j^{\ast} (x^{\vee} \otimes t), w) \\
 &\simeq \Map_{\cU}((j^{\ast} x)^{\vee} \otimes j^{\ast} t, w) \\
 &\simeq \Map_{\cU}(j^{\ast} t, (j^{\ast} x) \otimes w) \\
 &\simeq \Map_{\cX}(t, j_{\ast} ((j^{\ast} x) \otimes w))
 ~.
\end{align*}
Applying $i^{\ast}$ to the map \Cref{composite.map.in.X.for.projection.formula} yields the composite morphism of (2), which is therefore an equivalence. This proves the implication $(1) \Rightarrow (2)$.

The implication $(2) \Rightarrow (3)$ is trivial.

We prove the implication $(3) \Rightarrow (1)$ by explicitly constructing duality data $x$. Let
$$
\left(u^{\vee}, \: u^{\vee} \otimes u \xra{\epsilon_u} \uno_\cU, \: \uno_\cU \xra{\eta_u} u \otimes u^{\vee} \right)
\qquad
\text{and}
\qquad
\left( z^{\vee}, \: z^{\vee} \otimes z \xra{\epsilon_z} \uno_\cZ , \: \uno_\cZ \xra{\eta_z} z \otimes z^{\vee} \right)
$$
be duality data for $u$ and $z$. Let $\varphi(u \otimes u^{\vee}) \xra{f} z \otimes \varphi(u^{\vee})$ denote the inverse of the equivalence $g$, and let us define the morphism $z^{\vee} \xra{\beta} \varphi(u^{\vee})$ to be the adjunct of the composite
\[ 
\begin{tikzcd}
\beta^{\dagger}: \uno_\cZ \ar{r}{\can} & \varphi(\uno_\cU) \ar{r}{\varphi(\eta_u)} & \varphi(u \otimes u^{\vee}) \ar{r}{f} & z \otimes \varphi(u^{\vee}).
\end{tikzcd} 
\]
Let $x^{\vee} = [u^{\vee} \mapsto \varphi(u^{\vee}) \xla{\beta} z^{\vee}]$. We wish to construct an evaluation map $x^{\vee} \otimes x \xra{\epsilon_x} \uno_\cX$ that restricts to $\epsilon_u$ and $\epsilon_z$ on each component. To do this, we must exhibit a homotopy that makes the diagram
\[ \begin{tikzcd} z^{\vee} \otimes z \ar{r}{\beta \otimes \alpha} \ar{d}[swap]{\epsilon_z} & \varphi(u^{\vee}) \otimes \varphi(u) \ar{r}{\can} & \varphi(u^{\vee} \otimes u) \ar{d}{\varphi(\epsilon_u)} \\
\uno_\cZ \ar{rr}[swap]{\can} & & \varphi(\uno_\cU)
\end{tikzcd}
\]
commute. Equivalently, we must exhibit a homotopy between the two adjunct maps $z \to z \otimes \varphi(\uno_\cU)$. Invoking assumption (3), we may compare the two maps after postcomposition by the equivalence $z \otimes \varphi(\uno_\cU) \xra[\sim]{\gamma} \varphi(u)$. The map $\can \circ \epsilon_z$ is then adjunct to the map $\alpha: z \to z \otimes \varphi(\uno_\cU) \xra[\sim]{\gamma} \varphi(u)$. On the other hand, the map $\varphi(\epsilon_u) \circ \can \circ (\beta \otimes \alpha)$ is adjunct to the upper horizontal composite in the commutative diagram
\[ \begin{tikzcd}[column sep=1.5cm]
\uno \otimes z \ar{r}{\beta^{\dagger} \otimes \alpha } \ar{dd}{\can \otimes \alpha} \ar[bend right=60]{ddd}[swap]{\alpha} & z \otimes \varphi(u^{\vee}) \otimes \varphi(u) \ar{r}{\id \otimes \can} \ar{d}{\alpha \otimes \id \otimes \id} \ar[bend right=75]{dd}[sloped]{\sim} & z \otimes \varphi(u^{\vee} \otimes u) \ar{r}{\id \otimes \varphi(\epsilon_u)} \ar{d}{\alpha \otimes \id} & z \otimes \varphi(\uno) \ar{d}[swap]{\alpha \otimes \id} \ar[bend left=65]{dd}[sloped, swap]{\sim} \\
 & \varphi(u) \otimes \varphi(u^{\vee}) \otimes \varphi(u) \ar{r}{\id \otimes \can} \ar{d}{\can \otimes \id} & \varphi(u) \otimes \varphi(u^{\vee}\otimes u) \ar{r}{\id \otimes \varphi(\epsilon_u)} \ar{d}{\can} & \varphi(u) \otimes \varphi(\uno) \ar{d}[swap]{\can} \\
\varphi(\uno) \otimes \varphi(u) \ar{r}{\varphi(\eta_u) \otimes \id} \ar{d}{\can} & \varphi(u \otimes u^\vee) \otimes \varphi(u) \ar{r}{\can} & \varphi(u \otimes u^{\vee} \otimes u) \ar{r}{\varphi(\id \otimes \epsilon_u)} & \varphi(u) \\
\varphi(u) \ar{urr}[swap]{\varphi(\eta_u \otimes \id)} \ar[bend right=12]{urrr}[swap]{\id}
\end{tikzcd} \]
in $\cZ$, which then yields the desired homotopy and thus $\epsilon_x$.
% \footnote{\color{red} Grigory: maybe should add that the left square commutes by the construction of $\beta$}

Next, using the commutative diagram
\[ \begin{tikzcd}
\uno \ar{rr}{\can} \ar{d}[swap]{\eta_z} \ar{rd}[sloped]{\beta^\dagger} & & \varphi(\uno) \ar{d}{\varphi(\eta_u)} \\
z \otimes z^\vee \ar{r}{\id \otimes \beta} \ar{rd}[swap, sloped]{\alpha \otimes \beta} & z \otimes \varphi(u^\vee) \ar{r}{g}[swap]{\sim} \ar{d}{\alpha \otimes \id} & \varphi(u \otimes u^\vee) \\
& \varphi(u) \otimes \varphi(u^\vee) \ar{ru}[swap, sloped]{\can}
\end{tikzcd} \]
in $\cZ$ we define a coevaluation map $\uno \xra{\eta_x} x \otimes x^{\vee}$ that restricts to $\eta_u$ and $\eta_z$ on each component.

We now verify the triangle identity $\id_x \simeq (\id \otimes \epsilon_x) \circ (\eta_x \otimes \id)$. Since $(\id \otimes \epsilon_x) \circ (\eta_x \otimes \id)$ is homotopic to the identity on the components $u$ and $z$ by construction, it only remains to show that that pasting the two homotopies given above yields an outer square in the commutative diagram
\[ \begin{tikzcd}[column sep=6em]
\uno \otimes z \ar{r}{\can \otimes \alpha} \ar{d}{\eta \otimes \id} & \varphi(\uno) \otimes \varphi(u) \ar{r}{\can} \ar{d}{\varphi(\eta) \otimes \id} & \varphi(u) \ar{d}{\varphi(\eta \otimes \id)} \\
z \otimes z^\vee \otimes z \ar{r}{(\can \circ (\alpha \otimes \beta)) \otimes \alpha} \ar{dd}{\id \otimes \epsilon} \ar[bend right=15]{rd}[sloped, swap]{\alpha \otimes (\can \circ (\beta \otimes \alpha))} & \varphi(u \otimes u^{\vee}) \otimes \varphi(u) \ar{r}{\can} & \varphi(u \otimes u^{\vee} \otimes u) \ar{dd}{\varphi(\id \otimes \epsilon)} \\
 & \varphi(u) \otimes \varphi(u^\vee \otimes u) \ar{d}{\id \otimes \varphi(\epsilon)} \ar[bend right=15]{ur}[sloped, swap]{\can} & \\
z \otimes \uno \ar{r}{\alpha \otimes \can} & \varphi(u) \otimes \varphi(\uno) \ar{r}{\can} & \varphi(u)
\end{tikzcd} \]
in $\cZ$ that is homotopic to the identity on $\alpha$. A chase of the definitions shows that the homotopy witnessing the commutativity of the upper rectangle is itself homotopic to
\[ \begin{tikzcd}[column sep=4em]
% & z \otimes \varphi(u^{\vee}) \otimes \varphi(u)
z \ar{r}{\alpha} \ar{d}[swap]{\eta \otimes \id} & \varphi(u) \ar{rd}[sloped]{\beta^{\dagger} \otimes \id} \ar[bend left=10]{rrd}[sloped]{\varphi(\eta \otimes \id)}  \ar{d}[swap]{\eta \otimes \beta^{\dagger} \otimes \id} \\
z \otimes z^{\vee} \otimes z \ar{r}[swap]{\id \otimes \beta^{\dagger} \otimes \alpha} & z \otimes z^{\vee} \otimes z \otimes \varphi(u^{\vee}) \otimes \varphi(u) \ar{r}[swap]{\id \otimes \epsilon \otimes \id} & z \otimes \varphi(u^{\vee}) \otimes \varphi(u) \ar{r}[swap]{\can \circ (g \otimes \id)} & \varphi(u \otimes u^{\vee} \otimes u)
\end{tikzcd}~. \]
In particular, the factorization through $\id \otimes \beta \otimes \alpha$ permits us to reduce the assertion to checking that in the commutative diagram
\[ \begin{tikzcd}
z \ar{r}{\id \otimes \can} \ar{d}[swap]{\eta \otimes \id} & z \otimes \varphi(\uno) \ar{r}{\gamma}[swap]{\sim} \ar{d}{\eta \otimes \id} & \varphi(u) \ar{d}[swap]{\eta \otimes \beta^\dagger \otimes \id} \ar{rd}[sloped]{\beta^\dagger \otimes \id} \\
z \otimes z^{\vee} \otimes z \ar{r}[swap, outer sep=2pt]{\id \otimes \can} \ar{ddd}[swap]{\id \otimes \epsilon} & z \otimes z^{\vee} \otimes z \otimes \varphi(\uno) \ar{r}[swap, outer sep=2pt]{\id \otimes \beta^\dagger \otimes \gamma} \ar[bend right=30]{rdd}[swap, sloped]{\id} & z \otimes z^{\vee} \otimes z \otimes \varphi(u^{\vee}) \otimes \varphi(u) \ar{r}[swap, outer sep=2pt]{\id \otimes \epsilon \otimes \id} \ar{d}[swap]{\id \otimes \can} & z \otimes \varphi(u^{\vee}) \otimes \varphi(u) \ar{d}{\id \otimes \can} \\
& & z \otimes z^{\vee} \otimes z \otimes \varphi(u^{\vee} \otimes u) \ar{r}[swap]{\id \otimes \epsilon \otimes \id} \ar{d}[swap]{\id \otimes \varphi(\epsilon)} & z \otimes \varphi(u^{\vee} \otimes u) \ar{d}{\id \otimes \varphi(\epsilon)} \\
 & & z \otimes z^{\vee} \otimes z \otimes \varphi(\uno) \ar{d}[swap]{\id \otimes \epsilon \otimes \id} \ar{r}[swap]{\id \otimes \epsilon \otimes \id} & z \otimes \varphi(\uno) \ar{d}{\gamma}[sloped, anchor=north]{\sim} \\
z \ar{rr}[swap]{\id \otimes \can} & & z \otimes \varphi(\uno) \ar{r}[swap]{\gamma}{\sim} & \varphi(u)
\end{tikzcd} \]
the homotopy witnessing the commutativity of the outer square is homotopic to the identity on $\alpha$ (note the commutativity of the inner triangle derives from the diagram that defines $\epsilon_x$). But since $\gamma$ is an equivalence, this holds in view of the homotopy between the left inner diagram and the identity on $\id \otimes \can$ furnished by the triangle identity $\id_z \simeq (\id \otimes \epsilon_z) \circ (\eta_z \otimes \id)$.

Finally, the other triangle identity $\id_{x^{\vee}} \simeq (\epsilon_x \otimes \id) \circ (\id \otimes \eta_x)$ may be established by similar reasoning, the details of which we leave to the reader.
\end{proof}

\subsection{Complements to \texorpdfstring{\Cref{theorem.algebraic_descr}}{Theorem~\ref{theorem.algebraic_descr}}}
\label{subsection.complements.to.algdescr}

In this subsection, we collect a few remarks that serve to illustrate \Cref{theorem.algebraic_descr} and connect it with other parts of the literature.

\begin{remark} \label{rem:inducedRecollementOnModules}
Suppose that $\cU \xra{\varphi} \cZ$ is a laxly symmetric monoidal functor with lax limit $\cX \coloneqq \lim^\lax ( \cU \xra{\varphi} \cZ)$, and suppose that $A = [B \mapsto \varphi(B) \xla{\alpha} C] \in \CAlg(\cX)$ is a commutative algebra object. Then, we have an identification
\begin{equation}
\label{identify.A.modules.in.X.as.a.lax.limit}
\Mod_A(\cX)
\simeq
\lim^\lax \left( \Mod_B(\cU) \xlongra{\tilde{\varphi}} \Mod_C(\cZ) \right)
~,
\end{equation}
where $\tilde{\varphi}$ denotes the composite
\[
\Mod_B(\cU)
\xlongra{\varphi}
\Mod_{\varphi(B)}(\cZ)
\xlongra{\alpha^*}
\Mod_C(\cZ)
\]
(using the fact that $\varphi$ is laxly symmetric monoidal).\footnote{To see this, observe that the pullback square
\[ \begin{tikzcd}[ampersand replacement=\&]
(\cX^\otimes, A) \ar{r} \ar{d} \& (\Ar(\cZ)^\otimes, \alpha) \ar{d}{(\ev_1)^\otimes} \\
(\cU^\otimes, B) \ar{r}[swap]{\varphi^\otimes} \& (\cZ^\otimes, \varphi(B))
\end{tikzcd} \]
of operads equipped with a distinguished commutative algebra object is sent to a pullback square of operads under the functor $(\cC^\otimes , R) \mapsto \Mod_R(\cC)^\otimes$, and combine this with the pullback square
\[ \begin{tikzcd}[ampersand replacement=\&]
\Mod_\alpha(\Ar(\cZ))^\otimes
\arrow{r}
\arrow{d}[swap]{(\ev_1)^\otimes}
\&
\Ar(\Mod_C(\cZ))^\otimes
\arrow{d}{(\ev_1)^\otimes}
\\
\Mod_{\varphi(B)}(\cZ)^\otimes
\arrow{r}[swap]{(\alpha^*)^\otimes}
\&
\Mod_C(\cZ)^\otimes
\end{tikzcd} \]
of operads.} Let us suppose that the relative tensor products exist making $\Mod_B(\cU)$ and $\Mod_C(\cZ)$ (and hence also $\Mod_A(\cX)$) into symmetric monoidal categories. Then, $\tilde{\varphi}$ is also laxly symmetric monoidal and the equivalence \Cref{identify.A.modules.in.X.as.a.lax.limit} is symmetric monoidal. Applying \Cref{theorem.algebraic_descr}, the projection formula takes the following form: for a dualizable object $M = [N \mapsto \varphi(N) \xla{\beta} P] \in \Mod_A(\cX)$ and any $W \in \Mod_B(\cU)$, the canonical composite morphism
\[
P \otimes_C \varphi(W)
\xlongra{\beta}
\varphi(N) \otimes_C \varphi(W)
\xlongra{\can}
\varphi(N \otimes_B W)
\]
is an equivalence.
\end{remark}

% and let $A = [B \mapsto \varphi(B) \xla{\alpha} C] \in \cX$ be a commutative algebra. Then $\Mod_{A}(\cX)$ is the lax limit of the functor $\widetilde{\varphi}: \Mod_{B}(\cU) \to \Mod_{C}(\cZ)$ given by $\varphi$ at the level of underlying objects, which for a $B$-module $N$ regards $\varphi(N)$ as a $C$-module via restriction along $\alpha$. Indeed, the pullback square

% of operads with a distinguished algebra object is sent to a pullback square of operads under the functor $(\cC^\otimes, R) \mapsto \Mod_{R}(\cC)^\otimes$, and this together with the pullback of operads
% $$\Mod_{\alpha}(\Ar(\cZ))^\otimes \simeq \Mod_{\varphi(B)}(\cZ)^\otimes \times_{(\alpha^*)^\otimes, \Mod_{C}(\cZ)^\otimes, (\ev_1)^\otimes} \Ar(\Mod_C(\cZ))^\otimes$$
% implies the claim. Now, suppose that the relative tensor products exist to make $\Mod_{B}(\cU)$ and $\Mod_{C}(\cZ)$ (and hence also $\Mod_{A}(\cX)$) into symmetric monoidal categories. Then $\varphi$ is laxly symmetric monoidal and the canonical symmetric monoidal structure on $\Mod_{A}(\cX)$ agrees with that given by $(M,M') \mapsto M \otimes_A M'$.\footnote{More generally, we could suppose that $A$ is a $\EE_2$-algebra and work with $\LMod_{A}(\cX)$ as a monoidal category.} Applying \Cref{theorem.algebraic_descr} to $\Mod_{A}(\cX)$, we see that the projection formula takes on the following form: for a dualizable object $M = [N \mapsto \varphi(N) \xla{\alpha} P] \in \Mod_A(\cX)$ and $W \in \Mod_B(\cU)$, we have that
% $$ P \otimes_C \varphi(W) \xra{\simeq} \varphi(N \otimes_B W).$$

\begin{remark}
\label{rmk.strictly.s.m.case}
If the functor $\cU \xra{\varphi} \cZ$ is strictly monoidal, then \Cref{theorem.algebraic_descr} shows that an object $[u \mapsto \varphi(u) \xla{\alpha} z] \in \cX$ is right-dualizable if and only if $u$ is right-dualizable and $\alpha$ is an equivalence (which implies that $z$ is right-dualizable). Said differently, in this case the functor
\begin{equation}
\label{j.upper.star.on.dzbls}
\cU^\dzbl
\xlongla{j^*}
\cX^\dzbl
\end{equation}
is an equivalence, which generalizes the easy observation that $\Ar(\cZ)^\dzbl \xra{\ev_1} \cZ^{\dzbl}$ is an equivalence.\footnote{Indeed, a more direct way to see this is to use monoidal equivalences $\cX \simeq \cU \times_{\cZ} \Ar(\cZ)$ and $\cZ^{\dzbl} \simeq \Ar(\cZ)^{\dzbl}$.}
% (In this special case, the fiber product may be taken in $\CAlg \subseteq \CAlg^\lax$.)

On the other hand, in general the fully faithful inclusion
\[
\lim^\lax(\cU^\dzbl \longhookra \cU \xlongra{\varphi} \cZ)
\longhookra
\lim^\lax(\cU \xlongra{\varphi} \cZ)
\]
clearly restricts to an equivalence on dualizable objects. So in fact, the functor \Cref{j.upper.star.on.dzbls} is an equivalence as soon as the composite $\cU^\dzbl \hookra \cU \xra{\varphi} \cZ$ is strictly monoidal. See \Cref{example.modules.over.complete.local.cring} for an instance of this.
\end{remark}

\begin{remark}
\label{remark.strictness.on.subcat.gend.by.u.and.udual}
Here is a more conceptual (but slightly weaker) reformulation of \Cref{theorem.algebraic_descr} in the symmetric monoidal case.

First of all, note that the laxly symmetric monoidal functor $\cU \xra{\varphi} \cZ$ admits a canonical factorization
\[ \begin{tikzcd}
&
\Mod_{\varphi(\uno_\cU)}(\cZ)
\arrow{d}{\fgt}
\\
\cU
\arrow{r}[swap]{\varphi}
\arrow[dashed]{ru}[sloped]{\varphi}
&
\cZ
\end{tikzcd}
\]
through a strictly unital laxly symmetric monoidal functor.\footnote{If $\cZ$ does not admit all geometric realizations then $\Mod_{\varphi(\uno_\cU)}(\cZ)$ may not be a symmetric monoidal category, but we may still consider it as an operad.}

We claim that an object $[u \mapsto \varphi(u) \xla{\alpha} z] \in \cX$ is dualizable if and only if both $u \in \cU$ and $z \in \cZ$ are dualizable and moreover the following conditions hold.

\begin{enumerate}[label=(\alph*)]

\item\label{condn.4a} The composite functor
\begin{equation}
\label{composite.functor.from.Bord.to.modules.in.Z}
\Bord
\xlongra{u}
\cU
\xlongra{\varphi}
\Mod_{\varphi(\uno_\cU)}(\cZ)
\end{equation}
is strictly symmetric monoidal.\footnote{Writing $\langle u,u^\vee \rangle \subseteq \cU$ for the full symmetric monoidal subcategory containing $u$ and $u^\vee$, this is equivalent to the condition that the composite $\langle u,u^\vee \rangle \hookra \cU \xra{\varphi} \Mod_{\varphi(\uno_\cU)}(\cZ)$ is strictly symmetric monoidal.}

\item\label{condn.4b} The adjunct $z \otimes \varphi(\uno_\cU) \xra{\alpha^\dagger} \varphi(u)$ in $\Mod_{\varphi(\uno_\cU)}(\cZ)$ of the map $z \xra{\alpha} \varphi(u)$ in $\cZ$ is an equivalence.

\end{enumerate}
On the one hand, these conditions imply condition \Cref{condtn.alg.descr.two.objects.only} of \Cref{theorem.algebraic_descr}: the morphism $\alpha^\dagger$ in condition \Cref{condn.4b} is precisely the morphism $\gamma$, and thereafter condition \Cref{condn.4a} implies that the morphism $g$ is an equivalence. On the other hand, condition \Cref{contn.alg.descr.all.objects} of \Cref{theorem.algebraic_descr} evidently implies condition \Cref{condn.4a}.

Informally, this may be summarized as saying that dualizable objects of $\cX$ are all given by the following procedure: choose a dualizable object $u \in \cU$ such that the composite \Cref{composite.functor.from.Bord.to.modules.in.Z} is strictly symmetric monoidal, and then choose a dualizable object $z \in \cZ$ and a $\varphi(\uno_\cU)$-linear equivalence $z \otimes \varphi(\uno_\cU) \xra{\sim} \varphi(u)$.

We can articulate this alternatively as follows. Let us write
\[ \begin{tikzcd}
\cX^\dzbl
\arrow{rr}{j^*}
\arrow[dashed, two heads]{rd}
&
&
\cU^\dzbl
\\
&
\cU^\dzbl_0
\arrow[hook]{ru}
\end{tikzcd} \]
for the image, and let us write $\cU^\dzbl_1 \subseteq \cU^\dzbl$ for the full subcategory on those dualizable objects $u \in \cU$ such that the resulting composite morphism \Cref{composite.functor.from.Bord.to.modules.in.Z} is strictly symmetric monoidal. Then, we have a commutative diagram
\[ \begin{tikzcd}[row sep=1.5cm, column sep=2cm]
\cX^\dzbl
\arrow{r}{j^*}
\arrow{d}[swap]{i^*}
&
\cU^\dzbl_0
\arrow{d}{\varphi}
\arrow[hook]{r}
&
\cU^\dzbl_1
\arrow{d}{\varphi}
\\
\cZ^\dzbl
\arrow{r}[swap]{(-) \otimes \varphi(\uno_\cU)}
&
\Mod_{\varphi(\uno_\cU)}(\cZ)^\dzbl
\arrow{r}[swap]{\id}
&
\Mod_{\varphi(\uno_\cU)}(\cZ)^\dzbl
\end{tikzcd} \]
in which both the left square and the composite rectangle are pullbacks, and moreover the left square consists entirely of strictly symmetric monoidal functors.\footnote{One can see that the middle vertical functor is strictly symmetric monoidal as follows. Given a pair of objects $u,v \in \cU^\dzbl_0$, let $x = [u \mapsto \varphi(u) \xla{\alpha} z] \in \cX^\dzbl$ be a lift of $u$. Then, the projection formula for $x$ at $v$ guarantees that the composite
\[
z \otimes \varphi(v)
\xlongra{\alpha}
\varphi(u) \otimes \varphi(v)
\xra{\can}
\varphi(u \otimes v)
\]
is an equivalence. On the other hand, applying condition \Cref{condn.4b} we can rewrite the source as
\[
z \otimes \varphi(v) \simeq
(z \otimes \varphi(\uno_\cU)) \otimes_{\varphi(\uno_\cU)} \varphi(v)
\simeq
\varphi(u) \otimes_{\varphi(\uno_\cU)} \varphi(v)
~.
\]
}
\end{remark}

\begin{remark}
\Cref{remark.strictness.on.subcat.gend.by.u.and.udual} has the following curious consequence. Let us take $\cU = \Bord$ and $u \coloneqq + \in \Bord$ to be the universal dualizable object. Then, a laxly symmetric monoidal functor $\Bord \xra{\varphi} \cZ$ is strict if and only if the following conditions hold.
\begin{enumerate}[(i)]

\item The object $z \coloneqq \varphi(+) \in \cZ$ is dualizable.

\item The laxly symmetric monoidal functor $\varphi$ is unital, i.e., the canonical morphism $\uno_\cZ \ra \varphi(\es)$ is an equivalence.

\item The laxly symmetric monoidal functor $\varphi$ is strict on the pair $(+,-) \in \Bord^{\times 2}$, i.e., the canonical morphism $\varphi(+) \otimes \varphi(-) \ra \varphi(+ \sqcup -)$ is an equivalence.

\end{enumerate}
Indeed, these are equivalent to condition \Cref{condtn.alg.descr.two.objects.only} of \Cref{theorem.algebraic_descr}, and our conclusion is condition \Cref{condn.4a} of \Cref{remark.strictness.on.subcat.gend.by.u.and.udual}.
\end{remark}

\begin{remark}[Beauville--Laszlo] \label{rem:BeauvilleLaszlo}
Suppose for simplicity that $\varphi$ is a laxly symmetric monoidal functor, so that $j_*(\uno_{\cU})$ and $\varphi(\uno_{\cU})$ are commutative algebras, and suppose moreover that the relative tensor products exist to make $\Mod_{j_* (\uno_\cU)}(\cX)$ and $\Mod_{\varphi(\uno_\cU)}(\cZ)$ symmetric monoidal categories. We then have a pullback square
\begin{equation}
\label{before.dzbl.in.beauville.laszlo}
\begin{tikzcd}
\cX \ar{r}{i^*} \ar{d}[swap]{- \otimes j_*(\uno_{\cU})} & \cZ \ar{d}{- \otimes \varphi(\uno_\cU)} \\
\Mod_{j_* (\uno_\cU)}(\cX) \ar{r}[swap]{i^*} & \Mod_{\varphi(\uno_\cU)}(\cZ)
\end{tikzcd}
\end{equation}
in $\SMC$. Indeed, it is not hard to check that we have an equivalence
\begin{equation}
\label{recollement.of.Mod.j.lower.star.uno.U.in.X}
\Mod_{j_*(\uno_\cU)}(\cX) \simeq \lim^\lax ( \cU \xlongra{\varphi} \Mod_{\varphi(\uno_\cU)}(\cZ))
\end{equation}
in $\SMC$, and thereafter we may define an inverse functor
\[
\Mod_{j_* (\uno_\cU)}(\cX) \times_{\Mod_{\varphi(\uno_\cU)}(\cZ)} \cZ \longra \cX
\]
by the formula
$$([u \mapsto \varphi(u) \xla{\alpha'} z'], \, z', \, z' \xla{\sim}  z \otimes \varphi(\uno_\cU)) \: \longmapsto \: [u \mapsto \varphi(u) \xla{\alpha'} z' \xla{\sim} z \otimes \varphi(\uno_{\cU}) \xla{\eta} z]~.$$
This result and variants thereof are sometimes known as the \textit{Beauville--Laszlo theorem} (after \cite{BL}; e.g., see \cite[Proposition~7.4.1.1]{SAG}).

Let us pass to dualizable objects in the pullback square \Cref{before.dzbl.in.beauville.laszlo}, to obtain a pullback square
\begin{equation}
\label{beauville.laszlo.pullback.square}
\begin{tikzcd}
\cX^\dzbl
\ar{r}{i^*} \ar{d}[swap]{- \otimes j_*(\uno_{\cU})} & \cZ^\dzbl \ar{d}{- \otimes \varphi(\uno_\cU)} \\
\Mod_{j_* (\uno_\cU)}(\cX)^\dzbl \ar{r}[swap]{i^*} & \Mod_{\varphi(\uno_\cU)}(\cZ)^\dzbl
\end{tikzcd}~.
\end{equation}
Note that this description of the dualizable objects in $\cX$ is a priori very different from that conferred by \Cref{theorem.algebraic_descr}: here the connection with dualizable objects in $\cU$ is not apparent, and indeed this pullback square yields no information if $\varphi(\uno_{\cU}) \simeq \uno_{\cZ}$ (which implies that $j_*(\uno_\cU) \simeq \uno_\cX$). Nonetheless, we may relate it to the pullback squares of \Cref{remark.strictness.on.subcat.gend.by.u.and.udual} in the following manner. We always have a factorization of the adjunction $j^* \dashv j_*$ as
\[ \begin{tikzcd}[column sep=2cm]
j^*
:
\cX
\arrow[yshift=0.9ex]{r}{- \otimes j_*(\uno_\cU)}
\arrow[leftarrow, yshift=-0.9ex]{r}[yshift=-0.2ex]{\bot}[swap]{\fgt}
&
\Mod_{j_*(\uno_\cU)}(\cX)
\arrow[yshift=0.9ex]{r}{\overline{j}^*}
\arrow[leftarrow, yshift=-0.9ex]{r}[yshift=-0.2ex]{\bot}[swap]{\overline{j}_*}
&
\cU
:
j_*
\end{tikzcd} \]
with both left adjoints strictly symmetric monoidal. Furthermore, note that if $x \in \cX^{\dzbl}$, then the natural map $x \otimes j_* (\uno_{\cU}) \to j_* j^* (x)$ is an equivalence (as may be checked on representable functors), and similarly if $x \in \Mod_{j_*(\uno_\cU)}(\cX)^{\dzbl}$, then $x \ra \overline{j}_* \overline{j}^* (x)$ is an equivalence (since now $\overline{j}_*(\uno_{\cU}) \in \Mod_{j_*(\uno_\cU)}(\cX)$ is the unit object). Therefore, upon passage to dualizable objects, the functor $\overline{j}^*$ is fully faithful. Thereafter, due to the commutative triangle
\[ \begin{tikzcd}
\Mod_{j_*(\uno_\cU)}(\cX)^{\dzbl}
\arrow[hook]{r}{\overline{j}^*}
\arrow{rd}[sloped, swap]{i^*}
&
\cU^\dzbl
\arrow{d}{\varphi}
\\
&
\Mod_{\varphi(\uno_\cU)}(\cZ)
\end{tikzcd}~,
\]
we see that objects of $\Mod_{j_*(\uno_\cU)}(\cX)^{\dzbl}$ satisfy condition \Cref{condn.4a} in \Cref{remark.strictness.on.subcat.gend.by.u.and.udual}, so we obtain an inclusion $\Mod_{j_*(\uno_\cU)}(\cX)^{\dzbl} \subseteq \cU_1^{\dzbl}$ (using the notation of \Cref{remark.strictness.on.subcat.gend.by.u.and.udual}). On the other hand, the factorization shows that $\cU_0^{\dzbl} \subseteq \Mod_{j_*(\uno_\cU)}(\cX)^{\dzbl}$. We have thus shown that the Beauville--Laszlo pullback square \Cref{beauville.laszlo.pullback.square} fits in between the two pullback squares of \Cref{remark.strictness.on.subcat.gend.by.u.and.udual}:
\[ \begin{tikzcd}
\cX^\dzbl \arrow{r} \arrow{d} & \cU^\dzbl_0 \arrow{d} \arrow[hook]{r} & \Mod_{j_*(\uno_\cU)}(\cX)^{\dzbl} \arrow[hook]{r} \arrow{d} & \cU^\dzbl_1 \arrow{d} \\
\cZ^\dzbl \arrow{r} & \Mod_{\varphi(\uno_\cU)}(\cZ)^\dzbl \arrow{r}[swap]{\id} &
\Mod_{\varphi(\uno_\cU)}(\cZ)^\dzbl \arrow{r}[swap]{\id} & \Mod_{\varphi(\uno_\cU)}(\cZ)^\dzbl \:.
\end{tikzcd} \]

Suppose we additionally assume that $\cU \xra{\varphi} \cZ$ is an exact accessible functor of stable presentable categories (ensuring that $\cX$ is stable and presentably symmetric monoidal) and that $\cX$ is rigidly compactly generated (i.e., $\cX^{\omega} = \cX^{\dzbl}$ and $\cX \simeq \mathrm{Ind}(\cX^{\omega})$). Then, $\Mod_{j_*(\uno_\cU)}(\cX)^{\dzbl}$ is the thick subcategory generated by the essential image of $(- \otimes j_*(\uno_\cU))|_{\cX^{\dzbl}}$. Consequently, in this case if we want a pullback square of symmetric monoidal idempotent complete stable categories, the Beauville--Laszlo square is the `minimal' such option.
\end{remark}

% alt method: Using this and the projection formula $j_!(u) \otimes x \simeq j_!(u \otimes j^* x)$,
\begin{remark}
We digress to explain the relationship between the Beauville--Laszlo pullback square and similar results of Achim Krause in \cite{Krause}. Let us suppose that the hypotheses in the last paragraph of \Cref{rem:BeauvilleLaszlo} are in effect. First note that if $u \in \cU$ is compact, then $u$ is dualizable: indeed, since $j_!: \cU \to \cX$ preserves compact objects and $\cX^{\omega} = \cX^{\dzbl}$, we get that $j_!(u)$ is dualizable and hence $j^* j_!(u) \simeq u$ is dualizable. Note as well that since $j_!(u) = [u \mapsto \varphi(u) \leftarrow 0]$, it follows from the projection formula of \Cref{theorem.algebraic_descr} that if $u$ is compact, then $\varphi(u) = 0$ and hence the natural map $j_!(u) \to j_*(u)$ is an equivalence. We deduce that for all $x \in \cX$ and $u \in \cU^{\omega}$, both $j^*: \Map(j_! u, x) \to \Map(u, j^* x)$ and $j^*: \Map(x, j_! u) \to \Map(j^* x, u)$ are equivalences. The hypotheses of \cite[Lemma~3.9]{Krause} are thus satisfied, so that we get a pullback square of stable categories
\[ \begin{tikzcd}
\cX^{\omega} \ar{r} \ar{d} & \cX^{\omega} / \cU^{\omega} \ar{d} \\
\cU^{\dzbl} \ar{r} & \cU^{\dzbl} / \cU^{\omega}
\end{tikzcd} \]
where we form Verdier quotients in the righthand column. Moreover, it is easily checked that $\cU^{\omega} \subseteq \cX^{\dzbl}$ and $\cU^{\omega} \subseteq \cU^{\dzbl}$ are thick tensor ideals, so this is in fact a pullback square of stable symmetric monoidal categories.

We next bring the bottom row of the Beauville--Laszlo pullback square \Cref{beauville.laszlo.pullback.square} into the picture.  By the same reasoning now applied to the recollement $(\cU, \Mod_{\varphi(\uno_\cU)}(\cZ))$ on $\Mod_{j_*(\uno_\cU)}(\cX)$ (recall the equivalence \Cref{recollement.of.Mod.j.lower.star.uno.U.in.X}), we obtain the pullback square of stable symmetric monoidal categories
\[ \begin{tikzcd}
\Mod_{j_*(\uno_\cU)}(\cX)^{\omega} \ar{r} \ar[hook]{d} & \Mod_{j_*(\uno_\cU)}(\cX)^{\omega} / \cU^{\omega} \ar[hook]{d} \\
\cU^{\dzbl} \ar{r} & \cU^{\dzbl} / \cU^{\omega}
\end{tikzcd} \]
where the left vertical functor is fully faithful by \Cref{rem:BeauvilleLaszlo} and the right vertical functor is then fully faithful using the formula for mapping spaces in a Verdier quotient \cite[Lemma~3.3]{Krause}. One can sometimes show that the right vertical functor is an equivalence, at least upon idempotent completion; for example, the description of the $\ZZ$-linear stable module category as modules over the Tate construction falls into this paradigm \cite[Proposition~5.7]{Krause}.

Now suppose also that $\cU$ is compactly generated. Then the idempotent completions of $\cX^{\omega} / \cU^{\omega}$ and $\Mod_{j_*(\uno_\cU)}(\cX)^{\omega} / \cU^{\omega}$ are $\cZ^{\omega}$ and $\Mod_{\varphi(\uno_{\cU})}(\cZ)^{\omega}$, respectively. Since the idempotent completion functor (as an endofunctor of the category of small stable categories) preserves all limits \cite[Proposition~A.3.3]{nine2}, we obtain the diagram
\[ \begin{tikzcd}
\cX^{\omega} \ar{r} \ar{d} & \Mod_{j_*(\uno_\cU)}(\cX)^{\omega} \ar[hook]{r} \ar{d} & \cU^{\dzbl} \ar{d} \\
\cZ^{\omega} \ar{r} & \Mod_{\varphi(\uno_{\cU})}(\cZ)^{\omega} \ar[hook]{r} & (\cU^{\dzbl} / \cU^{\omega})^{\mathrm{ic}} \:.
\end{tikzcd}  \]
in which both squares are pullbacks (where $(-)^{\mathrm{ic}}$ denotes idempotent completion).
\end{remark}

To conclude this subsection, we record the following consequence of \Cref{theorem.algebraic_descr} in the setting of stable categories for later use.

\begin{corollary}
\label{stable_case}
Let $\cU$ and $\cZ$ be stably monoidal categories and let $\cU \xra{\varphi} \cZ$ be an exact laxly monoidal functor. Let $u \in \cU$ be an object that is contained in the thick subcategory of $\cU$ generated by $\uno_{\cU}$ (i.e., $u$ is \emph{perfect}). Then an object $[u \mapsto \varphi(u) \xla{\alpha} z] \in \cX$ is dualizable if and only if $z \in \cZ$ is dualizable and moreover the composite morphism
%\begin{equation}
%\label{3a_in_stable_case}
\[
\gamma: z \otimes \varphi(\uno_{\cU}) \xlongra{\alpha} \varphi(u) \otimes \varphi(\uno_{\cU}) \xrightarrow{\can} \varphi(u)
\]
%\end{equation}
is an equivalence.
\end{corollary}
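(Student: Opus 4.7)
The forward implication would follow directly from \Cref{theorem.algebraic_descr}: if $[u \mapsto \varphi(u) \xla{\alpha} z]$ is dualizable, then specializing condition \Cref{contn.alg.descr.all.objects} of that theorem to $w = \uno_\cU$ (and using $u \otimes \uno_\cU \simeq u$) yields both that $z$ is dualizable and that $\gamma$ is an equivalence.

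For the converse, my plan is to verify condition \Cref{condtn.alg.descr.two.objects.only} of \Cref{theorem.algebraic_descr}. In any stably monoidal category, the class of dualizable objects forms a thick subcategory containing the unit, so the perfection of $u$ forces $u$ to be dualizable. Combined with the hypotheses that $z$ is dualizable and $\gamma$ is an equivalence, it remains only to show that the second projection-formula map $g: z \otimes \varphi(u^\vee) \to \varphi(u \otimes u^\vee)$ is an equivalence.

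For this I would use a thick-subcategory argument. Let $\cU_0 \subseteq \cU$ denote the full subcategory on those $w$ for which the projection-formula map $z \otimes \varphi(w) \to \varphi(u \otimes w)$ is an equivalence. Both $z \otimes \varphi(-)$ and $\varphi(u \otimes -)$ are exact functors $\cU \to \cZ$ --- using that $\varphi$ is exact, that the tensor product on $\cU$ is biexact, and that $z \otimes -$ preserves finite colimits since $z$ is dualizable --- so $\cU_0$, being the equalizer locus of a natural transformation between two exact functors, is a thick subcategory of $\cU$. The hypothesis on $\gamma$ says exactly that $\uno_\cU \in \cU_0$, and hence $\cU_0 \supseteq \langle \uno_\cU \rangle$, where $\langle \uno_\cU \rangle$ denotes the thick subcategory generated by the unit.

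It would then suffice to show $u^\vee \in \langle \uno_\cU \rangle$. This reduces to the assertion that $\langle \uno_\cU \rangle$ is closed under duality, which I expect to be the only mildly subtle point: one invokes that $(-)^\vee$ defines a contravariant equivalence $\cU^\dzbl \simeq (\cU^\dzbl)^\op$ of stable categories and is therefore exact. Granting this, the full subcategory of $\langle \uno_\cU \rangle$ consisting of objects whose dual also lies in $\langle \uno_\cU \rangle$ is thick and contains the (self-dual) unit, and so equals all of $\langle \uno_\cU \rangle$; in particular $u^\vee \in \langle \uno_\cU \rangle \subseteq \cU_0$, so $g$ is an equivalence.
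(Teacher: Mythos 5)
Your proof is correct and takes essentially the same approach as the paper's: both reduce the converse to condition \Cref{condtn.alg.descr.two.objects.only} of \Cref{theorem.algebraic_descr}, observe that perfection implies dualizability, and run a thick-subcategory argument on the locus where the projection formula holds. Your write-up in fact supplies a detail the paper leaves implicit, namely the justification that $\langle \uno_\cU \rangle$ is closed under duality (via exactness of the contravariant duality equivalence on $\cU^\dzbl$ together with self-duality of the unit), which is precisely what licenses the paper's unexplained step from ``$\cU'$ contains $u$'' to ``$\cU'$ contains $u^\vee$''.
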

\begin{proof}
Note that $u$ being perfect implies that it is dualizable. So by condition \Cref{condtn.alg.descr.two.objects.only} of \Cref{theorem.algebraic_descr}, it remains to show that under the stated hypotheses the morphism
\[
 g: z \otimes \varphi(u^{\vee}) \xrightarrow{\alpha} \varphi(u) \otimes\varphi(u^{\vee}) \xrightarrow{\can} \varphi(u \otimes u^{\vee})
\]
is an equivalence. Let $\cU' \subseteq \cU$ be the subcategory consisting of those $w \in \cU$ such that the morphism 
\[
 g_w: z \otimes \varphi(w) \xrightarrow{\alpha} \varphi(u) \otimes\varphi(w) \xrightarrow{\can} \varphi(u \otimes w)
\]
is an equivalence. Note that $\cU'$ is thick and contains $\uno_{\cU}$ by assumption, so it contains $u$. Hence it also contains $u^{\vee}$, i.e., the morphism $g \coloneqq g_{u^{\vee}}$ is an equivalence.
\end{proof}

\section{Stratified categories and their dualizable objects} \label{sec:2}

In this section, we introduce the concept of an $\cO$-monoidal category stratified over a base category $\cB$, which specializes to a laxly $\cO$-monoidal functor of $\cO$-monoidal categories when $\cB = [1]$. We then prove that dualizable objects in the lax limit of a symmetric monoidal $\cB$-stratified category are detected by restriction to the set of individual strata and links.

\subsection{Monoidal structures on stratified categories and their lax limits} \label{sec:lax.limit}

Given a category $\cB$, we write $\locCocart_{\cB}$ for the 2-category whose objects are locally cocartesian fibrations $\cC \to \cB$, whose morphisms are those functors over $\cB$ that preserve locally cocartesian morphisms, and whose 2-morphisms are arbitrary natural transformations. We write $\Fun^\cocart_{/\cB}(-,-)$ for its functor categories.

\begin{definition}\cite[\S A.5]{AMGR}
We write
\[
\locCocart_{\cB}
\xra{\Yo^\lax}
\Fun(\Delta_{/\cB}^\op, \Cat)
\]
for the (restricted) \bit{lax Yoneda embedding}, given by the assignment
\[ (\cC \to \cB) \longmapsto \left( \left( [n] \xra{\varphi} \cB \right) \mapsto \Fun^{\cocart}_{/[n]}(\sd[n], \varphi^{\ast} \cC) \right). \]
By \cite[Observation~A.7.4]{AMGR}, $\Yo^\lax$ is faithful. We write $\locCocart^{\lax}_{\cB}$ for its essential image.
\end{definition}

\begin{notation}
Given an operad $\cO^\otimes$ and a category $\cC$ with finite products, we let $\Mon_{\cO}(\cC)$ denote the category of \bit{$\cO$-monoids in $\cC$}, namely the full subcategory of $\Fun(\cO^\otimes, \cC)$ spanned by those functors $M$ which satisfy the Segal condition: for every object $x \in \cO^\otimes_{\langle n \rangle}$ and every collection of inert morphisms $\{  x \xra{\rho^i} x_i: 1 \leq i \leq n \}$, the induced morphism $M(x) \ra \prod_{1 \leq i \leq n} M(x_i)$ is an equivalence.

By \cite[Proposition 2.1.2.12]{HA}, under unstraightening $\Mon_{\cO}(\Cat)$ is equivalent to the category of $\cO$-monoidal categories and $\cO$-monoidal functors thereof. We then let $\Mon^\lax_{\cO}(\Cat)$ denote the larger category with the same objects but whose morphisms only required to preserve inert morphisms.
\end{notation}

\begin{definition} \label{def:StratifiedGeneral}
An \bit{$\cO$-monoidally $\cB$-stratified category} is an $\cO$-monoid in $\locCocart^{\lax}_{\cB}$.
\end{definition}

Observe that the lax limit functor admits a factorization
\[
\lim^\lax_\cB
\coloneqq
\Fun^\cocart_{/\cB} ( \sd(\cB) , -)
:
\locCocart_\cB
\xra{\Yo^\lax}
\Fun(\Delta_{/\cB}^\op, \Cat) \xrightarrow{\lim} \Cat
\]
through the lax Yoneda embedding, yielding an evident extension to a functor $\locCocart_\cB \xra{\lim^\lax_\cB} \Cat$. By \cite[Observation~A.5.8]{AMGR}, this extension participates in an adjunction
\[ \begin{tikzcd}[column sep=2cm]
\Cat
\arrow[yshift=0.9ex]{r}{\const}
\arrow[leftarrow, yshift=-0.9ex]{r}[yshift=-0.2ex]{\bot}[swap]{\lim^\lax_\cB}
&
\locCocart_\cB
\end{tikzcd} \]
It follows that $\lim^\lax_{\cB}$ sends $\cO$-monoids in $\locCocart^{\lax}_{\cB}$ to $\cO$-monoids in $\Cat$. In fact, since $\const$ also preserves products, we obtain an adjunction
\begin{equation}
\label{lim.lax.adjunction.monoidal}
\adjunct{\const}{\Mon_{\cO}(\Cat)}{\Mon_{\cO}(\locCocart^{\lax}_{\cB})}{\lim^\lax_{\cB}}.
\end{equation}
This generalizes the canonical $\cO$-monoidal structure on the lax limit of a laxly $\cO$-monoidal functor, as we now explain.

First, we show how \Cref{def:StratifiedGeneral} specializes to a laxly $\cO$-monoidal functor of $\cO$-monoidal categories when $\cB = [1]$. For this, we will exploit the observation that $\Cocart^{\lax}_{[1]}$ may be alternatively described as the full subcategory of $\Cat_{/[1]^\op}$ on the \emph{cartesian} fibrations over $[1]^\op$. For greater clarity, we now temporarily reintroduce the distinction between right-lax and left-lax morphisms, so that $\locCocart^{\rlax}_{\cB} \coloneqq  \locCocart^{\lax}_{\cB}$.

\begin{notation} For a category $\cC$, we respectively let $\Cart^{\rlax}_{\cC}, \Cocart^{\llax}_{\cC} \subseteq \Cat_{/\cC}$ be the full subcategories on the cartesian and cocartesian fibrations over $\cC$.
\end{notation}

\noindent Then by \cite[Observation~A.5.7]{AMGR}, we have that $\Cocart^{\rlax}_{[1]} \simeq \Cart^{\rlax}_{[1]^\op}$ (with the equivalence implemented by sending a cocartesian fibration to its dual cartesian fibration), so we are entitled to make the following definition.

\begin{definition}\label{def_strat} Let $\Stratlax_{\cO} \coloneqq \Mon_{\cO}(\Cart^{\rlax}_{[1]^\op})$ be the category of $\cO$-monoidal $[1]$-stratified categories and $\cO$-monoidal right-lax morphisms thereof. Let $\Strat_{\cO} \subseteq \Stratlax_{\cO}$ be the wide subcategory on those $\cO$-monoidal right-lax morphisms that strictly commute (i.e., preserve cartesian morphisms when viewed as morphisms of the underlying objects).

In the case that $\cO = \Comm$ is the commutative operad (i.e., $\Comm^\otimes = \Fin_*$), we simply write $\Stratlax \coloneqq \Stratlax_\Comm$ and $\Strat \coloneqq \Strat_\Comm$.
\end{definition}

\begin{proposition} \label{prop:StratID}
The category $\Strat_{\cO}$ is equivalent to the wide subcategory of $\Ar( \Mon^{\lax}_{\cO}(\Cat))$ spanned by those morphisms whose restrictions to $\{ 0 \}$ and $\{ 1 \}$ are strictly $\cO$-monoidal.
\end{proposition}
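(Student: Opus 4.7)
The plan is to proceed by a careful side-by-side unpacking of both categories using straightening/unstraightening for cartesian fibrations over $[1]^\op$. The starting observation is that, at the level of underlying $1$-categories, $\Cart_{[1]^\op}$ (with morphisms required to preserve cartesian edges) is equivalent to $\Ar(\Cat)$ via the functor sending a cartesian fibration $\cX \to [1]^\op$ to its cartesian transport $\varphi: \cX_1 \to \cX_0$ between the fibers. The enlargement $\Cart^\rlax_{[1]^\op}$ allows morphisms that do not preserve cartesian edges; under the equivalence, such a right-lax morphism between $\varphi$ and $\varphi'$ corresponds to a commutative square of the underlying fiber functors equipped with a (possibly non-invertible) $2$-cell $g\varphi \Rightarrow \varphi' f$. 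The strict morphisms of fibrations — those preserving cartesian edges — then correspond to squares in which this $2$-cell is invertible, i.e., to strictly commuting squares in $\Cat$.

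Next I would pass to $\cO$-monoids on both sides. Given a Segal functor $M: \cO^\otimes \to \Cart^\rlax_{[1]^\op}$, postcomposing with the two fiber functors $(-)_0$ and $(-)_1$ (which both preserve products) yields two honest $\cO$-monoidal categories $\cZ$ and $\cU$; the cartesian transport of $M(x)$ for each $x \in \cO^\otimes$, assembled over all morphisms of $\cO^\otimes$ (including the inert and active morphisms of $\Fin_*$), packages into the data of a laxly $\cO$-monoidal functor $\varphi: \cU \to \cZ$, with the laxness appearing precisely because morphisms in $\Cart^\rlax_{[1]^\op}$ need not preserve cartesian edges. This procedure is evidently reversible: from $\cO$-monoidal categories $\cU, \cZ$ and a lax $\cO$-monoidal $\varphi$ one reconstructs a Segal functor $M$ by forming, at each $x \in \cO^\otimes$, the associated arrow of products of fibers. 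Consequently we obtain a canonical identification between objects of $\Mon_\cO(\Cart^\rlax_{[1]^\op})$ and objects of $\Ar(\Mon^\lax_\cO(\Cat))$.

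For the morphism side, a morphism in $\Mon_\cO(\Cart^\rlax_{[1]^\op})$ is a natural transformation of the underlying Segal functors; at each node $x \in \cO^\otimes$ it yields a right-lax morphism in $\Cart^\rlax_{[1]^\op}$. The subcategory $\Strat_\cO$ demands each such node-level morphism to be strict, hence — by Paragraph 1 — to a strictly commutative square of fiber arrows. The fact that these squares vary naturally in $\cO^\otimes$ and came from a natural transformation of Segal functors forces the fiber functors $f: \cU \to \cU'$ and $g: \cZ \to \cZ'$ to be strictly $\cO$-monoidal, while the strict commutativity of the square at the multiplication node records the equivalence $g\varphi \simeq \varphi' f$ of lax $\cO$-monoidal composites — i.e., a commutative square in $\Mon^\lax_\cO(\Cat)$. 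This identifies $\Strat_\cO$ with precisely the wide subcategory of $\Ar(\Mon^\lax_\cO(\Cat))$ named in the statement.

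The main obstacle is the second paragraph: rigorously identifying an $\cO$-monoid structure on the total fibration with a laxly $\cO$-monoidal structure on $\varphi$, uniformly across all of $\cO^\otimes$. One clean route is to invoke a lax straightening equivalence $\Cart^\rlax_{[1]^\op} \simeq \Fun^\rlax([1], \Cat)$ as a $2$-category (or $\infty$-bicategory), after which passage to $\cO$-monoids is formal; a more hands-on alternative — feasible here because $[1]^\op$ is so small — is to verify the equivalence directly at the level of Segal diagrams by checking that the universal property of cartesian transport reproduces, on the nose, the lax structure maps $\varphi(u) \otimes \varphi(u') \to \varphi(u \otimes u')$ together with all their higher coherences.
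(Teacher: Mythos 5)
You have correctly identified the natural first attack (straighten over $[1]^\op$ to a lax arrow, then pass to $\cO$-monoids), and you have honestly flagged exactly where the argument is incomplete: rigorously identifying $\cO$-monoid structures on the total fibration with laxly $\cO$-monoidal structures on the monodromy functor, uniformly and coherently across $\cO^\otimes$. This is in fact a genuine gap and neither of your proposed patches comes cheaply. The route through a $2$-categorical lax straightening $\Cart^\rlax_{[1]^\op} \simeq \Fun^\rlax([1],\Cat)$ requires a version of straightening for $\infty$-bicategories together with a verification that it is product-preserving so that passage to $\cO$-monoids really is formal — this is a heavy tool the paper deliberately avoids. The ``hands-on'' alternative of checking the coherences of the lax structure maps directly is not a tractable $\infty$-categorical strategy: one cannot exhibit the infinite hierarchy of coherence data by inspection. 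A small but real imprecision in the first paragraph: the cartesian transport of a cartesian fibration over $[1]^\op$ goes from the fiber over $0$ to the fiber over $1$ (i.e.\ $\cU \to \cZ$), not $\cX_1 \to \cX_0$.

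The paper sidesteps the issue you identify by a genuinely different decomposition. Rather than straightening $\Cart^\rlax_{[1]^\op}$ first and then taking $\cO$-monoids, it unstraightens a Segal functor $\cO^\otimes \to \Cart^\rlax_{[1]^\op}$ to a \emph{bifibration} over $[1]^\op \times \cO^\otimes$, and then re-straightens in the orthogonal direction, using the two equivalences
$\Bifib^{\lstr}_{\cA \times \cB} \simeq \Fun(\cB, \Cart^{\rlax}_{\cA})$ and $\Bifib^{\rstr}_{\cA \times \cB} \simeq \Fun(\cA^\op, \Cocart^{\llax}_{\cB})$ (\Cref{prop:BifibStraighteningEquivalences}). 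Both of these are established from ordinary straightening via slicing, so no $2$-categorical or lax straightening theorem is needed. The Segal condition transfers transparently across the shared groupoid $(\Bifib_{[1]^\op\times\cO^\otimes})^\simeq$, and the two morphism restrictions you want ($\Strat_\cO$ consisting of natural transformations preserving cartesian edges, versus commutative squares with strictly $\cO$-monoidal verticals) are recovered as the strict (left-and-right strict) morphisms of bifibrations, read off under the two re-straightenings. If you wish to salvage your proposal along the lines you outline, I would recommend instead adopting this bifibration reorganization, which converts your coherence problem into bookkeeping that the standard straightening theorem already handles.
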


\begin{remark}
\label{rmk.describe.Strat.informally}
We may summarize \Cref{prop:StratID} informally as follows: $\Strat_\cO$ is the category whose objects are laxly $\cO$-monoidal functors $\cU \xra{\varphi} \cZ$ and whose morphisms are commutative squares
\[ \begin{tikzcd}
\cU
\arrow{r}{\varphi}
\arrow{d}[swap]{f}
&
\cZ
\arrow{d}{g}
\\
\cU'
\arrow{r}[swap]{\varphi'}
&
\cZ'
\end{tikzcd} \]
in $\Mon^\lax_\cO(\Cat)$ such that the morphisms $f$ and $g$ lie in the subcategory $\Mon_\cO(\Cat) \subseteq \Mon_\cO^\lax(\Cat)$.
\end{remark}

\begin{remark} \label{rem:MonoidalFunctoriality}
The proof of \Cref{prop:StratID} will additionally show that morphisms in the larger category $\Stratlax_{\cO}$ are given by lax commutative squares
\[ \begin{tikzcd}
\cU \ar{r}{\varphi}[swap, xshift=-0.1cm, yshift=-0.4cm]{\eta \rotatebox{-40}{$\Downarrow$}} \ar{d}[swap]{f}
%\ar[phantom]{rd}{\eta \SWarrow}
& \cZ \ar{d}{g} \\
\cU' \ar{r}[swap]{\varphi'} & \cZ'
\end{tikzcd} \]
in which $f$ and $g$ are strictly $\cO$-monoidal, $\varphi$ and $\varphi'$ are laxly $\cO$-monoidal, and $g \varphi' \xRightarrow{\eta} \varphi' f$ is a laxly $\cO$-monoidal natural transformation.
% (Here the objects in $\Stratlax_{\cO}$ are displayed horizontally.)

Moreover, \emph{laxly} $\cO$-monoidal right-lax morphisms of $\cO$-monoids are given by such lax commutative squares, but where we only require that the vertical morphisms $f$ and $g$ be laxly $\cO$-monoidal. As we will not need this assertion, we leave the details of its proof to the reader.
\end{remark}

To prove \Cref{prop:StratID}, we will employ the formalism of \emph{bifibrations fibered in categories}.\footnote{This differs from the notion studied in \cite[\S 2.4.7]{HTT}, which discusses bifibrations fibered in groupoids.}

\begin{definition} Let $\cA$ and $\cB$ be two categories. We say that an isofibration $\cX \xra{\pi} \cA \times \cB$ is a \bit{bifibration} if the following two conditions hold.

\begin{enumerate}
\item The composite $\pi_\cA \coloneqq \pr_\cA \circ \pi$ is a cartesian fibration and $\pi$ is a morphism of cartesian fibrations over $\cA$, so that all $\pi_\cA$-cartesian morphisms project to equivalences in $\cB$.
\item The composite $\pi_\cB \coloneqq \pr_\cB \circ \pi$ is a cocartesian fibration and $\pi$ is a morphism of cocartesian fibrations over $\cB$, so that all $\pi_\cB$-cocartesian morphisms project to equivalences in $\cA$.
\end{enumerate}

A (\bit{bilax}) \bit{morphism of bifibrations} is simply a morphism in $\Cat_{/\cA \times \cB}$. We say that a morphism
\[ \begin{tikzcd}
\cX
\arrow{rr}{F}
\arrow{rd}[sloped, swap]{\pi}
&
&
\cY
\arrow{ld}[sloped, swap]{\rho}
\\
&
\cA \times \cB
\end{tikzcd}~, \]
is \bit{right-strict} (resp.\! \bit{left-strict}) if it carries $\pi_\cA$-cartesian morphisms to $\rho_\cA$-cartesian morphisms (resp.\! $\pi_\cB$-cocartesian morphisms to $\rho_\cB$-cocartesian morphisms),\footnote{In other words, right-strict morphisms of bifibrations are left-lax, and vice-versa.} and \bit{strict} if it is both left and right strict.

We write $\Bifib_{\cA \times \cB} \subseteq \Cat_{/ \cA \times \cB}$ for the full subcategory spanned by the bifibrations, and we write $\Bifib^{\lstr}_{\cA \times \cB}$ and $\Bifib^{\rstr}_{\cA \times \cB}$ for the wide subcategories of $\Bifib_{\cA \times \cB}$ on left-strict and right-strict morphisms.
\end{definition}

\begin{proposition} \label{prop:BifibStraighteningEquivalences} We have equivalences of categories
$$ \Bifib^{\lstr}_{\cA \times \cB} \simeq \Fun(\cB, \Cart^{\rlax}_{\cA}) \qquad \text{and} \qquad \Bifib^{\rstr}_{\cA \times \cB} \simeq \Fun(\cA^\op, \Cocart^{\llax}_{\cB}) \:.$$
\end{proposition}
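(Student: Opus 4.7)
My plan is to deduce both equivalences from ordinary straightening, by unpacking the bifibration data one direction at a time; I will focus on the first equivalence $\Bifib^{\lstr}_{\cA \times \cB} \simeq \Fun(\cB, \Cart^\rlax_\cA)$, as the second is completely dual (swap $\cA \leftrightarrow \cB$ and cartesian $\leftrightarrow$ cocartesian). The underlying idea: a bifibration is a cocartesian fibration over $\cB$ whose fibers happen to be cartesian fibrations over $\cA$, and left-strictness is exactly the condition that selects honest natural transformations after straightening along $\cB$.

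For the forward direction, let $\pi : \cX \to \cA \times \cB$ be a bifibration. Condition~(2) makes $\pi_\cB = \pr_\cB \circ \pi$ into a cocartesian fibration, and the left-strict condition on morphisms means this assignment defines a functor $\Bifib^{\lstr}_{\cA \times \cB} \to \Cocart_\cB$. Composing with straightening $\Cocart_\cB \simeq \Fun(\cB, \Cat)$ gives a functor $\hat F: \cB \to \Cat$ with $\hat F(b) \simeq \cX_b$. The remaining datum of $\pi_\cA$ lifts this to $F: \cB \to \Cat_{/\cA}$: each fiber inherits a structure map to $\cA$, and the transition functors $F(b \to b')$ are maps over $\cA$ because condition~(2) forces $\pi_\cB$-cocartesian lifts to project to equivalences in $\cA$. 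Condition~(1) now says exactly that each fiber $\cX_b$ is a cartesian fibration over $\cA$: cartesian lifts in $\cX$ of arrows in $\cA$ exist since $\pi_\cA$ is a cartesian fibration and stay in the given $\cB$-fiber since $\pi$ is a morphism of cartesian fibrations over $\cA$. The transition functors $F(b \to b')$ need not preserve these cartesian morphisms—nothing in the bifibration axioms forces it—so $F$ naturally takes values in $\Cart^\rlax_\cA$, not in $\Cart_\cA$. A left-strict morphism $\cX \to \cY$ restricts on each $b$-fiber to a functor $\cX_b \to \cY_b$ over $\cA$, and again there is no cartesian-preservation; these fit into a natural transformation in $\Fun(\cB, \Cart^\rlax_\cA)$ because cocartesian morphisms in the $\cB$-direction are preserved.

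For the inverse, given $G: \cB \to \Cart^\rlax_\cA$ one unstraightens along $\cB$ to produce a cocartesian fibration $\cY \to \cB$ together with a map $\cY \to \cA$ over $\cB$, hence $\cY \to \cA \times \cB$; the main verification is that this is a bifibration, i.e.\ that the fiberwise cartesian structures on the $G(b)$ assemble into a global cartesian fibration $\cY \to \cA$ even though the transition functors are only lax. This is precisely where one uses that laxness of $G(b \to b')$ over $\cA$ is compatible with cocartesian transport in $\cY$: a cartesian arrow in $\cY_b$ over $\alpha: a' \to a$ remains cartesian in $\cY$ because any competing lift factors through its $\cB$-fiber after applying the (locally) universal property, and cocartesian arrows in $\cB$ give equivalences on $\cA$-fibers. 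This is the one step where some care is required; the rest is formal, and checking that the two constructions are mutually inverse reduces to the corresponding statement for the straightening equivalence over $\cB$ together with the unambiguous extraction of the fiberwise cartesian structure. Finally, the second equivalence follows by applying the same argument with $\cA$ and $\cB$ interchanged, using that a cartesian fibration over $\cA$ straightens to a functor $\cA^\op \to \Cat$.
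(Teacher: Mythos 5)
Your approach is in the same family as the paper's — straighten along one factor and check when the fibers carry the right fibration structure over the other factor — but you miss the key simplification that makes the paper's argument clean. The paper does not construct two functors and verify they are inverse; instead it observes that by passage to overcategories, ordinary straightening already furnishes an equivalence
\[
\Theta : (\Cart_{\cA})_{/(\cA \times \cB)} \;\simeq\; \Fun(\cA^\op, \Cat)_{/\const_{\cB}} \;\simeq\; \Fun(\cA^\op, \Cat_{/\cB})\,,
\]
and the whole proof then reduces to identifying which \emph{full} subcategory on each side corresponds: bifibrations with right-strict morphisms on the left, functors valued in $\Cocart^{\llax}_{\cB}$ on the right. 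Nothing needs to be checked about inverseness, naturality, or lifting $\hat F$ to $\Cat_{/\cA}$ — it all falls out of $\Theta$ being an equivalence. Your plan of building explicit functors in both directions and asserting that ``checking that the two constructions are mutually inverse reduces to the corresponding statement for the straightening equivalence'' is precisely the coherence bookkeeping the paper's trick is designed to avoid, and in an $\infty$-categorical setting that bookkeeping is not free. You also sketch the crucial lemma — that a cartesian arrow in a single fiber $\cY_b$ remains cartesian in $\cY$ — rather informally (``any competing lift factors through its $\cB$-fiber''); the paper pins this down by showing the fiberwise cocartesian lift is a $\pi$-limit/colimit via \cite[Corollary 4.3.1.15]{HTT} and then upgrading to a $\pi_\cB$-cocartesian morphism via \cite[Proposition 4.3.1.5(2)]{HTT}. (The fact that you prove the first equivalence and deduce the second by duality, whereas the paper does the opposite, is immaterial.) In short: right idea, but you should prove the statement by restricting the already-established equivalence $\Theta$ to the appropriate full subcategories, rather than by an explicit two-sided construction.
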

\begin{proof}
We will prove the second equivalence and then deduce the first by taking opposites. First note that by passage to overcategories, the usual straightening equivalence $\Cart_{\cA} \simeq \Fun(\cA^\op, \Cat)$ becomes
$$\Theta: (\Cart_{\cA})_{/(\cA \times \cB)} \simeq \Fun(\cA^\op, \Cat)_{/\const_{\cB}} \simeq \Fun(\cA^\op, \Cat_{/\cB}).$$
Given a functor $\cA^\op \xra{F} \Cat_{/\cB}$, let $\cX \xra{\pi} \cA \times \cB$ be the corresponding object under the equivalence $\Theta$. We then see that $F$ is valued in cocartesian fibrations over $\cB$ if and only if the pullback $\cX_{a} \xra{\pi_a} \{ a \} \times \cB$ is a cocartesian fibration for all $a \in \cA$. We claim that this latter condition holds if and only if $\pi$ is a bifibration. For the `if' direction, supposing that $\cX \xra{\pi_\cB} \cB$ is a cocartesian fibration whose cocartesian morphisms project to equivalences in $\cA$ ensures that the restriction $\cX_a \subseteq \cX \to \cB$ is a cocartesian fibration for all $a \in \cA$. % (since $\pi$ is always assumed to be an isofibration).
For the `only if' direction, given a morphism $b \xra{e} b'$ in $\cB$ and an object $x \in \cX$ over $b$, we need to produce a $\pi_\cB$-cocartesian lift of $e$, i.e., we need to extend $x$ to a $\pi_\cB$-colimit covering $e$. Let $a = \pi_\cA(x)$ and consider the commutative diagram
\[ \begin{tikzcd} \{0\} \ar{r}{ x }  \ar{d} & \cX_a \ar{r} \ar{d}{\pi|_a} & \cX \ar{d}{\pi} \\
{[1]} \ar{r}[swap]{ e } \ar[dashed]{ru}[sloped]{f} & \{ a \} \times \cB \ar{r}  & \cA \times \cB
\end{tikzcd}~. \]
Since $\pi|_a$ is a cocartesian fibration by assumption, there exists a map $f$ which is a $\pi|_a$-colimit of $x$. By \cite[Corollary 4.3.1.15]{HTT}, $f$ viewed as a map into $\cX$ is then a $\pi$-colimit. By \cite[Proposition 4.3.1.5(2)]{HTT}, since $\pi \circ f$ is a $\pr_\cB$-cocartesian morphism (as a degenerate morphism in the $\cA$-factor), it follows that $f$ is a $\pi_\cB$-colimit. We conclude that $\pi_\cB$ is a cocartesian fibration. In fact, since we exhibited cocartesian lifts in $\cX$ which all project to degeneracies in $\cA$, by the stability of cocartesian morphisms under equivalence it follows that all $\pi_\cB$-cocartesian morphisms in $\cX$ project to equivalences in $\cA$. This shows that $\pi$ is a bifibration.
The category $\Bifib^{\rstr}_{\cA \times \cB}$ viewed as the full subcategory of $(\Cart_{\cA})_{/(\cA \times \cB)}$ on the bifibrations is thus seen to be equivalent under $\Theta$ to $\Fun(\cA^\op, \Cocart^{\llax}_{\cB})$.
\end{proof}

\begin{proof}[Proof of \Cref{prop:StratID}] Using the equivalences of \Cref{prop:BifibStraighteningEquivalences} we get equivalences of groupoids
$$
\Fun(\cB, \Cart^{\rlax}_{\cA})^{\simeq} \simeq (\Bifib^{\lstr}_{\cA \times \cB})^{\simeq} \simeq (\Bifib^{\rstr}_{\cA \times \cB})^{\simeq} \simeq \Fun(\cA^\op, \Cocart^{\llax}_{\cB})^{\simeq} \:.
$$
Then if we let $\cB = \cO^\otimes$ and $\cA = [1]^\op$, under this equivalence we see that $\cO$-monoids in $\Cart^{\rlax}_{[1]^\op}$ correspond to functors $[1] \to \Mon_{\cO}^\lax(\Cat)$. 

We then note that under the first equivalence of \Cref{prop:BifibStraighteningEquivalences}, $\Strat_{\cO}$ identifies as the wide subcategory of $\Stratlax_{\cO}$ on the strict morphisms of bifibrations over $[1]^\op \times \cO^\otimes$. Thus, using the second equivalence of \Cref{prop:BifibStraighteningEquivalences}, we see that $\Strat$ is equivalent to the wide subcategory of $\Ar(\Mon_{\cO}^\lax(\Cat))$ spanned by those morphisms whose restrictions to $\{0\}$ and $\{1\}$ are strictly $\cO$-monoidal, as claimed.
\end{proof}

Under the equivalence $\Cocart^{\rlax}_{[1]} \simeq \Cart^{\rlax}_{[1]^\op}$, the functor $\lim^{\lax}_{[1]}$ identifies with $\Sect(-) \coloneqq  \Fun_{/[1]^\op}([1]^\op,-)$. We next identify the induced functor on $\cO$-monoids
\[ \Stratlax_{\cO} = \Mon_{\cO}(\Cart^{\rlax}_{[1]^\op}) \xra{\Sect} \Mon_{\cO}(\Cat) \]
in more familiar terms. For convenience, we will suppose the operad $\cO^\otimes$ is reduced.

We also begin to use quasicategories per se. Namely, in the proof of the next proposition, we use Lurie's pairing construction \cite[Corollary 3.2.2.13]{HTT}: given $K, L \in \sSet_{/S}$, we write $\widetilde{\Fun}_S(K,L) \in \sSet_{/S}$ for the simplicial set over $S$ defined by the formula
\[ \Hom_{/S}(A, \widetilde{\Fun}_S(K,L)) \cong \Hom_{/S}(A \times_S K, L)~. \]
If $K \to S$ is a cartesian fibration and $L \to S$ is a cocartesian fibration, then $\widetilde{\Fun}_S(K,L) \to S$ is a cocartesian fibration with fibers $\Fun(K_s,L_s)$, such that for a morphism $s \xra{f} t$ in $S$, the pushforward functoriality $f_!$ is such that the diagram
\[ \begin{tikzcd}
K_s \ar{r}{F} & L_s \ar{d}{f_!} \\
K_t \ar{r}[swap]{f_! F} \ar{u}{f^\ast} & L_t
\end{tikzcd} \]
commutes.

\begin{proposition} \label{prop:MonoidalLaxLimitID}
Suppose $\cO^\otimes$ is a reduced operad, let $\cU \xra{\varphi} \cZ$ be a laxly $\cO$-monoidal functor, and let $\cM \to [1]^\op$ be its unstraightening to a cartesian fibration. Endow $\cM$ with the structure of an $\cO$-monoid in $\Cart^{\rlax}_{[1]^\op}$ under the equivalence of \Cref{prop:StratID}, and let $\Sect(\cM)^\otimes \to \cO^\otimes$ be the resulting $\cO$-monoidal structure on its category of sections. Then we have a pullback square of operads fibered over $\cO^\otimes$
\[ \begin{tikzcd}
\Sect(\cM)^\otimes \ar{r} \ar{d} & (\cZ^\otimes)^{\Delta^1} \ar{d}{\ev_1} \\
\cU^\otimes \ar{r}[swap]{\varphi} & \cZ^\otimes
\end{tikzcd} \:, \]
identifying $\Sect(\cM)^\otimes$ with the canonical $\cO$-monoidal structure of \Cref{obs:RecollementsAreLaxLimits}.
\end{proposition}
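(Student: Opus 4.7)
The plan is to establish the pullback square by exhibiting both sides as representing the same universal property in $\Op_{/\cO^\otimes}$, carried out in three steps. The overall structure is: construct a comparison map, verify it is an equivalence by corepresentability, and address operadic (as opposed to merely $\cO$-monoidal) functoriality.

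First I would construct the comparison morphism. Using the bifibrational unstraightening of the $\cO$-monoid $\cM$ to a bifibration $\cP \to [1]^\op \times \cO^\otimes$, the two evaluations at the objects of $[1]^\op$ together with the underlying-arrow data of a section produce morphisms of operads over $\cO^\otimes$: namely, $\Sect(\cM)^\otimes \to \cU^\otimes$ (restriction to the $\cU$-stratum) and $\Sect(\cM)^\otimes \to (\cZ^\otimes)^{\Delta^1}$ (sending a section to its underlying arrow $z \to \varphi(u)$ in $\cZ$). By inspection, these assemble into a commutative square over $\cZ^\otimes$, thereby yielding a canonical comparison morphism $\Sect(\cM)^\otimes \to \cU^\otimes \times_{\cZ^\otimes} (\cZ^\otimes)^{\Delta^1}$.

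Second I would verify the comparison is an equivalence by corepresentability. For any $\cO$-monoidal category $\cC$, the adjunction \Cref{lim.lax.adjunction.monoidal} combined with \Cref{prop:StratID} and \Cref{rem:MonoidalFunctoriality} identifies $\Map_{\Mon_\cO(\Cat)}(\cC, \Sect(\cM))$ with the category of triples $(f, g, \eta)$ where $f \colon \cC \to \cU$ and $g \colon \cC \to \cZ$ are strictly $\cO$-monoidal and $\eta \colon g \Rightarrow \varphi f$ is a laxly $\cO$-monoidal natural transformation (namely, the data of lax commutative squares between $\cC \xra{\id} \cC$ and $\cU \xra{\varphi} \cZ$). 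Separately, the universal property of the pullback in $\Op_{/\cO^\otimes}$ together with the standard identification of strictly $\cO$-monoidal functors $\cC \to (\cZ^\otimes)^{\Delta^1}$ with natural transformations of $\cO$-monoidal functors $\cC \to \cZ$ classifies precisely the same triples $(f, g, \eta)$. Unwinding definitions shows that both identifications are implemented by post-composition with the comparison morphism, and the Yoneda lemma in $\Mon_\cO(\Cat)$ then delivers an equivalence of underlying $\cO$-monoidal categories. To upgrade to an equivalence of operads over $\cO^\otimes$, I would use the symmetric monoidal envelope adjunction $\Env \dashv \fgt$ to reduce the analogous assertion about laxly $\cO$-monoidal test functors back to the strict case.

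As a fiberwise sanity check, the Segal condition for $\cM$ together with the pullback description of $\Sect(\cM)$ in \Cref{obs:RecollementsAreLaxLimits} shows that both operads have fiber $\Sect(\cM)^{\times n}$ over $\langle n \rangle \in \cO^\otimes$, so the nontrivial content lies in matching the multi-morphism structures. This is the step I expect to be the main obstacle: one must check that the pushforward functoriality on $\Sect(\cM)^\otimes$ induced by the $\cO$-monoid structure on the bifibration $\cP$ coincides with the composition of the pointwise operad structure on $(\cZ^\otimes)^{\Delta^1}$ with the laxly $\cO$-monoidal $\varphi^\otimes$. This requires a careful unwinding of Lurie's pairing construction $\widetilde{\Fun}$ applied to $\cP$ over $\cO^\otimes$, along with the laxness data of the $\cO$-monoid structure on $\cP$ reconstructed via \Cref{prop:BifibStraighteningEquivalences}.
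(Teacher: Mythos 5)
Your approach is genuinely different from the paper's. The paper constructs the two legs of the square explicitly (the map to $(\cZ^\otimes)^{\Delta^1}$ via a right-strict morphism of bifibrations $\cM^\otimes \to [1]^\op \times \cZ^\otimes$ and Lurie's $\widetilde{\Fun}$-pairing), observes that the resulting square is a pullback of \emph{underlying categories}, and then concludes by an argument at the level of cocartesian fibrations: both sides are cocartesian over $\cO^\otimes$, and the comparison preserves cocartesian edges because these are jointly detected by the two $\cO$-monoidal projections $j^*, i^*$ to $\cU^\otimes$ and $\cZ^\otimes$; the Segal condition then upgrades the underlying-category equivalence to a fiberwise equivalence. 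You instead propose to avoid the direct verification by corepresentability in $\Mon_\cO(\Cat)$ and Yoneda. That route is attractive in principle (the left side is tautologically corepresentable via the adjunction \ref{lim.lax.adjunction.monoidal} and \Cref{rem:MonoidalFunctoriality}), but it front-loads the same hard point that the paper dispatches via joint conservativity.

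The gap is in your computation of $\Map_{\Mon_\cO(\Cat)}(\cC, \cU^\otimes \times_{\cZ^\otimes} (\cZ^\otimes)^{\Delta^1})$. You invoke ``the standard identification of strictly $\cO$-monoidal functors $\cC \to (\cZ^\otimes)^{\Delta^1}$ with natural transformations of $\cO$-monoidal functors $\cC \to \cZ$.'' But a \emph{strictly} $\cO$-monoidal map $\cC^\otimes \to \cU^\otimes \times_{\cZ^\otimes} (\cZ^\otimes)^{\Delta^1}$ does \emph{not} project to a strictly $\cO$-monoidal map $\hat g \colon \cC^\otimes \to (\cZ^\otimes)^{\Delta^1}$: the constraint is only that $\pr_{\cU^\otimes}$ and $\ev_0 \circ \pr_{(\cZ^\otimes)^{\Delta^1}}$ of it are strict, while $\ev_1 \circ \hat g = \varphi \circ f$ is in general only lax (since $\varphi$ is). Indeed, as the paper notes, the map $\Sect(\cM)^\otimes \to (\cZ^\otimes)^{\Delta^1}$ ``isn't $\cO$-monoidal unless $\varphi$ is.'' So the pullback's universal property in $\Op_{/\cO^\otimes}$ combined with your stated identification does not by itself produce the triples $(f,g,\eta)$; you must first know what the cocartesian morphisms of the pullback operad over $\cO^\otimes$ are, and that they are detected by $j^*$ and $i^*$. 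That fact is precisely what the paper's ``joint conservativity'' step establishes (or equivalently is part of the content of the reference \cite[Def.~1.21]{QS} behind \Cref{obs:RecollementsAreLaxLimits}). Relatedly, the final $\Env \dashv \fgt$ upgrade is unnecessary: $\Mon_\cO(\Cat) \to \Op_{/\cO^\otimes}$ is a (non-full) subcategory inclusion preserving and reflecting equivalences, so a $\Mon_\cO(\Cat)$-equivalence is already an $\Op_{/\cO^\otimes}$-equivalence. Your closing remark correctly senses that the ``main obstacle'' lies near the pairing/laxness interface, but it locates it slightly off: the issue is not unwinding $\widetilde{\Fun}$ per se, but identifying the cocartesian edges of the pullback.
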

\begin{proof} In the statement, $(\cZ^\otimes)^{\Delta^1}$ denotes the cotensor in operads over $\cO^\otimes$, which defines the pointwise $\cO$-monoidal structure on $\Ar(\cZ)$. First, let $\cM^\otimes \to [1]^\op \times \cO^\otimes$ be the bifibration corresponding to $\varphi$ under the second equivalence of \Cref{prop:BifibStraighteningEquivalences} with $\cA = [1]^\op$ and $\cB = \cO^\otimes$. Let $[1]^\op \times \cO^\otimes \xra{p} \cO^\otimes$ be the projection. We observe that the formation of relative sections
$$p_{\ast} \cM^\otimes \coloneqq  \widetilde{\Fun}_{\cO^\otimes}([1]^\op \times \cO^\otimes, \cM^\otimes)$$
computes the induced $\cO$-monoidal structure on $\Sect(\cM)$.\footnote{A routine application of \cite[Theorem~B.4.2]{HA} shows that $p_{\ast} \cM^\otimes$ is indeed an operad over $\cO^\otimes$.}

We then have a right-strict morphism $\cM^\otimes \to [1]^\op \times \cZ^\otimes$ of bifibrations over $[1]^\op \times \cO^\otimes$ determined under the second equivalence of \Cref{prop:BifibStraighteningEquivalences} by the commutative square of operads
\[ \begin{tikzcd}
\cU^\otimes \ar{r}{\varphi} \ar{d}[swap]{\varphi} & \cZ^\otimes \ar{d}{\id} \\
\cZ^\otimes \ar{r}[swap]{\id} & \cZ^\otimes.
\end{tikzcd} \]
This specifies a functor $\cM^\otimes \to \cZ^\otimes$ over $\cO^\otimes$ and hence a functor $p_{\ast} \cM^\otimes \xra{F} (\cZ^\otimes)^{\Delta^1}$ over $\cO^\otimes$ by postcomposition, using the identification $\Delta^1 \cong [1]^\op$. Over $\cO \simeq \ast$, this restricts to the functor $\Sect(\cM) \to \Ar(\cZ)$ that sends a section $[z \to u]$ to $[z \to \varphi(u)]$. It's also straightforward to check that $F$ is a morphism of operads and $p_{\ast} \cM^\otimes \xra{i^* = \ev_0 \circ F} \cZ^\otimes$ is $\cO$-monoidal (though $F$ isn't $\cO$-monoidal unless $\varphi$ is).

On the other hand, the inclusion $\{1\} \times \cO^\otimes \subseteq [1]^\op \times \cO^\otimes$ defines an $\cO$-monoidal functor $p_{\ast} \cM^\otimes \xra{j^*} \cU^\otimes$ that restricts over $\cO$ to the functor $\Sect(\cM) \to \cU$ given by evaluation at $1$. We now have our desired commutative square of operads, which is also a pullback square of underlying categories. Furthermore, both $\Sect(\cM)^\otimes$ and $(\cZ^\otimes)^{\Delta^1} \times_{\cZ^\otimes} \cU^\otimes$ are $\cO$-monoidal categories (in fact, the projections to $\cU^\otimes$ are cocartesian fibrations in both cases), and using joint conservativity of the two $\cO$-monoidal functors $i^*, j^*$ to $\cU^\otimes$ and $\cZ^\otimes$, we see that the comparison map $\Sect(\cM)^\otimes \to (\cZ^\otimes)^{\Delta^1} \times_{\cZ^\otimes} \cU^\otimes$ is $\cO$-monoidal and hence an equivalence as such.
\end{proof}

\begin{remark} \label{rem:StrongerUniversalProperty}
\Cref{prop:MonoidalLaxLimitID} shows that the lax limit of a laxly $\cO$-monoidal functor satisfies a universal property with respect to \emph{laxly} $\cO$-monoidal functors, which is stronger than that encoded by the adjunction \Cref{lim.lax.adjunction.monoidal}. This universal property corresponds to that of the lax limit when taken in the $2$-category $\Mon_{\cO}^{\lax}(\Cat)$, and in fact one always has an adjunction
\[ \adjunct{\const}{\Mon_{\cO}^{\lax}(\Cat)}{\Mon_{\cO}^{\lax}(\locCocart^{\lax}_{\cB})}{\lim^{\lax}_{\cB}} \:. \]
In conjunction with the second half of \Cref{rem:MonoidalFunctoriality}, we then see that a lax commutative square of laxly $\cO$-monoidal functors of $\cO$-monoidal categories
\[ \begin{tikzcd}
\cU \ar{r}{\varphi}[swap, xshift=-0.1cm, yshift=-0.4cm]{\eta \rotatebox{-40}{$\Downarrow$}} \ar{d}[swap]{f}
& \cZ \ar{d}{g} \\
\cU' \ar{r}[swap]{\varphi'} & \cZ'
\end{tikzcd} \]
furnishes a laxly $\cO$-monoidal functor $\cX \to \cX'$ between lax limits. With a bit more work, one may understand this functoriality in terms of the explicit formulas of \cite[\S 2.4]{ES} in the case $\cO = \Comm$ (cf. the discussion prior to \cite[Theorem~2.64]{ES}).
\end{remark}

\subsection{Understanding monoidal structures on lax limits in general} \label{sec:2.2}

To understand $\cO$-monoidal structures on lax limits over $\cB$, we would like to reduce to considering lax limits over $[1]$ and full subcategories $\cB_0, \cB_1 \subseteq \cB$ forming a sieve-cosieve decomposition of $\cB$ by means of the following strategy. Namely, suppose we have a functor $\cB \xra{\pi} [1]$ (which is equivalent data to such a decomposition of $\cB$). Then we have a factorization
\[ \const: \Cat \xrightarrow{\const} \Cocart^{\lax}_{[1]} \xrightarrow{\pi^{\ast}} \locCocart^{\lax}_{\cB} \]
(cf. \Cref{obs:strategy} below), and we would like to factor $\lim^{\lax}_{\cB}$ as a composition of corresponding right adjoints
\[ \begin{tikzcd}
\locCocart^{\lax}_{\cB} \ar[dotted]{r}{\pi^{\lax}_{\ast}} & \Cocart^{\lax}_{[1]} \ar{r}{\lim^\lax_{[1]}} & \Cat 
\end{tikzcd}~, \]
where we first take ``(pointwise) lax right Kan extension along $\pi$'' and then the lax limit over $[1]$. Furthermore, $\pi^{\lax}_{\ast}$ should have the base-change property
\[ \begin{tikzcd}
\locCocart^{\lax}_{\cB_0} \ar{d}[swap]{\lim^{\lax}_{\cB_0}} & \locCocart^{\lax}_{\cB} \ar{r}{\iota_1^\ast} \ar{l}[swap]{\iota_0^\ast} \ar[dotted]{d}{\pi^{\lax}_{\ast}} & \locCocart^{\lax}_{\cB_1} \ar{d}{\lim^{\lax}_{\cB_1}} \\
\Cat & \Cocart^{\lax}_{[1]} \ar{r}[swap]{\iota^\ast_1} \ar{l}{\iota^\ast_0} & \Cat
\end{tikzcd} \]
However, in this generality one only expects $\pi^{\lax}_{\ast}$ to be partially defined, since one needs conditions on $\cC \to \cB$ for the monodromy functor $\lim^\lax_{\cB_0}(\cC|_{\cB_0}) \xra{\varphi} \lim^\lax_{\cB_1}(\cC|_{\cB_1})$ to exist.\footnote{Even if we attempted to invoke the adjoint functor theorem to construct $\pi_*^{\lax}$, we would then need to impose additional conditions so as to validate the base-change property. This is analogous to the situation for ordinary pointwise Kan extensions.}

In \cite[\S 2.2]{QS}, the third author determined the relevant conditions for existence and gave a direct construction of $\pi_{\ast}^{\lax}$ at the level of objects in terms of its monodromy functor \cite[Proposition 2.27]{QS}. For example (cf. \cite[Remark 2.28]{QS}), if $\cB = P$ is a poset, then for every $x \in P_1$, if we let $J_x$ be the poset whose objects are strings $[a_0 < ... < a_n < x]$, $n \geq 0$ with $a_i \in P_0$ and whose morphisms are string inclusions, then the value of $\varphi(\sd(P_0) \xra{\sigma} \cC|_{P_0})$ on $x \in \sd(P_1)$ is computed as the limit of the functor
\[ J_x \xlongra{f_x} \cC_x \:, \qquad [a_0 \xra{\alpha_0} ... \xra{\alpha_{n-1}} a_n \xra{\alpha_n} x] \longmapsto (\alpha_n)_! ... (\alpha_0)_! \sigma(a_0) \:, \]
provided these limits exist and are preserved by pushforward along all monodromy functors $\cC_x \xra{\beta_!} \cC_y$ over morphisms $x \xra{\beta} y$ in $P_1$. For example, if we suppose that $P$ is finite, all fibers of $\cC \to P$ admit finite limits, and all pushforward functors are left-exact, then the lax right Kan extension $\pi_*^{\lax}(\cC)$ exists for any functor $P \xra{\pi} [1]$. However, \cite[\S 2.2]{QS} did not establish the functoriality of $\pi_{\ast}^{\lax}$ in \emph{lax} morphisms over $\cB$, which is essential for the passage to $\cO$-monoids.

The situation for us becomes more favorable when $\cB_0 = \pt$ (so that $\cB \simeq \cB_1^{\lhd}$ and $\pi$ is the structure map of the join), since then the relevant indexing categories are all contractible and thus $\pi_*^{\lax}(\cC)$ exists \emph{unconditionally}. For our application to the proof of \Cref{thm:GeneralDualizability}, this case suffices; in fact, we will only need to construct $\pi_*^{\lax}$ when $\cB = [n]$ and $\cB_0 = \{ 0 \}$. We now outline our strategy for doing this as follows.

\begin{observation} \label{obs:strategy}
Let $\cB \xra{\pi} \cB'$ be a functor of categories. This determines an adjunction
\[
\adjunct{\pi \circ (-) \eqqcolon \pi_\circ }{\Cat_{/\cB}}{\Cat_{/\cB'}}{ \pi^\circ \coloneqq \cB \times_{\cB'} (-) }
~,
\]
which is opposite to an adjunction
\[
\adjunct{(\pi^\circ)^\op}{\Cat_{/\cB'}^\op}{\Cat_{/\cB}^\op}{(\pi_\circ)^\op}
~.
\]
Applying $\Fun(-,\Cat)$, we obtain an adjunction
\[
\adjunctbig{(-) \circ (\pi_\circ)^\op \eqqcolon \pi^*}{\Fun(\Cat_{/\cB'}^\op,\Cat)}{\Fun(\Cat_{/\cB}^\op,\Cat)}{\pi_* \coloneqq (-) \circ (\pi^\circ)^\op}
~.
\]
So explicitly, given a presheaf $ \Cat^\op_{/\cB} \xra{\cF} \Cat$, the presheaf $\Cat^\op_{/\cB'} \xra{\pi_* \cF} \Cat$ is given by the formula
\[
(\pi_* \cF)(K)
\coloneqq
\cF ( \cB \times_\cB' K)
~.
\]

We would like to restrict to the full subcategories $\Delta_{/\cB} \xhookrightarrow{i_\cB} \Cat_{/\cB}$ and $\Delta_{/\cB'} \xhookrightarrow{i_{\cB'}} \Cat_{/\cB'}$. Note the factorization
\[ \begin{tikzcd}
\Cat_{/\cB}
\arrow{r}{\pi_\circ}
&
\Cat_{/\cB'}
\\
\Delta_{/\cB}
\arrow[hook]{u}
\arrow[dashed]{r}[swap]{\pi_\circ'}
&
\Delta_{/\cB'}
\arrow[hook]{u}
\end{tikzcd}~. \]
Then, we obtain an adjunction
\[
\adjunctbigger{ (-) \circ (\pi_\circ')^\op \simeq (i_\cB)^* \circ \pi^* \circ (i_{\cB'})_* \eqqcolon \pi^*}{\Fun(\Delta_{/\cB'}^\op,\Cat)}{\Fun(\Delta_{/\cB}^\op,\Cat)}{\pi_* \coloneqq (i_{\cB'})^* \circ \pi_* \circ (i_\cB)_* }
~.
\]
In particular, we have that
\[ \pi_{\ast} \Yo^\lax(\cC \to \cB)([m] \xra{\varphi} \cB') = \Fun^{\cocart}_{/[m] \times_{\cB'} \cB}(\sd([m] \times_{\cB'} \cB),  \overline{\varphi}^\ast \cC)~, \]
where $\overline{\varphi}$ denotes the pullback of $\varphi$ by $\pi$.\footnote{Here we use that $\Yo^\lax(\cC \to \cB)$ extends over $\Cat_{/\cB}^\op$ by the same formula
\[ (\cC \to \cB) \longmapsto \left( (K \xra{\varphi} \cB) \mapsto \Fun^{\cocart}_{/K}(\sd(K), \varphi^{\ast} \cC) \right) \]
(see \cite[Observation~A.5.5]{AMGR}).
Note that in this generality, we use \cite[Observation~A.3.9]{AMGR} to see that $\sd([n]) \xra{\max} [n]$ extends to a locally cocartesian fibration for all categories, but we will only be concerned with the case where $\pi$ is a functor of posets, in which case the usual definition of the subdivision poset is consistent with the abstract definition via left Kan extension by \cite[Lemma~A.3.7]{AMGR}.}

Next, we note that by definition we have a commutative square
\[ \begin{tikzcd}
\locCocart_{\cB'} \ar{r}{\Yo^\lax} \ar{d}[swap]{\pi^\ast} & \Fun(\Delta^\op_{/\cB'}, \Cat) \ar{d}{\pi^\ast}  \\
\locCocart_{\cB} \ar{r}[swap]{\Yo^\lax} & \Fun(\Delta^\op_{/\cB}, \Cat)
\end{tikzcd} \]
and hence $\pi^\ast$ extends to a functor $\locCocart^{\lax}_{\cB'} \xra{\pi^*} \locCocart^{\lax}_{\cB}$. Let $(\locCocart^\lax_{\cB})' \subseteq \locCocart^\lax_{\cB}$ be the full subcategory on those locally cocartesian fibrations $\cC \to \cB$ for which $\pi_{\ast} \Yo^\lax(\cC \to \cB)$ is equivalent to $\Yo^\lax(\cC' \to \cB')$ for some locally cocartesian fibration $\cC' \to \cB'$. Then $\pi^\ast$ admits a partially defined right adjoint
\[ (\locCocart^\lax_{\cB})' \xra{\pi^\lax_{\ast}} \locCocart^\lax_{\cB'} \]
computed by $\pi_{\ast}$ under the image of $\Yo^\lax$.
\end{observation}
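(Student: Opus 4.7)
The claim to establish is that $\pi^\ast$ admits a partially defined right adjoint $\pi^\lax_\ast$ on the subcategory $(\locCocart^\lax_{\cB})' \subseteq \locCocart^\lax_{\cB}$ consisting of those $\cC \to \cB$ for which $\pi_\ast \Yo^\lax(\cC \to \cB)$ lies in the essential image of $\Yo^\lax_{\cB'}$, and that this partial right adjoint is computed by $\pi_\ast$ under $\Yo^\lax$. The proof is essentially a formal transport of the existing adjunction $\pi^\ast \dashv \pi_\ast$ on presheaves across the fully faithful lax Yoneda embeddings, so the strategy is to assemble the pieces in the right order.

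The plan is as follows. First, I would invoke faithfulness of $\Yo^\lax$ (cited from \cite{AMGR}*{Observation A.7.4}) to upgrade the commutative square
\[ \begin{tikzcd}
\locCocart_{\cB'} \ar{r}{\Yo^\lax} \ar{d}[swap]{\pi^\ast} & \Fun(\Delta^\op_{/\cB'}, \Cat) \ar{d}{\pi^\ast}  \\
\locCocart_{\cB} \ar{r}[swap]{\Yo^\lax} & \Fun(\Delta^\op_{/\cB}, \Cat)
\end{tikzcd} \]
to the extension $\locCocart^{\lax}_{\cB'} \xra{\pi^*} \locCocart^{\lax}_{\cB}$ already named in the observation, i.e.\! to verify that $\Yo^\lax(\pi^\ast \cD) \simeq \pi^\ast \Yo^\lax(\cD)$ naturally in $\cD \in \locCocart^\lax_{\cB'}$. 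Since the Yoneda embedding is fully faithful on morphisms of locally cocartesian fibrations that only preserve locally cocartesian lifts up to lax commutativity (this is precisely the content of \cite{AMGR}*{Observation A.7.4}), the same diagram gives such an equivalence for the lax variants.

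Next, for $\cC \in (\locCocart^\lax_\cB)'$, I define $\pi^\lax_\ast(\cC) \in \locCocart^\lax_{\cB'}$ by choosing a (unique-up-to-equivalence) preimage under $\Yo^\lax$ of the presheaf $\pi_\ast \Yo^\lax(\cC)$; this is functorial in $\cC$ by full faithfulness. Then for any $\cD \in \locCocart^\lax_{\cB'}$ I would chain the canonical equivalences
\begin{align*}
\Hom_{\locCocart^\lax_{\cB}}(\pi^\ast \cD,\cC)
&\simeq \Hom_{\Fun(\Delta^\op_{/\cB},\Cat)}(\Yo^\lax(\pi^\ast \cD),\Yo^\lax(\cC)) \\
&\simeq \Hom_{\Fun(\Delta^\op_{/\cB},\Cat)}(\pi^\ast \Yo^\lax(\cD),\Yo^\lax(\cC)) \\
&\simeq \Hom_{\Fun(\Delta^\op_{/\cB'},\Cat)}(\Yo^\lax(\cD),\pi_\ast \Yo^\lax(\cC)) \\
&\simeq \Hom_{\Fun(\Delta^\op_{/\cB'},\Cat)}(\Yo^\lax(\cD),\Yo^\lax(\pi^\lax_\ast \cC)) \\
&\simeq \Hom_{\locCocart^\lax_{\cB'}}(\cD,\pi^\lax_\ast \cC),
\end{align*}
where the first and last use full faithfulness of $\Yo^\lax$, the second uses the commutative square upgraded in the previous step, the third is the presheaf-level adjunction $\pi^\ast \dashv \pi_\ast$ constructed in the first half of the observation, and the fourth is the defining equivalence of $\pi^\lax_\ast(\cC)$.

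The only nontrivial step is the upgrade of the commutative square to its lax variant, and even this is essentially automatic: the functor $\Yo^\lax$ is built to be compatible with restriction along functors of the base, so one need only check that the formula $\Yo^\lax(\cC \to \cB)(K \xra{\varphi} \cB) = \Fun^\cocart_{/K}(\sd K, \varphi^\ast \cC)$ is natural in $\cC$ with respect to lax (i.e.\! not necessarily locally-cocartesian-preserving) functors over $\cB$, which is immediate from the definition. Everything else is formal yoga with adjunctions and essential images, so there is no substantive obstacle; the content of the observation is really just the bookkeeping needed to later combine this partial right adjoint with $\lim^\lax_{[1]}$ as outlined in the paragraphs preceding it.
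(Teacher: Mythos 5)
Your argument is correct and tracks the paper's own (implicit) reasoning: the observation is stated without proof precisely because, once the commutative square is noted and $\pi^\lax_\ast$ is defined by transport under $\Yo^\lax$, the chain of equivalences you write out is the whole content. One small imprecision worth flagging: \cite[Observation~A.7.4]{AMGR} establishes only that $\Yo^\lax \colon \locCocart_{\cB} \to \Fun(\Delta^\op_{/\cB},\Cat)$ is \emph{faithful}, not fully faithful; the full faithfulness your chain actually uses is that of the full-subcategory inclusion $\locCocart^\lax_{\cB} \hookrightarrow \Fun(\Delta^\op_{/\cB},\Cat)$, which holds tautologically because $\locCocart^\lax_{\cB}$ is \emph{defined} to be the essential image (and likewise $\pi^\lax_\ast(\cC)$ need not be chosen as a preimage under $\Yo^\lax$ — it is just the presheaf $\pi_\ast \Yo^\lax(\cC)$ itself viewed as an object of that full subcategory).
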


We also record an adjointability property of the functors in \Cref{obs:strategy}.

\begin{lemma} \label{lem:Adjointability}
Suppose we have a pullback square of categories
\[ \begin{tikzcd}
\cA \ar{r}{f} \ar{d}[swap]{\pi'} & \cB \ar{d}{\pi} \\
\cA' \ar{r}[swap]{f} & \cB'.
\end{tikzcd} \]
Then the exchange transformation $f^\ast \pi_\ast \to \pi_{\ast} f^\ast$ is a natural equivalence.
\end{lemma}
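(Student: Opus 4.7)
The plan is to unwind the definitions in \Cref{obs:strategy} and reduce to a cofinality statement rooted in the pullback property. The key geometric input is that the pullback square yields a canonical equivalence $K \times_{\cA'} \cA \simeq K \times_{\cB'} \cB$ in $\Cat_{/\cB}$ for every $K \to \cA'$, which already makes the analogous Beck--Chevalley statement at the level of presheaves on $\Cat_{/-}^\op$ tautological (using the explicit formulas in \Cref{obs:strategy}).

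To descend to $\Delta_{/-}^\op$, recall that $\pi_* = (i_{\cB'})^* \pi_*^\Cat (i_\cB)_*$ (and analogously $\pi'_*$), while $f^*$ acts directly by precomposition with $f_\circ \colon \Delta_{/\cA} \to \Delta_{/\cB}$ since $f_\circ$ preserves simplices. Applying the pointwise formula for right Kan extension along $i_{\cB}$, I would unwind for $\cF \colon \Delta_{/\cB}^\op \to \Cat$ and $[m] \to \cA'$ the formulas
\[
(f^* \pi_* \cF)([m] \to \cA') \simeq \lim_{J_1} \cF([n] \to \cB), \qquad (\pi'_* f^* \cF)([m] \to \cA') \simeq \lim_{J_2} \cF([n'] \to \cA \to \cB),
\]
where $J_1$ consists of pairs $(\alpha \colon [m] \to [n], \beta \colon [n] \to \cB)$ with $\pi \beta \alpha$ equal to the prescribed map $[m] \to \cA' \to \cB'$, and $J_2$ consists of pairs $(\alpha' \colon [m] \to [n'], \beta' \colon [n'] \to \cA)$ compatible over $\cA'$. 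There is a canonical functor $J_2 \to J_1$ obtained by postcomposing the $\cA$-structure with $f$, and this functor intertwines the two limit diagrams, so it suffices to prove it is initial.

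By Quillen's Theorem A in its $\infty$-categorical form, this reduces to checking that the slice $(J_2)_{/(\alpha, \beta)}$ is weakly contractible for every $(\alpha, \beta) \in J_1$. I claim this slice admits an initial object: take $[n'] = [m]$, $\alpha' = \id$, $\delta = \alpha$, with the lift $\beta' \colon [m] \to \cA$ determined uniquely by the pullback property from the pair $([m] \to \cA', \beta \alpha \colon [m] \to \cB)$---these agree over $\cB'$ precisely by the $J_1$-constraint $\pi \beta \alpha = [m] \to \cA' \to \cB'$. For any other object $([n'], \alpha', \beta', \delta)$ of the slice, the unique morphism from this candidate is $\alpha' \colon [m] \to [n']$, which is verified to be over $\cA$ by comparing its composites with $\pi' \colon \cA \to \cA'$ and $f \colon \cA \to \cB$ against $\beta'$ and invoking the universal property of the pullback.

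The main obstacle I expect is the careful bookkeeping of compatibility conditions in the $\infty$-categorical setting, ensuring that the initial object is genuine not only at the level of objects but also at all higher coherences. However, all the relevant coherence data is canonical, being determined by the pullback square and composition, so the verification, while tedious, proceeds formally from the universal properties.
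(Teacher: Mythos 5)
You correctly identify the key input---that $K \times_{\cA'}\cA \simeq K \times_{\cB'}\cB$ makes the exchange at the level of $\Cat_{/-}^\op$-presheaves tautological---and this is precisely what the paper does: it reduces to the $\Cat_{/-}^\op$-level and observes that $(f^*\pi_*\cF)(K \to \cA') = \cF(K\times_{\cB'}\cB) \to \cF(K\times_{\cA'}\cA) = (\pi_* f^*\cF)(K \to \cA')$ is an equivalence, which gives the statement immediately. Your descent to $\Delta$-presheaves, however, introduces a genuine error and then heavy machinery to compensate for it.

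The pointwise formulas for the right Kan extension $(i_\cB)_*$ have the arrows reversed in your $J_1$ and $J_2$: evaluated at the object $K = [m]\times_{\cB'}\cB$ of $\Cat_{/\cB}^\op$, the relevant slice is $(\Delta_{/K})^\op$, whose objects are pairs $([n]\to[m],\,[n]\to\cB)$ compatible over $\cB'$, with $[n]$ mapping \emph{into} $[m]$. You have written $\alpha\colon [m]\to[n]$, which is the shape of a left Kan extension formula. With the direction corrected, one finds $J_1 \simeq (\Delta_{/[m]\times_{\cB'}\cB})^\op$ and $J_2 \simeq (\Delta_{/[m]\times_{\cA'}\cA})^\op$, and the comparison functor you describe (postcomposing the $\cA$-structure with $f$) is the functor induced by the equivalence $[m]\times_{\cA'}\cA \simeq [m]\times_{\cB'}\cB$---it is itself a \emph{canonical equivalence of categories} carrying equivalent diagrams. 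There is thus no cofinality to establish and no Quillen's Theorem A to invoke; the initial-object construction you set up in each slice is proving something that holds for free once the indexing is derived correctly, and as written it would not even typecheck against the right formulas.
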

\begin{proof}
In the statement, we may regard these as functors
\[ \Fun(\Cat^\op_{/\cB}, \Cat) \longra \Fun(\Cat^\op_{/\cA'}, \Cat) \qquad \textrm{or} \qquad \Fun(\Delta^\op_{/\cB}, \Cat) \longra \Fun(\Delta^\op_{/\cA'}, \Cat), \]
and it suffices to check in the former case. For this, given a presheaf $\Cat^\op_{/\cB} \xra{\cF} \Cat$ we observe that the morphism
\[ (f^\ast \pi_\ast \cF)(K \to \cA') = \cF(K \times_{\cB'} \cB) \longra \cF(K \times_{\cA'} \cA) = (\pi_{\ast} f^\ast \cF)(K \to \cA') \]
is an equivalence since $K \times_{\cB'} \cB \simeq K \times_{\cA'} \cA$.
\end{proof}

\subsection{An unconditional lax right Kan extension} \label{sec:2.3}

We now let $\cB = [n]$ and consider a map of posets for which $\pi_{\ast}^\lax$ will exist unconditionally.

\begin{notation}\label{notation:map_zeta}
For $n \geq 1$, let $[n] \xra{\zeta} [1]$ denote the map of totally ordered sets given by $\zeta(0) = 0$ and $\zeta(i) = 1$ for $i \geq 1$.
\end{notation}

Our goal is to show that $\zeta^\lax_{\ast}$ is defined on all of $\locCocart^\lax_{[n]}$. First, we give an alternative construction of what will be the dual cartesian fibration of $\zeta^{\lax}_{\ast}(\cC)$.

\begin{observation} \label{obs:ExtraCocartesianFunctoriality}
The composite functor $\sd[n] \xra{\min} [n]^\op \xra{\zeta^\op} [1]^\op$ is a cocartesian fibration, such that a morphism given by a proper string inclusion $\sigma \subset \sigma'$ is $\zeta^\op \min$-cocartesian if and only if $\sigma' = \sigma \cup \{ 0 \}$. This follows by noting that for any map of posets $ P \xra{f} [1]^\op$, a morphism $x < y$ over $(1^\circ \ra 0^\circ) \coloneqq (0 \ra 1)^\circ$ is $f$-cocartesian if and only if for all $z \in P_0$, if $x < z$ then $y \leq z$.

Note then that with respect to the identification of the fibers as $\sd[n]_{0} \cong \sd[n-1]$ (via $\{1,...,n \} \cong [n-1]$) and $\sd[n]_1 = \sd[n]_{\min = 0}$, this cocartesian fibration is classified by the map of posets $\sd[n-1] \to \sd[n]_{\min = 0}$ given by prepending $0$. (So, this functor witnesses $\sd[n]_{\min=0}$ as the left cone on $\sd[n-1]$.)
\end{observation}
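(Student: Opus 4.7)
My plan is to verify the three assertions packaged in the observation: (i) the general characterization of $f$-cocartesian morphisms over $[1]^\op$, (ii) the specific description of cocartesian morphisms for $\zeta^\op \circ \min$, and (iii) the identification of the classifying functor as the cone inclusion.

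For (i), I would start from the cocartesian lifting criterion: a morphism $x < y$ in $P$ over the unique non-identity arrow $\alpha \colon 1^\circ \to 0^\circ$ is $f$-cocartesian iff for every $z \in P$ and every morphism $x \leq z$ in $P$ compatible with a factorization of $\alpha$ through $f(z)$, the lift $y \leq z$ exists uniquely. Since $[1]^\op$ has only one non-identity arrow and $P$ is a poset (so uniqueness is automatic), the only nontrivial case is $f(z) = 0^\circ$, and the criterion collapses to the stated condition: whenever $x < z$ with $z \in P_0$, we must have $y \leq z$.

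For (ii), apply this with $P = \sd[n]$ and $f = \zeta^\op \circ \min$. The fiber over $0^\circ$ is $\{\tau : 0 \in \tau\} = \sd[n]_{\min = 0}$ and the fiber over $1^\circ$ is $\{\sigma : 0 \notin \sigma\}$. A proper inclusion $\sigma \subsetneq \sigma'$ lies over $\alpha$ precisely when $0 \notin \sigma$ and $0 \in \sigma'$. Testing the criterion from (i) against $\tau := \sigma \cup \{0\}$ forces $\sigma' \subseteq \sigma \cup \{0\}$, hence $\sigma' = \sigma \cup \{0\}$; conversely this choice visibly satisfies the criterion, since any $\tau$ with $\sigma \subsetneq \tau$ and $0 \in \tau$ contains $\sigma \cup \{0\}$. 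Every object over $1^\circ$ therefore admits a cocartesian lift of $\alpha$ (namely $\sigma \subsetneq \sigma \cup \{0\}$), and lifts of the identity at $0^\circ$ are identities; so $\zeta^\op \circ \min$ is a cocartesian fibration with exactly the claimed cocartesian morphisms.

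For (iii), the monodromy along $\alpha$ sends each $\sigma$ over $1^\circ$ to the target of its cocartesian lift, i.e.\ $\sigma \mapsto \sigma \cup \{0\}$. The order isomorphism $\{1,\dots,n\} \cong [n-1]$ identifies $\sd[n]_{1} \cong \sd[n-1]$, and under this identification the monodromy becomes "prepend $0$," which embeds $\sd[n-1]$ into $\sd[n]_{\min=0}$ as the full subposet of strings of length $\geq 2$ beginning with $0$. The only object of $\sd[n]_{\min=0}$ outside this image is the singleton $\{0\}$, which is initial in $\sd[n]_{\min=0}$ (since $\{0\} \subseteq \sigma$ for every such $\sigma$); thus $\sd[n]_{\min=0}$ is the left cone on $\sd[n-1]$, as asserted. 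I do not anticipate any real obstacle here, as the observation is essentially a bookkeeping exercise once the general poset criterion in (i) is in hand; the only mild subtlety is checking that the identities over $0^\circ$ suffice as cocartesian lifts of $\mathrm{id}_{0^\circ}$, which is automatic.
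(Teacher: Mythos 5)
Your argument is correct and proceeds by essentially the same route as the paper: derive the general criterion for a poset map $f\colon P\to[1]^\op$ (a morphism over $1^\circ\to 0^\circ$ is $f$-cocartesian iff $x<z$, $z\in P_0$ forces $y\le z$), apply it to $\zeta^\op\min$ to see that the cocartesian lift of $\sigma$ with $0\notin\sigma$ is exactly $\sigma\subset\sigma\cup\{0\}$, and read off the monodromy as "prepend $0$," with $\{0\}$ the initial cone point of $\sd[n]_{\min=0}$.

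One small notational caveat: you index the fibers by the actual objects of $[1]^\op$, writing $\sd[n]_1\cong\sd[n-1]$ for the fiber over $1^\circ$. The paper instead writes $\sd[n]_0\cong\sd[n-1]$ and $\sd[n]_1=\sd[n]_{\min=0}$; the indices $0,1$ there refer to the \emph{source} and \emph{target} of the cocartesian monodromy arrow (equivalently, the labels after transporting through the isomorphism $[1]^\op\cong[1]$). This same convention is used in \Cref{rem:UnwindingDual}. The poset-level identifications, the direction of the monodromy, and the cone structure are all unaffected, but it is worth being aware of the paper's indexing when you later invoke that remark.
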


% don't do this: Let $\leftnat{\sd[n]}$ denote $\sd[n]$ as a marked simplicial set with the $\zeta^\op \min$-cocartesian morphisms marked. 
\begin{construction} \label{con:DualOfLaxRKE}
Let $\leftnat{\sd[n]}$ denote $\sd[n]$ as a marked simplicial set with the $\max$-locally cocartesian morphisms marked. Consider the span of marked simplicial sets
\[ \begin{tikzcd}[column sep=8ex]
(\left[1 \right]^\op)^\flat & \leftnat{\sd[n]} \ar{r}{\max} \ar{l}[swap]{\zeta^\op \min} & \left[n \right]^\sharp,
\end{tikzcd}   \]
where we note that $\min$ of every $\max$-locally cocartesian morphism is an equality. This induces an adjunction
\[ \adjunctbig{\max_! (\zeta^\op \min)^\ast}{\sSet^+_{/([1]^\op)^{\flat}}}{\sSet^+_{/[n]}}{(\zeta^\op \min)_\ast \max^\ast} \:. \]

Equip $\sSet^+_{/[n]}$ with the locally cocartesian model structure and $\sSet^+_{/([1]^\op)^{\flat}}$ with the model structure of \cite[Appendix~B]{HA} given by the trivial categorical pattern, so that the fibrant objects are inner fibrations with the equivalences marked; these respectively model $\locCocart_{[n]}$ and $\Cat_{/[1]^\op}$ (as inner fibrations over a poset are automatically categorical fibrations). Then since $\zeta^\op \min$ is a flat inner fibration by \cite[Example B.3.11]{HA},\footnote{Beware that $\sd[n] \xra{\min} [n]^\op$ is not a flat fibration for $n>1$: this construction is highly specific to the choice of $\zeta$.} it follows from \cite[Theorem~B.4.2]{HA} that this adjunction is a Quillen adjunction.

Thus for a locally cocartesian fibration $\cC \xra{p} [n]$, we may define
\[ (\zeta^{\lax}_{\ast} \cC)^{\vee} \coloneqq (\zeta^\op \min)_\ast \max^\ast (\leftnat{\cC} \xrightarrow{p} [n])  \]
as an inner fibration over $[1]^\op$. 
\end{construction}

\begin{proposition} \label{prop:GetCartesianFibration}
Let $\cC \xra{p} [n]$ be a locally cocartesian fibration. Then $(\zeta^{\lax}_{\ast} \cC)^{\vee} \xra{q} [1]^\op$ is a cartesian fibration, and a morphism $f$ of $(\zeta^{\lax}_{\ast} \cC)^{\vee}$ over $1^\circ \ra 0^\circ$ is $q$-cartesian if and only if the corresponding functor
\[\leftnat{\sd[n]} \xlongra{F} \leftnat{\cC} \]
sends $\zeta^\op \min$-cocartesian morphisms to equivalences.
\end{proposition}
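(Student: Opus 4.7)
Since $q$ is an inner fibration by \Cref{con:DualOfLaxRKE}, the task is to exhibit cartesian lifts and verify the stated characterization. The strategy is to translate everything through the Quillen adjunction of \Cref{con:DualOfLaxRKE}: for a marked simplicial set $K \to ([1]^\op)^\flat$, maps $K \to (\zeta^\lax_\ast \cC)^\vee$ correspond bijectively to maps $K \times_{[1]^\op} \leftnat{\sd[n]} \to \leftnat{\cC}$ over $[n]$ that preserve the $\max$-cocartesian markings. Thus an object of $(\zeta^\lax_\ast \cC)^\vee$ over $i^\circ \in [1]^\op$ is a $\max$-cocartesian-preserving functor from the fiber $\sd[n]_{i^\circ}$ to $\cC$ over $[n]$, and a morphism over the unique non-identity arrow $1^\circ \to 0^\circ$ is such a functor $F \colon \sd[n] \to \cC$.

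To construct cartesian lifts, I fix an object $Y \colon \sd[n]_{\min = 0} \to \cC$ over $0^\circ$. The inclusion $\sd[n]_{\min = 0} \hookrightarrow \sd[n]$ admits a left adjoint $L \colon \sd[n] \to \sd[n]_{\min = 0}$ given on objects by $\sigma \mapsto \sigma \cup \{0\}$; its unit morphisms $\sigma \to \sigma \cup \{0\}$ (for $0 \notin \sigma$) are precisely the non-identity $\zeta^\op \min$-cocartesian morphisms of \Cref{obs:ExtraCocartesianFunctoriality}. I define $F \coloneqq Y \circ L$. Then $F$ restricts to $Y$ on $\sd[n]_{\min = 0}$, sends every $\zeta^\op \min$-cocartesian morphism to an identity in $\cC$ (hence to an equivalence), and preserves $\max$-cocartesian morphisms: indeed, $L$ carries a $\max$-cocartesian morphism $\sigma \subsetneq \sigma \cup \{a\}$ (with $a > \max \sigma$) to $\sigma \cup \{0\} \subseteq \sigma \cup \{0,a\}$, which remains $\max$-cocartesian in $\sd[n]_{\min=0}$, and $Y$ preserves such morphisms. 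Hence $F$ represents a morphism $X \to Y$ in $(\zeta^\lax_\ast \cC)^\vee$ over $1^\circ \to 0^\circ$, with $X$ the restriction of $F$ to the fiber $\sd[n]_{\min \neq 0}$ over $1^\circ$.

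The key combinatorial observation driving the remainder is that every morphism $\sigma \subseteq \tau$ in $\sd[n]$ with $0 \notin \sigma$ and $0 \in \tau$ factors uniquely as $\sigma \to \sigma \cup \{0\} \to \tau$, where the first arrow is $\zeta^\op \min$-cocartesian and the second lies in $\sd[n]_{\min = 0}$. Thus any functor $G \colon \sd[n] \to \cC$ representing a morphism of $(\zeta^\lax_\ast \cC)^\vee$ over $1^\circ \to 0^\circ$ is determined by its two fiber restrictions together with the morphisms $G(\sigma) \to G(\sigma \cup \{0\})$ indexed by $\sigma \in \sd[n]_{\min \neq 0}$. When these latter morphisms are equivalences, one can straighten $G$ to the composite of a comparison map in the fiber over $1^\circ$ with the cartesian lift $F$ associated to $G|_{\sd[n]_{\min=0}}$; this yields the cartesianness of all such $G$. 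Conversely, since cartesian lifts are unique up to equivalence, any $q$-cartesian morphism over $1^\circ \to 0^\circ$ is equivalent to the $F$ constructed above (with the appropriate target) and so must send $\zeta^\op \min$-cocartesian morphisms to equivalences.

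The main technical obstacle is upgrading the informal ``unique factorization'' to the assertion that the relevant space of factorizations is contractible, coherently across higher simplices. I expect this to be handled cleanest at the level of marked simplicial sets, directly exhibiting the required lifts against the generating trivial cofibrations of $\sSet^+_{/([1]^\op)^\flat}$ under the right Quillen functor $(\zeta^\op \min)_\ast \max^\ast$. The cocartesian structure on $\sd[n] \to [1]^\op$ from \Cref{obs:ExtraCocartesianFunctoriality}, together with the flatness of $\zeta^\op \min$ invoked in \Cref{con:DualOfLaxRKE}, should reduce these lifting problems to the explicit combinatorics of adjoining $0$ to chains.
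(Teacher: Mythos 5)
Your construction $F \coloneqq Y \circ L$ (where $L(\sigma) = \sigma \cup \{0\}$ is left adjoint to the inclusion $\sd[n]_{\min=0} \hookrightarrow \sd[n]$) is the right object-level formula for the cartesian lift, and your factorization observation $\sigma \to \sigma \cup \{0\} \to \tau$ is exactly the combinatorial input at work. But the argument that $F$ is actually $q$-cartesian is not carried out. Both your implications rest on this: to conclude that any $G$ inverting the $\zeta^\op\min$-cocartesian morphisms is cartesian, you write $G \simeq F \circ (\text{fiber equivalence})$ and invoke cartesianness of $F$; and the converse invokes uniqueness of cartesian lifts, which presupposes you've exhibited at least one. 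You flag the remaining work — upgrading the object-level unique factorization to contractibility of factorization spaces coherently across higher simplices — but leave it as a speculation that marked-simplicial-set lifting should handle it. That is precisely the mathematical content that remains unproven.

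The paper closes this gap by a reduction you never make: it invokes the characterization of $q$-cartesian morphisms as $q$-limit diagrams (\cite[Example 4.3.1.4]{HTT}), translates the $q$-limit condition into the condition that $F$ be a pointwise $p$-right Kan extension of $F_0$ along $\sd[n]_{\min=0} \hookrightarrow \sd[n]$ (\cite[Definition 4.3.2.2]{HTT}), and then cites \cite[Proposition 2.27]{QS}, which both guarantees existence of the $p$-right Kan extension (here the indexing posets $J_i = \{0<i\}$ collapse to singletons, making its hypotheses automatic) and gives the explicit pointwise formula — which agrees with your $Y \circ L$ — together with the fact that the extension inverts the $\zeta^\op\min$-cocartesian morphisms. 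The converse characterization is then a two-out-of-three argument on the counit $F \to \iota_\ast F_0$. In short: your approach identifies the same formula the paper arrives at, but the paper outsources the coherence to the machinery of pointwise Kan extensions, whereas you would need to redo that machinery from scratch (solving lifting problems against the generating trivial cofibrations of the locally cocartesian model structure), and that work is not present.
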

\begin{proof}
Since $q$ was already shown to be an inner fibration in the course of its construction, it suffices to exhibit a sufficient supply of $q$-cartesian morphisms to prove that $q$ is cartesian. Recall from \cite[Example 4.3.1.4]{HTT} that $f$ is $q$-cartesian if and only if $f$ is a $q$-limit diagram. Note then that $f$ is a $q$-limit diagram if in the corresponding square
\[ \begin{tikzcd}
\sd\left[n \right]_{\min = 0} \ar{r}{F_0} \ar{d}[swap]{\iota} & \cC \ar{d}{p} \\
\sd\left[n \right] \ar{r}[swap]{\max} \ar{ru}[sloped]{F} & \left[n \right],
\end{tikzcd} \]
$F$ is a $p$-right Kan extension (\cite[Definition 4.3.2.2]{HTT}) of its restriction $F_0$. We now apply \cite[Proposition~2.27]{QS} with $\pi = \zeta$ (noting that $\sd([n])_0$ there is our $\sd([n])_{\min=0}$). In the notation there, we have that for all $0<i \leq n$, $J_i = \{0<i\}$ (cf. \cite[Remark 2.28]{QS}), so conditions (1) and (2) there are necessarily satisfied and hence the $p$-right Kan extension exists as a functor
\[ \Fun_{/[n]}(\leftnat{\sd[n]_{\min=0}}, \leftnat{\cC}) \xlongra{\iota_{\ast}} \Fun_{/[n]}(\leftnat{\sd[n]}, \leftnat{\cC}) \]
right adjoint to restriction along $\iota$. Moreover, by the explicit formula of \cite[Proposition~2.27]{QS}, $\iota_{\ast} F_0$ inverts all $\zeta^\op \min$-cocartesian morphisms, using their identification as those morphisms given by prepending $0$ (\Cref{obs:ExtraCocartesianFunctoriality}). Conversely, if $F$ inverts all $\zeta^\op \min$-cocartesian morphisms, then it follows that the counit map $F \ra \iota_{\ast} F_0$ is an equivalence by the two-out-of-three property of equivalences. We conclude both the description of the $q$-cartesian morphisms and that $q$ is a cartesian fibration.
\end{proof}

\begin{example}
Suppose $n=1$. Then \Cref{con:DualOfLaxRKE} applied to a cocartesian fibration $\cC \to [1]$ yields the dual cartesian fibration $\cC^{\vee} \to [1]^\op$ studied in \cite{BGN}.
\end{example}

\begin{remark} \label{rem:UnwindingDual}
Let $\cC \xra{p} [n]$ be a locally cocartesian fibration. Then
$$(\zeta^{\lax}_{\ast} \cC)^{\vee}_0 = \Fun_{/[n]}(\leftnat{\sd[n-1]}, \leftnat{\cC})$$
is by definition the right-lax limit of $\cC|_{[n-1]}$, and the morphism
\[ \begin{tikzcd}
(\zeta^{\lax}_{\ast} \cC)^{\vee}_1 = \Fun_{/[n]}(\leftnat{\sd[n]_{\min=0}}, \leftnat{\cC}) \ar{r}{\ev_0} & \cC_0
\end{tikzcd} \]
is an equivalence (in fact a trivial fibration) by \cite[Proposition~2.21(2)]{QS}.
\end{remark}

The cartesian monodromy of $(\zeta^{\lax}_{\ast} \cC)^{\vee} \ra [1]^\op$ is then given by the functor
\[ \cC_0 \longra \Fun_{/[n]}(\leftnat{\sd[n-1]}, \leftnat{\cC}) \]
that sends an object $x$ to the collection $\{ \varphi_{0j}(x) : 0<j \leq n \}$ together with the structure of the various canonical maps, homotopies, etc. implicitly encoded by the condition that $p$ is a locally cocartesian fibration (where $\cC_i \xra{\varphi_{ij}} \cC_j$ denotes the locally cocartesian monodromy functor of $\cC$ for $0 \leq i < j \leq n$).

By \Cref{prop:GetCartesianFibration}, we get a functor
\[ \locCocart_{[n]} \xra{\zeta^{\lax}_{\ast}(-)^{\vee}} \Cart_{[1]^\op}. \]
Writing $\Cart_{[1]^\op} \xra{(-)^{\vee}} \Cocart_{[1]}$ for the dual cocartesian fibration functor, we define
\[ \locCocart_{[n]} \xra{\zeta^{\lax}_{\ast}(-) \coloneqq (\zeta^{\lax}_{\ast}(-)^{\vee})^{\vee}} \Cocart_{[1]}. \]

The following result demonstrates the consistency of this definition of $\zeta^{\lax}_{\ast}(-)$ with that of \Cref{obs:strategy}.

\begin{theorem} \label{thm:ExistenceRKE}
Let $\cC \to [n]$ be a locally cocartesian fibration. Then there is a canonical equivalence
$$\zeta_{\ast} \Yo^{\lax}(\cC \to [n]) \simeq \Yo^{\lax}(\zeta^{\lax}_{\ast}(\cC) \to [1]).$$
Consequently, $\zeta^{\lax}_{\ast}$ as defined with domain $\locCocart_{[n]}$ extends to a right adjoint
$$\locCocart^{\lax}_{[n]} \xra{\zeta^{\lax}_{\ast}} \Cocart^{\lax}_{[1]}$$
given as the restriction of the right adjoint $\Fun(\Delta^\op_{/[n]}, \Cat) \xra{\zeta_{\ast}} \Fun(\Delta^\op_{/[1]}, \Cat)$.
\end{theorem}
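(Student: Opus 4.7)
The plan is to first establish the canonical equivalence $\zeta_{*}\Yo^{\lax}(\cC) \simeq \Yo^{\lax}(\zeta^{\lax}_{*}\cC)$ in $\Fun(\Delta^\op_{/[1]}, \Cat)$; the adjunction assertion is then formal, since by Observation \ref{obs:strategy} the functor $\zeta_{*}$ is already defined on all of $\Fun(\Delta^\op_{/[n]}, \Cat)$ as the right adjoint to $\zeta^{*}$, and the equivalence will show that its restriction along $\Yo^{\lax}: \locCocart^{\lax}_{[n]} \hookrightarrow \Fun(\Delta^\op_{/[n]}, \Cat)$ factors through $\Yo^{\lax}: \Cocart^{\lax}_{[1]} \hookrightarrow \Fun(\Delta^\op_{/[1]}, \Cat)$. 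The adjunction $\zeta^{\ast} \dashv \zeta^{\lax}_{\ast}$ is then the restriction of the adjunction of presheaves.

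The heart of the argument is to verify the equivalence at the identity $\id_{[1]}$, namely $\lim^{\lax}_{[n]}(\cC) \simeq \lim^{\lax}_{[1]}(\zeta^{\lax}_{*}\cC)$. By Remark \ref{rem:UnwindingDual} the right-hand side is the lax limit of the monodromy functor $F: \cC_0 \to \lim^{\lax}_{[n-1]}(\cC|_{[n-1]})$ of $\zeta^{\lax}_{*}\cC$. I would construct the equivalence by decomposing $\sd[n]$ along strings containing $0$ versus strings in $\sd[n-1] = \sd[n]_{\min \geq 1}$: the restriction of a cocartesian-preserving functor $\sd[n] \to \cC$ to $\sd[n]_{\min=0}$ is determined by its value $x_0 \in \cC_0$ at $[0]$ by Remark \ref{rem:UnwindingDual}; the restriction to $\sd[n-1]$ gives an element $g \in \lim^{\lax}_{[n-1]}(\cC|_{[n-1]})$; and the ``prepending $0$'' morphisms $\sigma \hookrightarrow \sigma \cup \{0\}$, which by Observation \ref{obs:ExtraCocartesianFunctoriality} are exactly the $\zeta^\op\min$-cocartesian morphisms, assemble into a natural transformation $g \to F(x_0)$ realizing the lax limit structure. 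The rigorous identification is obtained from the marked simplicial set Quillen adjunction of Construction \ref{con:DualOfLaxRKE} together with Proposition \ref{prop:GetCartesianFibration}, which pins down precisely which functors $\leftnat{\sd[n]} \to \leftnat{\cC}$ over $[n]$ correspond to sections of the dual cartesian fibration.

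For a general $[m] \xra{\varphi} [1]$, I would apply Lemma \ref{lem:Adjointability} to the pullback square along $\varphi$ and $\zeta$, yielding
\[
(\zeta_{*}\Yo^{\lax}(\cC))(\varphi) \simeq \lim^{\lax}_{[m]\times_{[1]}[n]}(\overline{\varphi}^{*}\cC),
\]
while the right-hand side is $\lim^{\lax}_{[m]}(\varphi^{*}\zeta^{\lax}_{*}\cC)$. The pullback map $\zeta': [m]\times_{[1]}[n] \to [m]$ has the same sieve-cosieve structure as $\zeta$ (sieve $\varphi^{-1}(0)\times\{0\}$, cosieve $\varphi^{-1}(1)\times\{1,\dots,n\}$), so the identical decomposition strategy of the previous paragraph applies verbatim to yield the equivalence at $\varphi$.

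The principal obstacle will be packaging these pointwise equivalences into a genuine natural equivalence of functors in $\Fun(\Delta^\op_{/[1]}, \Cat)$, and in particular verifying compatibility with pullback along arbitrary $\varphi$. Everything hinges on the flatness of $\zeta^\op \min$ noted in Construction \ref{con:DualOfLaxRKE}, which is what makes the relevant Quillen adjunction well-behaved; as the footnote there warns, the map $\sd[n] \xra{\min} [n]^\op$ alone is not flat for $n>1$, so the strategy is genuinely specific to the choice of $\zeta$ and cannot be adapted to a general functor of posets without additional hypotheses.
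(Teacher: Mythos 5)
Your high-level picture is correct and matches the paper's overall strategy: the adjunction statement is formal once the equivalence of presheaves on $\Delta^\op_{/[1]}$ is established, and by the formula in \Cref{obs:strategy} the value of $\zeta_*\Yo^\lax(\cC)$ at $\varphi\colon[m]\to[1]$ is $\Fun^{\cocart}_{/[m]\times_{[1]}[n]}(\sd([m]\times_{[1]}[n]),\overline{\varphi}^*\cC)$, while \Cref{rem:UnwindingDual} and \Cref{prop:GetCartesianFibration} identify what $\Yo^\lax(\zeta^\lax_*\cC)(\varphi)$ should be. (Incidentally your appeal to \Cref{lem:Adjointability} for this pointwise formula is unnecessary; it is already the defining formula in \Cref{obs:strategy}. \Cref{lem:Adjointability} is used for a different purpose, in \Cref{lem:ZetaAdjointability}.)

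However, there is a genuine gap: you never actually construct the equivalence, and the place you defer the work is exactly where the theorem's content lives. The heuristic of decomposing $\sd[n]$ into strings containing $0$ versus $\sd[n-1]$, with the prepend-$0$ morphisms supplying the comparison to the monodromy functor, captures the right intuition for $\varphi=\id_{[1]}$, but neither \Cref{con:DualOfLaxRKE} nor \Cref{prop:GetCartesianFibration} upgrades it to a functorial equivalence; those two results only serve to show that $(\zeta^\lax_*\cC)^\vee$ is a cartesian fibration and to identify its cartesian edges. To compare $\Fun^{\cocart}_{/[m]\times_{[1]}[n]}(\sd([m]\times_{[1]}[n]),\overline{\varphi}^*\cC)$ with $\Fun_{/[1]^\op}([m]^\op,(\zeta^\lax_*\cC)^\vee)$ one needs a concrete zigzag of comparison maps $\sd([m]\times_{[1]}[n])\xra{F}\Tw[m]\times_{\Tw[1]}\sd[n]\xra{G}[m]^\op\times_{[1]^\op}\sd[n]$ and to show $F^*$ and $G^*$ are equivalences; this is precisely the content of Lemmas \ref{lem:LocalizationSdToTw} and \ref{lem:LocalizationTwMinToBase} (each establishing that the relevant map is a weak equivalence in the marked model structure), which your proposal does not cite or replace with an alternative. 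Moreover, the claim that the $\id_{[1]}$ decomposition applies ``verbatim'' for a general $\varphi$ glosses over the fact that $[m]\times_{[1]}[n]$ is not of the form $[k]$, and the paper in fact needs a two-case analysis according to whether $1\in\mathrm{im}(\varphi)$. What you flag at the end as the ``principal obstacle'' of packaging pointwise equivalences into a natural one is not bookkeeping — it is the main technical work, and it is missing from your argument.
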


\begin{remark}
Let $\cB$ be a category and let $\cB \xra{\pi} \ast$ be the unique functor. We already have that the composition
\[ \Fun(\Delta^\op_{/\cB}, \Cat) \xlongra{\pi_\ast} \Fun(\Delta^\op, \Cat) \xra{\ev_{[0]}} \Cat \]
restricts on the full subcategory $\locCocart^{\lax}_{\cB}$ as $\lim^{\lax}_{\cB}$. By an argument that is similar to (but easier than) the proof of \Cref{thm:ExistenceRKE}, one may also see that $\lim^{\lax}_{\cB}$ is the restriction of $\pi_{\ast}$ under $\Yo^{\lax}$.
\end{remark}

We will give the proof of \Cref{thm:ExistenceRKE} after first proving two combinatorial lemmas.

\begin{lemma} \label{lem:LocalizationSdToTw}
Let $[m] \xra{\rho} [p]$ and $[n] \xra{\pi} [p]$ be maps of totally ordered sets, form the fiber product $\Tw[m] \times_{\Tw[p]} \sd[n]$ with respect to the obvious induced map $\Tw[m] \rightarrow \Tw[p]$ and the `minmax' map $\sd[n] \rightarrow \Tw[p]$ given by $\sigma \mapsto (\pi \min \sigma \leq \pi \max \sigma)$, and let
\[ \sd([m] \times_{[p]} [n]) \xlongra{F} \Tw[m] \times_{\Tw[p]} \sd[n] \]
be given by $((a_1,x_1),...,(a_k,x_k)) \mapsto ((a_1 \leq a_k),(x_1,...,x_k))$, where $(x_1,...,x_k)$ denotes the image as a string in $[n]$ so that we implicitly remove duplicated entries. Let $W \subseteq \sd([m] \times_{[p]} [n])_1$ be the subset of morphisms sent to identities under $F$. Then the map of marked simplicial sets
\[ (\sd([m] \times_{[p]} [n]), W) \xlongra{F} (\Tw[m] \times_{\Tw[p]} \sd[n])^{\flat} \]
is a weak equivalence in the marked model structure on $\sSet^+$.
\end{lemma}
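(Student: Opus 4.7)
The plan is to exhibit $F$ as a localization at $W$. By an $\infty$-categorical form of Quillen's Theorem A (in its marked version), this reduces to showing that for each object $v \in \Tw[m] \times_{\Tw[p]} \sd[n]$, the fiber $F^{-1}(v)$ has weakly contractible nerve. Note that every morphism of such a fiber is automatically sent by $F$ to an identity, so it lies in $W$, and hence the marking on the fiber covers all its edges; contractibility of the nerve is therefore the right target.

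First I unpack the combinatorics. A morphism $\sigma \subset \sigma'$ in $\sd([m] \times_{[p]} [n])$ lies in $W$ exactly when $\sigma$ and $\sigma'$ share first and last $a$-components and, after consolidation of duplicates, produce identical strict $x$-chains. Hence, for $v = ((a \leq a'), (x_1 < \cdots < x_\ell))$, the fiber $F^{-1}(v)$ is the subposet of $\sd([m] \times_{[p]} [n])$ on those strict chains that begin with an element of $a$-coordinate $a$ and $x$-coordinate $x_1$, terminate at an element of $a$-coordinate $a'$ and $x$-coordinate $x_\ell$, and whose $x$-projection (with consecutive duplicates removed) equals $(x_1, \ldots, x_\ell)$.

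Second, I argue each fiber has contractible nerve, which is the combinatorial heart of the proof. The strategy exploits that $[m]$ and $[n]$ are totally ordered: given $\sigma, \sigma' \in F^{-1}(v)$, their set-theoretic union in $[m] \times_{[p]} [n]$ is frequently already a chain, in which case it is itself an object of $F^{-1}(v)$ refining both $\sigma$ and $\sigma'$, exhibiting $F^{-1}(v)$ as filtered. When incomparabilities arise, they do so in a controlled way—between pairs of entries with reversed $x$- and $a$-orderings—and may be resolved by a short zigzag inside the fiber (first inserting a common intermediate chain, then dropping the offending elements). A more structural packaging is to recognize $F$ as (or pull back to) a cocartesian fibration whose cocartesian morphisms coincide with $W$, whereupon the standard fact that a cocartesian fibration with weakly contractible fibers is a localization at its cocartesian edges gives the marked equivalence directly. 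The main obstacle is establishing contractibility uniformly in the presence of such incomparabilities, and the natural tools are either a discrete Morse function on the order complex of $F^{-1}(v)$ or the explicit exhibition of a cofinal/terminal sub-poset that is manifestly contractible.
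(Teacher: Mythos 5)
Your high-level plan — show that $F$ is a localization at $W$ by analyzing fibers — is aligned in spirit with the paper, and your description of the fiber $F^{-1}(v)$ as the poset of chains with prescribed endpoints and prescribed $x$-projection is correct. But the proposal has three real gaps.

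First, the reduction is not justified. Checking weak contractibility of the point-fibers $F^{-1}(v)$ alone would suffice if $F$ were a cocartesian fibration with the cocartesian edges marked, but you do not establish this and it is far from obvious: a cocartesian lift of a morphism $v \to v'$ would require a canonical minimal way to extend a chain $\sigma$ over $v$ to one over $v'$, and such a choice involves attaching $a$-coordinates to newly added $x$-entries, which is not visibly canonical or well-defined. The paper instead invokes the criterion of \cite[Theorem~3.8]{MG}, which requires verifying that for \emph{every} $r \geq 0$ the induced map $\Fun([r],\sd([m]\times_{[p]}[n]))^W \to \Map([r], \Tw[m]\times_{\Tw[p]}\sd[n])$ is a weak homotopy equivalence; the case $r=0$ is exactly your fiber-contractibility claim, but the cases $r>0$ are nontrivial and occupy the second half of the paper's proof (the authors produce a cartesian fibration over $[r]$ from the restriction functors $S_r \to \cdots \to S_0$ and a product decomposition $S_r \cong \prod P_{\bullet,\bullet}$ to deduce contractibility of these higher fibers from the $r=0$ case). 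Your proposal does not address $r>0$ at all.

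Second, even the $r = 0$ contractibility argument is left open: your text explicitly flags it as "the main obstacle" and gestures at possible tools (discrete Morse theory, cofinal subposets, zigzags). The heuristic that the union of two chains in $F^{-1}(v)$ is "frequently" again a chain is not an argument — when the union is not a chain, you need a concrete resolution, and nothing you write pins it down. The paper's actual argument proceeds by induction on the length $k$ of the $x$-string $\sigma$, building two explicit poset adjunctions (one right adjoint $P \to P'$ that prunes redundant $x_1$-entries, one left adjoint $P' \to P''$ that canonicalizes the $a$-coordinate of the $x_2$-entry), each of which induces a weak homotopy equivalence on nerves, reducing to a shorter string and invoking the inductive hypothesis. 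That inductive scheme, or something equally precise, is the missing content.

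Third, even if you could recognize $F$ as a cocartesian fibration, you would still need to check that the cocartesian edges coincide with $W$; this coincidence is an additional verification, not a consequence of the definitions.
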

\begin{proof}
By \cite[Theorem~3.8]{MG}, it suffices to show that for every $r \geq 0$, the induced map
\[ \Fun([r],\sd([m] \times_{[p]} [n]))^W \xlongra{F_r} \Map([r], \Tw[m] \times_{\Tw[p]} \sd[n]) \]
is a weak homotopy equivalence, where we restrict on the lefthand side to natural transformations lying in $W$ and on the righthand side to natural isomorphisms (so that we obtain the discrete set on the objects of $\Fun([r], \Tw[m] \times_{\Tw[p]} \sd[n])$). We first consider the case $r = 0$. Given an object
\[ (a \leq b, \sigma \coloneqq (x_1,...,x_k)) \in (\Tw[m] \times_{\Tw[p]} \sd[n])~, \]
let $P_{ab,\sigma} = F^{-1}(a\leq b ,\sigma) \subseteq \sd([m] \times_{[p]} [n])$ be the fiber, which we must show is weakly contractible.

We proceed by induction on the length $k$. For the base case $k=1$, we have that $P_{ab,\sigma}$ has an initial object $((a,x_1),(b,x_1))$, hence it is weakly contractible. Now suppose that we have shown $P_{cd,\tau}$ is weakly contractible for all pairs $(c \leq d, \tau)$ with $\tau$ of length $<k$. Let $P' \subseteq P = P_{ab,\sigma}$ be the subposet on strings $((a,x_1),...,(b,x_k))$ such that the only entry containing $x_1$ in its second factor is the initial $(a,x_1)$. We define a map of posets $P \xra{r} P'$ as follows: if $(c,x_1)$ is the last entry in a string $\lambda \in P$ with $x_1$ in its second factor, so that we may write $$\lambda = ((a,x_1),...,(c,x_1),(d,x_2),...,(b,x_k)),$$ then we let $r(\lambda) \coloneqq ((a,x_1),(d,x_2),...,(b,x_k))$. Then since $r(\lambda) \leq \lambda$ and $r|_{P'} = \id$, we have that $r$ is right adjoint to the inclusion $P' \subseteq P$, so that $r$ induces a weak homotopy equivalence $|P| \xrightarrow{\sim} |P'|$. Next let $a' \in [m]$ be the smallest number such that $a \leq a'$ and $\rho(a') = \pi(x_2)$, and let $P'' \subseteq P'$ be the subposet on strings $((a,x_1),(a',x_2),...,(b,x_k))$. Define a map of posets $P' \xra{r'} P''$ by sending a string $$\lambda  = ((a,x_1),(d,x_2),...,(b,x_k)) \in P'$$ to the image of $((a,x_1),(a',x_2),(d,x_2),...,(b,x_k))$ as a string in $[m] \times_{[p]} [n]$. Then $\lambda \leq r'(\lambda)$ and $r|_{P''} = \id$, so $r'$ is left adjoint to the inclusion $P'' \subseteq P'$ and hence induces a weak homotopy equivalence $|P'| \xrightarrow{\sim} |P''|$. Now note that we have an isomorphism $P'' \cong P_{a'b,\sigma'}$ for $\sigma' = (x_2,...,x_k)$, given by $\lambda \mapsto \lambda - \{(a,x_1)\}$. By the inductive hypothesis, we deduce that $|P''| \simeq \ast$, which concludes the proof if $r=0$.

Now suppose $r>0$ and let
$$\theta = [(a_0 \leq b_0, \sigma_0) \rightarrow ... \rightarrow (a_r \leq b_r, \sigma_r)] \in \ob (\Fun([r],\Tw[m] \times_{\Tw[p]} \sd[n]))~,$$
so that $\theta$ is the data of $[a_r \leq ... \leq a_0 \leq b_0 \leq ... \leq b_r]$ in $[m]$ and $\sigma_0 \subseteq \ldots \subseteq \sigma_r$ in $\sd([n])$, with constraints $\rho(a_i) = \pi(\min \sigma_i)$ and $\rho(b_i) = \pi(\max \sigma_i)$ for all $0 \leq i \leq r$. For $0 \leq j \leq r$, let $S_j \subseteq P_{a_j b_j, \sigma_j}$ be the subposet on those strings $\lambda$ such that for all $0 \leq i \leq j$, the pairs $(a_i, \min \sigma_i)$ and $(b_i, \max \sigma_i)$ are in $\lambda$. We then have `restriction' functors
\[ S_r \xlongra{\res} S_{r-1} \xlongra{\res} \cdots \xlongra{\res} S_0 \]
which send $\lambda \in S_{j+1}$ to the largest substring $((a_j, \min \sigma_j), ..., (b_j, \max \sigma_j))$ of $\lambda$ in $S_j$, such that if $\lambda' \subseteq \lambda$ is any substring with $\lambda' \in S_j$, then $\lambda' \subseteq \res(\lambda)$. Then let $M \rightarrow [r]$ be the cartesian fibration classified by this sequence of functors, which we may suppose is a poset, and note that we have an equivalence of posets
\[ \Fun_{/[r]}([r], M) \simeq F_r^{-1}(\theta) \]
as a nested sequence of strings $\lambda_0 \subseteq ... \subseteq \lambda_r$ in $F_r^{-1}(\theta)$ is the same data as a section of $M$. For \emph{any} cartesian fibration $\cC \rightarrow [r]$ classified by $\cC_r \xrightarrow{f_r} ... \xrightarrow{f_1} \cC_0$, we have an adjunction
\[ \adjunct{j^{\ast}}{\Fun_{/[r]}([r], \cC)}{\cC_r}{j_{\ast}} \]
where $j^{\ast}$ is evaluation at $r$ and $j_{\ast}(c) = [f_1 \circ ... \circ f_r(c) \rightarrow  ... \rightarrow f_r(c) \rightarrow c ]$. Therefore, to show that $F_r^{-1}(\theta)$ is weakly contractible, it suffices to show that $S_r$ is weakly contractible. Now for $0 < i \leq r$ define convex substrings $\sigma_r^{-i} = (\min \sigma_i, ..., \min \sigma_{i-1})$ and $\sigma_r^i = (\max \sigma_{i-1}, ..., \max \sigma_{i})$ of $\sigma_r$, and also let $\sigma_r^0 = (\min \sigma_0, ..., \max \sigma_0)$ be a convex substring, so that $\sigma_r = \bigcup_{i = -r}^r \sigma_r^i$. Observe that we have a product decomposition
\[ S_r \cong \left( \prod_{i=1}^{r} P_{a_i a_{i-1}, \sigma_r^{-i}} \right) \times P_{a_0 b_0, \sigma_r^0} \times \left( \prod_{i=1}^r P_{b_{i-1} b_i, \sigma_r^i} \right)  \]
which, given $\lambda \in S_r$, sends it on the factor $P_{a_0 b_0, \sigma_r^0}$ to the convex substring bounded by $(a_0, \min \sigma_0)$ and $(b_0, \max \sigma_0)$, and similarly for the other factors. Since we showed that each factor on the righthand side is weakly contractible, so is their product, and this concludes the proof.
\end{proof}

\begin{lemma} \label{lem:LocalizationTwMinToBase}
Let $[m] \xra{\varphi} [1]$ be a functor such that $\varphi^{-1}(1) \neq \es$, and let $W \subseteq \Tw[m]_1$ denote the set of morphisms $[i \leq j] \to [i \leq j']$ such that $\varphi(j) = \varphi(j')$. Form the fiber product $\Tw[m] \times_{\Tw[1]} \sd[n]$ as in \Cref{lem:LocalizationSdToTw} and the fiber product $[m]^\op \times_{[1]^\op} \sd[n]$ with respect to $\varphi^\op$ and $\zeta \min$. Then the map of marked simplicial sets
\[ (\Tw[m],W) \times_{\Tw[1]^{\flat}} \sd[n]^{\flat} \xra{G \coloneqq (\min,\id)} ([m]^\op \times_{[1]^\op} \sd[n])^{\flat} \]
is a weak equivalence in the marked model structure on $\sSet^+$.
\end{lemma}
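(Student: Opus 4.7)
The plan is to mimic the proof of \Cref{lem:LocalizationSdToTw}. By \cite[Theorem~3.8]{MG}, it suffices to show that for every $r \geq 0$, the map
\[ G_r: \Fun([r], (\Tw[m], W) \times_{\Tw[1]^{\flat}} \sd[n]^{\flat})^{W_r} \longra \Map([r], ([m]^\op \times_{[1]^\op} \sd[n])^{\flat}) \]
is a weak homotopy equivalence, where $W_r$ denotes natural transformations that are pointwise marked. Since the right-hand side is discrete, this reduces to showing that the fiber $G_r^{-1}(\theta)$ over each $\theta$ is weakly contractible.

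Next, I would unwind the fiber explicitly. Fix $\theta = [(i_0, \sigma_0) \to \cdots \to (i_r, \sigma_r)]$, so that $i_r \leq \cdots \leq i_0$ in $[m]$, $\sigma_0 \subseteq \cdots \subseteq \sigma_r$ in $\sd[n]$, and $\varphi(i_s) = \zeta(\min \sigma_s)$ for each $s$. A lift of $\theta$ along $G_r$ is then a tuple $(j_0, \ldots, j_r) \in [m]^{r+1}$ with $j_s \geq i_s$, $\varphi(j_s) = c_s \coloneqq \zeta(\max \sigma_s)$, and $j_0 \leq \cdots \leq j_r$. A $W_r$-morphism between two lifts is simply a componentwise inequality $j_s \leq j_s'$: the marking condition $\varphi(j_s) = \varphi(j_s')$ is automatic (both equal $c_s$), and the naturality squares commute trivially since the ambient poset $\Tw[m]$ is thin. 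Hence $G_r^{-1}(\theta)$ is the nerve of the poset
\[ P_\theta = \left\{(j_0, \ldots, j_r) \in \prod_{s=0}^r I_s : j_0 \leq \cdots \leq j_r\right\}, \quad I_s \coloneqq \{j \in [m] : j \geq i_s,\ \varphi(j) = c_s\}. \]

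The heart of the proof -- and where the hypothesis $\varphi^{-1}(1) \neq \es$ is used -- is to exhibit an initial element of $P_\theta$, which suffices because a poset with an initial element has weakly contractible nerve. Let $k \coloneqq \min \varphi^{-1}(1)$. Then each $I_s$ is nonempty: when $c_s = 0$, compatibility forces $\varphi(i_s) = 0$ and so $i_s < k$, giving $I_s = [i_s, k-1]$; when $c_s = 1$, we have $I_s = [\max(i_s, k), m]$. Since $c_s$ is non-decreasing in $s$ (as $\zeta \circ \max$ of an ascending chain), I would define the candidate recursively by $j_0^\ast = \min I_0$ and $j_s^\ast = \max(\min I_s, j_{s-1}^\ast)$, then verify $j_s^\ast \leq \max I_s$ by splitting on whether $c_\bullet$ is identically $0$, identically $1$, or transitions from $0$ to $1$ at some index $t$. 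The main subtle case is the last: one must check $j_t^\ast = i_0 < k$ (using $c_0 = 0 \Rightarrow i_0 < k$ and that $i_s$ is non-increasing) to conclude that $j_{t+1}^\ast = \max(\max(i_{t+1}, k), i_0) = k$ lands in $I_{t+1} = [\max(i_{t+1}, k), m]$. Initiality of $(j_0^\ast, \ldots, j_r^\ast)$ then follows by induction: any $(j_0, \ldots, j_r) \in P_\theta$ satisfies $j_s \geq \max(\min I_s, j_{s-1}) \geq \max(\min I_s, j_{s-1}^\ast) = j_s^\ast$. All remaining details parallel the corresponding steps in the proof of \Cref{lem:LocalizationSdToTw}.
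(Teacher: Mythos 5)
Your proof is correct and follows the same strategy as the paper: reduce via \cite[Theorem~3.8]{MG} to showing the fibers of $G_r$ are weakly contractible, observe each fiber is (the nerve of) a poset, and exhibit an explicit initial element. The only cosmetic difference is that you build the initial lift $(j_s^\ast)$ by a greedy recursion $j_s^\ast = \max(\min I_s, j_{s-1}^\ast)$, whereas the paper writes the same element in closed form after a case split on where the sequence $\zeta\max g(s)$ first equals $1$.
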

\begin{proof}
We adopt the same strategy as in the proof of \Cref{lem:LocalizationSdToTw}. By \cite[Theorem~3.8]{MG}, it suffices to show that for every $k \geq 0$, the induced map
\[ \Fun([k], \Tw[m] \times_{\Tw[1]} \sd[n])^W \xlongra{G_k} \Map([k], [m]^\op \times_{[1]^\op} \sd[n]) \]
is a weak homotopy equivalence, where the lefthand side indicates the full subcategory on natural transformations through marked morphisms in $(\Tw[m],W) \times_{\Tw[1]^{\flat}} \sd[n]^{\flat}$. Since the righthand side is discrete, it suffices to show that the fibers of $G_k$ are weakly contractible.
Given a functor $[k] \xra{h=(f,g)} [m]^\op \times_{[1]^\op} \sd[n]$, we define a lift
\[ [k] \xra{\widetilde{h} = (\widetilde{f},g)} \Tw[m] \times_{\Tw[1]} \sd[n] \]
as follows.
\begin{enumerate}[(a)]
\item If $\zeta \max g(k) = 0$ or $\varphi f(0)=1$, let $\widetilde{f} = \left( [f(0)=f(0)] \to [f(1) \leq f(0)] \to ... \to [f(k) \leq f(0)] \right)$.
\item Otherwise, if $\varphi f(0) = 0$ (so that $\zeta \min g(0) = 0$) and $0<i \leq k$ is minimal such that $\zeta \max g(i) = 1$, let
\[ \widetilde{f} = \left( [f(0) = f(0)] \to ... \to [f(i-1) \leq f(0)] \to [f(i) \leq m_0] \to ... \to [f(k) \leq m_0] \right)\]
where $m_0$ is minimal such that $\varphi(m_0) = 1$.
\end{enumerate}
We claim that $\widetilde{h}$ is an initial object in $G_k^{-1}(h)$, which will complete the proof. Suppose that
$$\overline{h} = (f(0)\leq j_0,g(0)) \to ... \to (f(k) \leq j_k, g(k))$$
is another element in the fiber. Then if the projection to $\Tw[1]$ yields the sequence $00 \to ... \to 00$ or $11 \to ... \to 11$, so that we are in the situation of (a), we have for each index $0 \leq l \leq k$ that $[f(l) \leq f(0)] \leq [f(l) \leq j_l]$ in $(\Tw[m], W) \times_{\Tw[1]^\flat} \sd[n]^\flat$, hence $\widetilde{h} \leq \overline{h}$. Otherwise, if the projection to $\Tw[1]$ has $01$ appearing with first instance in the $i$th position so that we are in the situation of (b), then since $\varphi$ is a cocartesian fibration with all morphisms $x \to m_0 \in [m]$ $\varphi$-cocartesian, we likewise have that $\widetilde{h} \leq \overline{h}$.
\end{proof}

\begin{proof}[Proof of \Cref{thm:ExistenceRKE}] We already explained the consequence in \Cref{obs:strategy}. To the task at hand, we first observe that the composite 
\[ \Yo^{\lax} (-)^{\vee}: \Cart_{[1]^\op} \longra \Cocart_{[1]} \longra \Fun(\Delta^\op_{/[1]}, \Cat) \]
is canonically equivalent to the functor given by the assignment
\[ (\cD \to [1]^\op) \longmapsto \left( (\varphi: [m] \to [1]) \mapsto \Fun_{/[1]^\op}([m]^\op, \cD) \right). \]
Indeed, since the ``minmax'' functor $\sd[m] \to \Tw[m]$ given by $\sigma \mapsto (\min \sigma \leq \max \sigma)$ is the unit map of a pointwise left adjoint to the fully faithful inclusion $\locCocart_{[m]} \hookla \Cocart_{[m]}$ \cite[Lemma~2.6.3]{AMGR}, we have a natural equivalence
\[ \Fun^{\cocart}_{/[m]}(\sd[m], \varphi^\ast (\cD^{\vee})) \simeq \Fun^{\cocart}_{/[m]}(\Tw[m], \varphi^\ast (\cD^{\vee}))~. \]
Then by definition of the dual cartesian fibration and its commutativity with base-change, we have a natural equivalence
\[ \Fun^{\cocart}_{/[m]}(\Tw[m], \varphi^\ast (\cD^{\vee})) \simeq \Fun_{/[m]^\op}([m]^\op, (\varphi^\op)^\ast ((\cD^{\vee})^{\vee}) ). \]
Finally, we have a natural equivalence $(\cD^{\vee})^{\vee} \xrightarrow{\sim} \cD$ from the double dual to the identity as in \cite[ Proposition~4.1]{BGN}.

Using \Cref{obs:strategy} to identify $\zeta_{\ast} \Yo^{\lax}$, we thereby reduce to producing a natural equivalence
\[ \Fun^{\cocart}_{/[m] \times_{[1]} [n]}(\sd([m] \times_{[1]} [n]), \overline{\varphi}^{\ast} \cC) \simeq \Fun_{/[1]^\op}([m]^\op, (\zeta^{\lax}_{\ast} \cC)^{\vee} ). \]
Note that the righthand side is by definition $\Fun_{/[n]}(([m]^\op)^\flat \times_{([1]^\op)^\flat} \leftnat{\sd[n]}, \leftnat{\cC})$. In fact, we further have that $[m]^\op \times_{[1]^\op} \sd[n] \to [n]$ is a locally cocartesian fibration with the locally cocartesian morphisms marked as indicated, so we may write this as $\Fun^{\cocart}_{/[n]}([m]^\op \times_{[1]^\op} \sd[n], \cC)$.

We claim that precomposition by the composite functor
\[ \begin{tikzcd}[column sep=8ex]
\sd(\left[m \right] \times_{\left[1 \right]} \left[n \right])
\ar{r}{F}
& \Tw\left[m\right] \times_{\Tw\left[1\right]} \sd\left[n\right]
\ar{r}{G}
& \left[m\right]^\op \times_{\left[1\right]^\op} \sd \left[n \right]
\end{tikzcd} \]
% \[ \sd(\left[m \right] \times_{\left[1 \right]} \left[n \right]) \Tw\left[m\right] \times_{\Tw\left[1\right]} \sd\left[n\right] \left[m\right]^\op \times_{\left[1\right]^\op} \sd \left[n \right] \]
will implement the desired natural equivalence, where $F$ and $G$ are respectively as defined as in Lemmas \ref{lem:LocalizationSdToTw} and \ref{lem:LocalizationTwMinToBase}. We first note that given a commutative square
\[ \begin{tikzcd}
\sd(\left[m \right] \times_{\left[1 \right]} \left[n \right]) \ar{r}{\alpha} \ar{d} & \cC \ar{d} \\ 
\left[m \right] \times_{\left[1 \right]} \left[n \right] \ar{r}[swap]{\overline{\varphi}} & \left[ n \right]
\end{tikzcd} \]
such that $\alpha$ is a morphism of locally cocartesian fibrations, $\alpha$ necessarily sends the class of morphisms $W$ in the statement of \Cref{lem:LocalizationSdToTw} to equivalences in $\cC$ (because the canonical natural transformations of locally cocartesian monodromy functors encoded by morphisms in $W$ sent into $\cC$ are necessarily identities). Therefore, we have the inclusion
\[ \Fun^{\cocart}_{/[m] \times_{[1]} [n]}(\sd([m] \times_{[1]} [n]), \overline{\varphi}^{\ast} \cC) \subseteq \Fun_{/[n]}( (\sd([m] \times_{[1]} [n]) ,W), \cC^{\sim}).  \]
By \Cref{lem:LocalizationSdToTw}, we have an equivalence
\[ \Fun_{/[n]}(\Tw[m] \times_{\Tw[1]} \sd[n], \cC) \xra[\sim]{F^*} \Fun_{/[n]}( (\sd([m] \times_{[1]} [n]) ,W), \cC^{\sim})~, \]
which then restricts to an equivalence
\[ \Fun^{\cocart}_{/[m] \times_{[1]} [n]}(\Tw[m] \times_{\Tw[1]} \sd[n] ,\overline{\varphi}^{\ast} \cC) \xra[\sim]{F^*} \Fun^{\cocart}_{/[m]\times_{[1]} [n]}( \sd( [m] \times_{[1]} [n]), \overline{\varphi}^{\ast} \cC)~. \]
% where we mark $\sd( [m] \times_{[1]} [n])$ as a locally cocartesian fibration over $[m] \times_{[1]} [n]$ via $\max$ and $(\Tw[m] \times_{\Tw[1]} \sd[n])$ as a locally cocartesian fibration over $[m] \times_{[1]} [n]$ via $(\max,\max)$.

Next, we treat two cases for $\varphi$ separately.
\begin{enumerate}
\item Suppose the image of $\varphi$ is $\{0 \} \subset [1]$. Then $[m] \times_{[1]} [n] = [m] \times \{0\}$ and may write $G$ as
\[ \min \times \iota: \Tw[m] \times \{ 0 \} \to [m]^\op \times \sd[n]_{\min=0}. \]
By \cite[Proposition~2.21(2)]{QS}, $\{ 0 \} \to \leftnat{\sd[n]_{\min=0}}$ is an equivalence in the locally cocartesian model structure for $\sSet^+_{/[n]}$. It is also clear that $\Tw[m] \xra{\min} [m]^\op$ exhibits $[m]^\op$ as the localization of $\Tw[m]$ at those morphisms sent to identities under $\min$, i.e., the $\max$-cocartesian morphisms, so $\leftnat{\Tw[m]} \to ([m]^\op)^\flat$ is an equivalence in the marked model structure on $\sSet^+$. By \cite[Remark B.2.5]{HA}, we get that
\[ \leftnat{\Tw[m]} \times \{0\} \longra ([m]^\op)^\flat \times \leftnat{\sd[n]_{\min=0}} \]
is an equivalence in the locally cocartesian model structure on $\sSet^+_{/[n]}$. We conclude that
\[ \Fun^{\cocart}_{/[n]}([m]^\op \times \sd[n]_{\min=0}, \cC) \xrightarrow{G^*} \Fun^{\cocart}_{/[m]}(\Tw[m],\overline{\varphi}^{\ast} \cC) \]
is an equivalence (and both sides are equivalent to $\Fun([m]^\op, \cC_0)$).
\item Suppose that $1 \in [1]$ is in the image of $\varphi$. Then in the notation of \Cref{lem:LocalizationTwMinToBase}, we have an equivalence
\[ \Fun_{/[n]}([m]^\op \times_{[1]^\op} \sd[n], \cC) \xra[\sim]{G^*} \Fun_{/[n]}((\Tw[m],W) \times_{\Tw[1]^\flat} \sd[n]^\flat, \cC^\sim)~. \]
Noting that $W$ is contained in the $\max$-locally cocartesian morphisms, we see that this equivalence restricts to an equivalence
\[ \Fun^{\cocart}_{/[n]}([m]^\op \times_{[1]^\op} \sd[n], \cC) \xra[\sim]{G^*} \Fun^{\cocart}_{/[m] \times_{[1]} [n]}(\Tw[m] \times_{\Tw[1]} \sd[n]),\overline{\varphi}^{\ast} \cC)~. \]
\end{enumerate}
This completes the proof that $\zeta_{\ast} \Yo^{\lax}(\cC \to [n]) \simeq \Yo^{\lax}(\zeta^{\lax}_{\ast}(\cC) \to [1])$.
\end{proof}

\begin{lemma} \label{lem:ZetaAdjointability}
The functor $\locCocart^\lax_{[n]} \xra{\zeta^{\lax}_\ast} \Cocart^\lax_{[1]}$ of \Cref{thm:ExistenceRKE} satisfies the base-change commutativities
\[ \begin{tikzcd}
\Cat \ar{d}{\id} & \locCocart^{\lax}_{[n]} \ar{r}{\iota_1^\ast} \ar{l}[swap]{\iota_0^\ast} \ar{d}{\zeta^{\lax}_{\ast}} & \locCocart^{\lax}_{[n-1]} \ar{d}{\lim^{\lax}_{[n-1]}} \\
\Cat & \Cocart^{\lax}_{[1]} \ar{r}[swap]{\iota^\ast_1} \ar{l}{\iota^\ast_0} & \Cat
\end{tikzcd}~, \]
where $\iota_0$ denotes both inclusions $\{0\} \subset [n]$ and $\{0\} \subset [1]$ and $\iota_1$ denotes both inclusions $\{1,\ldots,n\} \subset [n]$ and $\{1\} \subset [1]$.
\end{lemma}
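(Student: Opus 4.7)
The plan is to reduce both base-change commutativities to their analogues for the ordinary right Kan extension $\zeta_*$ acting at the level of presheaves, and then invoke Lemma~\ref{lem:Adjointability}. By Theorem~\ref{thm:ExistenceRKE}, $\zeta^\lax_*$ agrees with the restriction of
\[ \zeta_* : \Fun(\Delta^\op_{/[n]}, \Cat) \longra \Fun(\Delta^\op_{/[1]}, \Cat) \]
along the faithful embedding $\Yo^\lax$; and by the commutative square recorded in Observation~\ref{obs:strategy}, the restriction functors $\iota_0^*$ and $\iota_1^*$ on (locally) (co)cartesian fibrations correspond under $\Yo^\lax$ to the evident restriction of presheaves along $\iota_{0}, \iota_1$. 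Thus it suffices to verify each base-change equivalence at the presheaf level and then to recognize the resulting presheaf as $\Yo^\lax$ of the claimed object.

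For the left square, I apply Lemma~\ref{lem:Adjointability} to the pullback square
\[ \begin{tikzcd} \{0\} \ar{r}{\iota_0} \ar{d}[swap]{\id} & [n] \ar{d}{\zeta} \\ \{0\} \ar{r}[swap]{\iota_0} & [1] \end{tikzcd} \]
to obtain $\iota_0^* \zeta_* \simeq \id_* \iota_0^* = \iota_0^*$. Restricting along $\Yo^\lax$ yields $\iota_0^*(\zeta^\lax_* \cC) \simeq \iota_0^* \cC = \cC_0$, as required. For the right square, I apply Lemma~\ref{lem:Adjointability} instead to
\[ \begin{tikzcd} [n-1] \ar{r}{\iota_1} \ar{d}[swap]{\pi'} & [n] \ar{d}{\zeta} \\ \{1\} \ar{r}[swap]{\iota_1} & [1] \end{tikzcd} \]
(where $\pi'$ is the projection to a point, and we use the identification $\{1,\ldots,n\} \cong [n-1]$) to obtain $\iota_1^* \zeta_* \simeq \pi'_* \iota_1^*$ on presheaves. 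A direct unwinding of definitions gives $\iota_1^* \Yo^\lax(\cC) \simeq \Yo^\lax(\cC|_{[n-1]})$; moreover, by the observation recorded immediately after Theorem~\ref{thm:ExistenceRKE}, the composite $\ev_{[0]} \circ \pi'_* \circ \Yo^\lax$ agrees with $\lim^\lax_{[n-1]}$. Combining these identifications gives $\iota_1^*(\zeta^\lax_* \cC) \simeq \lim^\lax_{[n-1]}(\cC|_{[n-1]})$.

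The main obstacle in a fully rigorous write-up is essentially one of bookkeeping: one must carefully align the various ambient categories of presheaves (over $\Delta_{/[n]}$, $\Delta_{/[n-1]}$, $\Delta_{/[1]}$, $\Delta_{/\{1\}} \simeq \Delta$, etc.) and confirm that the intermediate presheaf $\pi'_* \iota_1^* \Yo^\lax(\cC)$ really lies in the essential image of $\Yo^\lax$, so that both sides of the desired equivalence are genuinely locally cocartesian fibrations rather than mere presheaves. This can be dispatched by the same strategy as in the second half of the proof of Theorem~\ref{thm:ExistenceRKE}, and in fact simplifies considerably here since the target base is a point.
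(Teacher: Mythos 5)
Your argument is correct and follows essentially the same route as the paper's one-line proof, which simply applies \Cref{lem:Adjointability} to the two evident pullback squares (fibers of $\zeta$ over $\{0\}$ and $\{1\}$) and then evaluates at $[0]$ in $\Fun(\Delta^\op,\Cat)$. The identifications you spell out (in particular the pullback squares, the compatibility of $\iota_i^*$ with $\Yo^\lax$ from \Cref{obs:strategy}, \Cref{thm:ExistenceRKE} itself, and the remark that $\ev_{[0]} \circ \pi'_* \circ \Yo^\lax \simeq \lim^\lax$) are exactly what is implicit in that terse statement.

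One small correction to your closing remark: the bookkeeping obstacle you flag does not actually arise. Because the target statement is an equivalence of $\Cat$-valued functors obtained \emph{after} evaluating at $[0]$, you never need to know that $\pi'_*\iota_1^*\Yo^\lax(\cC)$ lies in the essential image of $\Yo^\lax$; you only need its value at $[0]$, which is exactly what the remark following \Cref{thm:ExistenceRKE} provides. Your own write-up of the right square already does precisely this, so the worry in the final paragraph is vacuous.
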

\begin{proof}
This follows from \Cref{lem:Adjointability} after evaluation at $[0]$ in $\Fun(\Delta^\op,\Cat)$.
\end{proof}

Combining \Cref{thm:ExistenceRKE} and \Cref{lem:ZetaAdjointability}, we arrive at the following result.

%Insert reference to identification of O-monoids in \locCocart^\lax_{[1]} with lax O-monoidal functors of O-monoidal categories
\begin{corollary} \label{cor:InductiveReductionMethod}
Let $\cO^\otimes$ be an operad and let $\cC \to [n]$ be an $\cO$-monoid in $\locCocart^{\lax}_{[n]}$. Let $\cC_0 \xra{\varphi} \lim^{\lax}_{[n-1]} \cC|_{[n-1]}$ be the functor classifying the cartesian fibration $(\zeta^{\lax}_{\ast} \cC)^{\vee}$ of \Cref{con:DualOfLaxRKE}. Then $\varphi$ comes canonically endowed with a laxly $\cO$-monoidal structure such that we have an equivalence
\[ \lim^{\lax}_{[1]} \varphi \simeq \lim^{\lax}_{[n]} \cC \]
of $\cO$-monoidal categories. \qed
\end{corollary}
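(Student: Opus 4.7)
The plan is to combine \Cref{thm:ExistenceRKE} with the elementary fact that right adjoints between categories with finite products preserve $\cO$-monoids.

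First I would address the laxly $\cO$-monoidal structure on $\varphi$. By \Cref{thm:ExistenceRKE}, the functor $\zeta^{\lax}_{\ast} : \locCocart^{\lax}_{[n]} \to \Cocart^{\lax}_{[1]}$ is a right adjoint defined on all of $\locCocart^{\lax}_{[n]}$, hence preserves finite products. Applied to the $\cO$-monoid $\cC$, it therefore yields an $\cO$-monoid $\zeta^{\lax}_{\ast}\cC$ in $\Cocart^{\lax}_{[1]}$. By \Cref{prop:StratID} (combined with the equivalence $\Cocart^{\lax}_{[1]} \simeq \Cart^{\rlax}_{[1]^\op}$), this datum is precisely that of a laxly $\cO$-monoidal functor. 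The base-change identifications of \Cref{lem:ZetaAdjointability}, together with the identification of the $0$- and $1$-fibers of $(\zeta^{\lax}_{\ast}\cC)^{\vee}$ from \Cref{rem:UnwindingDual} as $\lim^{\lax}_{[n-1]}\cC|_{[n-1]}$ and $\cC_0$ respectively, guarantee that the underlying functor is precisely $\varphi$.

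For the equivalence of $\cO$-monoidal categories, I would exploit an adjoint factorization. At the level of left adjoints, the identity $\const_{[n]} \simeq \zeta^{\ast} \circ \const_{[1]}$ is immediate, since both send a category $\cD$ to the trivial fibration $\cD \times [n] \to [n]$. Passing to right adjoints produces a canonical natural equivalence
\[
\lim^{\lax}_{[n]} \simeq \lim^{\lax}_{[1]} \circ \zeta^{\lax}_{\ast}
\]
of functors $\locCocart^{\lax}_{[n]} \to \Cat$. Since all three functors involved are right adjoints and therefore preserve $\cO$-monoids, this upgrades to an equivalence of induced functors on $\Mon_{\cO}(-)$. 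Evaluating at $\cC$ yields the desired equivalence $\lim^{\lax}_{[1]} \varphi \simeq \lim^{\lax}_{[n]} \cC$ of $\cO$-monoidal categories.

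The argument is essentially a formal assembly of \Cref{thm:ExistenceRKE} and \Cref{lem:ZetaAdjointability}, so the only mild obstacle is bookkeeping: one should verify that the $\cO$-monoidal structure on $\lim^{\lax}_{[1]}\varphi$ transported through this chain of right adjoints agrees with the canonical one described in \Cref{prop:MonoidalLaxLimitID}. This is automatic, however, since both structures arise by right-adjoint functoriality in the adjunction \Cref{lim.lax.adjunction.monoidal} (applied respectively to $\cB = [1]$ and to $\cB = [n]$).
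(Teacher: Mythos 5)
Your proposal is correct and follows essentially the same approach the paper intends: the paper supplies no explicit argument beyond the prefatory sentence ``Combining \Cref{thm:ExistenceRKE} and \Cref{lem:ZetaAdjointability}, we arrive at the following result,'' and you simply spell out that combination---using the right-adjoint factorization $\lim^{\lax}_{[n]} \simeq \lim^{\lax}_{[1]} \circ \zeta^{\lax}_{\ast}$ coming from $\const_{[n]} \simeq \zeta^{\ast}\circ\const_{[1]}$, the product-preservation of right adjoints to pass to $\cO$-monoids, and \Cref{lem:ZetaAdjointability} together with \Cref{rem:UnwindingDual} to identify the fibers so that the underlying functor of $\zeta^{\lax}_{\ast}\cC$ is indeed $\varphi$.
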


\subsection{Detecting dualizability on strata and links} \label{sec:2.4}

We now apply \Cref{cor:InductiveReductionMethod} to understand dualizable objects in the lax limit of a locally cocartesian fibration $\cC \to \cB$.

\begin{observation} \label{obs:RestrictionMonoidality}
Let $\cA \xra{f} \cB$ be a functor, and let $\pi_{\cA}$ and $\pi_{\cB}$ denote the projections to a point. Then $\pi_{\cA}^\ast \simeq f^\ast \pi^\ast_{\cB}$ is adjoint to a natural transformation $\lim^\lax_{\cB} \xRightarrow{f^\ast} \lim^\lax_{\cA} f^\ast$ of product-preserving functors from $\locCocart^\lax_{\cB}$ to $\Cat$. For any $\cO$-monoid $\cC \to \cB$ in $\locCocart^\lax_{\cB}$, we thus get an $\cO$-monoidal functor $\lim^\lax_{\cB} \cC \xra{f^*} \lim^\lax_{\cA} f^\ast \cC$.
\end{observation}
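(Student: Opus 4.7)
The plan is to show this is a formal consequence of the adjunction $\const \dashv \lim^\lax_\cB$ established before \Cref{lim.lax.adjunction.monoidal}, promoted to $\cO$-monoids using product-preservation. I would proceed in three short steps.

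First, the equivalence $\pi_\cA^* \simeq f^* \pi_\cB^*$ of functors $\Cat \to \locCocart_\cA$ is immediate from the factorization $\pi_\cA = \pi_\cB \circ f$ and the functoriality of pullback of slices; note that both $\pi_\cB^*$ and $f^*$ preserve locally cocartesian fibrations, so the composite indeed lands in $\locCocart_\cA$ (in fact in $\Cocart_\cA$). Applying mate calculus to the pair of adjunctions $\const = \pi_\cB^* \dashv \lim^\lax_\cB$ and $\const = \pi_\cA^* \dashv \lim^\lax_\cA$, this equivalence gives rise to a natural transformation $\lim^\lax_\cB \Rightarrow \lim^\lax_\cA \circ f^*$ of functors $\locCocart_\cB \to \Cat$. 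Concretely, using the lax Yoneda description $\lim^\lax_\cB(-) = \Fun^\cocart_{/\cB}(\sd(\cB), -)$, the map at an object $\cC \to \cB$ is implemented by restriction of sections along the canonical map $\sd(\cA) \to \sd(\cB)$ covering $f$. Because this pullback-of-sections description is natural in arbitrary (not necessarily strict) morphisms in $\locCocart^\lax$, the transformation extends verbatim from $\locCocart_\cB$ to $\locCocart^\lax_\cB$.

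Second, I would check that both $\lim^\lax_\cB$ and $\lim^\lax_\cA \circ f^*$ preserve finite products as functors $\locCocart^\lax_\cB \to \Cat$. The functor $\lim^\lax$ is a right adjoint and hence preserves all limits. Pullback $f^*$ preserves limits, since limits in a slice category are computed by taking the limit of the underlying objects together with the induced map to the base. A composite of product-preserving functors is product-preserving, so the claim follows for $\lim^\lax_\cA \circ f^*$.

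Third, since $\cO$-monoids in a category with finite products are characterized by the Segal condition against product-preserving functors out of $\cO^\otimes$, any product-preserving functor of such categories carries $\cO$-monoids to $\cO$-monoids, and any natural transformation between such functors carries $\cO$-monoid objects to $\cO$-monoidal morphisms. Evaluating our natural transformation at an $\cO$-monoid $\cC \to \cB$ in $\locCocart^\lax_\cB$ thus produces the asserted $\cO$-monoidal functor $\lim^\lax_\cB \cC \to \lim^\lax_\cA f^* \cC$. There is no real obstacle; the only subtlety worth flagging is that one must use the lax Yoneda formulation in the first step in order to see that the whole construction extends to $\locCocart^\lax$, but this is then automatic.
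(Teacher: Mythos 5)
Your proof is correct and follows the same route the paper implicitly takes: the mate of $\pi_\cA^\ast \simeq f^\ast \pi_\cB^\ast$ under the adjunctions $\const \dashv \lim^\lax$, checked to be a transformation of product-preserving functors, then passed to $\cO$-monoids. One small imprecision: the justification that $f^\ast$ preserves products by appeal to limits in slice categories applies directly to $\locCocart_\cB$ but not to $\locCocart^\lax_\cB$ (which is not a slice category); the cleaner argument is the one you also gesture at, namely that under $\Yo^\lax$ the functor $f^\ast$ becomes restriction of presheaves along $\Delta^\op_{/\cA} \to \Delta^\op_{/\cB}$, which preserves all limits pointwise.
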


%Non-degenerate replaced by non-equivalence?
\begin{theorem} \label{thm:GeneralDualizability}
Let $\cB$ be a category and let $\cC \to \cB$ be a commutative monoid in $\locCocart^\lax_{\cB}$.  Then an object $x \in \lim^\lax_{\cB} \cC$ is dualizable if and only if for every morphism $[1] \xra{f} \cB$ the restriction
\[
f^*x
\in
\lim^\lax_{[1]}(f^* \cC)
\]
is dualizable.
%{\color{red}
%the following two conditions obtain.
%\begin{enumerate}
%\item For every $b \in \cB$, $x_b = b^\ast(x) \in \cC_b$ is dualizable.
%\item For every non-degenerate $f: [1] \to \cB$, $f^\ast(x)$ is dualizable in $\lim^\lax_{[1]} f^\ast \cC$.
%\end{enumerate}}
\end{theorem}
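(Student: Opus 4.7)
First I would dispatch the forward direction: for any $f : [1] \to \cB$, \Cref{obs:RestrictionMonoidality} provides a strictly symmetric monoidal restriction functor $f^* : \lim^\lax_\cB \cC \to \lim^\lax_{[1]} f^*\cC$, and any symmetric monoidal functor preserves dualizable objects. The content lies in the converse, for which my plan is to first reduce to the case $\cB = [n]$ of a standard simplex, and then induct on $n$ using \Cref{cor:InductiveReductionMethod} together with \Cref{theorem.algebraic_descr}.

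For the reduction, I would exploit that $\cB$ is canonically the colimit in $\Cat$ of its simplex diagram indexed by $\Delta_{/\cB}$; since subdivision preserves colimits, $\sd \cB$ is the colimit of the $\sd [n]$'s, so the explicit formula $\lim^\lax_\cB \cC = \Fun^\cocart_{/\cB}(\sd \cB, \cC)$ exhibits $\lim^\lax_\cB \cC$ as the limit of $\lim^\lax_{[n]} f^*\cC$ over $([n] \to \cB) \in \Delta_{/\cB}^\op$. By functoriality of the $\cO$-monoidal structure on lax limits (cf.\ \Cref{lim.lax.adjunction.monoidal} and \Cref{prop:MonoidalLaxLimitID}), this equivalence lifts to $\SMC$. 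Since the functor $(-)^\dzbl : \SMC \to \Spaces$ is corepresented by $\Bord$ (by the ordinary $1$-dimensional bordism hypothesis) and hence preserves limits, the dualizability of $x$ is equivalent to that of $f^*x$ for every simplex $f : [n] \to \cB$. It thus suffices to prove the theorem when $\cB = [n]$.

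For this I would induct on $n$, with $n \leq 1$ being tautological. For $n \geq 2$, \Cref{cor:InductiveReductionMethod} furnishes a symmetric monoidal equivalence $\lim^\lax_{[n]} \cC \simeq \lim^\lax_{[1]} \varphi$, where $\varphi : \cC_0 \to \lim^\lax_{[n-1]} \cC|_{[n-1]}$ is a laxly symmetric monoidal functor whose restriction to stratum $k \in \{1,\dots,n\}$ recovers the link functor $\varphi_{0k} : \cC_0 \to \cC_k$. Writing $x = [u \mapsto \varphi(u) \xla{\alpha} z]$ and applying \Cref{theorem.algebraic_descr}, dualizability of $x$ is equivalent to the conjunction of (a) dualizability of $u \in \cC_0$, (b) dualizability of $z \in \lim^\lax_{[n-1]} \cC|_{[n-1]}$, and (c) the projection formulas for $\varphi$ being equivalences at $\uno_{\cC_0}$ and at $u^\vee$. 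Condition (a) matches the dualizability of $f^*x$ for $f : [1] \to [n]$ constant at $0$; by the inductive hypothesis applied to $z$, condition (b) matches the dualizability of $f^*x$ for every $f : [1] \to [n]$ landing in $\{1,\dots,n\} \cong [n-1]$; and the only remaining maps are those with $f(0)=0$ and $f(1)=k>0$, for which $f^*\cC$ is classified by the link functor $\varphi_{0k}$.

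The hard part will be matching condition (c) with the dualizability of $f^*x$ for these crossing maps. For such $f$, \Cref{theorem.algebraic_descr} applied to $\varphi_{0k}$ shows that, once (a) and (b) are granted, dualizability of $f^*x$ is equivalent to the projection formulas for $\varphi_{0k}$ being equivalences at $\uno_{\cC_0}$ and $u^\vee$. The critical observation is that a morphism in $\lim^\lax_{[n-1]} \cC|_{[n-1]}$ is an equivalence if and only if its restriction to each stratum $\cC_k$ is an equivalence: the lax limit is defined as a category of $\cocart$-preserving functors out of $\sd [n-1]$, and natural transformations between such are equivalences iff pointwise so, with the pointwise components living in the strata $\cC_{\max \sigma}$. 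Applying this stratumwise detection to the candidate maps $z \otimes \varphi(w) \to \varphi(u \otimes w)$ for $w \in \{\uno_{\cC_0}, u^\vee\}$ exhibits condition (c) as the conjunction over $k \in \{1,\dots,n\}$ of the link projection formulas, completing the matching and the induction.
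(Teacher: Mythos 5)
Your proof is correct and follows essentially the same route as the paper: reduce to $\cB=[n]$ via the colimit decomposition $\sd(\cB) \simeq \colim_{[n]\to\cB}\sd[n]$, then induct on $n$ using \Cref{cor:InductiveReductionMethod} to peel off the bottom stratum, apply \Cref{theorem.algebraic_descr} to the resulting $[1]$-stratification, and detect the projection-formula equivalence stratumwise via joint conservativity of the fiber restrictions (with \Cref{rem:UnwindingDual} supplying the identification of the restricted map as the link projection formula). The only cosmetic differences are your use of condition (3) of \Cref{theorem.algebraic_descr} where the paper checks condition (2) at all $w$, and your explicit invocation of corepresentability of $(-)^\dzbl$ by $\Bord$ to justify the reduction, a point the paper leaves implicit.
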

\begin{proof}
%{\color{red} We first note that if $f: [1] \to \cB$ factors through $b \in \cB$, then $x_b$ is dualizable if and only if $f^\ast x$ is dualizable, so we could also combine both conditions into a single one for all morphisms.}
Using that $\sd(\cB) \simeq \underset{[n] \rightarrow \cB}{\colim} \: \sd[n]$ together with \Cref{obs:RestrictionMonoidality}, we have an equivalence
\[ \lim^{\lax}_{\cB} \cC = \Fun^{\cocart}_{/\cB}(\sd(\cB), \cC) \xlongra{\sim}
\lim^\SMC_{([n] \xra{\gamma} \cB) \in \Delta_{/\cB}} \left(
\lim^{\lax}_{[n]} \gamma^\ast \cC\right)
\]
of symmetric monoidal categories. Thus, $x$ is dualizable if and only if $\gamma^{\ast} x$ is dualizable for all $[n] \xra{\gamma} \cB$, which in particular proves the `only if' direction. Furthermore,  it follows that for the `if' direction it suffices to consider the case where $\cB = [n]$.

We proceed by induction on $n \geq 0$. The cases $n=0$ and $n=1$ are evident. So, let us suppose that $n \geq 2$. Suppose we have shown the claim for all $\cB = [m]$ with $m < n$, and assume that for all $[1] \xra{f} [n]$ we have that $f^\ast x \in \lim^\lax_{[1]} (f^\ast \cC)$ is dualizable. In the setup of \Cref{cor:InductiveReductionMethod}, we need to check that $x \in \lim^\lax_{[1]} \varphi$ is dualizable. Let us write $\cX = \lim^\lax_{[n]} \cC$, $\cU = \cC_0$, and $\cZ = \lim^{\lax}_{[n-1]} (\cC|_{[n-1]})$, and let us write $\cX \xra{j^*} \cU$ and $\cX \xra{i^*} \cZ$ for the restrictions. Let $u = j^\ast x$ and $z = i^\ast x$. By the inductive hypothesis, we have that $u$ and $z$ are dualizable. Using \Cref{theorem.algebraic_descr}, it suffices to show that for any $w \in \cU$ the canonical map
\[ z \otimes \varphi(w) \xlongra{\chi} \varphi(u \otimes w) \]
in $\cZ$ is an equivalence. Now note from \Cref{rem:UnwindingDual} and \Cref{obs:RestrictionMonoidality} again that for every $1 \leq k \leq n$, the restriction of $\chi$ over the object $k \in [n]$ is given by $z_k \otimes \varphi_{0k}(x) \to \varphi_{0k}(u \otimes w)$ (where $\cC_0 \xra{\varphi_{0k}} \cC_k$ denotes the monodromy functor). By our assumption applied to each morphism $\{0 < k \} \to [n]$, we see that $\chi_k$ is an equivalence. Hence, by the joint conservativity of restriction to fibers, we deduce that $\chi$ is an equivalence and thus that $x$ is dualizable.
\end{proof}

\begin{remark}
The proof of \Cref{thm:GeneralDualizability} works equally well to characterize left- and right-dualizable objects in the lax limit of a monoid in $\locCocart^\lax_{\cB}$.
\end{remark}

\begin{remark} \label{rem:GeneralInducedRecollementOnModules}
In the setup of \Cref{thm:GeneralDualizability}, if $A \in \cX \coloneqq \lim^\lax_{\cB} \cC$ is a commutative algebra, then we get a symmetric monoidal stratification of $\Mod_A(\cX)$ whose stratum over $b \in \cB$ is given by $\Mod_{A_b}(\cC_b)$, thereby exhibiting $\Mod_A(\cX)$ as a lax limit and furnishing a characterization of its dualizable objects via \Cref{thm:GeneralDualizability} again. To show this, we'll assume known that commutative monoids in $\locCocart^{\lax}_{\cB}$ straighten to oplax functors $\cB \to \SMC^{\lax}$, which follows from $2$-categorical straightening \cite{GoodwillieI}*{\S 3} by an argument similar to the proof of \Cref{prop:StratID}. First note that the $2$-functor $\SMC^{\lax} \xra{\CAlg} \Cat$ is corepresentable, hence $A$ defines a lax cone over the oplax functor $\cB \xra{\cC^\otimes_{\bullet}} \SMC^{\lax}$. Thus, if we let $(\SMC^{\lax})_{\ast \sslash}$ be the lax slice under $\Com^\otimes$, then $A$ defines an oplax functor $\cB \xra{\overline{A}} (\SMC^{\lax})_{\ast \sslash}$. Then since the $2$-functor $(\SMC^{\lax})_{\ast \sslash} \xra{\Mod} \SMC^{\lax}$ is also corepresentable (as a $2$-functor into $\Cat$) and thus preserves lax limits, we obtain the desired symmetric monoidal stratification as $\Mod \circ \overline{A}$. This generalizes \Cref{rem:inducedRecollementOnModules}.
\end{remark}

\section{The \texorpdfstring{$[1]$}{[1]}-stratified bordism hypothesis}\label{section:strat}

In this section, we use \Cref{theorem.intro.alg} to prove \Cref{theorem.intro.bord}. We give the statement (and an outline of the proof) in \Cref{subsection.state.bord}. The main work of the proof consists of three parts, which are accomplished in \S \S \ref{subsec.prove.bord.first.part}, \ref{subsec.prove.bord.second.part}, and \ref{subsec.prove.bord.third.part}.

\begin{notation}
Throughout this section, we fix an object
$$
\varphi \coloneqq (\cU \xra{\varphi} \cZ) \in \Strat,
$$
i.e., a laxly symmetric monoidal functor between symmetric monoidal categories.\footnote{We recall that the morphisms in $\Strat$ are described in \Cref{subsection.intro.bord}; in terms of \Cref{def_strat} (which also introduces $\Stratlax$), this description is justified by \Cref{prop:StratID}.} Additionally, we write
\[
\cX \coloneqq \lim^\lax(\varphi)
\simeq
\lim
\left(
\begin{tikzcd}
&
\Ar(\cZ)
\arrow{d}{\ev_1}
\\
\cU
\arrow{r}[swap]{\varphi}
&
\cZ
\end{tikzcd}
\right)
\]
for the lax limit of $\varphi$.
\end{notation}

For the reader's convenience, we restate the universal property and functoriality of the lax limit as established in \Cref{sec:lax.limit} (cf. the adjunction \Cref{lim.lax.adjunction.monoidal} as well as \Cref{prop:MonoidalLaxLimitID}).

\begin{observation} \label{obs.univ.property.of.lax.limit.of.a.morphism}
For any symmetric monoidal category $\cA$, a morphism $\cA \rightarrow \lim^{\lax}(\varphi)$ in $\SMC$ is equivalent data to a lax commutative diagram
\[ \begin{tikzcd}[ampersand replacement=\&]
\cA
\arrow{d}
\arrow{rd}[sloped, swap, xshift=-0.1cm, yshift=-0.1cm]{\Downarrow}
\\
\cU
\arrow{r}[swap]{\varphi}
\&
\cZ
\end{tikzcd} \]
in $\SMC^{\lax}$ such that the vertical functors are strictly symmetric monoidal. Moreover, a lax commutative square
\[ \begin{tikzcd}[ampersand replacement=\&]
\cU
\arrow{r}{\varphi}[swap, yshift=-0.4cm]{\rotatebox{45}{$\Leftarrow$}}
\arrow{d}[swap]{F_0}
\&
\cZ
\arrow{d}{F_1}
\\
\cU'
\arrow{r}[swap]{\varphi'}
\&
\cZ'
\end{tikzcd} \]
in $\SMC^\lax$ with vertical functors strictly symmetric monoidal extends to a lax commutative diagram
\[ \begin{tikzcd}[ampersand replacement=\&]
\lim^\lax(\varphi)
\arrow{d}
\arrow{rd}[sloped, swap, xshift=-0.2cm, yshift=-0.2cm]{\Downarrow}
\\
\cU
\arrow{r}{\varphi}[swap, yshift=-0.4cm]{\rotatebox{45}{$\Leftarrow$}}
\arrow{d}[swap]{F_0}
\&
\cZ
\arrow{d}{F_1}
\\
\cU'
\arrow{r}[swap]{\varphi'}
\&
\cZ'
\end{tikzcd} \]
and so induces a morphism
\[
\lim^\lax(\varphi)
\longra
\lim^\lax(\varphi')
\]
in $\SMC$ by the universal property of the target. In other words, passage to lax limits defines the second functor in the composite
\[
\Strat
\longhookra
\Stratlax
\xra{\lim^\lax}
\SMC
~.
\]
\end{observation}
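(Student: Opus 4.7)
The plan is to derive both assertions from Proposition \ref{prop:MonoidalLaxLimitID} (together with Remark \ref{rem:StrongerUniversalProperty}), appealing to the identification of $\Stratlax$ furnished by Proposition \ref{prop:StratID} to make the construction functorial.

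For the first assertion, I would invoke Remark \ref{rem:StrongerUniversalProperty}, which grants an equivalence between laxly symmetric monoidal functors $\cA \to \cX$ and lax commutative squares
\[ \begin{tikzcd}
\cA \arrow{r}{Z}[swap, yshift=-0.3cm]{\rotatebox{-45}{$\Leftarrow$}} \arrow{d}[swap]{U} & \cZ \arrow{d}{\id} \\
\cU \arrow{r}[swap]{\varphi} & \cZ
\end{tikzcd} \]
in which $U$ and $Z$ are themselves only laxly symmetric monoidal. The point is then to check that a laxly symmetric monoidal functor $\cA \to \cX$ is strictly symmetric monoidal if and only if both $U = j^* \circ (\cA \to \cX)$ and $Z = i^* \circ (\cA \to \cX)$ are strictly symmetric monoidal. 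This is immediate from the pullback description $\cX^\otimes \simeq \Ar(\cZ)^\otimes \times_{\cZ^\otimes} \cU^\otimes$ of Proposition \ref{prop:MonoidalLaxLimitID}, since $j^\otimes$ and $i^\otimes$ are jointly conservative and both strictly symmetric monoidal, so strictness of cocartesian-lifting can be checked after projection to $\cU^\otimes$ and $\cZ^\otimes$. This proves the first assertion.

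For the second assertion, the first assertion applied to the identity of $\cX$ equips $\cX$ with a universal lax commutative square $(j^*, i^*, \eta\colon i^* \Rightarrow \varphi\, j^*)$ having strict vertical legs. I would paste this universal square on top of the given lax square from $\varphi$ to $\varphi'$. The resulting outer square has vertical legs $F_0 \circ j^*$ and $F_1 \circ i^*$, which are strictly symmetric monoidal as composites of strict functors, and its 2-cell is the vertical composite of two laxly symmetric monoidal natural transformations, which is again laxly symmetric monoidal. Applying the first assertion in reverse then produces the desired strict symmetric monoidal functor $\cX \to \lim^\lax(\varphi')$.

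The step I expect to be the main obstacle — if addressed by hand — is promoting this objectwise construction to an actual functor $\Stratlax \xra{\lim^\lax} \SMC$, since one would need to check coherent compatibility under composition of lax commutative squares in $\SMC^\lax$. I would sidestep this entirely by appealing to Proposition \ref{prop:StratID}, which identifies $\Stratlax$ with $\Mon_\Comm(\Cart^\rlax_{[1]^\op})$, and then using the fact that the section functor $\Sect$ on the larger category $\Cart^\rlax_{[1]^\op}$ preserves products and so restricts to a genuine functor on commutative monoids — which, under the equivalence $\Cocart^\rlax_{[1]} \simeq \Cart^\rlax_{[1]^\op}$, is exactly $\lim^\lax_{[1]}$. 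The content of the observation is therefore to extract an explicit description of $\lim^\lax$ on objects and morphisms of $\Stratlax$ in the language of lax commutative squares in $\SMC^\lax$.
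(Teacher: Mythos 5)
Your proposal is correct and takes essentially the same route as the paper, which cites the adjunction \eqref{lim.lax.adjunction.monoidal}, \Cref{prop:MonoidalLaxLimitID}, and (implicitly) \Cref{rem:MonoidalFunctoriality} for this observation. Your minor detour through \Cref{rem:StrongerUniversalProperty} followed by the joint-conservativity/two-out-of-three argument to cut down from laxly to strictly symmetric monoidal legs is a valid small variation, and your final appeal to the product-preserving section functor on $\Cart^{\rlax}_{[1]^\op}$ for functoriality is exactly the content of the adjunction \eqref{lim.lax.adjunction.monoidal}.
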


% \begin{remark}\label{maps.between.lim.lax}[Also see \Cref{rem.map.gives.map.on.lax.limits}]
% In particular, a lax commutative square
% \[ \begin{tikzcd}[ampersand replacement=\&]
% \cU
% \arrow{r}{\varphi}[swap, yshift=-0.4cm]{\rotatebox{45}{$\Leftarrow$}}
% \arrow{d}[swap]{F_0}
% \&
% \cZ
% \arrow{d}{F_1}
% \\
% \cU'
% \arrow{r}[swap]{\varphi'}
% \&
% \cZ'
% \end{tikzcd} \]
% in $\SMC$ extends to a lax commutative diagram
% \[ \begin{tikzcd}[ampersand replacement=\&]
% \lim^\lax(\varphi)
% \arrow{d}
% \arrow{rd}[sloped, swap, xshift=-0.2cm, yshift=-0.2cm]{\Downarrow}
% \\
% \cU
% \arrow{r}{\varphi}[swap, yshift=-0.4cm]{\rotatebox{45}{$\Leftarrow$}}
% \arrow{d}[swap]{F_0}
% \&
% \cZ
% \arrow{d}{F_1}
% \\
% \cU'
% \arrow{r}[swap]{\varphi'}
% \&
% \cZ'
% \end{tikzcd} \]
% and so induces a morphism
% \[
% \lim^\lax(\varphi)
% \longra
% \lim^\lax(\varphi')
% \]
% in $\SMC$ by the universal property of the target.
% \end{remark}

% {\color{blue} sort out notation below: currently just using $\Strat$, but may eventually be something like $CAlg.\Strat_{[1]}$.}

\begin{remark} \label{rmk.why.defined.Strat.as.such}
If we work in $\Stratlax$ as opposed to $\Strat$ (see \Cref{def_strat} and \Cref{rmk.describe.Strat.informally}), then the adjunction \Cref{lim.lax.adjunction.monoidal} shows that the object $(\Bord \xra{\id_\Bord} \Bord) \in \Strat^\lax$ corepresents dualizable objects in the lax limit. However, we find this to be an unsatisfactory version of a stratified bordism hypothesis: we want morphisms out of our corepresenting object to be as simple to describe as possible. In particular, in $\Stratlax$ we would then end up asking about laxly symmetric monoidal natural transformations between laxly symmetric monoidal functors out of $\Bord$, and we know of no simple universal characterization of these.
\end{remark}
% {\color{cyan} as illustrated in \Cref{obs.univ.property.of.lax.limit.of.a.morphism}, there is slightly more functoriality for right-lax limits: if we allow a map $(F_0,F_1) : \varphi \ra \varphi'$ to just come equipped with a 2-morphism $F_1 \varphi \ra \varphi' F_0$, then we get a symmetric monoidal functor $\lim^\lax(\varphi) \ra \lim^\lax(\varphi')$. call the resulting category $\widetilde{\Strat}$.

% in fact, there's now an easy left adjoint $\SMC \ra \widetilde{\Strat}$, just $\cC \mapsto \id_\cC$, simply due to the universal property of the right-lax limit. in paticular, $\id_\Bord$ corepresents dualizable objects in the lax limit.

% on the other hand, that category is not so good for our purposes, because it is more complicated, and we want morphisms out of our corepresenting object to be as simple to describe as possible. in particular, there we'd end up asking about laxly symmetric monoidal natural transformations between laxly symmetric monoidal functors out of $\Bord$, and (as stated above?) we know of no universal characterization of these. (alternatively, our result can be read as an interpretation of laxly symmetric monoidal natural transformations out of $\Bord$.)}

\subsection{The statement of the \texorpdfstring{$[1]$}{[1]}-stratified bordism hypothesis}
\label{subsection.state.bord}

\begin{definition}
The (1-dimensional framed) \bit{bordism category}, denoted by $\Bord$, is the free symmetric monoidal category on a dualizable object.

By the 1-dimensional bordism hypothesis \cite{Baez-Dolan,TQFT} (see also \cite{Harpaz}), the objects of $\Bord$ are compact signed 0-manifolds,\footnote{More precisely, a signed 0-manifold is a 1-framed 0-manifold.} the morphisms are compact framed 1-dimensional bordisms, and the symmetric monoidal structure is given by disjoint union. We respectively write
\[
+ \in \Bord
\qquad
\text{and}
\qquad
- \in \Bord
\]
for the positively and negatively signed points; these objects are dual to one another, and we consider $+ \in \Bord$ as the free dualizable object.\footnote{These two objects are interchanged by the canonical ${\sf O}(1)$-action on $\Bord$.} We write
\[
\varnothing \in \Bord
\]
for the empty 0-manifold (the symmetric monoidal unit), and we use the notation
\[
M,N, \ldots \in \Bord
\]
to denote generic objects.
\end{definition}

We now define a stratified version of $\Bord$:

\begin{definition}
We define the \bit{stratified bordism category} to be the object
\[
\BBord
\coloneqq
\BBord_{[1]}
\coloneqq
\left( \Bord \xrightarrow{(\id_\Bord , \const_\pt)} \Bord \times \Fin \right)
\in
\Strat
~.
\]
\end{definition}

\begin{remark} Observe that the functor $\Bord \xrightarrow{\id_\Bord} \Bord$ is symmetric monoidal while the functor $\Bord \xrightarrow{\const_\pt} \Fin$ is only laxly symmetric monoidal. Hence, the functor $\BBord$ is only laxly symmetric monoidal.
\end{remark}

\begin{observation}
\label{obs.BBord.is.Ar.Bord.x.Fin}
The lax limit of $\BBord$ can be identified as
\[
\lim^\lax(\BBord)
\simeq
\Ar(\Bord) \times \Fin
\in
\SMC
\]
where the product is of symmetric monoidal categories and $\Ar(\Bord)$ is equipped with its pointwise symmetric monoidal structure. Explicitly, this equivalence is given by the formulas
\[ \begin{tikzcd}[row sep=0cm]
\lim^\lax(\BBord)
\arrow[leftrightarrow]{r}{\sim}
&
\Ar(\Bord) \times \Fin
\\
\rotatebox{90}{$\in$}
&
\rotatebox{90}{$\in$}
\\
\left[M \longmapsto (M,\pt) \longla (N,S) \right]
\arrow[leftrightarrow]{r}
&
(N \longra M , S)
\end{tikzcd}
~.
\]
\end{observation}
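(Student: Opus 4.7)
The plan is to compute the lax limit as an explicit pullback of categories, using that the formation of arrow categories preserves products and that $\pt$ is terminal in $\Fin$, and then to separately verify that this categorical equivalence is symmetric monoidal by unwinding the tensor product formula recalled in \Cref{subsubsection.intro.dzbl.in.rec}.

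First, by \Cref{obs:RecollementsAreLaxLimits}, we have
\[
\lim^\lax(\BBord)
\simeq
\Bord \times_{\Bord \times \Fin} \Ar(\Bord \times \Fin),
\]
where $\Bord \to \Bord \times \Fin$ is $(\id_\Bord, \const_\pt)$ and $\Ar(\Bord \times \Fin) \to \Bord \times \Fin$ is evaluation at $1$. Since $\Ar(-)$ commutes with products, $\Ar(\Bord \times \Fin) \simeq \Ar(\Bord) \times \Ar(\Fin)$, and the pullback splits as a product of pullbacks:
\[
\bigl(\Bord \times_{\Bord,\ev_1} \Ar(\Bord)\bigr) \times \bigl(\pt \times_{\Fin,\ev_1} \Ar(\Fin)\bigr) \simeq \Ar(\Bord) \times \Fin_{/\pt} \simeq \Ar(\Bord) \times \Fin,
\]
where the last equivalence uses that $\pt$ is terminal in $\Fin$. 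Chasing through the identifications produces exactly the two-way formula displayed in the statement.

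For the symmetric monoidal compatibility, the plan is to apply the tensor product formula recorded in \Cref{subsubsection.intro.dzbl.in.rec} to $\varphi = \BBord$. Since the first component $\id_\Bord$ of $\BBord$ is strictly symmetric monoidal, the only nontrivial laxness structure map $\can$ comes from $\const_\pt \colon \Bord \to \Fin$, which (with $\Fin$ equipped with its disjoint union symmetric monoidal structure) is the fold $\pt \sqcup \pt \to \pt$. Substituting this into the tensor product formula and simplifying shows that the tensor product of $(N \to M, S)$ and $(N' \to M', S')$ is carried to $(N \otimes N' \to M \otimes M', S \sqcup S')$, which is precisely the tensor product in $\Ar(\Bord) \times \Fin$ with $\Ar(\Bord)$ equipped with its pointwise structure and $\Fin$ with disjoint union.

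Since the whole argument is a direct unpacking of the defining pullback and the standard tensor product formula on a lax limit, no step presents a real obstacle; the only point requiring care is correctly identifying the symmetric monoidal structure on the $\Fin$ factor as disjoint union (rather than cartesian product), which is forced by the fold laxness of $\const_\pt$.
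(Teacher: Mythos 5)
Your proof is correct and gives what must be the intended (but unwritten) justification behind this Observation, which the paper states without proof. The two steps — splitting the defining pullback $\Bord \times_{\Bord \times \Fin} \Ar(\Bord \times \Fin)$ into a product of pullbacks using $\Ar(-) \simeq \Fun([1],-)$ preserving products, together with $\Fin_{/\pt} \simeq \Fin$ from $\pt$ being terminal in $\Fin$; and then checking the tensor-product formula via the laxity data, where the only nontrivial laxness is the fold $\pt \sqcup \pt \to \pt$ from $\const_\pt$ — are exactly right. The point you flag at the end is indeed the one place that can trip a reader up: the monoidal structure on $\Fin$ is disjoint union (so that $\Fin$ corepresents commutative algebras, as the paper uses in \Cref{notn.for.hom.star}), and $\pt$ is terminal for the \emph{categorical} structure while $\varnothing$ is the monoidal unit; your argument uses each in the right place. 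A slightly more compressed route would be to run the pullback decomposition directly at the operad level using the square of \Cref{obs:RecollementsAreLaxLimits} (so the product-of-pullbacks step handles both the underlying category and the $\cO$-monoidal structure simultaneously), but separating the two as you do is equally valid and arguably clearer.
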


\begin{notation}\label{BBord.notations}
For simplicity, we typically refer to objects of $\lim^\lax(\BBord)$ in terms of their corresponding objects in $\Ar(\Bord) \times \Fin$, using the equivalence of \Cref{obs.BBord.is.Ar.Bord.x.Fin}.

We denote by
\[ \begin{tikzcd}[column sep=1.5cm, row sep=0cm]
\Bord
&
\lim^\lax(\BBord)
\arrow{l}[swap]{\ev_0}
\arrow{r}{\ev_1}
&
\Bord \times \Fin
\\
&
\rotatebox{90}{$\simeq$}
\\
&
\Ar(\Bord) \times \Fin
\arrow{luu}[sloped, swap]{t \circ \pr_{\Ar(\Bord)}}
\arrow{ruu}[sloped, swap]{s \times \id_\Fin}
\end{tikzcd}
~.
\]
the evaluation functors $\lim^\lax(\BBord) \xra{\ev_i} \BBord_i$. In particular, there exists a fully faithful right adjoint $\ev_0 \dashv \ev_0^R$ which is moreover laxly symmetric monoidal (as $\ev_0$ is symmetric monoidal).

We also denote by
\[ \begin{tikzcd}[row sep=0cm, column sep=3cm]
\Ar(\Bord)
\arrow[hook]{r}{(\id_{\Ar(\Bord)} , \const_\varnothing)}
\arrow[dashed, hook]{rdd}[sloped, swap]{\iota_{\Ar(\Bord)}}
&
\Ar(\Bord) \times \Fin
\arrow[leftarrow]{r}{(\const_{\id_\varnothing} , \id_\Fin )}
&
\Fin
\arrow[dashed]{ldd}[sloped, swap]{\iota_\Fin}
\\
&
\rotatebox{90}{$\simeq$}
\\
&
\lim^\lax(\BBord)
\end{tikzcd} \]
the indicated morphisms from summands in $\SMC$.
\end{notation}

Similar to the symmetric monoidal category $\Bord$, the $[1]$-stratified symmetric monoidal category $\BBord$ admits a universal dualizable object.

\begin{definition}
We define the \bit{tautological object} of $\lim^\lax(\BBord) \in \SMC$ to be
\[
\tau
\coloneqq
(\id_+,\varnothing)
\in
\lim^\lax(\BBord)
~.
\]
\end{definition}

\begin{observation}
The object $\tau \in \lim^\lax(\BBord)$ is dualizable (as it is dualizable factorwise in $\Ar(\Bord) \times \Fin$), and so it corresponds to a symmetric monoidal functor
\[
\Bord
\xlongra{\tau}
\lim^\lax(\BBord)
~.
\]
\end{observation}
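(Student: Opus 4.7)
The plan is to exploit the product decomposition of \Cref{obs.BBord.is.Ar.Bord.x.Fin} to reduce the dualizability of $\tau$ to two easy factorwise checks, then invoke the 1-dimensional bordism hypothesis to obtain the classifying functor.

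More precisely, under the symmetric monoidal equivalence $\lim^\lax(\BBord) \simeq \Ar(\Bord) \times \Fin$, the object $\tau$ corresponds to $(\id_+, \varnothing)$. Since dualizability in a product of symmetric monoidal categories is detected factorwise, it suffices to verify that $\id_+ \in \Ar(\Bord)$ and $\varnothing \in \Fin$ are each dualizable. The second is immediate because $\varnothing = \uno_\Fin$, and the unit is always self-dual. For the first, I would observe that for any symmetric monoidal category $\cC$, the ``identity section'' functor $\cC \to \Ar(\cC)$, $x \mapsto \id_x$, is strictly symmetric monoidal with respect to the pointwise structure on $\Ar(\cC)$ (because $\id_x \otimes \id_y = \id_{x \otimes y}$); consequently it preserves dualizable objects. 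Since $+ \in \Bord$ is dualizable by construction, $\id_+ \in \Ar(\Bord)$ is dualizable, with dual $\id_{+^\vee} = \id_-$ and duality data obtained by applying $\id_{(-)}$ to the duality data for $+$.

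Having established that $\tau$ is dualizable, the classifying functor $\Bord \xra{\tau} \lim^\lax(\BBord)$ is produced by the universal property of $\Bord$ as the free symmetric monoidal category on a dualizable object (the ordinary 1-dimensional bordism hypothesis recalled in \Cref{subsection.intro.bord}): symmetric monoidal functors out of $\Bord$ correspond to dualizable objects in the target via evaluation at $+$. There is essentially no obstacle here; the only subtlety worth flagging is the compatibility of the pointwise symmetric monoidal structure on $\Ar(\Bord)$ with the one transported from $\lim^\lax(\BBord)$, but this is exactly the content of the equivalence in \Cref{obs.BBord.is.Ar.Bord.x.Fin}, which in turn is a special case of \Cref{prop:MonoidalLaxLimitID} applied to the strictly symmetric monoidal functor $\id_\Bord$ in the first factor and the laxly symmetric monoidal functor $\const_\pt$ in the second.
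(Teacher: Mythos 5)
Your proof is correct and takes essentially the same approach the paper sketches: the paper's entire justification is the parenthetical "as it is dualizable factorwise in $\Ar(\Bord) \times \Fin$," and you have simply supplied the (correct) details for each factor — $\varnothing$ being the unit of $\Fin$, and $\id_+$ being the image of the dualizable object $+$ under the strictly symmetric monoidal diagonal $\Bord \to \Ar(\Bord)$ — together with the standard fact that dualizability in a product is detected factorwise.
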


We are now ready to formulate \Cref{theorem.intro.bord}.

\begin{theorem}
\label{thm.stratified.bordism}
The composite morphism of spaces
\[
\Hom_{\Strat}(\BBord,\varphi)
\xra{\lim^\lax}
\Hom_\SMC(\lim^\lax(\BBord),\cX)
\xlongra{\tau^*}
\Hom_\SMC(\Bord,\cX)
\simeq
\cX^\dzbl
\]
is an equivalence.
\end{theorem}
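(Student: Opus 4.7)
The plan is to construct an explicit inverse to the stated composite, leveraging three inputs: the $1$-dimensional bordism hypothesis for $\Bord$; the universal property of $\Fin$ as the free symmetric monoidal category on a commutative algebra object (so that $\Hom_\SMC(\Fin,\cZ) \simeq \CAlg(\cZ)$); and the reformulation of \Cref{theorem.algebraic_descr} provided by \Cref{remark.strictness.on.subcat.gend.by.u.and.udual}, which characterizes dualizable objects of $\cX$ as triples $(u, z, \alpha^\dagger)$ consisting of $u \in \cU^\dzbl$ satisfying condition (a) there, $z \in \cZ^\dzbl$, and a $\varphi(\uno_\cU)$-linear equivalence $\alpha^\dagger: z \otimes \varphi(\uno_\cU) \xra{\sim} \varphi(u)$ in $\Mod_{\varphi(\uno_\cU)}(\cZ)$.

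First I would unpack the LHS. A morphism $\BBord \to \varphi$ in $\Strat$ is a commutative square in $\SMC^\lax$
\[ \begin{tikzcd}
\Bord \ar{r}{(\id, \const_\pt)} \ar{d}[swap]{f} & \Bord \times \Fin \ar{d}{g} \\
\cU \ar{r}[swap]{\varphi} & \cZ
\end{tikzcd} \]
whose vertical morphisms are strictly symmetric monoidal. Since $\Bord \times \Fin$ is the coproduct in $\SMC$, $g$ decomposes as a pair $(g_0,g_1)$ of strictly symmetric monoidal functors out of each factor. Applying the three universal properties above: $f \leftrightarrow u \coloneqq f(+) \in \cU^\dzbl$, $g_0 \leftrightarrow z \coloneqq g_0(+) \in \cZ^\dzbl$, and $g_1 \leftrightarrow w \coloneqq g_1(\pt) \in \CAlg(\cZ)$. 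Evaluating the commutativity datum of the square at $\varnothing \in \Bord$ furnishes a canonical equivalence $w \simeq \varphi(\uno_\cU)$ in $\CAlg(\cZ)$ (where the algebra structure on the target is induced by the laxness of $\varphi$), after which the remaining data is an equivalence of laxly symmetric monoidal functors $\varphi \circ f \simeq g \circ (\id, \const_\pt) : \Bord \to \cZ$.

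The key step will be to show that this remaining equivalence corresponds precisely to a $\varphi(\uno_\cU)$-linear equivalence between two strictly symmetric monoidal functors $\Bord \to \Mod_{\varphi(\uno_\cU)}(\cZ)$, namely $\varphi \circ u$ (well-defined under condition (a) of \Cref{remark.strictness.on.subcat.gend.by.u.and.udual}, using the canonical factorization of $\varphi$ through $\Mod_{\varphi(\uno_\cU)}(\cZ)$) and the free-module functor $g_0(-) \otimes \varphi(\uno_\cU)$. Both conditions of \Cref{remark.strictness.on.subcat.gend.by.u.and.udual} then emerge: (a) as the strictness of $\varphi \circ u$ as a functor to modules, and (b) as the existence of $\alpha^\dagger$, which by the bordism hypothesis applied to $\Mod_{\varphi(\uno_\cU)}(\cZ)$ is encoded by the equivalence at $+ \in \Bord$. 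This identifies $\Hom_\Strat(\BBord, \varphi)$ with the space of triples $(u, z, \alpha^\dagger)$, which by \Cref{remark.strictness.on.subcat.gend.by.u.and.udual} is $\cX^\dzbl$. A direct computation using $\tau \leftrightarrow [+ \mapsto (+, \pt) \xla{(\id,\, \varnothing \to \pt)} (+, \varnothing)]$ and the formula for $\alpha$ in \Cref{theorem.algebraic_descr} should confirm that this correspondence is inverse to the stated composite.

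The main obstacle is the correspondence invoked in the previous paragraph between laxly symmetric monoidal natural equivalences $\varphi f \simeq g(\id, \const_\pt)$ and $\varphi(\uno_\cU)$-linear equivalences of the corresponding strict functors $\Bord \to \Mod_{\varphi(\uno_\cU)}(\cZ)$; this requires carefully tracking coherence data through the lax--strict interplay and using the fact that strict symmetric monoidal functors out of $\Bord$ are rigidly determined by their value on $+$. I expect the three subsections \S\ref{subsec.prove.bord.first.part}--\S\ref{subsec.prove.bord.third.part} to handle, respectively, the explicit description of $\Hom_\Strat(\BBord, \varphi)$ as the space of triples satisfying (a) and (b), the computation of the image of $\tau$ under the induced $\lim^\lax(\BBord) \to \cX$, and the verification that the resulting map is the inverse of the composite in the statement.
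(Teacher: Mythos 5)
Your proposal takes a genuinely different route from the paper, and I'll describe the comparison. The paper factors through two intermediate subspaces of $\Hom_\SMC(\lim^\lax(\BBord),\cX)$: it characterizes $\Hom_\Strat(\BBord,\varphi)$ as a subspace $\Hom^\HomforStrat$ defined by factorization conditions plus a Beck--Chevalley condition (Lemma~\ref{lem.hom.in.Strat.to.hom.dagger.an.equivalence}, via the localization Lemma~\ref{factorization.in.cat.iff.in.calg}); proves $\Hom^\HomforStrat$ equals a second subspace $\Hom^\HomforCAlg$ defined by a different factorization condition plus the condition that the $\Fin$-component select $j_*(\uno_\cU)$ (Lemma~\ref{lemma.subspaces.homforStrat.and.homforCAlg.are.equivalent}); and shows $\tau^*$ restricted to $\Hom^\HomforCAlg$ is an equivalence (Lemma~\ref{lem.hom.star.to.hom.in.CAlg.an.equivalence}, using the identification $\Ar(\Bord) \simeq \lim^\lax(\id_\Bord)$ and its universal property). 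The single invocation of \Cref{theorem.algebraic_descr} occurs in the second step, packaged as \Cref{lemma.genzd.projection.formula.for.pos.and.neg.tensor.powers} (the projection formula at all signed tensor powers). Your proposal instead constructs the inverse directly, replacing the paper's projection-formula lemma with the module-enriched reformulation \Cref{remark.strictness.on.subcat.gend.by.u.and.udual}: condition~(a) there plays the role of \Cref{lemma.genzd.projection.formula.for.pos.and.neg.tensor.powers}, and condition~(b) plays the role of the Beck--Chevalley condition. This is a legitimate alternative and arguably more conceptual, since the module category $\Mod_{\varphi(\uno_\cU)}(\cZ)$ is built to absorb precisely the laxity that makes the bookkeeping delicate. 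What the paper's route buys is that both of its intermediate spaces live inside $\Hom_\SMC(\lim^\lax(\BBord),\cX)$, so compatibility with the stated composite $\tau^* \circ \lim^\lax$ is automatic from a commuting diagram; your route has to compute this compatibility separately at the end, as you acknowledge.

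The place where I'd press you is the ``key step,'' which you correctly flag as the main obstacle but don't resolve. The claim that a laxly symmetric monoidal natural equivalence $\varphi f \simeq g \circ (\id,\const_\pt)$ of laxly symmetric monoidal functors $\Bord \to \cZ$, once its value at $\varnothing$ is used to identify $w \simeq \varphi(\uno_\cU)$ as commutative algebras, corresponds precisely to a $\varphi(\uno_\cU)$-linear equivalence of the lifted functors $\Bord \to \Mod_{\varphi(\uno_\cU)}(\cZ)$ is true but is not a formal triviality: one needs that (i) each component $\eta_M$ is automatically a $\varphi(\uno_\cU)$-module map (this follows from compatibility of $\eta$ with the lax structure maps at the pairs $(\varnothing, M)$), and (ii) the lifted equivalence is itself laxly symmetric monoidal with respect to the module tensor product. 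Moreover the strictness of $\varphi \circ u$ as a functor to modules (condition~(a) of \Cref{remark.strictness.on.subcat.gend.by.u.and.udual}) must be \emph{deduced} from the existence of $\eta$ — by the two-out-of-three argument that $g \circ (\id,\const_\pt)$ lifts strictly and $\eta$ transports lax structure maps — rather than assumed, so your parenthetical ``well-defined under condition~(a)'' has the logic flowing in the wrong direction. None of this is fatal, but these coherence checks are exactly what the paper's Lemmas~\ref{lemma.subspaces.homforStrat.and.homforCAlg.are.equivalent} and~\ref{lemma.genzd.projection.formula.for.pos.and.neg.tensor.powers} are doing in disguise; the work does not disappear, it just relocates. (Your guess about what \S\ref{subsec.prove.bord.first.part}--\S\ref{subsec.prove.bord.third.part} handle does not match the paper's actual organization, though that is incidental.)
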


%\begin{remark}
%Informally, one may express \Cref{thm.stratified.bordism} as the assertion that $\BBord$ is the universal $[1]$-stratified symmetric monoidal category with a dualizable object (given by the tautological object $\tau \in \lim^\lax(\BBord)$).
%\end{remark}

\begin{proof}[Proof of \Cref{thm.stratified.bordism}]
We have a commutative diagram
\[ \begin{tikzcd}[row sep=1.5cm]
\Hom_\Strat(\BBord,\varphi)
\arrow{rr}{\lim^\lax}
\arrow[dashed]{rd}[sloped]{\sim}[sloped, swap]{\text{\Cref{lem.hom.in.Strat.to.hom.dagger.an.equivalence}}}
&[-2cm]
&[-2cm]
\Hom_\SMC(\lim^\lax(\BBord) , \cX )
\arrow{rr}{\tau^*}
\arrow[hookleftarrow]{rd}
&[-2cm]
&[-2cm]
\Hom_\SMC(\Bord , \cX)
\\
&
\Hom_\SMC(\lim^\lax(\BBord) , \cX )^\HomforStrat
\arrow[dashed, leftrightarrow]{rr}{\sim}[swap]{\text{\Cref{lemma.subspaces.homforStrat.and.homforCAlg.are.equivalent}}}
\arrow[hook]{ru}
&
&
\Hom_\SMC(\lim^\lax(\BBord) , \cX )^\HomforCAlg
\arrow[dashed]{ru}[sloped]{\sim}[sloped, swap]{\text{\Cref{lem.hom.star.to.hom.in.CAlg.an.equivalence}}}
\end{tikzcd}~, \]
in which
\begin{itemize}

\item the two indicated subspaces of $\Hom_\SMC(\lim^\lax(\BBord),\cX)$ are respectively defined in Notations \ref{notn.for.hom.dagger} and \ref{notn.for.hom.star},

\item the equivalence between them is established as \Cref{lemma.subspaces.homforStrat.and.homforCAlg.are.equivalent},

\item the factorization on the left (exists by \Cref{obs.factorization.of.map.on.hom.spaces} and) is an equivalence by \Cref{lem.hom.in.Strat.to.hom.dagger.an.equivalence}, and

\item the composite on the right is an equivalence by \Cref{lem.hom.star.to.hom.in.CAlg.an.equivalence}. \qedhere
\end{itemize}
\end{proof}

\subsection{The subspace \texorpdfstring{$\Hom^\HomforStrat$}{Hom<} and its equivalence with morphisms in \texorpdfstring{$\Strat$}{Strat}}
\label{subsec.prove.bord.first.part}

In this subsection we define the subspace $\Hom_\SMC(\lim^\lax(\BBord) , \cX )^\HomforStrat \subseteq \Hom_\SMC(\lim^\lax(\BBord) , \cX )$ and prove that it is equivalent to $\Hom_{\Strat}(\BBord,\varphi)$ (see \Cref{lem.hom.in.Strat.to.hom.dagger.an.equivalence}).

\begin{notation}
\label{notn.for.hom.dagger}
We define
\[
\Hom_\SMC(\lim^\lax(\BBord),\cX)^\HomforStrat
\subseteq
\Hom_\SMC(\lim^\lax(\BBord),\cX)
\]
to be the subspace of those symmetric monoidal functors
%\begin{equation}
%\label{typical.s.m.functor.from.limlaxBBord.to.X}
\[
\lim^\lax(\BBord)
\xlongra{F}
\cX
\]
%\end{equation}
satisfying the following conditions.
\begin{enumerate}[label=(\alph*)]

\item\label{define.hom.dagger.factorizations.exist}

There exist factorizations
\begin{equation}
\label{factorizations.of.functor.from.limlaxBBord.to.X.evaluated}
\begin{tikzcd}[column sep=1.5cm]
\Bord
\arrow[dashed]{d}[swap]{F_0}
&
\lim^\lax(\BBord)
\arrow{l}[swap]{\ev_0}
\arrow{r}{\ev_1}
\arrow{d}{F}
&
\Bord \times \Fin
\arrow[dashed]{d}{F_1}
\\
\cU
&
\cX
\arrow{l}{j^*}
\arrow{r}[swap]{i^*}
&
\cZ
\end{tikzcd} 
\end{equation}
in $\Cat$ (see \Cref{BBord.notations}).

\item\label{define.hom.dagger.beck.chevalley}

The resulting lax commutative square
\begin{equation}
\label{lax.comm.square.obtained.by.passing.to.radjts.on.left.side}
\begin{tikzcd}[column sep=2cm, row sep=1.5cm]
\Bord
\arrow[hook]{r}[swap]{\ev_0^R}[xshift=2.1cm, yshift=-1.3cm]{\rotatebox{45}{$\Leftarrow$}}
\arrow{d}[swap]{F_0}
\arrow[bend left]{rr}{(\id_\Bord , \const_\pt)}
&
\lim^\lax(\BBord)
%\arrow{d}{F}
\arrow{r}[swap]{\ev_1}
&
\Bord \times \Fin
\arrow{d}{F_1}
\\
\cU
\arrow[hook]{r}{j_*}
%{c \longmapsto ( c \longmapsto \varphi(c) \xlongla{=} \varphi(c) )}
\arrow[bend right]{rr}[swap]{\varphi}
&
\cX
\arrow{r}{i^*}
&
\cZ
\end{tikzcd}
\end{equation}
in $\Cat$ obtained by passing to horizontal right adjoints in the left commutative square of diagram \Cref{factorizations.of.functor.from.limlaxBBord.to.X.evaluated}  commutes.

\end{enumerate}
\end{notation}

\begin{remark}
Using the joint conservativity of the functors $\cU\xla{j^*} \cX \xra{i^*} \cZ$, it is not hard to see that condition \Cref{define.hom.dagger.beck.chevalley} of \Cref{notn.for.hom.dagger} is equivalent to the following apparently stronger condition.
\begin{itemize}

\item[(b$'$)] The resulting lax commutative square
\[ \begin{tikzcd}[column sep=1.5cm]
\Bord
\arrow[hook]{r}{\ev_0^R}[swap, xshift=0.3cm, yshift=-0.4cm]{\rotatebox{45}{$\Leftarrow$}}
\arrow{d}[swap]{F_0}
&
\lim^\lax(\BBord)
\arrow{d}{F}
\\
\cU
\arrow[hook]{r}[swap]{j^*}
&
\cX
\end{tikzcd} \]
in $\Cat$ obtained by passing to horizontal right adjoints in the left commutative square of diagram \Cref{factorizations.of.functor.from.limlaxBBord.to.X.evaluated}  commutes.

\end{itemize}
\end{remark}

\begin{observation}
\label{obs.factorization.of.map.on.hom.spaces}
By construction, the functor $\Hom_\Strat(\BBord,\varphi) \xra{\lim^{\lax}} \Hom_\SMC(\lim^\lax(\BBord),\cX)$ admits a factorization
\[
\begin{tikzcd}
\Hom_\Strat(\BBord,\varphi)
\arrow{r}{\lim^\lax}
\arrow[dashed]{rd}[sloped, swap]{\lim^\lax}
&
\Hom_\SMC(\lim^\lax(\BBord),\cX)
\\
&
\Hom_\SMC(\lim^\lax(\BBord),\cX)^\HomforStrat
\arrow[hook]{u}
\end{tikzcd}
~.
\]
\end{observation}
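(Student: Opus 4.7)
The plan is to unpack the definitions and verify conditions \Cref{define.hom.dagger.factorizations.exist} and \Cref{define.hom.dagger.beck.chevalley} of \Cref{notn.for.hom.dagger} directly for $F \coloneqq \lim^\lax(\alpha)$, where $\alpha \in \Hom_\Strat(\BBord,\varphi)$ consists of strictly symmetric monoidal functors $F_0 \colon \Bord \to \cU$ and $F_1 \colon \Bord \times \Fin \to \cZ$ fitting into a \emph{strictly} commutative square in $\SMC^\lax$. By \Cref{obs.univ.property.of.lax.limit.of.a.morphism}, $F$ is a well-defined symmetric monoidal functor.

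Condition \Cref{define.hom.dagger.factorizations.exist} follows immediately from the naturality of the stratumwise projections in morphisms of $\Strat$: the functors $F_0$ and $F_1$ provide the desired factorizations $j^* F \simeq F_0 \ev_0$ and $i^* F \simeq F_1 \ev_1$. For condition \Cref{define.hom.dagger.beck.chevalley}, the plan is to show that the Beck--Chevalley natural transformation $j_* F_0 \Rightarrow F \ev_0^R$ obtained from the left square of \Cref{factorizations.of.functor.from.limlaxBBord.to.X.evaluated} by passing to right adjoints on the horizontal maps is an equivalence. By joint conservativity of $(j^*, i^*)$, this splits into two checks: applying $j^*$ yields $F_0 \simeq F_0$ (tautological, using $j^* j_* \simeq \id$ and $\ev_0 \ev_0^R \simeq \id$ together with condition \Cref{define.hom.dagger.factorizations.exist}), while applying $i^*$ reduces to the identity $\varphi F_0 \simeq F_1 \BBord$ (using $i^* j_* \simeq \varphi$, $\ev_1 \ev_0^R = (\id_\Bord, \const_\pt) = \BBord$, and condition \Cref{define.hom.dagger.factorizations.exist}), which is precisely the strict commutativity of $\alpha$.

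There is no serious obstacle; the verification is purely formal. The conceptual content of the observation is that the strictness hypothesis on morphisms in $\Strat$ (as opposed to $\Stratlax$; cf.\ \Cref{rmk.why.defined.Strat.as.such}) is exactly what forces the relevant Beck--Chevalley transformation to be an equivalence, so that $F = \lim^\lax(\alpha)$ automatically satisfies condition \Cref{define.hom.dagger.beck.chevalley}. Correspondingly, one expects the full equivalence $\Hom_\Strat(\BBord,\varphi) \simeq \Hom_\SMC(\lim^\lax(\BBord),\cX)^{\HomforStrat}$ proved later in \Cref{lem.hom.in.Strat.to.hom.dagger.an.equivalence} to exhibit $\Hom^{\HomforStrat}$ as recording exactly the data of a strict square, stripped of the extra symmetric monoidal coherences encoded in $\lim^\lax$.
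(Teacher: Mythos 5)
Your proposal is correct and fills in precisely the details that the paper compresses into "by construction"; there is no alternative approach hidden in the paper's text. The one minor inaccuracy is the direction of the mate: passing to right adjoints of the horizontal maps in the commutative square $F_0 \circ \ev_0 \simeq j^* \circ F$ produces the canonical transformation $F \circ \ev_0^R \Rightarrow j_* \circ F_0$ (via $F \ev_0^R \to j_* j^* F \ev_0^R \simeq j_* F_0 \ev_0 \ev_0^R \to j_* F_0$), not $j_* F_0 \Rightarrow F \ev_0^R$ as you wrote; the rest of your argument (full faithfulness of $j_*$ and $\ev_0^R$ making $j^*$ of the mate an equivalence, and $i^*$ of the mate identifying with the strictness witness of the square $\alpha$) is unaffected and establishes the stronger condition (b$'$), hence (b).
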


\begin{lemma}
\label{lem.hom.in.Strat.to.hom.dagger.an.equivalence}
The morphism
\[
\Hom_\Strat(\BBord,\varphi)
\xra{\lim^\lax}
\Hom_\SMC(\lim^\lax(\BBord),\cX)^\HomforStrat
\]
of spaces from \Cref{obs.factorization.of.map.on.hom.spaces} is an equivalence.
\end{lemma}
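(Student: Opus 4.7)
The plan is to construct an explicit inverse to $\lim^\lax$ on this subspace, using strictly symmetric monoidal sections of $\ev_0$ and $\ev_1$ together with the universal properties of $\cX$ and $\lim^\lax(\BBord)$. The key preliminary observation is the existence of strictly symmetric monoidal sections
\[
s : \Bord \to \lim^\lax(\BBord), \quad M \mapsto (\id_M, \varnothing),
\qquad
s' : \Bord \times \Fin \to \lim^\lax(\BBord), \quad (M,S) \mapsto (\id_M, S),
\]
of $\ev_0$ and $\ev_1$ respectively, under the identification $\lim^\lax(\BBord) \simeq \Ar(\Bord) \times \Fin$ of \Cref{obs.BBord.is.Ar.Bord.x.Fin}. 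Strict monoidality of $s$ hinges on $\varnothing \in \Fin$ being the monoidal unit: although $(\id_\Bord, \const_\pt)$ is only laxly symmetric monoidal, the functor $\const_\varnothing : \Bord \to \Fin$ is strictly symmetric monoidal.

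Given $F \in \Hom_\SMC(\lim^\lax(\BBord),\cX)^\HomforStrat$, the candidate inverse would send $F$ to the datum of a morphism in $\Hom_\Strat(\BBord, \varphi)$ built as follows. Set $F_0 \coloneqq (j^* F) \circ s$ and $F_1 \coloneqq (i^* F) \circ s'$, both strictly symmetric monoidal as composites of strictly symmetric monoidal functors. Condition (a) of \Cref{notn.for.hom.dagger} then yields equivalences $F_0 \circ \ev_0 \simeq j^* F$ and $F_1 \circ \ev_1 \simeq i^* F$: any $\Cat$-level factorization $\widetilde F_0 \circ \ev_0 \simeq j^* F$ pulls back along $s$ (using $\ev_0 s = \id_\Bord$) to give $\widetilde F_0 \simeq F_0$. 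The commutativity datum $\varphi F_0 \simeq F_1 (\id_\Bord, \const_\pt)$ required of a morphism in $\Strat$ is then produced as the mate of the factorization square with respect to the adjunctions $\ev_0 \dashv \ev_0^R$ and $j^* \dashv j_*$, with condition (b) declaring this mate to be an equivalence (after using $\ev_1 \ev_0^R = (\id_\Bord, \const_\pt)$ and $i^* j_* = \varphi$).

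To verify that the two maps are mutually inverse, I would invoke the universal property of $\cX$ from \Cref{obs.univ.property.of.lax.limit.of.a.morphism} together with joint conservativity of $(j^*, i^*)$. In the forward-then-backward direction, the functor $F \coloneqq \lim^\lax(F_0, F_1, \alpha)$ satisfies $j^* F \simeq F_0 \circ \ev_0$ and $i^* F \simeq F_1 \circ \ev_1$ by construction, so $(j^* F) \circ s \simeq F_0$ and analogously for $F_1$, with matching commutativity data. In the backward-then-forward direction, the reconstructed symmetric monoidal functor agrees with the original $F$ after applying $j^*$ and $i^*$ with matching laxly symmetric monoidal natural transformations, hence coincides with $F$.

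The main obstacle is upgrading condition (b) --- stated as commutativity in $\Cat$ --- to an equivalence in $\SMC^\lax$, as required for the inverse to land in $\Strat$. The resolution is that the adjunctions $\ev_0 \dashv \ev_0^R$ and $j^* \dashv j_*$ both refine to adjunctions in $\SMC^\lax$, with left adjoints strictly symmetric monoidal and right adjoints laxly symmetric monoidal (the latter as noted in \Cref{BBord.notations} and in the setup of $\cX$). Consequently, the entire mate construction takes place within $\SMC^\lax$, and the $\Cat$-level equivalence afforded by (b) is automatically an equivalence of laxly symmetric monoidal functors, as required.
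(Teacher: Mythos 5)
Your approach is essentially the paper's approach unwound into explicit formulas: where the paper extracts the factorizations $F_0$ and $F_1$ abstractly and lifts them to $\SMC$ via Observation~\ref{obs.factorizations.are.in.fact.in.CAlg}, you define them concretely via the strictly symmetric monoidal sections $s$ and $s'$ and then argue the mate construction supplies the commutativity datum. That is the same route, and it is a perfectly reasonable way to present it.

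However, there is a genuine gap. You only establish that $F_0 \circ \ev_0 \simeq j^*F$ and $F_1 \circ \ev_1 \simeq i^*F$ hold in $\Cat$, i.e., as plain natural equivalences. The subsequent steps need these equivalences to live in $\SMC$, and you never argue this. The problem appears in two places. First, for ``the entire mate construction to take place within $\SMC^\lax$,'' the input square must already be a square in $\SMC^\lax$: you have an equivalence of underlying functors between the (strictly symmetric monoidal) functors $F_0\ev_0$ and $j^*F$, but a natural isomorphism of underlying functors is not automatically a laxly symmetric monoidal natural transformation — that is extra structure, not a property. Second, in the backward-then-forward verification you need $j^*F' = F_0\ev_0 \simeq j^*F$ \emph{as strictly symmetric monoidal functors} (with a matching laxly symmetric monoidal transformation on the $i^*$ side) so that Observation~\ref{obs.univ.property.of.lax.limit.of.a.movphism} identifies $F'$ with $F$; the $\Cat$-level equivalence does not suffice. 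The paper closes this gap precisely with \Cref{factorization.in.cat.iff.in.calg}: because $\ev_0$ (and dually $\ev_1$) is a symmetric monoidal (co)localization, any $\Cat$-level factorization of a strictly symmetric monoidal functor through it admits a unique lift to $\SMC$. Without an analogue of that lemma, defining $F_0 = (j^*F)\circ s$ produces a strictly symmetric monoidal functor, but it does not by itself give the symmetric monoidal coherence of the equivalence $F_0\ev_0 \simeq j^*F$ — indeed, $s\circ\ev_0$ is not equivalent to the identity, so the coherence of that equivalence is not formal and rests on the localization argument.
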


To prove \Cref{lem.hom.in.Strat.to.hom.dagger.an.equivalence} we will use the following general result.

\begin{lemma}\label{factorization.in.cat.iff.in.calg} Let 
\[ \begin{tikzcd}
\cB
\arrow[leftarrow, yshift=0.9ex]{r}{L}
\arrow[hook, yshift=-0.9ex]{r}[yshift=-0.2ex]{\bot}[swap]{j}
& \cA
\end{tikzcd} \]
be a symmetric monoidal localization of categories (that is, the functor $j$ is fully faithful, $L$ is left adjoint to $j$, $\cA,\cB$ are symmetric monoidal and the functor $L$ is symmetric monoidal). Then for any morphism $\cA \xra{F} \cE$ in $\SMC$, any factorization 
\[ \begin{tikzcd}
\cA \arrow{r}{L} \arrow{d}[swap]{F} & \cB \arrow[dashed]{dl}
\\
\cE
\end{tikzcd} \]
in $\Cat$ uniquely lifts to a factorization in $\SMC$.
\begin{proof}
Since $L$ is a localization and the functor $j$ is automatically laxly symmetric monoidal (as the right right to symmetric monoidal functor $F$), the dotted functor as above exists if and only if the laxly symmetric monoidal natural transformation $F \rightarrow F j L$ of laxly symmetric monoidal functors is an equivalence, in which case it is given by $F j$. In particular, we see that the above factorization in $\Cat$ can be uniquely lifted to factorization in $\SMC^{\lax}$. To see that the functor $F j$ is actually strictly symmetric monoidal, note that for any $b_1, b_2 \in \cB$ we have
\[
Fj(b_1 \otimes b_2) \simeq Fj(Lj(b_1) \otimes Lj(b_2)) \simeq FjL(j(b_1) \otimes j(b_2)) \xlongla{\sim} F(j(b_1) \otimes j(b_2)) \simeq Fj(b_1) \otimes Fj(b_2)
\]
where the arrow is the equivalence $F \xra{\sim} F j L$.
\end{proof}
\end{lemma}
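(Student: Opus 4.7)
The plan is to realize the desired lift as $F \circ j$ equipped with its natural lax symmetric monoidal structure, and then show that this lax structure is automatically an equivalence.

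First, I would observe that since $L$ is symmetric monoidal, its right adjoint $j$ carries a canonical laxly symmetric monoidal structure. As $F$ is strictly symmetric monoidal, the composite $F \circ j$ thus inherits a laxly symmetric monoidal structure, giving a morphism in $\SMC^\lax$. The hypothesis that a factorization in $\Cat$ exists amounts to the condition that $F$ inverts those morphisms of $\cA$ that $L$ inverts; equivalently, using the unit $\eta : \id_\cA \Rightarrow jL$ of the localization (which is a morphism of laxly symmetric monoidal functors), that the induced natural transformation $F \xRightarrow{F\eta} F \circ jL$ is an equivalence. Under this equivalence the factor in $\Cat$ is seen to be equivalent to $Fj$, and it inherits from $F$ and from $F\eta$ a laxly symmetric monoidal structure, so the factorization lifts canonically to $\SMC^\lax$.

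Next, I would verify that this lift $Fj$ is \emph{strictly} symmetric monoidal. Given $b_1, b_2 \in \cB$, the symmetric monoidality of $L$ combined with $Lj \simeq \id_\cB$ gives an equivalence $b_1 \otimes b_2 \simeq L(j(b_1) \otimes j(b_2))$, and therefore an equivalence $j(b_1 \otimes b_2) \simeq jL(j(b_1) \otimes j(b_2))$. Applying $F$ and then using the equivalence $F \xRightarrow{\sim} FjL$ from the previous paragraph, we get
\[
Fj(b_1 \otimes b_2) \;\simeq\; FjL(j(b_1) \otimes j(b_2)) \;\xlongla{\sim}\; F(j(b_1) \otimes j(b_2)) \;\simeq\; Fj(b_1) \otimes Fj(b_2),
\]
where the last equivalence uses the strict symmetric monoidality of $F$. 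One then needs to check that this chain of equivalences is compatible with the laxly symmetric monoidal structure constructed above (i.e., that it agrees with the lax structure map of $Fj$); this is a routine verification using naturality of the unit $\eta$ and the compatibility of $L$'s symmetric monoidal structure with the counit $Lj \xRightarrow{\sim} \id_\cB$.

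For uniqueness, I would argue that any two lifts $G_1, G_2 : \cB \to \cE$ in $\SMC$ of the factorization must agree as functors in $\Cat$: indeed, uniqueness of the factorization in $\Cat$ is itself a standard localization fact (any two such lifts are both equivalent to $Fj$ via the counit). Since the space of strictly symmetric monoidal structures on a given functor sits over the space of laxly symmetric monoidal structures, it suffices to check uniqueness in $\SMC^\lax$, which in turn reduces to the fact that the canonical lax structure on $Fj$ pulled back from $F$ along the counit is the unique lax structure making $G \circ L \simeq F$ as laxly symmetric monoidal functors. The main obstacle is checking the strictness claim coherently; everything else is a formal consequence of $L \dashv j$ being a symmetric monoidal localization and $F$ being strictly symmetric monoidal.
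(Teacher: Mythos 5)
Your proposal is correct and follows essentially the same route as the paper's proof: identify the lift as $Fj$ with its canonical lax symmetric monoidal structure inherited from $j$ (right adjoint to the symmetric monoidal $L$), use the universal property of localization to show the existence of a $\Cat$-factorization forces $F\eta \colon F \Rightarrow FjL$ to be an equivalence, and then deduce strictness of $Fj$ via the same chain of equivalences $Fj(b_1 \otimes b_2) \simeq FjL(j(b_1)\otimes j(b_2)) \simeq F(j(b_1)\otimes j(b_2)) \simeq Fj(b_1)\otimes Fj(b_2)$. The only difference is cosmetic: you flag explicitly the coherence check that this chain agrees with the lax structure map and elaborate on the uniqueness step, both of which the paper leaves implicit.
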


\begin{observation}
\label{obs.factorizations.are.in.fact.in.CAlg} Since both $\Bord$ and $\Bord \times \Fin$ are symmetric monoidal localizations of $\lim^{\lax}(\BBord) \simeq \Ar(\Bord) \times \Fin$, we see that in condition \Cref{define.hom.dagger.factorizations.exist} of \Cref{notn.for.hom.dagger}, if the factorizations in the diagram \Cref{factorizations.of.functor.from.limlaxBBord.to.X.evaluated} exist then they are unique, and, moreover, they admit unique lifts to $\SMC$ such that the entire commutative diagram \Cref{factorizations.of.functor.from.limlaxBBord.to.X.evaluated} lifts to $\SMC$.
\end{observation}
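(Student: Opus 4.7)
The plan is to apply Lemma~\ref{factorization.in.cat.iff.in.calg} separately to each of the two squares in diagram \Cref{factorizations.of.functor.from.limlaxBBord.to.X.evaluated}. The key input is to identify both evaluation functors $\ev_0$ and $\ev_1$ as symmetric monoidal localizations of $\lim^\lax(\BBord)$.

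For $\ev_0$, this is recorded in \Cref{BBord.notations}: $\ev_0$ is strictly symmetric monoidal and admits a fully faithful right adjoint $\ev_0^R$. For $\ev_1$, under the identification $\lim^\lax(\BBord) \simeq \Ar(\Bord) \times \Fin$ of \Cref{obs.BBord.is.Ar.Bord.x.Fin}, $\ev_1$ corresponds to $s \times \id_{\Fin}$, where $s$ is the source projection on $\Ar(\Bord)$. This functor admits the fully faithful left adjoint
\[ \ell \colon (N, S) \longmapsto (N \xrightarrow{\id} N, S), \]
which is itself strictly symmetric monoidal for the pointwise monoidal structure on $\Ar(\Bord)$ (since $\id_N \otimes \id_{N'} \simeq \id_{N \sqcup N'}$). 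Hence $\ev_1$ exhibits $\Bord \times \Fin$ as a (coreflective) symmetric monoidal localization of $\lim^\lax(\BBord)$.

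Given an $F \colon \lim^\lax(\BBord) \to \cX$ in $\SMC$ satisfying condition \Cref{define.hom.dagger.factorizations.exist}, we handle the two squares of diagram \Cref{factorizations.of.functor.from.limlaxBBord.to.X.evaluated} separately. For the left square, Lemma~\ref{factorization.in.cat.iff.in.calg} applied to $L = \ev_0$, $j = \ev_0^R$, and the symmetric monoidal functor $j^* \circ F$ shows that $F_0 \simeq j^* \circ F \circ \ev_0^R$ is uniquely determined, is strictly symmetric monoidal, and makes the left square commute in $\SMC$. For the right square, the same conclusion for $F_1 \simeq i^* \circ F \circ \ell$ follows from the evident dual of Lemma~\ref{factorization.in.cat.iff.in.calg} applied to the coreflective localization $\ell \dashv \ev_1$; since $\ell$ happens to be strictly (rather than merely oplaxly) symmetric monoidal, the verification that $F_1$ is strictly symmetric monoidal is immediate from the strict symmetric monoidality of $F$, $\ell$, and $i^*$. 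The only minor subtlety is dualizing Lemma~\ref{factorization.in.cat.iff.in.calg} to the coreflective case, but its proof transcribes verbatim with the unit $\id \Rightarrow jL$ replaced by the unit $\id \Rightarrow \ev_1 \ell$ of the adjunction $\ell \dashv \ev_1$ (which is an equivalence since $\ell$ is fully faithful).
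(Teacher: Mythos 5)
Your proposal is correct and essentially reconstructs the argument that the paper's one-line observation compresses. In particular, you correctly notice that the paper's appeal to \Cref{factorization.in.cat.iff.in.calg} is not symmetric between the two squares: $\ev_0$ has a fully faithful laxly symmetric monoidal \emph{right} adjoint $\ev_0^R$ (so the lemma applies as stated), whereas $\ev_1 = s \times \id_{\Fin}$ only has a fully faithful \emph{left} adjoint $\ell \colon (N,S) \mapsto (\id_N, S)$ (there is no right adjoint, since $\Bord$ has no terminal object, $\End_\Bord(\es)$ being noncontractible). The paper's phrase ``symmetric monoidal localizations'' silently covers both the reflective and the coreflective case, and you are right that the latter requires the evident dual of \Cref{factorization.in.cat.iff.in.calg}. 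Your additional observation that $\ell$ is in fact \emph{strictly} symmetric monoidal (being the identity inclusion $\cC \to \Ar(\cC)$ in the $\Ar(\Bord)$ factor) is a pleasant simplification: it makes the strict symmetric monoidality of $F_1 \simeq i^* F \ell$ immediate without running the $Fj(b_1 \otimes b_2) \simeq \cdots$ computation from the lemma's proof.

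One small slip in your final sentence: in dualizing \Cref{factorization.in.cat.iff.in.calg}, the natural transformation whose image under the symmetric monoidal functor must be checked to be an equivalence is the \emph{counit} $\ell \ev_1 \Rightarrow \id$ of $\ell \dashv \ev_1$ (replacing the original's $\id \Rightarrow jL$), not the unit $\id \Rightarrow \ev_1 \ell$. The unit $\id \Rightarrow \ev_1\ell$ is always an equivalence because $\ell$ is fully faithful; it is the analogue of the counit $Lj \Rightarrow \id$ in the original proof, which is used to rewrite $b \simeq Lj(b)$. As written, your parenthetical ``(which is an equivalence since $\ell$ is fully faithful)'' would then seem to suggest that the factorization through $\ev_1$ always exists, which is not what you mean; the factorization hypothesis is really the condition that $i^*F$ sends the counit $\ell\ev_1 \Rightarrow \id$ to an equivalence. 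This does not affect the correctness of the overall argument, but the roles of unit and counit should be swapped.
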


%\item\label{obs.evaluation.for.BBord.ladjt.is.symmetric.monoidal} The left adjoint $\id_{(-)} \times \id_\Fin$ is symmetric monoidal.

%\item\label{obs.evaluation.for.BBord.radjt.is.laxly.symmetric.monoidal} The right adjoint $(\id_{(-)} , \const_\pt)$ is laxly symmetric monoidal.

\begin{proof}[Proof of \Cref{lem.hom.in.Strat.to.hom.dagger.an.equivalence}]
Fix a point
\[
F
\in
\Hom_\SMC(\lim^\lax(\BBord),\cX)^\HomforStrat
\subseteq
\Hom_\SMC(\lim^\lax(\BBord),\cX)
~.
\]
By \Cref{obs.factorizations.are.in.fact.in.CAlg}, we obtain a lift of the lax commutative square \Cref{lax.comm.square.obtained.by.passing.to.radjts.on.left.side} (which commutes by condition \Cref{define.hom.dagger.beck.chevalley}) from $\Cat$ to $\SMC^\lax$ and hence a morphism $\BBord \ra \varphi$ in $\Strat$. Altogether, this construction assembles into a morphism backwards
\[ \begin{tikzcd}
\Hom_\Strat(\BBord,\varphi)
\arrow{r}{\lim^\lax}
&
\Hom_\SMC(\lim^\lax(\BBord) , \cX)^\HomforStrat
\arrow[bend left, dashed]{l}
\end{tikzcd} \]
of spaces. On the one hand, the dotted arrow is a section by \Cref{obs.factorizations.are.in.fact.in.CAlg}. On the other hand, it is a retraction by \Cref{obs.univ.property.of.lax.limit.of.a.morphism}. 
\end{proof}

\subsection{The subspace \texorpdfstring{$\Hom^\HomforCAlg$}{Hom>} and its equivalence with the subspace \texorpdfstring{$\Hom^\HomforStrat$}{Hom<}}
\label{subsec.prove.bord.second.part}

In this subsection we define the subspace $\Hom_\SMC(\lim^\lax(\BBord) , \cX )^\HomforCAlg \subseteq \Hom_\SMC(\lim^\lax(\BBord) , \cX )$ and prove that it is equivalent to $\Hom_\SMC(\lim^\lax(\BBord) , \cX )^\HomforStrat$ (see \Cref{lemma.subspaces.homforStrat.and.homforCAlg.are.equivalent}).

\begin{notation}
\label{notn.for.hom.star}
We define
\[
\Hom_\SMC(\lim^\lax(\BBord),\cX)^\HomforCAlg
\subseteq
\Hom_\SMC(\lim^\lax(\BBord),\cX)
\]
to be the subspace of those symmetric monoidal functors
\[
\lim^\lax(\BBord)
\xlongra{F}
\cX
\]
satisfying the following conditions.
\begin{enumerate}[label=(\greek*)]

\item\label{define.hom.star.factorizations.exist}

There exist factorizations
\begin{equation}
\label{factorizations.of.functor.from.limlaxidBord.to.X.evaluated}
\begin{tikzcd}[column sep=1.5cm]
\Bord
\arrow[dashed]{dd}[swap]{G_0}
&
\Ar(\Bord)
\arrow{l}[swap]{t}
\arrow{r}{s}
\arrow[hook]{d}{\iota_{\Ar(\Bord)}}
&
\Bord
\arrow[dashed]{dd}{G_1}
\\
%\Bord
&
\lim^\lax(\BBord)
\arrow{d}{F}
%&
%\Bord \times \Fin
%\arrow[dashed]{d}{F_1}
\\
\cU
&
\cX
\arrow{l}{j^*}
\arrow{r}[swap]{i^*}
&
\cZ
\end{tikzcd} 
\end{equation}
in $\Cat$.

\item\label{define.hom.star.Fin.selects.ev.zero.R.of.unit.of.C}

The composite
\[
\Fin
\xhookra{\iota_\Fin}
\lim^\lax(\BBord)
\xlongra{F}
\cX
\]
in $\SMC$ selects the object $j_*(\uno_\cU) \in \CAlg(\cX) \simeq \Hom_\SMC(\Fin , \cX)$.\footnote{This is indeed merely a condition, because the functor $\cU \xra{j_*} \cX$ is fully faithful and laxly symmetric monoidal, so it induces a fully faithful functor $\CAlg(\cU) \xra{j_*} \CAlg(\cX)$, and the object $\uno_\cU \in \CAlg(\cU)$ is initial.}

\end{enumerate}
\end{notation}

\begin{lemma}
\label{lemma.subspaces.homforStrat.and.homforCAlg.are.equivalent}
The subspaces
\[
\Hom_\SMC(\lim^\lax(\BBord),\cX)^\HomforStrat
\subseteq
\Hom_\SMC(\lim^\lax(\BBord),\cX)
\supseteq
\Hom_\SMC(\lim^\lax(\BBord),\cX)^\HomforCAlg
\]
are equal.
\end{lemma}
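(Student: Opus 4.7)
The plan is to verify the two inclusions separately. For $\HomforStrat \subseteq \HomforCAlg$, given $F$ satisfying (a) and (b) of \Cref{notn.for.hom.dagger}, condition \Cref{define.hom.star.factorizations.exist} follows by precomposing the factorizations of (a) along $\iota_{\Ar(\Bord)}$: since $\ev_0 \circ \iota_{\Ar(\Bord)} = t$ and $\ev_1 \circ \iota_{\Ar(\Bord)} = (s, \const_\varnothing)$, I may take $G_0 \coloneqq F_0$ and $G_1 \coloneqq F_1 \circ (\id_\Bord, \const_\varnothing)$. For condition \Cref{define.hom.star.Fin.selects.ev.zero.R.of.unit.of.C}, the key observation is that $\ev_0^R(\varnothing) = (\id_\varnothing, \pt) = \iota_\Fin(\pt)$ both as objects and as commutative algebras in $\lim^\lax(\BBord)$. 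Since $F_0$ is strictly symmetric monoidal, $F_0(\varnothing) = \uno_\cU$, and the mate equivalence (b) evaluated at $\varnothing$ yields $F \iota_\Fin(\pt) = F \ev_0^R(\varnothing) \xra{\sim} j_*(\uno_\cU)$ in $\CAlg(\cX)$, as the mate of a commutative square of strictly symmetric monoidal functors is a laxly symmetric monoidal natural transformation.

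For the converse, let $F \in \HomforCAlg$. Condition (a) requires factorizations $j^* F = F_0 \ev_0$ and $i^* F = F_1 \ev_1$; since $\ev_0$ and $\ev_1$ are symmetric monoidal localizations, by \Cref{obs.factorizations.are.in.fact.in.CAlg} it suffices to check that $j^* F$ and $i^* F$ invert the morphisms inverted by $\ev_0$ and $\ev_1$ respectively. Symmetric monoidality of $F$ and the decomposition $(\alpha, S) = \iota_{\Ar(\Bord)}(\alpha) \otimes \iota_\Fin(S)$ in $\lim^\lax(\BBord) \simeq \Ar(\Bord) \times \Fin$ give $F(\alpha, S) \simeq F \iota_{\Ar(\Bord)}(\alpha) \otimes F \iota_\Fin(S)$, naturally in $(\alpha, S)$. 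Applying $j^*$: \Cref{define.hom.star.factorizations.exist} gives $j^* F \iota_{\Ar(\Bord)} = G_0 t$, while \Cref{define.hom.star.Fin.selects.ev.zero.R.of.unit.of.C} combined with $j^* j_* \simeq \id_\cU$ identifies $j^* F \iota_\Fin : \Fin \to \cU$ with the symmetric monoidal functor associated to $\uno_\cU \in \CAlg(\cU)$, which sends every morphism of $\Fin$ to an identity. A morphism in $\Ar(\Bord) \times \Fin$ covering the identity under $\ev_0$ has $\Ar(\Bord)$-factor covering an identity under $t$, so $j^* F$ maps it to $\id \otimes \id = \id$. An analogous argument, using that $i^* F \iota_\Fin$ corresponds to $\varphi(\uno_\cU) \in \CAlg(\cZ)$ and that $\ev_1$-identity-covering morphisms are identities on the $\Fin$-factor, handles $i^* F$.

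The main obstacle is condition (b), the equivalence of the mate $\mu : F \ev_0^R \Rightarrow j_* F_0$. At $M \in \Bord$, setting $x \coloneqq F \iota_{\Ar(\Bord)}(\id_M) \in \cX$, the decomposition $F \ev_0^R(M) = F(\id_M, \pt) \simeq x \otimes j_*(\uno_\cU)$ (using \Cref{define.hom.star.Fin.selects.ev.zero.R.of.unit.of.C}) identifies $\mu_M$ with the unit $x \otimes j_*(\uno_\cU) \to j_* j^*(x \otimes j_*(\uno_\cU)) \simeq j_*(j^* x) = j_* F_0(M)$ of $j^* \dashv j_*$; under $\cX \simeq \cU \times_\cZ \Ar(\cZ)$, the $j^*$-component of this unit is an identity while the $i^*$-component is precisely the projection formula morphism for $x$ at $\uno_\cU \in \cU$ (cf.\ \Cref{remark.strictness.on.subcat.gend.by.u.and.udual}). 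The crux is that $\id_M \in \Ar(\Bord)$ is dualizable with dual $\id_{M^\vee}$: under the pointwise symmetric monoidal structure on $\Ar(\Bord)$, the duality data for $M \in \Bord$ lifts to constant-square duality data for $\id_M$. Consequently, $x \in \cX$ is dualizable as the image of a dualizable object under the symmetric monoidal functor $F \iota_{\Ar(\Bord)}$, so \Cref{theorem.algebraic_descr} applied to $x$ yields the projection formula at every object of $\cU$; in particular at $\uno_\cU$, which gives that $\mu_M$ is an equivalence.
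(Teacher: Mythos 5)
Your proposal is correct and follows the same overall architecture as the paper's proof (two inclusions, using the $\SMC$ coproduct decomposition $\lim^\lax(\BBord)\simeq\Ar(\Bord)\times\Fin$ and the factorization‑through‑localization observation), but you make two genuine simplifications worth comparing.

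For the direction $\HomforStrat\Rightarrow\HomforCAlg$: the paper computes $F(\iota_\Fin(\pt))$ as an object of $\cX$ via the explicit formula for $\iota_\Fin$ and the commutative rectangle of condition (b), and then upgrades the resulting equivalence in $\cX$ to one in $\CAlg(\cX)$ via the pullback square $\CAlg(\cU)\to\CAlg(\cX)$. You instead observe that $\iota_\Fin(\pt)\simeq\ev_0^R(\varnothing)$ already in $\CAlg(\lim^\lax(\BBord))$ and evaluate the laxly symmetric monoidal mate at $\varnothing$ directly. This is cleaner, but the identification of $\iota_\Fin(\pt)$ with $\ev_0^R(\varnothing)$ as commutative algebras deserves a one‑line justification (e.g.\ the unit of $\ev_0\dashv\ev_0^R$ applied to $\iota_\Fin(\pt)$ is a laxly symmetric monoidal natural equivalence since $(\id_\varnothing,\pt)$ lies in the image of the fully faithful $\ev_0^R$; or appeal to the uniqueness of the $\CAlg$ structure on $(\id_\varnothing,\pt)$).

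For the direction $\HomforCAlg\Rightarrow\HomforStrat$: the paper handles condition (b) by rewriting $F_1\circ(\id_\Bord,\const_\pt)\simeq\varphi(\uno_\cU)\otimes G_1$, chasing a pentagon diagram, and then invoking \Cref{lemma.genzd.projection.formula.for.pos.and.neg.tensor.powers} applied to $x_+:=F\iota_{\Ar(\Bord)}(\id_+)$. You identify $\mu_M$ directly as the $j^*\dashv j_*$ unit at $x_M\otimes j_*(\uno_\cU)$, whose $i^*$‑component is the projection‑formula morphism $\gamma$ at $\uno_\cU$ for $x_M:=F\iota_{\Ar(\Bord)}(\id_M)$, and then apply \Cref{theorem.algebraic_descr} directly to each $x_M$ using that $\id_M$ is dualizable in $\Ar(\Bord)$. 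Since $\id_M=(\id_+)^{\otimes M}$ and $F\iota_{\Ar(\Bord)}$ is strictly symmetric monoidal, your $x_M$ is $x_+^{\otimes M}$, so this is logically the same content, but it cleanly bypasses the auxiliary inductive lemma. Your identification of $\mu_M$ with the unit of $j^*\dashv j_*$ is correct and, in my view, more transparent than the pentagon diagram chase.

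One small caveat: in checking that $j^*F$ and $i^*F$ kill the morphisms collapsed by $\ev_0$ and $\ev_1$, you speak of morphisms ``covering the identity''; strictly you should say ``covering an equivalence'' (it makes no difference here since $\Bord$ is a $1$‑category, but the phrasing should be consistent with the $\infty$‑categorical localization criterion, cf.\ \Cref{factorization.in.cat.iff.in.calg}). Also note that $\ev_1$ has a fully faithful \emph{left} adjoint rather than a right one, so the localization in that factor is coreflective; this mirrors the paper and does not affect the argument.
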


\begin{notation}
Given a symmetric monoidal category $\cA$ and a dualizable object $a \in \cA^\dzbl$, we simply write
\[
\Bord
\xlongra{a}
\cA
\]
for the corresponding symmetric monoidal functor. We also use the notation
\[
M
\longmapsto
a^{\otimes M}
\]
to indicate its values. So for instance, we have canonical identifications
\[
a^{\otimes +} \simeq a
~,
\qquad
a^{\otimes \varnothing} \simeq \uno_\cA
~,~
\qquad
\text{and}
\qquad
a^{\otimes -} \simeq a^\vee
~.
\]
\end{notation}

\begin{proof}[Proof of \Cref{lemma.subspaces.homforStrat.and.homforCAlg.are.equivalent}]
Choose a morphism
\[
\lim^\lax(\BBord)
\xlongra{F}
\cX
\]
in $\SMC$. We must show that it satisfies conditions \Cref{define.hom.dagger.factorizations.exist} and \Cref{define.hom.dagger.beck.chevalley} of \Cref{notn.for.hom.dagger} if and only if it satisfies conditions \Cref{define.hom.star.factorizations.exist} and \Cref{define.hom.star.Fin.selects.ev.zero.R.of.unit.of.C} of \Cref{notn.for.hom.star}.

Suppose first that the morphism $F$ satisfies conditions \Cref{define.hom.dagger.factorizations.exist} and \Cref{define.hom.dagger.beck.chevalley}.
\begin{itemize}

\item

Using the commutative diagram
\begin{equation}
\label{morphism.over.brax.one.from.Ar.Bord.to.lim.lax.BBord}
\begin{tikzcd}
\Bord
\arrow{d}[swap]{\id_\Bord}
&
\Ar(\Bord)
\arrow{l}[swap]{t}
\arrow{r}{s}
\arrow[hook]{d}{\iota_{\Ar(\Bord)}}
&
\Bord
\arrow{d}{(\id_\Bord,\const_\varnothing )}
\\
\Bord
&
\lim^\lax(\BBord)
\arrow{l}{\ev_0}
\arrow{r}[swap]{\ev_1}
&
\Bord \times \Fin
\end{tikzcd}
\end{equation}
in $\Cat$, we immediately see that condition \Cref{define.hom.star.factorizations.exist} must hold (in fact, it is implied by condition \Cref{define.hom.dagger.factorizations.exist} alone).

\item

Observe that the commutative square \Cref{lax.comm.square.obtained.by.passing.to.radjts.on.left.side} of condition \Cref{define.hom.dagger.beck.chevalley} gives an equivalence
\[
F_1(\varnothing,\pt)
\simeq
\varphi(\uno_\cU)
\]
in $\cZ$. Combining this with the formula
\[ \begin{tikzcd}[row sep=0cm]
\Fin
\arrow{r}{\iota_\Fin}
&
\lim^\lax(\BBord)
\\
\rotatebox{90}{$\in$}
&
\rotatebox{90}{$\in$}
\\
S
\arrow[maps to]{r}
&
( \varnothing
\longmapsto (\varnothing , \pt)
\longla
(\varnothing , S ) )
\end{tikzcd}
~,
\]
we obtain the equivalences
\begin{align*}
F(\iota_\Fin(\pt))
& \simeq
F(\varnothing \longmapsto (\varnothing , \pt) \xlongla{\sim} (\varnothing , \pt) )
\\
& \simeq
(F_0(\varnothing) \longmapsto \varphi(F_0(\varnothing)) \simeq F_1(\varnothing,\pt) \xlongla{\sim} F_1(\varnothing,\pt))
\\
& \simeq
(\uno_\cU \longmapsto \varphi(\uno_\cU) \xlongla{\sim} \varphi(\uno_\cU) )
\\
& \simeq
j_*(\uno_\cU)
\end{align*}
in $\cX$. To upgrade this to an equivalence in $\CAlg(\cX)$, it suffices to observe the pullback square
\[ \begin{tikzcd}
\CAlg(\cU)
\arrow[hook]{r}{j_*}
\arrow{d}[swap]{\fgt}
&
\CAlg(\cX)
\arrow{d}{\fgt}
\\
\cU
\arrow[hook]{r}[swap]{j_*}
&
\cX
\end{tikzcd} \]
in $\Cat$, the initiality of the object $\uno_\cU \in \CAlg(\cU)$, and the fact that the functor
\[
\CAlg(\cU)
\xra{\fgt}
\cU
\]
is conservative. So indeed, condition \Cref{define.hom.star.Fin.selects.ev.zero.R.of.unit.of.C} holds.

\end{itemize}

So indeed, conditions \Cref{define.hom.dagger.factorizations.exist} and \Cref{define.hom.dagger.beck.chevalley} together imply conditions \Cref{define.hom.star.factorizations.exist} and \Cref{define.hom.star.Fin.selects.ev.zero.R.of.unit.of.C}.

Now suppose that the morphism $F$ satisfies conditions \Cref{define.hom.star.factorizations.exist} and \Cref{define.hom.star.Fin.selects.ev.zero.R.of.unit.of.C}.
\begin{itemize}

\item

Note that the equivalence
\[
\lim^\lax(\BBord)
\simeq
\Ar(\Bord) \times \Fin
\]
in $\SMC$ of \Cref{obs.BBord.is.Ar.Bord.x.Fin} is a coproduct decomposition in $\SMC$. Moreover, the factorizations $G_0$ and $G_1$ of condition \Cref{define.hom.star.factorizations.exist} automatically lie in $\SMC$, because in diagram \Cref{factorizations.of.functor.from.limlaxidBord.to.X.evaluated} the composites $j^* \circ F \circ \iota_{\Ar(\Bord)}$ and $i^* \circ F \circ \iota_{\Ar(\Bord)}$ lie in $\SMC$ and the functors $\Bord \xla{t} \Ar(\Bord) \xra{s} \Bord$ are symmetric monoidal localizations. Hence, to verify condition \Cref{define.hom.dagger.factorizations.exist} it suffices to obtain factorizations
\[ \begin{tikzcd}
\pt
\arrow[dashed]{dd}
&
\Fin
\arrow{l}[swap]{\const_\pt}
\arrow{r}{\id_\Fin}[swap]{\sim}
\arrow{d}{\iota_\Fin}
&
\Fin
\arrow[dashed]{dd}
\\
&
\Ar(\Bord) \times \Fin
\arrow{d}{F}
\\
\cU
&
\cX
\arrow{l}{j^*}
\arrow{r}[swap]{i^*}
&
\cZ
\end{tikzcd} \]
in $\SMC$, of which the one on the right is tautological and the one on the left follows from condition \Cref{define.hom.star.Fin.selects.ev.zero.R.of.unit.of.C}.

\item Consider the morphism
\begin{equation}\label{morphism.want.to.check.by.showing.eq.of.star.and.diamond}
F_1 \circ (\id_{\Bord}, \const_{\pt})
\longra
\varphi \circ F_0
\end{equation}
of diagram \Cref{lax.comm.square.obtained.by.passing.to.radjts.on.left.side}. Since $\Bord \times \Fin$ is the coproduct of $\Bord$ and $\Fin$ in $\SMC$, using condition \Cref{define.hom.star.Fin.selects.ev.zero.R.of.unit.of.C} and the commutative diagram \Cref{morphism.over.brax.one.from.Ar.Bord.to.lim.lax.BBord} we have an identification
\[
F_1 \circ (\id_{\Bord}, \const_{\pt}) \simeq \varphi(\uno_{\cU}) \otimes G_1
~.
\]
Furthermore, using the commutative diagram
\begin{equation}
\label{comm.pentagon.from.Bord.down.to.X}
\begin{tikzcd}
&& \Bord \arrow[sloped]{drr}{\id_{\Bord}} \arrow{dll}[sloped]{\id_{\Bord}} \arrow{d}{\id_{(-)}}
\\
\Bord \arrow{d}[swap]{G_0} && \Ar(\Bord) \arrow{d}{F \circ \iota_{\Ar(\Bord)}} \arrow{rr}{s} \arrow{ll}[swap]{t} && \Bord \arrow{d}{G_1}
\\
\cU && \cX \arrow{rr}[swap]{i^*} \arrow{ll}{j^*} && \cZ
\end{tikzcd}~,
\end{equation}
we obtain the identifications
\[
\varphi(\uno_{\cU}) \otimes G_1 \simeq \varphi(\uno_{\cU}) \otimes (i^* \circ F \circ \iota_{\Ar(\Bord)} \circ \id_{(-)})
\]
and
\[
\varphi \circ F_0
=
\varphi \circ G_0
\simeq
\varphi \circ j^* \circ F \circ \iota_{\Ar(\Bord)} \circ \id_{(-)}
~.
\]
Unwinding the definitions, we see that under these identifications the morphism \Cref{morphism.want.to.check.by.showing.eq.of.star.and.diamond} is identified with the composite morphism
\begin{equation}
\label{composite.nat.trans.of.fctrs.from.Bord.to.D}
\varphi(\uno_{\cU}) \otimes G_1
\longra
\varphi(\uno_\cU) \otimes \varphi G_0
\xra{\can}
\varphi(\uno_\cU \otimes G_0)
\simeq
\varphi G_0
~,
\end{equation}
in which the first morphism arises by passing to horizontal right adjoints in the left square of diagram \Cref{comm.pentagon.from.Bord.down.to.X}. Now, consider the vertical composite in diagram \Cref{comm.pentagon.from.Bord.down.to.X}, a symmetric monoidal functor $\Bord \ra \cX$. Let us denote by $[u \mapsto \varphi(u) \xla{\alpha} z] \in \cX$ the dualizable object that it selects. Then, evaluating the composite natural transformation \Cref{composite.nat.trans.of.fctrs.from.Bord.to.D} at an object $M \in \Bord$, we obtain the composite morphism
\[
\varphi(\uno_\cU) \otimes z{\otimes M}
\longra
\varphi(\uno_\cU) \otimes \varphi(u^{\otimes M})
\longra
\varphi(\uno_\cU \otimes u^{\otimes M})
\simeq
\varphi(u^{\otimes M})
\]
in $\cZ$, which is an equivalence by \Cref{lemma.genzd.projection.formula.for.pos.and.neg.tensor.powers} (applied to the case that $w=\uno_{\cU}$).

\end{itemize}
So indeed, conditions \Cref{define.hom.star.factorizations.exist} and \Cref{define.hom.star.Fin.selects.ev.zero.R.of.unit.of.C} together imply conditions \Cref{define.hom.dagger.factorizations.exist} and \Cref{define.hom.dagger.beck.chevalley}.
\end{proof}

\begin{lemma}
\label{lemma.genzd.projection.formula.for.pos.and.neg.tensor.powers}
Suppose that the object $[u \mapsto \varphi(u) \xla{\alpha} z] \in \cX$ is dualizable. Choose any sequence $\varepsilon_1,\ldots,\varepsilon_k \in \{ + , - \}$, and write $M \coloneqq \coprod_{i=1}^k \varepsilon_i \in \Bord$. Then for any $w \in \cU$, the composite morphism
\begin{align}
\label{label.morphism.in.genzd.proj.form}
z^{\otimes M}
\otimes \varphi(w)
\coloneqq &
z^{\otimes \varepsilon_1} \otimes \cdots \otimes z^{\otimes \varepsilon_k}
\otimes \varphi(w)
\longra
\varphi(u^{\otimes \varepsilon_1}) \otimes \cdots \otimes \varphi(u^{\otimes \varepsilon_k})
\otimes \varphi(w) \nonumber \\
& \xra{\can}
\varphi( u^{\otimes \varepsilon_1} \otimes \cdots \otimes u^{\otimes \varepsilon_k} \otimes w)
\eqqcolon
\varphi ( u^{\otimes M} \otimes w)
\end{align}
is an equivalence.
\end{lemma}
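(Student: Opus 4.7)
The plan is to apply \Cref{theorem.algebraic_descr} not to the original object $x \coloneqq [u \mapsto \varphi(u) \xla{\alpha} z]$ directly but to its tensor power $x^{\otimes M} \in \cX$. First, since $x$ is dualizable by assumption, so too is $x^{\otimes M}$, being a tensor product of copies of $x$ and $x^{\vee}$ (where $x^\vee$ is as described by the moreover clause of \Cref{theorem.algebraic_descr}). Next, since the functors $\cX \xra{j^*} \cU$ and $\cX \xra{i^*} \cZ$ are strictly symmetric monoidal, we have canonical identifications $j^*(x^{\otimes M}) \simeq u^{\otimes M}$ in $\cU$ and $i^*(x^{\otimes M}) \simeq z^{\otimes M}$ in $\cZ$. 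Hence $x^{\otimes M}$ can be written as $[u^{\otimes M} \mapsto \varphi(u^{\otimes M}) \xla{\alpha_M} z^{\otimes M}]$ for a structure map $\alpha_M$ that I would identify explicitly by induction on $k$ using the tensor product formula in the lax limit (recalled in the introduction). The result is that $\alpha_M$ is the composite
\[
z^{\otimes \varepsilon_1} \otimes \cdots \otimes z^{\otimes \varepsilon_k}
\longra
\varphi(u^{\otimes \varepsilon_1}) \otimes \cdots \otimes \varphi(u^{\otimes \varepsilon_k})
\xra{\can}
\varphi(u^{\otimes M})~,
\]
where the first map is the tensor product of the structure maps of $x$ and $x^{\vee}$ in each factor (equal to $\alpha$ in the $+$ factors and to $\beta$ in the $-$ factors).

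With these preliminaries in place, the implication (1) $\Rightarrow$ (2) of \Cref{theorem.algebraic_descr} applied to the dualizable object $x^{\otimes M}$ immediately supplies that the composite
\[
z^{\otimes M} \otimes \varphi(w)
\xra{\alpha_M \otimes \id}
\varphi(u^{\otimes M}) \otimes \varphi(w)
\xra{\can}
\varphi(u^{\otimes M} \otimes w)
\]
is an equivalence for every $w \in \cU$. Unwinding the explicit description of $\alpha_M$ and using the associativity coherence for the laxness structure on $\varphi$, this composite agrees with the morphism \Cref{label.morphism.in.genzd.proj.form}, which is therefore an equivalence as claimed. No step of this plan is difficult: all the real work is already contained in \Cref{theorem.algebraic_descr}, and the only obstacle is the bookkeeping required to identify $x^{\otimes M}$ as a specific object of $\cX$ and to recognize the resulting instance of the projection formula as the stated composite.
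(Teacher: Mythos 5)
Your proof is correct, and it takes a genuinely different route from the paper's. The paper proves this lemma by induction on $k$: the case $k=1$ follows from applying \Cref{theorem.algebraic_descr} to $x$ (when $\varepsilon_1 = +$) or $x^\vee$ (when $\varepsilon_1 = -$), and the inductive step peels off the first factor $\varepsilon_1$, factoring the composite as $\id_{z^{\otimes \varepsilon_1}}$ tensored with the projection formula for $N = (\varepsilon_2, \ldots, \varepsilon_k)$ at $w$ (induction), followed by the projection formula for $\varepsilon_1$ at $u^{\otimes N} \otimes w$ (the $k=1$ case). By contrast, you apply \Cref{theorem.algebraic_descr} a single time, to the dualizable object $x^{\otimes M}$, after observing that $j^\ast$ and $i^\ast$ being strictly symmetric monoidal forces $x^{\otimes M} \simeq [u^{\otimes M} \mapsto \varphi(u^{\otimes M}) \xla{\alpha_M} z^{\otimes M}]$ and that $\alpha_M$ unwinds to the iterated $\can$-composite. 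Both proofs ultimately reduce to the same coherence fact about the laxness structure of $\varphi$ (the paper localizes this to a footnote in the inductive step; you need it to identify the projection formula for $x^{\otimes M}$ with the displayed composite), so neither is strictly simpler, but your approach makes it conceptually transparent that the lemma is nothing more than the projection formula for $x^{\otimes M}$ itself, which is a nice observation. The one place to be careful is the identification of $\alpha_M$ — your sketch of computing it by induction from the explicit tensor-product formula in the lax limit is the right move, and it is exactly the place where the laxness coherence enters.
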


\begin{proof}
The claim holds trivially when $k=0$, and when $k=1$ it holds by \Cref{theorem.algebraic_descr} (applied to $x$ if $\varepsilon_1 = +1$ and to $x^\vee$ if $\varepsilon_1 = -1$). If $k \geq 2$, then writing $N = ( \varepsilon_2,\ldots,\varepsilon_k)$, 
%and denoting the morphism \Cref{label.morphism.in.genzd.proj.form} by $u_{M,w}$ (i.e.\! recording its dependence on $M$ and $w$),
we may factor the morphism \Cref{label.morphism.in.genzd.proj.form} as the composite
\[
z^{\otimes M} \otimes \varphi(w)
\simeq
z^{\otimes \varepsilon_1} \otimes z^{\otimes N} \otimes \varphi(w)
\xlongra{\sim}
%\xra{\id_{z^{\otimes N}} \otimes u_{(\varepsilon_k),w}}
z^{\otimes \varepsilon_1} \otimes \varphi(u^{\otimes N} \otimes w)
\xlongra{\sim}
\varphi(u^{\otimes \varepsilon_1} \otimes u^{\otimes N} \otimes w)
\simeq
\varphi(u^{\otimes M} \otimes w)
~,
\]
in which the first morphism is an equivalence by the case where $k=1$ and the second morphism is an equivalence by induction.\footnote{The fact that this is indeed a factorization of the morphism \Cref{label.morphism.in.genzd.proj.form} follows from the compatibility of the various canonical morphisms associated to the laxly symmetric monoidality of $\cU \xra{\varphi} \cZ$.}
\end{proof}

\begin{remark}
\Cref{lemma.genzd.projection.formula.for.pos.and.neg.tensor.powers} applies equally well (with the same proof) in the monoidal case under the assumption that the chosen object is right-dualizable.
\end{remark}

\subsection{The equivalence between the subspace \texorpdfstring{$\Hom^\HomforCAlg$}{Hom>} and morphisms in \texorpdfstring{$\SMC$}{SMC}}
\label{subsec.prove.bord.third.part}

The following result will conclude the proof of \Cref{thm.stratified.bordism}.

\begin{lemma}
\label{lem.hom.star.to.hom.in.CAlg.an.equivalence}
The composite
\begin{equation} 
\label{morphism.in.eq.of.star.with.calg}
\Hom_\SMC(\lim^\lax(\BBord),\cX)^\HomforCAlg
\longra
\Hom_\SMC(\lim^\lax(\BBord),\cX)
\xlongra{\tau^*}
\Hom_\SMC(\Bord,\cX)
\end{equation}
is an equivalence.
\end{lemma}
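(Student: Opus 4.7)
The plan is to construct an explicit inverse to the composite \Cref{morphism.in.eq.of.star.with.calg}. Given a dualizable object $x = [u \mapsto \varphi(u) \xla{\alpha} z] \in \cX^\dzbl$ classified by an SMC-functor $\hat{x} : \Bord \to \cX$, I would exploit the coproduct decomposition $\lim^\lax(\BBord) \simeq \Ar(\Bord) \times \Fin$ in $\SMC$ (\Cref{obs.BBord.is.Ar.Bord.x.Fin}). A morphism $F : \Ar(\Bord) \times \Fin \to \cX$ in $\SMC$ is thus equivalent data to a pair $(G_\Ar, A)$ consisting of an SMC-functor $G_\Ar : \Ar(\Bord) \to \cX$ and a commutative algebra $A \in \CAlg(\cX)$, combined via $F(q, S) = G_\Ar(q) \otimes A^{\otimes S}$. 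Condition $(\gamma)$ forces $A = j_*(\uno_\cU)$, so it remains to construct $G_\Ar$.

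For $G_\Ar$, I would invoke the universal property of $\cX$ as a lax limit (\Cref{obs.univ.property.of.lax.limit.of.a.morphism}): an SMC-functor into $\cX$ is equivalent to a lax cone in $\SMC^\lax$ over $\varphi$ with strictly SMC vertical legs. Setting $\hat{u} = j^* \hat{x}$ and $\hat{z} = i^* \hat{x}$, I would take the vertical legs to be $\hat{u} \circ t : \Ar(\Bord) \to \cU$ and $\hat{z} \circ s : \Ar(\Bord) \to \cZ$, and define the lax SMC natural transformation $\eta : \hat{z} \circ s \Rightarrow \varphi \circ \hat{u} \circ t$ as the horizontal composite
\[
\hat{z} \circ s \xRightarrow{\hat{z}(s \Rightarrow t)} \hat{z} \circ t \xRightarrow{\text{lax cell of } \cX \text{ at } \hat{x} t} \varphi \circ \hat{u} \circ t,
\]
where the first arrow is obtained by applying $\hat{z}$ to the tautological strict SMC natural transformation $s \Rightarrow t$ (the "source-to-target" cell in $\Ar(\Bord)$), and the second is the defining lax cell $i^* \Rightarrow \varphi j^*$ of $\cX$ as $\lim^\lax(\varphi)$, evaluated at $\hat{x} \circ t$. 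By construction, $F$ satisfies $(\alpha)$ and $(\gamma)$ with $F(\tau) = G_\Ar(\id_+) = [u \mapsto \varphi(u) \xla{\alpha} z] = x$.

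For the inverse property, showing $F(\tau) \mapsto F$ recovers $F$ reduces (by $(\gamma)$ on the $\Fin$-factor and $(\alpha)$ on the $\Ar(\Bord)$-factor, together with the bordism hypothesis applied to $\hat{u}$ and $\hat{z}$) to showing that any two lax SMC natural transformations $\eta, \eta' : \hat{z} s \Rightarrow \varphi \hat{u} t$ with $\eta_{\id_+} = \eta'_{\id_+} = \alpha$ must agree. Using that every arrow $f : N \to M$ in $\Bord$ factors in $\Ar(\Bord)$ through $\id_M$ via $(f, \id_M)$ and through $\id_N$ via $(\id_N, f)$, naturality of $\eta$ forces $\eta_f = \eta_{\id_M} \circ \hat{z}(f)$, so $\eta$ is determined by its restriction along the diagonal $\id_{(-)} : \Bord \hookrightarrow \Ar(\Bord)$. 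On the diagonal, lax SMC naturality together with the universal property of $\Bord$ as the free SMC on a dualizable object forces $\eta_{\id_M}$ to be determined by $\alpha$ and the coherence data of $x^\vee$.

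The main obstacle I anticipate is verifying that the $\eta$ constructed above is genuinely lax SMC coherent — in particular, compatible with the evaluation/coevaluation bordisms $\epsilon : +\sqcup - \to \varnothing$ and $\delta : \varnothing \to + \sqcup -$ in $\Ar(\Bord)$. Unpacking the required square for $\epsilon$ yields exactly the commutative diagram defining $\epsilon_x : x^\vee \otimes x \to \uno_\cX$ in the proof of (3)$\Rightarrow$(1) of \Cref{theorem.algebraic_descr}, while the generalized projection formula \Cref{lemma.genzd.projection.formula.for.pos.and.neg.tensor.powers} guarantees the analogous compatibilities at all dualizable $M \in \Bord$. Thus the construction is underwritten by the hypothesis that $x$ is dualizable, precisely as it should be. Most of the remaining homotopy-coherent bookkeeping is absorbed by the universal property of the lax limit in $\SMC^\lax$.
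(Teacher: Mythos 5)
Your construction of the inverse is correct and, unwound, coincides with the paper's map: taking the lax cone with legs $\hat{u}\circ t$, $\hat{z}\circ s$ and 2-cell $(\,\text{defining cell of }\cX\text{ at }\hat{x}t\,)\circ(\hat{z}\sigma)$ is exactly what one gets by passing $\hat{x}:\Bord\to\cX$ through the equivalence $\Hom_\SMC(\Bord,\cX)\simeq\Hom_{\Stratlax}(\id_\Bord,\varphi)$ and then applying $\lim^\lax$. The coproduct decomposition to isolate the $\Ar(\Bord)$-factor is also the same as the paper's first step. So the overall architecture matches.

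There are, however, two confusions worth flagging. First, the ``main obstacle'' you anticipate — verifying lax SMC coherence of $\eta$ at the bordisms $\epsilon,\delta$, invoking \Cref{theorem.algebraic_descr} and \Cref{lemma.genzd.projection.formula.for.pos.and.neg.tensor.powers} — is not actually an obstacle here. Your $\eta$ is a whiskered composite of two laxly SMC 2-cells (the strictly SMC $\hat{z}\sigma$ and the structure cell $i^*\Rightarrow\varphi j^*$ of $\cX$), so it is laxly SMC automatically; nothing further need be checked. The hypothesis that $x$ is dualizable enters only via the ordinary bordism hypothesis, to produce $\hat{x}$ from $x$; it does not reappear in this lemma. \Cref{theorem.algebraic_descr} and \Cref{lemma.genzd.projection.formula.for.pos.and.neg.tensor.powers} are precisely the content of the \emph{previous} lemma (\Cref{lemma.subspaces.homforStrat.and.homforCAlg.are.equivalent}), where one shows that the two subspaces of morphisms agree; you appear to have folded the two lemmas together. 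Second, your reduction to ``any two $\eta,\eta'$ with $\eta_{\id_+}=\eta'_{\id_+}=\alpha$ agree'' over-shoots: the claim you actually need (and the one your next sentence proves) is that $\eta$ is determined by its full restriction $\eta\circ\id_{(-)}$ along the diagonal, not merely by its single component at $\id_+$. The further determination of $\eta\circ\id_{(-)}$ from $\alpha$ alone is subsumed in the bordism hypothesis equivalence $\Hom_\SMC(\Bord,\cX)\simeq\cX^\dzbl$, which the paper uses as a black box by factoring the composite through $\Hom_\SMC(\Bord,\cX)$ rather than landing directly in $\cX^\dzbl$. Adopting that intermediate step would let you drop your last reduction entirely, and would also make visible where the genuine (though routine) work lies: the paper handles the homotopy-coherent version of ``$\eta$ determined by its diagonal restriction'' by combining \Cref{obs.univ.property.of.lax.limit.of.a.morphism} with the uniqueness of factorizations from \Cref{factorization.in.cat.iff.in.calg}, whereas your pointwise naturality argument only establishes the claim on components.
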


\begin{notation}
We write
\[
\Hom_{\SMC}(\Ar(\Bord),\cX)^\HomforCAlg \subseteq \Hom_{\SMC}(\Ar(\Bord),\cX)
\]
for the subspace consisting of those morphisms such that there exist factorizations
\begin{equation}
\label{factorizns.defining.hom.star.from.Ar.Bord}
\begin{tikzcd}[column sep=1.5cm]
\Bord
\arrow[dashed]{d}%[swap]{G_0}
&
\Ar(\Bord)
\arrow{l}[swap]{t}
\arrow{r}{s}
\arrow{d}{}
&
\Bord 
\arrow[dashed]{d}%{G_1}
\\
\cU
&
\cX
\arrow{l}{j^*}
\arrow{r}[swap]{i^*}
&
\cZ
\end{tikzcd}
\end{equation}
in $\Cat$.
\end{notation}

\begin{proof}[Proof of \Cref{lem.hom.star.to.hom.in.CAlg.an.equivalence}]
The product decomposition
\[
\Hom_\SMC(\lim^\lax(\BBord),\cX) \simeq \Hom_{\SMC}(\Ar(\Bord),\cX) \times \Hom_{\SMC}(\Fin, \cX)
\]
resulting from \Cref{obs.BBord.is.Ar.Bord.x.Fin} restricts to a product decomposition
\[
\Hom_\SMC(\lim^\lax(\BBord),\cX)^\HomforCAlg
\simeq
\Hom_\SMC(\Ar(\Bord),\cX)^\HomforCAlg \times \{j_*(\uno_\cU) \}
\simeq
\Hom_\SMC(\Ar(\Bord),\cX)^\HomforCAlg
~,
\]
under which the composite \Cref{morphism.in.eq.of.star.with.calg} is identified with the composite
%\begin{equation}
%\label{composite.from.smaller.homforCAlg}
\[
c
:
\Hom_\SMC(\Ar(\Bord),\cX)^\HomforCAlg
\longhookra
\Hom_\SMC(\Ar(\Bord),\cX)
\xra{(\id_{(-)})^*}
\Hom_\SMC(\Bord,\cX)
\]
%\end{equation}
where the latter morphism is precomposition by the morphism $\Bord \xra{\id_{(-)}} \Ar(\Bord)$ in $\SMC$. We claim that the morphism $c$ has an inverse, given by the evident factorization in the diagram (see \Cref{obs.univ.property.of.lax.limit.of.a.morphism})
% \Cref{maps.between.lim.lax}
\[ \begin{tikzcd}
&
&
\Hom_\SMC(\Ar(\Bord) , \cX)^\HomforCAlg
\arrow[hook]{d}
\\
\Hom_\SMC(\Bord , \cX)
\arrow{r}[swap]{\sim}
\arrow[dashed]{rru}[sloped]{f}
&
\Hom_{\Stratlax}(\id_\Bord , \varphi)
\arrow{r}[swap]{\lim^\lax}
&
\Hom_\SMC(\Ar(\Bord) , \cX)
\end{tikzcd}.
\]
Indeed, it follows from \Cref{obs.univ.property.of.lax.limit.of.a.morphism} that the composite $c \circ f$ is equivalent to the identity. Conversely, by \Cref{factorization.in.cat.iff.in.calg}, if the factorizations in diagram \Cref{factorizns.defining.hom.star.from.Ar.Bord} exist then they are unique, and moreover they admit unique lifts to $\SMC$ such that the entire commutative diagram \Cref{factorizns.defining.hom.star.from.Ar.Bord} lifts to $\SMC$, and so it follows from \Cref{obs.univ.property.of.lax.limit.of.a.morphism} that the composite $f \circ c$ is equivalent to the identity.
\end{proof}

\section{Applications and examples}
\label{section.applications.and.examples}

In this section we use \Cref{theorem.algebraic_descr} to describe dualizable objects in various categories of interests.

% \Alg_{\EE_n}(\cV)
\begin{example}[overcategories of $\EE_n$-algebras]
\label{item.generalities.on.dzbls.in.overcat}
For an $\EE_n$-monoidal category $\cV$, an $\EE_n$-algebra $A$ is equivalently a laxly $\EE_n$-monoidal functor $\ast \xra{A} \cV$, and so the overcategory $\cV_{/A} \simeq \lim^{\lax}( \ast \xra{A} \cV)$ obtains an $\EE_n$-monoidal structure. By \Cref{theorem.algebraic_descr} we thus see that an object $X \in \cV_{/A}$ is right-dualizable if and only if $X \in \cV$ is right-dualizable and the composite morphism
\[
X \otimes A \xra{\alpha \otimes \id_A} A \otimes A \xra{{\sf mult}_A} A
\]
in $\cV$ is an equivalence. 
\end{example}

\begin{example}[Thom spectra and orientations]
We discuss a relatively simple application of \Cref{theorem.algebraic_descr}, and explain how it relates to a multiplicative form of the Thom isomorphism theorem (following \cite{ACB}). Let $\cV$ be a presentably $\EE_{n+1}$-monoidal category, and let $A$ be an $\EE_{n+1}$-algebra in $\cV$ for some $n \geq 1$.
\begin{enumerate}

\item\label{item.abstract.orientations} 
For an $\EE_{n}$-monoidal functor $\cC \xlongra{E} \cV$, note that the colimit $\colim(E) \in \cV$ is canonically an $\EE_{n}$-algebra. By the universal property of $\cV_{/A} \simeq \lim^{\lax}( \ast \xra{A} \cV)$, we see that $\Hom_{\Alg_{\EE_{n}}(\cV)}(\colim(E),A)$ is equivalent to the space of $\EE_{n}$-monoidal lifts
\begin{equation}
\label{lifts.from.C.to.V.over.A}
\begin{tikzcd}
& \cV_{/A} \arrow{d}
\\
\cC\arrow{r}[swap]{E} \arrow[dashed]{ur} & \cV.
\end{tikzcd}
\end{equation}
Meanwhile, by an $\EE_{n}$-analogue of \Cref{remark.strictness.on.subcat.gend.by.u.and.udual} we have a pullback diagram
\begin{equation}
\label{pullback.diagram.for.rdzbls.in.V.over.A}
\begin{tikzcd}
(\cV_{/A})^{\sf rdzbl}
\arrow{r}
\arrow{d}
&
\ast
\arrow{d}{A}
\\
\cV^{\sf rdzbl}
\arrow{r}[swap]{(-) \otimes A}
&
{\sf RMod}_{A}(\cV)^{\sf rdzbl}
\end{tikzcd}
\end{equation}
of $\EE_{n}$-monoidal categories. Now, let us assume that all objects of $\cC$ are right-dualizable, so that we obtain a factorization\footnote{Note that we require $E$ to be strictly $\EE_n$-monoidal for this.}
\[ \begin{tikzcd}
\cC
\arrow{rr}{E}
\arrow[dashed]{rd}[sloped, swap]{E}
&
&
\cV
\\
&
\cV^{{\sf rdzbl}}
\arrow[hook]{ru}
\end{tikzcd}~ \]
and similarly for $\cV_{/A}$.
Then, using the pullback diagram \Cref{pullback.diagram.for.rdzbls.in.V.over.A}, we find that the space of $\EE_{n}$-monoidal lifts \Cref{lifts.from.C.to.V.over.A} is equivalent to the space of $\EE_{n}$-monoidal lifts
\[ \begin{tikzcd}[column sep=1.5cm]
& \ast \arrow{d}{A}
\\
\cC\arrow{r}[swap]{E \otimes A} \arrow[dashed]{ur} & {\sf RMod}_{A}(\cV),
\end{tikzcd} \]
i.e., the space ${\sf Orient}_A(E)$ of \bit{$A$-orientations of $E$}. Summarizing, we obtain an equivalence
$$
\Hom_{\Alg_{\EE_{n}}(\cV)}(\colim(E),A) \simeq {\sf Equiv}_{\Fun_{\Alg_{\EE_{n}}(\Cat)}(\cC,{\sf RMod}_{A}(\cV))}(\const_A, E \otimes A) \eqqcolon {\sf Orient}_A(E)
$$
of spaces.

\item\label{item.orientations.of.thom.spectra}

We apply \Cref{item.abstract.orientations} in the case $\cV = \Spectra$ is the presentably symmetric monoidal category of spectra and $\cC = K \in \Alg_{\EE_{n}}^{{\sf gp}}(\Spaces)$ is a grouplike $\EE_{n}$-algebra in spaces.\footnote{Since $K$ is a grouplike $\EE_n$-groupoid, the condition of right-dualizability in $K$ is automatic.} Then, the equivalence above yields an equivalence
$$
\Hom_{\Alg_{\EE_{n}}(\Spectra)}(\colim(E),A) \simeq {\sf Orient}_A(E)
$$
of spaces. In particular, each morphism
\[
\colim(E)
\longra
A
\]
in $\Alg_{\EE_{n}}(\Spectra)$ determines an equivalence 
$$
\colim(E) \otimes A \simeq \colim(E \otimes A) \xlongra{\sim} \colim\left(K \xra{\const_A} {\sf RMod}_{A}(\Spectra)\right) \simeq \Sigma^{\infty}_+ K \otimes A
$$
in $\Alg_{\EE_{n}}({\sf RMod}_A(\Spectra))$, which implies that the $A$-homology of $\colim(E)$ is naturally isomorphic to the $A$-homology of $K$.

\item

We illustrate \Cref{item.orientations.of.thom.spectra} in the specific example that $K={\sf BO}$ and $E$ is given by the $J$-homomorphism
$$
J: {\sf BO} \longra {\sf BAut}_{\Spectra}(\SSS) \longhookra \Spectra
~.
$$
In particular, for a space $K' \in \Spaces$ and a stable vector bundle $K' \xra{V} {\sf BO}$ on $K'$, the composite $K' \xra{V} {\sf BO} \xra{J} \Spectra$ classifies bundle of spectra obtained by $1$-point compactifying and then applying $\Sigma^{\infty}$ to the fibers of $V$. Then, we obtain a universal property for ${\sf MO}=\colim(J)$ as an $\EE_{n}$-algebra (for any $n \geq 1$): morphisms
\[
{\sf MO} \simeq \colim(J) \longra A
\]
in $\Alg_{\EE_{n}}(\Spectra)$ are equivalent to the space of $A$-orientations ${\sf Orient}_A(J)$ of the $J$-homomorphism. In particular, a morphism ${\sf MO} \rightarrow A$ of $\EE_{n}$-algebras induces a splitting ${\sf MO} \otimes A \simeq \Sigma^{\infty}_+ {\sf BO} \otimes A$ and implies that the $A$-homology of ${\sf MO}$ is naturally isomorphic to the $A$-homology of ${\sf BO}$.

\end{enumerate}

\end{example}

\begin{example}[Picard groups]
Recall that the Picard spectrum functor $\SMC \xra{\mathfrak{pic}} \Spectra_{\geq 0}$ preserves limits \cite[Proposition~2.2.3]{MS}. Applying this to the pullback square of \Cref{remark.strictness.on.subcat.gend.by.u.and.udual} and writing $\Pic \coloneqq \pi_0 \mathfrak{pic}$ for the Picard group, we get a long exact sequence on homotopy groups
\[ ... \to \pi_1(\mathfrak{pic}(\cU)) \times \pi_1(\mathfrak{pic}(\cZ)) \to \pi_1 \mathfrak{pic}(\Mod_{\varphi(\uno_{\cU})}(\cZ)) \xra{\partial} \Pic(\cX) \to \Pic(\cU) \times \Pic(\cZ) \:.\footnote{Here, we are free to replace $\cU_0$ (as defined in \Cref{remark.strictness.on.subcat.gend.by.u.and.udual}) with $\cU$ since $\Pic(\cU_0) \subseteq \Pic(\cU)$ and $\pi_i \mathfrak{pic}$ for $i>0$ is unchanged.} \]
Under the isomorphism
\[
\pi_1 \mathfrak{pic}(\Mod_{\varphi(\uno_{\cU})}(\cZ))
\cong
\End_{\Mod_{\varphi(\uno_\cU)}(\cZ)}(\varphi(\uno_{\cU}))^\times
~,
\]
one may identify the boundary map $\partial$ as sending an automorphism $\varphi(\uno_{\cU}) \xra{\gamma} \varphi(\uno_{\cU})$ to the invertible object $[\uno_{\cU} \mapsto \varphi(\uno_{\cU}) \xla{\gamma \circ \can} \uno_{\cZ}]$. The map $\partial$ then implements an isomorphism of abelian groups
\[ \partial: \End(\uno_{\cU})^\times \backslash \End(\varphi(\uno_{\cU}))^\times / \End(\uno_{\cZ})^\times \xra{\cong} \ker \left( \Pic(\cX) \to \Pic(\cU) \times \Pic(\cZ) \right) \:. \]
This isomorphism is originally due to Tom Bachmann in a slightly different form \cite[Corollary 3.6]{Bachmann}.
\end{example}

\begin{example}[modules]
\label{example.modules.over.complete.local.cring}
Let $R$ be a (discrete) complete local commutative ring and let us write $\mathfrak{m}$ for its maximal ideal and $K$ for its fraction field. We then have an equivalence
\[
\Mod_R
\xlongra{\sim}
\lim^\lax \left( \Mod_R^{\mathfrak{m}\text{-}{\sf cplt}} \xra{ (-) \otimes_R K} \Mod_K \right)
\]
of categories. Note that an $R$-module is dualizable if and only if it is perfect, and all perfect $R$-modules are $\mathfrak{m}$-complete. In particular, at least when $R$ is regular, these are \textit{all} of the dualizable objects in $\Mod_R^{\mathfrak{m}\text{-}{\sf cplt}}$, i.e., completion determines an equivalence
\[
\Mod_R^\dzbl
\xlongra{\sim}
(\Mod_R^{\mathfrak{m}\text{-}{\sf cplt}})^\dzbl
~.
\]
It follows that the composite
\[
(\Mod_R^{\mathfrak{m}\text{-}{\sf cplt}})^\dzbl
\longhookra
\Mod_R^{\mathfrak{m}\text{-}{\sf cplt}}
\xra{(-) \otimes_R K}
\Mod_K
\]
is strictly symmetric monoidal. This is consistent with \Cref{theorem.algebraic_descr}, in view of \Cref{rmk.strictly.s.m.case}.
\end{example}

The following example was suggested by Tomer Schlank.

\begin{example}[$E(1)$-local spectra]
To see the necessity of the second condition in $(3)$, \Cref{theorem.algebraic_descr}, consider the equivalence (in fact, a recollement)
\[
L_{E(1)} \Spectra
\xlongra{\sim}
\lim^\lax \left( L_{K(1)}\Spectra \xra{ (-) \otimes_\SSS \QQ} \Mod_\QQ \right)
~.
\]
By \cite[15.1]{HS} we know that there exist invertible $K(1)$-local spectra with torsion homotopy groups (in particular, their rationalization is zero). Consider such a Picard element $U \in \Pic(L_{K(1)} \Spectra)$ and the corresponding object $[U \mapsto 0 \xla{=} 0]$ in $L_{K(1)}\Spectra$. Since $U$ is dualizable and the natural morphism $0 \to 0 \otimes_\SSS (L_{K(1)} \SSS \otimes_\SSS \QQ) \rightarrow U \otimes \QQ$ is tautologically an equivalence, we see that $[U \mapsto 0 \xla{=} 0]$ satisfies the first condition in (3) of \Cref{theorem.algebraic_descr}. On the other hand, it doesn't satisfy the second condition since $L_{K(1)} \SSS \otimes_{\SSS} \QQ$ is nonzero.
By \Cref{stable_case} it then follows that such invertible objects of $L_{K(1)}\Spectra$ cannot lie in the thick subcategory generated by the unit (compare \cite{AMN}*{Example~2.8}).
\end{example}

In the next few examples, we consider the category $\Sp^G$ of (genuine) $G$-spectra when $G$ is a finite group. Recall from \cite{AMGR}*{Theorems~A~and~E} that $\Sp^G$ is symmetric monoidally equivalent to the lax limit of its `canonical fracture', which is a locally cocartesian fibration over the subconjugacy poset $P$ of $G$ with stratum over $H \in P$ given by $\Sp^{h W_G H} = \Fun(B W_G H, \Sp)$ for $W_G H$ the Weyl group and monodromy given by the proper Tate construction $(-)^{\tau (-)}$ with residual action, at least when $G$ is abelian (for a general formula, see \cite{AMGRnew}*{Theorem A}). This generalizes the symmetric monoidal equivalence
\[ \Sp^{C_p} \simeq \lim^{\lax} \left( \Sp^{h C_p} \xra{(-)^{tC_p}} \Sp \right) \:. \]
In view of equivariant Atiyah duality, we already know that the dualizable and compact objects of $\Sp^G$ coincide (and are generated by the orbits $\Sigma^\infty_+(G/H)$ as a thick subcategory). In the following examples, we use Theorems \ref{theorem.algebraic_descr} and \ref{thm:GeneralDualizability} to describe some novel features of these dualizable objects.

\begin{example}[$C_p$-spectra] \label{exm:Cp}
Suppose that $G = C_p$. The Segal conjecture implies that $\SSS^{tC_p} \simeq \SSS^{\wedge}_p$. \Cref{theorem.algebraic_descr} then shows that a $C_p$-spectrum $X = [X^{\Phi e} \mapsto X^{tC_p} \xla{\alpha} X^{\Phi C_p}]$ is dualizable if and only if $X, X^{\Phi C_p}$ are finite as spectra and the canonical maps $(X^{\Phi C_p})^{\wedge}_p \to X^{tC_p}$, $X^{\Phi C_p} \otimes (X^{\vee})^{tC_p} \to (X \otimes X^{\vee})^{tC_p}$ are equivalences. Conceptually, we thus think of $X^{\Phi C_p}$ as providing a `decompletion' of $X^{tC_p}$.

Note in addition that by a thick subcategory argument as in \Cref{stable_case} and using that the $C_p$-Tate construction vanishes on $C_p$-induced objects, verifying the projection formula at $X^{\vee}$ may be traded for checking that $X$ is in the thick subcategory of $\Sp^{hC_p}$ generated by the objects $\Sigma^{\infty}_+ (C_p/e)$ and $\Sigma^\infty_+(C_p/C_p)$ (note that this coincides with the thick essential image of $j^*: (\Sp^{C_p})^{\dzbl} \to (\Sp^{hC_p})^{\dzbl}$ and thus leads to a necessary condition; compare \Cref{rem:BeauvilleLaszlo}).
\end{example}

\begin{example}[$C_{p^n}$-spectra] \label{exm:Cpn}
Suppose $G = C_{p^n}$. Then the Tate orbit lemma \cite{NS}*{Lemma~I.2.1} implies that the full subcategory $\Sp^{C_{p^n}}_{+}$ on bounded-below objects (i.e., whose geometric fixed points are all bounded-below; cf. \cite{QS}*{Lemma~7.8}) decomposes as an iterated pullback via geometric fixed points (cf. \cite{QS}*{Proposition~7.38 and 2.52} and also \cite{NS}*{Remark~II.4.8})
\[ \Sp^{C_{p^n}}_{+} \simeq \Sp^{h C_{p^n}}_{+} \times_{\scriptscriptstyle (-)^{tC_p}, \Sp^{h C_{p^{n-1}}}, \ev_1} \Ar'(\Sp^{h C_{p^{n-1}}}) \times_{\scriptscriptstyle (-)^{tC_p} \ev_0, \Sp^{h C_{p^{n-2}}}, \ev_1} \Ar'(\Sp^{h C_{p^{n-2}}}) \times \cdots \times \Ar'(\Sp) \:,   \]
where $\Ar'$ denotes the full subcategory on arrows whose source is bounded-below. In fact, if we let
$$\cX^k \coloneqq \lim^{\lax} \left( \Sp^{h C_{p^k}}_{+} \xra{(-)^{tC_p}} \Sp^{h C_{p^{k-1}}} \right)$$
and $\cX^k_+ \coloneqq \Sp^{h C_{p^{k-1}}}_+ \times_{\Sp^{h C_{p^{k-1}}}, i^*} \cX^k$, then we have a symmetric monoidal equivalence
\[ \Sp^{C_{p^n}}_+ \simeq \cX^n_+ \times_{\scriptstyle i^*,\Sp^{hC_{p^{n-1}}}_{+},j^*} \cX^{n-1}_+ \times \cdots \times \cX^1_+ \:. \]
It follows that the dualizability of a $C_{p^n}$-spectrum $X$ may be checked in terms of dualizability of its geometric fixed points $\{ X^{\Phi C_{p^k}} \}$ and the projection formulas
\[ (X^{\Phi C_{p^{k+1}}})^{\wedge}_p \xlongra{\sim} (X^{\Phi C_{p^k}})^{t C_p} \qquad \text{and} \qquad (X^{\Phi C_{p^{k+1}}}) \otimes ((X^{\Phi C_{p^k}})^{\vee})^{t C_p} \xlongra{\sim} (X^{\Phi C_{p^k}} \otimes (X^{\Phi C_{p^k}})^{\vee})^{t C_p}. \]
\end{example}

\begin{example}[$G$-spectra] \label{exm:Gspectra}
If $G$ is a finite $p$-group, then the Segal conjecture for $G$ and its subgroups together with an isotropy separation argument imply that $\SSS^{\tau G} \simeq \SSS^{\wedge}_p$. In lieu of the simplification afforded by the Tate orbit lemma for the $C_{p^n}$ case, we may instead use \Cref{thm:GeneralDualizability} along with \cite{AMGRnew}*{Theorem A} to describe the dualizable objects in $\Sp^G$. In fact, the $p$-group case is really the interesting one to consider since if two distinct primes divide the order of $G$, then $(-)^{\tau G}$ vanishes \cite{QS}*{Lemma~3.50}.

In any case, as in \Cref{exm:Cp} we may replace the projection formula condition at duals with the condition that $X^{\Phi H} \in \Sp^{h W_G H}$ is in the thick subcategory generated by the $W_G H$-orbits, using that the proper Tate construction annihilates all non-trivial orbits.
\end{example}

\begin{example}[Tate diagonals]
Consider the Hill-Hopkins-Ravenel norm functor $\Sp \xra{N^{C_p}} \Sp^{C_p}$ given by $N^{C_p}(X) = [X^{\otimes p} \mapsto (X^{\otimes p})^{t C_p} \xla{\Delta} X ]$, where $C_p$ acts on $X^{\otimes p}$ by permutation and $\Delta$ is the Tate diagonal. If $X$ is finite, then by \Cref{exm:Cp} $\Delta$ exhibits $(X^{\otimes p})^{t C_p}$ as the $p$-completion of $X$, since $N^{C_p}$ is symmetric monoidal. In fact, this holds more generally when $X$ is bounded-below \cite{NS}*{Theorem~III.1.7}. On the other hand, suppose instead that $G$ is any finite $p$-group. Then for any finite spectrum $X$, by \Cref{exm:Gspectra} we again have that $X \xra{\Delta} (X^{\otimes |G|})^{\tau G}$ is a $p$-completion map. However, in this case we are not aware of a generalization to the case of $X$ bounded-below.
\end{example}

\begin{example}[derived Mackey functors]
Let $G$ be any finite group and $H \ZZ^{\triv} \in \CAlg(\Sp^G)$ be the image of $H \ZZ$ under the unique symmetric monoidal colimit-preserving functor $\Sp \xra{\triv} \Sp^G$. Then $\Mod_{H \ZZ^{\triv}}(\Sp^G)$ is equivalent to the category of derived Mackey functors $\Fun^{\oplus}(\Span(\Fin_G), D(\ZZ))$ \cite{PSW}. More relevantly for us, $\Mod_{H \ZZ^{\triv}}(\Sp^G)$ admits a description as a lax limit like with $\Sp^G$; one may either use \Cref{rem:GeneralInducedRecollementOnModules} or appeal to \cite{AMGRnew}*{Theorem A}. However, in this algebraic setting, one now has that the orbits generate $(D(\ZZ)^{h G})^{\dzbl}$ as a thick subcategory; this follows immediately from the same assertion for the stable module category $(D(\ZZ)^{h G})^{\dzbl}/(D(\ZZ)^{h G})^{\omega}$ established by \cite{Krause}*{Lemma~5.6} and the fact that $(D(\ZZ)^{h G})^{\omega}$ is generated by $\ZZ[G]$. Consequently, when applying \Cref{thm:GeneralDualizability}, the projection formula of \Cref{theorem.algebraic_descr} now only needs to be checked at the unit for all the monodromy functors.
\end{example}

\begin{example}[cyclotomic spectra]
Let $\cC$ be a symmetric monoidal category and $\cC \xra{F} \cC$ a laxly symmetric monoidal endofunctor. The \emph{lax equalizer} $\LEq(\id,F)$ of $\id$ and $F$ is defined to be the pullback $\cC \times_{(\id,F),\cC \times \cC, (\ev_0, \ev_1)} \Ar(\cC)$ \cite{NS}*{Definition~II.1.4}. If we let $\cX$ denote the lax limit of $F$, then we may rewrite this as
\begin{equation} \label{eq:laxequalizer}
\LEq(\id,F) \simeq \cC \times_{(\id,\id), \cC \times \cC, (i^*, j^*)} \cX \:.
\end{equation}
% given on objects by
% $$[\varphi: x \to F(x)] \otimes [\varphi': x' \to F(x')] = [x \otimes x' \xra{\varphi \otimes \varphi'} F(x) \otimes F(x') \xra{\can} F(x \otimes x')] \:,$$
Moreover, $\LEq(\id,F)$ admits a symmetric monoidal structure \cite{NS}*{Construction IV.2.1} such that \Cref{eq:laxequalizer} is an equivalence of symmetric monoidal categories. Consequently, we may apply \Cref{theorem.algebraic_descr} to see that an object $[x \xra{\varphi} F(x)]$ is dualizable in $\LEq(\id,F)$ if and only if $x$ is dualizable in $\cC$ and the canonical maps $x \otimes F(1) \ra F(x)$ and $x \otimes F(x^{\vee}) \ra F(x \otimes x^{\vee})$ are equivalences.

Now suppose $\cC = \Fun(B S^1, \Sp)$ and $F = (-)^{t C_p}$, so that $\LEq(\id,F)$ is the category of $p$-cyclotomic spectra $\CycSp_p$.\footnote{The Nikolaus-Scholze category $\CycSp$ of cyclotomic spectra agrees with the classical category $\CycSp^{\mathrm{gen}}$ on full subcategories of bounded-below objects \cite{NS}*{Theorem~1.4}, so we may as well work with the Nikolaus-Scholze category when studying dualizability.} Then by the Segal conjecture, $F(\SSS) \simeq \SSS^{\wedge}_p$, with trivial residual action since the action is through ring maps. We deduce that the cyclotomic Frobenius of a dualizable $p$-cyclotomic spectrum identifies with the $p$-completion map, and likewise for cyclotomic spectra, for which dualizability may be treated at each prime independently. Recalling that $\THH$ defines a symmetric monoidal functor $\Cat^{\mathrm{perf}} \to \CycSp$, we see that this strongly constrains $\THH$ of dualizable (i.e., smooth and proper) stable idempotent complete categories.

Furthermore, using \Cref{rem:inducedRecollementOnModules} and the lift of $\THH$ to a symmetric monoidal functor $\Cat^{\mathrm{perf}}_k \to \Mod_{\THH(k)}(\CycSp)$, we obtain a similar constraint on $\THH$ of smooth and proper $k$-linear stable idempotent complete categories. For instance, suppose $k$ is a perfect field of characteristic $p$. Then by \Cref{rem:inducedRecollementOnModules}, \Cref{stable_case}, and the fact that every dualizable object in $\Mod_{\THH(k)}(\Sp^{hS^1})$ is perfect \cite{AMN}*{Theorem~3.2}, a $\THH(k)$-module $[X \xra{\alpha} X^{tC_p}]$ in $\CycSp_p$ is dualizable if and only if $X$ is dualizable in $\Mod_{\THH(k)}(\Sp^{hS^1})$ and $\alpha$ factors as base-change along $\THH(k) \xra{\varphi} \THH(k)^{tC_p}$ followed by an equivalence
$$X \otimes_{\THH(k)} \THH(k)^{tC_p} \xlongra{\sim} X^{tC_p} \:.$$
Moreover, we then have for any $Y \in \Mod_{\THH(k)}(\Sp^{hS^1})$, an equivalence
$$X^{tC_p} \otimes_{\THH(k)^{tC_p}} Y^{tC_p} \simeq (X \otimes_{\THH(k)} Y)^{tC_p} \:,$$
which recovers the $H=C_p$ case of \cite{AMN}*{Theorem~3.3} and also shows that it is not logically dependent on \cite{AMN}*{Theorem~3.2} (which was only used here to simplify the criterion of \Cref{theorem.algebraic_descr}).
\end{example}

\begin{bibdiv}
\begin{biblist}

\bib{ACB}{article}{
   title={A simple universal property of Thom ring spectra},
   volume={12},
   ISSN={1753-8424},
   url={http://dx.doi.org/10.1112/topo.12084},
   % DOI={10.1112/topo.12084},
   number={1},
   journal={Journal of Topology},
   publisher={Wiley},
   author={Antolín‐Camarena, Omar},
   author={Barthel, Tobias},
   year={2018},
   pages={56–78}
}

\bib{AMGR}{article}{
      title={Stratified noncommutative geometry}, 
      author={Ayala, David},
      author={Mazel-Gee, Aaron},
      author={Rozenblyum, Nick},
      year={2020},
      eprint={https://arxiv.org/abs/1910.14602},
}

\bib{AMGRnew}{article}{
      title={Derived Mackey functors and $C_{p^n}$-equivariant cohomology}, 
      author={Ayala, David},
      author={Mazel-Gee, Aaron},
      author={Rozenblyum, Nick},
      year={2021},
      pages={to appear}
}

\bib{AMN}{article}{
  url = {https://doi.org/10.1007/s00029-018-0427-x},
  year = {2018},
  publisher = {Springer Science and Business Media {LLC}},
  volume = {24},
  number = {5},
  pages = {4555--4576},
  author = {Antieau, Benjamin},
  author = {Mathew, Akhil},
  author = {Nikolaus, Thomas},
  title = {On the Blumberg-Mandell K\"{u}nneth theorem for $\mathrm{TP}$},
  journal = {Selecta Mathematica}
}

\bib{Bachmann}{article}{
    title={Invertible Objects in Motivic Homotopy Theory},
    author={Bachmann, Tom},
    year={2016},
    eprint={https://edoc.ub.uni-muenchen.de/20151/7/Bachmann_Tom.pdf},
}

\bib{BBD}{article}{
    title={Faisceaux pervers}, 
    author={Beilinson, Alexander},
    author={Bernstein, Joseph},
    author={Deligne, Pierre},
    journal = {Astérisque},
    volume = {100},
    year={1983},
}

\bib{Baez-Dolan}{article}{
   title={Higher‐dimensional algebra and topological quantum field theory},
   volume={36},
   ISSN={1089-7658},
   url={http://dx.doi.org/10.1063/1.531236},
   % DOI={10.1063/1.531236},
   number={11},
   journal={Journal of Mathematical Physics},
   publisher={AIP Publishing},
   author={Baez, John C},
   author={Dolan, James},
   year={1995},
   pages={6073–6105},
}

\bib{BGH}{article}{
      title={Exodromy}, 
      author={Barwick, Clark},
      author={Glasman, Saul},
      author={Haine, Peter},
      year={2020},
      eprint={https://arxiv.org/abs/1807.03281},
}

\bib{BGN}{article}{
      title={Dualizing cartesian and cocartesian fibrations}, 
      author={Barwick, Clark},
      author={Glasman, Saul},
      author={Nardin, Denis},
      year={2018},
      journal={Theory and Applications of Categories},
      volume = {33},
      number = {4},
      page = {67-94},
      % eprint={https://arxiv.org/abs/1409.2165},
}

\bib{BL}{article}{
    author = {Beauville, Arnaud},
    author = {Laszlo, Yves},
    title = {Un lemme de descente},
    journal = {C. R. Acad. Sci. Paris S\'er. I Math.},
    volume = {320},
    year = {1995},
    number = {3},
    pages = {335--340},
}

\bib{nine2}{article}{
      title={Hermitian K-theory for stable $\infty$-categories II: Cobordism categories and additivity}, 
      author={Calmès, Baptiste},
      author={Dotto, Emanuele},
      author={Harpaz, Yonatan},
      author={Hebestreit, Fabian},
      author={Land, Markus},
      author={Moi, Kristian},
      author={Nardin, Denis},
      author={Nikolaus, Thomas},
      author={Steimle, Wolfgang},
      year={2020},
      eprint={https://arxiv.org/abs/2009.07224},
}

\bib{ES}{article}{
      title={Scheiderer motives and equivariant higher topos theory}, 
      author={Elmanto, Elden},
      author={Shah, Jay},
      year={2021},
      month = {May},
      publisher = {Elsevier {BV}},
      volume = {382},
      pages = {107651},
      journal = {Advances in Mathematics},
}

\bib{GR}{book}{
    title =     {A Study in Derived Algebraic Geometry, Volume I: Correspondences and Duality},
    author =    {Gaitsgory, Dennis},
    author =    {Rozenblyum, Nick},
    publisher = {American Mathematical Society},
    isbn =      {1470435691,9781470435691},
    year =      {2017},
    series =    {Mathematical Surveys and Monographs},
    eprint =    {http://www.math.harvard.edu/~gaitsgde/GL/},
}

\bib{Harpaz}{article}{
      title={The Cobordism Hypothesis in Dimension 1}, 
      author={Harpaz, Yonatan},
      year={2012},
      eprint={https://arxiv.org/abs/1210.0229},
}

\bib{HS}{article}{
    author = {Hovey, Mark},
    author={Strickland, Neil},
    year = {1999},
    title = {Morava K-theories and localisation},
    journal = {Mem. Amer. Math. Soc. \textbf{139}, no.~666, viii+100.},
}

\bib{Krause}{article}{
      title={The Picard group in equivariant homotopy theory via stable module categories}, 
      author={Krause, Achim},
      year={2020},
      eprint={https://arxiv.org/abs/2008.05551},
}

\bib{HTT}{book}{
   title = {Higher Topos Theory},
   author = {Lurie, Jacob},
   publisher = {Princeton University Press},
   isbn = {9780691140490,9781400830558},
   year = {2009},
   eprint = {http://www.math.harvard.edu/~lurie/papers/HTT.pdf},
}

\bib{GoodwillieI}{article}{
    author={Lurie, Jacob},
    title={$(\infty,2)$-Categories and the Goodwillie Calculus I},
    date={2009},
    eprint={https://www.math.ias.edu/~lurie/papers/GoodwillieI.pdf},
}

\bib{TQFT}{article}{
    title={On the Classification of Topological Field Theories}, 
    author={Lurie, Jacob},
    year={2010},
    eprint={https://www.math.ias.edu/~lurie/papers/cobordism.pdf},
}

\bib{HA}{article}{
    author={Lurie, Jacob},
    title={Higher Algebra},
    date={2017},
    eprint={http://www.math.harvard.edu/~lurie/papers/HA.pdf},
}

\bib{SAG}{article}{
    author={Lurie, Jacob},
    title={Spectral Algebraic Geometry},
    date={2018},
    eprint={https://www.math.ias.edu/~lurie/papers/SAG-rootfile.pdf},
}

\bib{MG}{article}{
   title={The universality of the Rezk nerve},
   volume={19},
   ISSN={1472-2747},
   url={http://dx.doi.org/10.2140/agt.2019.19.3217},
   % DOI={10.2140/agt.2019.19.3217},
   number={7},
   journal={Algebraic \& Geometric Topology},
   publisher={Mathematical Sciences Publishers},
   author={Mazel-Gee, Aaron},
   year={2019},
   pages={3217–3260},
}

\bib{MS}{article}{
   title={The Picard group of topological modular forms via descent theory},
   volume={20},
   ISSN={1465-3060},
   url={http://dx.doi.org/10.2140/gt.2016.20.3133},
   % DOI={10.2140/gt.2016.20.3133},
   number={6},
   journal={Geometry \& Topology},
   publisher={Mathematical Sciences Publishers},
   author={Mathew, Akhil},
   author={Stojanoska, Vesna},
   year={2016},
   pages={3133–3217},
}

\bib{NS}{article}{
  % doi = {10.4310/acta.2018.v221.n2.a1},
  url = {https://doi.org/10.4310/acta.2018.v221.n2.a1},
  year = {2018},
  publisher = {International Press of Boston},
  volume = {221},
  number = {2},
  pages = {203--409},
  author = {Nikolaus, Thomas},
  author = {Scholze, Peter},
  title = {On topological cyclic homology},
  journal = {Acta Mathematica}
}

\bib{PSW}{article}{
      title={The spectrum of derived Mackey functors}, 
      author={Patchkoria, Irakli},
      author={Sanders, Beren},
      author={Wimmer, Christian},
      year={2020},
      eprint={https://arxiv.org/abs/2008.02368}
}

\bib{QS}{article}{
      title={On the parametrized Tate construction and two theories of real $p$-cyclotomic spectra}, 
      author={Quigley, J. D.},
      author={Shah, Jay},
      year={2019},
      eprint={https://arxiv.org/abs/1909.03920},
}

\end{biblist}
\end{bibdiv}

\end{document}